
\documentclass[12pt]{article}

\def\papertitle{Wilson loop expectations\\ as sums over surfaces on the plane}


\usepackage[T1]{fontenc}
\usepackage[utf8]{inputenc}
\usepackage[english]{babel}
\usepackage{lmodern}

\usepackage{amsmath,amsthm,amssymb}
\usepackage{mathtools, etoolbox, xparse}
\usepackage{graphicx}
\usepackage{subcaption}
\usepackage[usenames,dvipsnames]{xcolor}
\usepackage{comment}
\usepackage{bm, bbm}
\usepackage{enumerate}
\usepackage{tikz}
\usetikzlibrary{arrows,positioning,automata}
\usepackage{longtable}
\usepackage{mhequ}

\usepackage[margin=1in]{geometry}
\usepackage[pdftitle={\papertitle}, pdfauthor={Minjae Park, Joshua Pfeffer, Scott Sheffield, Pu Yu}, colorlinks=true,linkcolor=NavyBlue,urlcolor=RoyalBlue,citecolor=PineGreen,bookmarks=true,bookmarksopen=true,bookmarksopenlevel=2,unicode=true,linktocpage]{hyperref}

\usepackage{array} 
\usepackage{booktabs} 
\usepackage[column=0]{cellspace} 
\setlength{\cellspacetoplimit}{2pt}
\setlength{\cellspacebottomlimit}{\cellspacetoplimit}

\theoremstyle{plain}
\newtheorem{thm}{Theorem}[section]
\newtheorem{cor}[thm]{Corollary}
\newtheorem{lem}[thm]{Lemma}
\newtheorem{prop}[thm]{Proposition}

\theoremstyle{definition}
\newtheorem{defn}[thm]{Definition}
\newtheorem{remark}[thm]{Remark}

\newtheorem{ques}{Question}[section]

\newtheorem{eg}[thm]{Example}

\numberwithin{equation}{section} 

\def\@rst #1 #2other{#1}
\newcommand\MR[1]{\relax\ifhmode\unskip\spacefactor3000 \space\fi
\MRhref{\expandafter\@rst #1 other}{#1}}
\newcommand{\MRhref}[2]{\href{http://www.ams.org/mathscinet-getitem?mr=#1}{MR#2}}

\newcommand{\ep}{\varepsilon}
\DeclareMathOperator{\NC}{\mathcal{NC}}

\newcommand{\BB}{\mathbbm}
\newcommand{\mcl}{\mathcal}
\newcommand{\frk}{\mathfrak}
\newcommand{\ol}{\overline}
\newcommand{\ul}{\underline}
\newcommand{\wt}{\widetilde}
\newcommand{\wh}{\widehat}

\DeclareMathOperator{\1}{\mathbf{1}}

\DeclareMathOperator{\tr}{tr}
\DeclareMathOperator{\Tr}{Tr}
\DeclareMathOperator{\sgn}{sgn}
\newcommand{\nabs}[1]{#1}
\newcommand{\Cep}{\mcl C_{\ep}}

\DeclarePairedDelimiterX{\abs}[1]{\lvert}{\rvert}{\ifblank{#1}{{}\cdot{}}{#1}} 
\DeclarePairedDelimiterX{\nrm}[1]{\lVert}{\rVert}{\ifblank{#1}{{}\cdot{}}{#1}} 
\DeclarePairedDelimiterX{\set}[1]{\{}{\}}{#1} 
\DeclarePairedDelimiterX{\block}[1]{[}{]}{\ifblank{#1}{{}\cdot{}}{#1}} 
\newcommand{\avg}[1]{\left\langle#1\right\rangle} 
\newcommand{\ETr}[1]{\avg{\Tr\left(#1\right)}} 
\DeclarePairedDelimiterX{\inp}[2]{\langle}{\rangle}{#1,#2} 

\RenewDocumentCommand{\Pr}{ e{^} s o >{\SplitArgument{1}{|}}m }{
\operatorname{\mathbb{P}}
\IfValueT{#1}{{\!}^{#1}}
\IfBooleanTF{#2}{
  \expectarg*{\expectvar#4}%
}{
  \IfNoValueTF{#3}{
    \expectarg{\expectvar#4}%
  }{
    \expectarg[#3]{\expectvar#4}%
  }%
}%
}
\NewDocumentCommand{\Ex}{ e{^} s o >{\SplitArgument{1}{|}}m }{
  \operatorname{\mathbb{E}}
  \IfValueT{#1}{{\!}^{#1}}
  \IfBooleanTF{#2}{
    \expectarg*{\expectvar#4}%
  }{
    \IfNoValueTF{#3}{
      \expectarg{\expectvar#4}%
    }{
      \expectarg[#3]{\expectvar#4}%
    }%
  }%
}
\NewDocumentCommand{\expectvar}{mm}{%
  #1\IfValueT{#2}{\nonscript\;\delimsize\vert\nonscript\;#2}%
}
\DeclarePairedDelimiterX{\expectarg}[1]{[}{]}{#1}


\newcommand{\GLNR}{\operatorname{GL}(N,\mathbb{R})}
\newcommand{\GLNC}{\operatorname{GL}(N,\mathbb{C})}

\newcommand{\UN}{\operatorname{U}(N)}
\newcommand{\uN}{\mathfrak{u}(N)}
\newcommand{\SUN}{\operatorname{SU}(N)}
\newcommand{\suN}{\mathfrak{su}(N)}
\newcommand{\SON}{\operatorname{SO}(N)}
\newcommand{\soN}{\mathfrak{so}(N)}
\newcommand{\ON}{\operatorname{O}(N)}
\newcommand{\oN}{\mathfrak{o}(N)}
\newcommand{\SpN}{\operatorname{Sp}(N)}
\newcommand{\SphN}{\operatorname{Sp}(N/2)}

\newcommand{\sphN}{\mathfrak{sp}(N/2)}
\newcommand{\fc}{\mathfrak{c}}
\newcommand{\fg}{\mathfrak{g}}

\newcommand{\fS}{\mathfrak{S}}
\newcommand{\cF}{\mathcal{F}}
\newcommand{\cM}{\mathcal{M}}
\newcommand{\cG}{\mathcal{G}}
\newcommand{\cJ}{\mathcal{J}}
\newcommand{\cR}{\mathcal{R}}

\newcommand{\mbf}[1]{\mathbf{#1}}
\newcommand{\mrm}[1]{\mathrm{#1}}

\newcommand{\varep}{ \varepsilon }
\newcommand{\ovl}[1]{\overline{#1}}

\newcommand{\E}{\mathbb{E}}
\newcommand{\mc}[1]{\mathcal{#1}}

\newcommand{\numcomp}{\#\mrm{comp}}
\newcommand{\ind}{\mathbbm{1}}

\makeatletter
\newcommand{\mytag}[2]{%
  \text{#1}%
  \@bsphack
  \begingroup
    \@onelevel@sanitize\@currentlabelname
    \edef\@currentlabelname{%
      \expandafter\strip@period\@currentlabelname\relax.\relax\@@@%
    }%
    \protected@write\@auxout{}{%
      \string\newlabel{#2}{%
        {#1}%
        {\thepage}%
        {\@currentlabelname}%
        {\@currentHref}{}%
      }%
    }%
  \endgroup
  \@esphack
} 

\newcommand*{\rom}[1]{\expandafter\@slowromancap\romannumeral #1@} 
\newcommand{\romnum}[1]{\textup{\rom{#1}}} 

\makeatother

\definecolor{color1}{RGB}{255, 0, 0}
\definecolor{color2}{RGB}{0, 0, 255}
\definecolor{color3}{RGB}{0, 255, 0}
\definecolor{color4}{RGB}{255, 127, 0}
\definecolor{color5}{RGB}{191, 255, 0}
\definecolor{color6}{RGB}{255, 0, 170}
\newcommand{\wA}{\textcolor{color1}{\mathbf{A}}}
\newcommand{\wAi}{\textcolor{color1}{\mathbf{A^{-1}}}}
\newcommand{\wB}{\textcolor{color2}{\mathbf{B}}}

\newcommand{\lr}[1]{\lambda_{#1}}

\newcommand{\ls}[1]{\abs{\lambda_{#1}}}

\newcommand{\loopimg}[1]{
\begin{minipage}{20mm}
  \includegraphics[width=20mm, height=20mm]{chart/#1.pdf}
\end{minipage}
}
\newcommand{\otoprule}{\midrule[\heavyrulewidth]}

\title{\papertitle}
\author{ 
    \begin{tabular}{c}{Minjae Park}\\[-4pt]\small University of Chicago\end{tabular}
    \begin{tabular}{c}{Joshua Pfeffer}\\[-4pt]\small Columbia University\end{tabular}
    \begin{tabular}{c}{Scott Sheffield}\\[-4pt]\small MIT\end{tabular}
    \begin{tabular}{c}{Pu Yu}\\[-4pt]\small MIT\end{tabular}
}

\date{\today}

\begin{document}
\maketitle

\begin{abstract}
Although lattice Yang-Mills theory on finite subgraphs of $\mathbb Z^d$ is easy to rigorously define, the construction of a satisfactory continuum theory on $\mathbb R^d$ is a major open problem when $d \geq 3$. Such a theory should in some sense assign a Wilson loop expectation to each suitable finite collection $\mathcal L$ of loops in $\mathbb R^d$. One classical approach is to try to represent this expectation as a sum over surfaces with boundary $\mathcal L$. There are some formal/heuristic ways to make sense of this notion, but they typically yield an ill-defined difference of infinities.

In this paper, we show how to make sense of Yang-Mills integrals as surface sums for $d=2$, where the continuum theory is more accessible. Applications include several new explicit calculations, a new combinatorial interpretation of the master field, and a new probabilistic proof of the Makeenko-Migdal equation.
\end{abstract}

\tableofcontents

\bigskip
\noindent\textbf{Acknowledgments.} We thank Sky Cao, Sourav Chatterjee, Ewain Gwynne, Thierry L\'evy, Hao Shen, and Xin Sun for helpful discussions, and further thank Sky Cao for providing feedback on an early draft. The authors were partially supported by NSF grants DMS 1712862 and DMS 2153742. J.P. was partially supported by the NSF Postdoctoral Research Fellowship under grant 2002159. The authors also thank the Institute for Advanced Study at Princeton, where this work was partially completed.

\section{Introduction}\label{sec-intro}
\subsection{Euclidean Yang-Mills and Wilson loop expectations}\label{sec-intro-ym}

The four-dimensional quantum Yang-Mills theory is extremely important in mathematical physics because it provides the core of the standard model, which describes the properties of elementary particles in nature. Still, much remains unknown about its mathematical foundation, and rigorously understanding this foundation is a famous Clay Millennium Problem~\cite{Jaffe2006a}.

The physically relevant Yang-Mills theory is a \emph{quantum} theory defined on \emph{Minkowski} space. On the other hand, there are some standard arguments within the physics literature indicating that, in some sense, one can reduce this problem to the study of a \emph{probabilistic} Yang-Mills theory on \emph{Euclidean space}~\cite{glimm2012quantum, seiler1982gauge}. For this reason, the probabilistic/Euclidean theory is itself of tremendous interest and will be the main focus of this paper. See e.g.\ a recent survey article by Chatterjee~\cite{chatterjee2016a} for additional probabilistic perspective on this subject. We will not further address quantum wave functions or Minkowski spaces in this article.

Fix a Riemannian manifold $\cM$, a gauge group $G$, and a principle $G$-bundle $\pi:P\to \cM$. The Euclidean Yang-Mills theory concerns the stochastic connection $\omega$ on $P$ sampled from the ``Yang-Mills measure''
\begin{equation}\label{eqn-the-YM-measure}
 d\mu_{\text{YM}}(\omega) = \frac{1}{Z}e^{-\frac{1}{2T}S_{\text{YM}}(\omega)}d\omega 
\end{equation}
where $T$ is the coupling constant, and the Yang-Mills action $S_{\text{YM}}(\omega)$ is the $L^2$ norm of the curvature of $\omega$, where $d\omega$ is interpreted heuristically as a ``uniform measure'' on the space of connections on $P$.
The path integral formulation of~\eqref{eqn-the-YM-measure} has been extremely successful in physics, yielding accurate computations of physical quantities in terms of Feynman diagrams and leading to a substantial understanding of the quantum Yang-Mills theory, e.g.\ see~\cite{Faddeev2016}.
However, from a mathematical perspective, it is difficult to directly make sense of the measure on the infinite-dimensional space of connections on $P$.
Currently, there is no known mathematical construction of the Yang-Mills measure for $d\ge 3$, although there has been some recent progress in the case $d=3$~\cite{shen2018,shen2021large,cao2021state}.

For dimension $d=2$, following a series of works by Driver, Fine, Witten, Sengupta, L\'{e}vy, and others~\cite{Gross1989, Driver1989, fine1991, Witten91, sengupta1997, levy2003, levy2010a, shen2018, che19, chandra2020}, the functional integrals~\eqref{eqn-the-YM-measure} can be mathematically rigorously defined using stochastic calculus. (See~\cite{Levy2019a} for a recent survey on two-dimensional Yang-Mills theory.) Hence, the Euclidean two-dimensional Yang-Mills measure is well understood for any surface. In light of these works, some aspects of the two-dimensional theory are simple; this is because ``gauge-fixing'' arguments allow one to reduce the problem to the analysis of matrices obtained by running Brownian motion on a Lie group for a fixed amount of time --- or to the expected products of traces of words produced from finitely many such matrices. For example, one may consider $A_1, A_2, A_3$, with each $A_i$ obtained independently by running a Brownian motion on a Lie group for time $t_i$ (starting at the identity) and seek to understand quantities like:
$\mathbb E[(\tr A_1 A_3)( \tr A_1 A_2^{-1} A_3 A_2 A_3^{-1} )]$. We will explore this random matrix problem in this paper, and will explain how to geometrically interpret the resulting Wilson loop expectations (defined below) for arbitrary collections of loops, expressing them in terms of sums over surfaces. 

Let us also stress that the theory developed in this paper is being used in a follow up paper (by Cao and two of the current authors) to explore problems in lattice Yang-Mills theory in higher dimensions \cite{cao2023random}. In particular, the results in Appendix~\ref{sec-appendix} are used in the companion paper.

We review the necessary background on two-dimensional Yang-Mills theory in Section~\ref{sec-background}. As we explain further in Section~\ref{sec-background}, the key observables in the theory of two-dimensional Yang-Mills are the \emph{Wilson loop observables} of a collection of loops in the plane---defined as products of the trace of the \emph{holonomies} of the loops with respect to the Yang-Mills connection. We call the expectations of the product of these traces the \emph{Wilson loop expectations} of the collection of loops. These expectations play the role of $n$-point functions of the theory.

At the end of a long and groundbreaking work, L\'evy presents a table of explicit computations for the single-loop Wilson loop expectations that one obtains in the $N \to \infty$ limit~\cite{Levy2011a}. Using the ``surface sum'' approach explained below, we will derive a similar table for $G=\UN$ with $N$ finite.  (L\'evy's table can be recovered as the $N\to \infty$ limit of our table.)
Readers interested in these explicit calculations may proceed directly to Section~\ref{sec-chart} where this chart is presented and discussed, along with an intuitive explanation of what happens in the limiting cases where $N=\infty$, or $N=1$, or the loop areas are small or large. We also offer in Section~\ref{sec-forest} a combinatorial method for computing and interpreting L\'evy's $N=\infty$ limits in terms of so-called forest polynomials. Although the techniques in this section are somewhat different, they provide new ways to compute and understand the $N=\infty$ surface sums described in Section~\ref{sec-chart} and elsewhere. This section makes use of combinatorial \emph{parking functions} as well as a somewhat different Poisson point process calculation.

Let us also mention that a recent groundbreaking work by Magee and Puder~\cite{Magee2019,Magee2019a} has proved a result that corresponds to the limit of our result when loop areas tend to infinity---i.e., when the random matrices defined using heat kernels are replaced with Haar measure random matrices. While their result is also expressed in terms of sums over surfaces, the techniques they use are completely different, relying more heavily on representation theory and the so-called Weingarten calculus. Both~\cite{Levy2011a} and~\cite{Magee2019,Magee2019a} address limiting cases of the results given here---in the large $N$ and large area limits, respectively. Furthermore, recent groundbreaking works by Chatterjee and others on lattice gauge theory provide intuition for what should happen in the continuum when both $N=\infty$ \emph{and} the loop areas are large~\cite{Chatterjee2019a, chatterjee2016, jafarov2016}. We explain further in Section~\ref{sec-chart}  what happens to the surface expansions in these limiting cases, and how this relates to the other approaches mentioned above. The above-mentioned follow up work in \cite{cao2023random} explains in more depth the relationship between our point of view and the approaches in ~\cite{Chatterjee2019a, chatterjee2016, jafarov2016} and in~\cite{Magee2019,Magee2019a}.

\subsection{The main theorem: a gauge-string duality}\label{sec-intro-duality}

One approach to studying the Yang-Mills measure, as proposed by Wilson~\cite{Wilson1974, wilson2004origins}, is to study~\eqref{eqn-the-YM-measure} on the discrete lattice $\mathbb{Z}^d$. One assigns an independent Haar element $U_e$ in $G$ to each edge, and the holonomy $U_p$ on each unit square (or plaquette) is the product of $U_e$ over its 4 surrounding edges. The Wilson action is defined by summing (the real part of) the trace of $I-U_p$ over all plaquettes, and fitting this into~\eqref{eqn-the-YM-measure} gives the lattice gauge theory (with coupling strength $\beta = \frac{1}{2T}$). The recent breakthrough work mentioned above~\cite{Chatterjee2019a, chatterjee2016, jafarov2016} treats the lattice gauge theory for any dimension and establishes several mathematical results, including the lattice master loop equation and the area law. We refer to the survey~\cite{chatterjee2016a} for an explanation of these results.

It is natural to ask whether we can obtain a continuous theory for any dimension by taking the continuum limit of lattice gauge theory. Unfortunately, the answer is again unknown for $d\ge 3$. In two dimensions, the continuum limit of the lattice theory was developed in the physics literature on the plane by Migdal~\cite{migdal1996recursion} and on general surfaces by Witten~\cite{Witten91}. These results have been systematically organized by L\'evy~\cite{Levy2011a} into a mathematically rigorous framework.


Chatterjee in~\cite{Chatterjee2019a} expressed Wilson loop expectations in lattice gauge theory (for small $\beta$ and the limiting case of $N \to \infty$) as sums over certain sequences of loops, called string trajectories, of their associated weights. Each string trajectory is given a real-valued \emph{weight} and each string trajectory in some sense traces out an embedded discretized surface. The surface traced out by the string trajectory is called a \emph{worldsheet} and integrals over a space of string trajectories are sometimes interpreted as integrals over a space of surfaces.\footnote{To make sense of this interpretation, for any lattice string model, one should imagine that the ``weight'' of a surface---which is formally a planar (or higher genus) map somehow embedded in $\mathbb Z^d$---depends on the ``weighted number of ways'' the surface can be traced out by a string trajectory, which could be interpreted as the partition function for a growth model on the surface. However, this partition function could in principle be simple, e.g.\ if the growth model is a variant of a probability model (like the Eden model) with partition function 1, perhaps with additional weighting depending only on the surface genus and area. Recall that in the Eden model, the faces of the planar map are added to a ``growth set'' one at a time; the boundary of the growth set is a collection of loops, whose evolution over time can be interpreted as a discrete string trajectory. This is at least the heuristic justification for replacing a ``sum over string trajectories'' with a ``sum over spanning surfaces.'' It would take some effort to formalize this idea for the string trajectories in~\cite{Chatterjee2019a} (with its particular string evolution rules, which somehow involve erasing backtracking edges as one goes), and we will not attempt this here. The follow up work \cite{cao2023random} presents several different ways to express a lattice Wilson loop expectation as a sum over discretized surfaces.} This relationship between Wilson loop expectations and sums over string trajectories is a discrete example of a \emph{gauge-string duality}.
Dualities between gauge and string theories---most famously, the \emph{Ads/CFT duality}---remain a popular area of research in theoretical physics following the intensive exploration in~\cite{maldacena1999large,gubser1998gauge}.
However, Chatterjee's formula does not extend to the fine-mesh continuum limit even for $d=2$, since to obtain an interesting limit, one has to let $\beta$ get larger as the mesh size gets finer. In some sense, the problem is that (when $\beta$ is large and/or $N$ is finite) there are \emph{too many} surfaces in the sum, so many that the surface sums become ill-defined differences of infinities.

In dimension two, it was first asserted by Gross and Taylor~\cite{Gross1993,Gross1993a} that 2D Euclidean Yang-Mills theory (or equivalently, 2D quantum chromodynamics) for $G=\UN$ is a string theory, in the sense of having a surface sum interpretation. Their work interprets the known representation-theoretic formula for the partition function and Wilson loop expectations on any surface~(see, e.g.,~\cite{Witten91}) in terms of sums over surfaces. More specifically, the Gross-Taylor expansion expresses the partition function in terms of sums (of quantities involving certain characters of the associated Lie group) over ``ramified covers from worldsheets to a target space'' and Wilson loop expectations in a similar manner.
There are a few limitations of this pioneering work: the form of the summands is a bit complicated and hard to interpret, and the presentation is rather heuristic (it suggests an outline/algorithm for the calculation but is not fully detailed and does not provide an explicit formula). The works~\cite{ramgoolam1996wilson,cordes1997large} address the first issue by interpreting the summands as ``orbifold Euler characteristics of Wilson-Hurwitz spaces,'' though they still somewhat rely on algorithmic descriptions. However, as these works point out,
their interpretations are somewhat \emph{ad hoc}, and each summand is a topological observable associated to a space of surfaces, rather than a single surface. To the best of our knowledge, the Gross-Taylor formula has been made mathematically rigorous only for certain simple collections of loops: those in which each loop traverses the same fixed circle some number of times~\cite{Levy2008}. See the further explanation in Section~\ref{sec-previous-work}. Another issue is that when $G$ is not $\UN$ or $\SUN$, one needs to sum over \emph{non-orientable} surfaces as explained by physicists~\cite{naculich1993two, ramgoolam1994comment},
while ramified covers are always orientable because of their complex structure. This means that the Gross-Taylor gauge-string formulation could not directly extend to other classical Lie groups like $\SON$ or $\SpN$.

The main goal of this paper is to give another string interpretation for Wilson loop expectations on the plane $\BB R^2$. We emphasize that our description of our summands as signed Euler characteristics of surfaces is straightforward, and that the sum includes all surfaces obtained by adding finitely many ramification points to a certain ``reference surface'', which will be explained in Section~\ref{sec-spanning}. Let us also note that while much of the previous work treats heat kernels on Lie groups using representation theory (e.g.\ diagonalizing the Laplace-Beltrami operator in terms of irreducible characters) our approach will be more probabilistic (stochastic calculus, Wick's formula, planar-and-higher-genus map expansions, Poisson point processes, etc.) and may be more accessible to readers with that background.

\begin{thm}\label{thm-main-simple}
	Let $G = \UN$. Consider the Euclidean Yang-Mills theory on the plane with gauge group $G$.
	Let $\bm{\Gamma}$ be an ordered collection of $n$ rectifiable\footnote{The rectifiability is not really necessary; it is just a sufficient condition that allows for the holonomies to be well-defined, see~\cite{levy2010a}.} loops on the plane. The Wilson loop expectation $\Phi_N(\bm{\Gamma})$ of $\bm{\Gamma}$ 
	is equal to a positive constant times the expectation of $\sgn(S)N^{-n+\chi(S)}$ where $S$ is a collection of random topological surfaces spanning $\bm{\Gamma}$ as its boundary (in the sense of Definition~\ref{defn-spanning}) with $\sgn(S)\in\{-1,1\}$ and $\chi(S)$ the sum of the Euler characteristics of the surfaces in $S$. Similar results hold when the gauge group $G$ is one of the classical Lie groups $\SUN,$ $\SON,$ and $\SphN$.
\end{thm}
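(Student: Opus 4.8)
The plan is to reduce the continuum Wilson loop expectation to a finite-dimensional integral over a product of heat-kernel-distributed matrices, and then expand that integral as a sum over surfaces using Wick's formula / Brownian motion on the Lie algebra. First I would invoke the standard gauge-fixing results in two-dimensional Yang-Mills (as reviewed in Section~\ref{sec-background}): for a collection $\bm\Gamma$ of loops in the plane, one may assume (up to subdividing, and an approximation argument using rectifiability) that the loops are piecewise linear and meet a fixed lattice-like graph in general position, so that $\Phi_N(\bm\Gamma)$ becomes the expectation of a product of traces of words in independent $\UN$-valued random variables $U_{F}$ indexed by the bounded faces $F$, where $U_F$ is the terminal value of a Brownian motion on $\UN$ run for time $T|F|$ (the area of $F$). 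The holonomy of each loop around a face is recorded with the appropriate orientation, so each trace becomes $\tr$ of an alternating product of the $U_F^{\pm 1}$.

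Next I would write each $U_F = \exp(X_F)$-style expansion via the Dyson/It\^o series for Brownian motion on the Lie group, so that $\Ex{\Tr(\cdots)}$ becomes a sum of Gaussian moments of the driving Lie-algebra-valued white noises. Applying Wick's formula pairs up the noise increments; each pairing, via the structure constants of $\uN$ (equivalently, the fact that for $\UN$ the quadratic Casimir acts on the fundamental as a ``gluing of strands'' in the sense of the $\frac1N$ expansion), produces a ribbon-graph / fat-graph contribution. Summing the pairings that occur within a fixed face over all orders of the series reconstructs the heat kernel and leaves a combinatorial object: a surface built by gluing ribbons along the loops of $\bm\Gamma$, with the ``reference surface'' of Section~\ref{sec-spanning} as the minimal such gluing and additional ramification points inserted by the higher-order Wick pairings. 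Each such surface $S$ carries a power $N^{\chi(S)}$ from the Euler-characteristic bookkeeping standard in matrix-model genus expansions (faces $\to N$ per closed index loop, edges $\to N^{-1}$, vertices $\to N^0$), a factor $N^{-n}$ from the normalization $\tr = \frac1N\Tr$ applied to the $n$ loops, and a sign $\sgn(S)\in\{\pm1\}$ coming from the alternating signs in the series expansion (the Brownian motion generator contributes $-\frac12$ times Casimir, and odd terms flip sign). Finally, interpreting the combinatorial sum over ``which ramification points appear, and where'' as a Poisson point process on $\bm\Gamma \times$ (areas) governed by the area parameters $T|F|$ turns the deterministic sum into the expectation $\Ex{\sgn(S)N^{-n+\chi(S)}}$ asserted, with the ``positive constant'' absorbing the partition function and the exponential of total Casimir/area terms.

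For the other classical groups $\SUN$, $\SON$, $\SphN$ the same strategy applies, with two modifications. For $\SUN$ one must handle the extra $\mathfrak{u}(1)$ direction removed from $\uN$; this only shifts the quadratic form in the Wick pairing by a rank-one correction, contributing an explicit determinant/$U(1)$ factor that can again be folded into the constant and an extra family of ``non-generic'' surface contributions. For $\SON$ and $\SpN$ the fundamental representation is self-dual, so strands are unoriented: Wick pairings can now glue two strands in either of two ways (the ``orientation-preserving'' and ``orientation-reversing'' gluing coming from the two rank-one pieces $\sum e_{ij}\otimes e_{ij}$ and $\sum e_{ij}\otimes e_{ji}$ in the Casimir), and the resulting fat graphs are allowed to be non-orientable; the Euler-characteristic bookkeeping is identical (cross-caps lower $\chi$ just as handles do), and the only genuinely new input is the $\SpN$ sign coming from the symplectic form, which I expect to match the $\sgn(S)$ already present.

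The main obstacle I anticipate is controlling the interchange of the infinite series expansion, the Wick pairing, and the identification with a convergent Poisson point process: one must show that after resumming the pairings internal to each face one genuinely recovers the heat kernel (and not some divergent rearrangement), and that the remaining sum over inserted ramification points is absolutely summable so that it legitimately equals an expectation over a Poisson point process with the stated intensity. Making the ``reference surface'' of Definition~\ref{defn-spanning} canonical --- i.e.\ showing every surface in the sum is obtained from it by adding finitely many ramification points, and that the bookkeeping of $\chi$, $n$, and the sign is consistent across all of them --- is the combinatorial heart of the argument, and is where the bulk of the work in Sections~\ref{sec-spanning} and onward will go.
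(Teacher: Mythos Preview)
Your proposal is essentially the same strategy as the paper's: lasso decomposition $\to$ Brownian-motion expansion $\to$ Wick pairing $\to$ fat-graph/CW-complex with $N^{\chi}$ bookkeeping $\to$ Poisson reinterpretation, and your treatment of the other classical groups via the two rank-one pieces of the Casimir (yielding the second gluing option and hence non-orientable surfaces) matches Lemma~\ref{lem-other}. Two points of divergence are worth flagging.

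First, the step ``summing the pairings that occur within a fixed face over all orders of the series reconstructs the heat kernel'' does not actually occur, and trying to make it happen will cost you time. In the It\^o/Dyson expansion of a single occurrence of $U^{(\ell)}_{|\lambda_\ell|}$, the time-ordering $s_1<\cdots<s_k$ forbids any nonzero Wick pairing among its own increments; \emph{all} nonzero pairings are between \emph{different occurrences} of the \emph{same} lasso in the word. The exponential prefactor (your ``positive constant'') comes entirely from the drift term $\tfrac{\mathfrak c_{\mathfrak g}}{2}\,dt$ in \eqref{bm-sde}, together with the Poisson normalization $\exp(\text{total intensity})$; there is no resummation. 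Correspondingly, the Poisson point process does not live on ``$\bm\Gamma\times(\text{areas})$'' but on the space $\mcl D$ of \emph{matching-color lasso pairs} (Definition~\ref{defn-space-match}): one copy of the region $D_{(m,m^*)}$ for each ordered pair $m<m^*$ of positions in the lasso word with $c(m)=c(m^*)$.

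Second, the paper implements the expansion via the Euler--Maruyama random walk approximation \eqref{eq:walk-recursion} rather than the continuous Dyson series. This is not merely cosmetic: it turns the interchange-of-limits problem you anticipate into a concrete $L^2$ bound on the trace of the holonomy approximation (Proposition~\ref{prop-limit-legal}), after which the identification with a Poisson expectation is a finite-$J$ combinatorial bijection (Proposition~\ref{prop-matching-bijection}) followed by a dominated-convergence argument as $J\to\infty$. If you pursue the Dyson series directly you will have to prove the analogous absolute convergence of the iterated stochastic integrals, which is doable but less clean.
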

We state this theorem more precisely for a single loop in Section~\ref{sec-overview} (Lemmas~\ref{lem-main-poisson},~\ref{lem-un} and~\ref{lem-other}), and for a collection of loops in Section~\ref{sec-multiple} (Lemmas~\ref{lem-main-poisson-gen},~\ref{lem-un-gen} and~\ref{lem-other-gen}). 

The idea of expressing Wilson loop expectations in terms of sums of spanning surfaces is classical, but it remains to be seen how useful surface sums will be in higher dimensions---see e.g.\ the “skeptic vs.\ enthusiast” dialog in Section 2 of~\cite{cordes1995lectures}, written in 1995. In this paper, we sum only over flat spanning surfaces, which have minimal area locally (though they do not all have minimal areas in a global sense). As mentioned above, all the surfaces we consider can be obtained by starting with a ``reference'' spanning surface and adding finitely many ramification points in a methodical way.  The sums in this paper are different from the random surface models surveyed, e.g., in~\cite{she2022}, which involve random fractal surfaces that are not flat. It would be interesting if the sums in this paper could be expressed as sums over a larger class of surfaces, including non-flat surfaces, with the non-flat surfaces somehow canceling each other out. (Note that the lattice string trajectories surveyed in e.g.\ \cite{Chatterjee2019a} in some sense trace out spanning surfaces, but in that context the corresponding sum is not limited to flat surfaces.)

\subsection{Applications of the surface interpretation}\label{sec-previous-work}

In addition to its relevance to the overall idea of gauge-string duality, our surface sum has several other immediate corollaries and applications, which we can describe in the context of the existing literature on Euclidean two-dimensional Yang-Mills theory.

\medskip
\noindent
\textit{The master field (i.e., the large-$N$ limit of Yang-Mills).}
\medskip

The large-$N$ limit of Yang-Mills theory was first observed by 't Hooft in~\cite{Hooft1973} and mathematically rigorously stated in~\cite{singer1995}. It was conjectured by Singer that, at least for two-dimensional surfaces, the holonomy of loops under the $\UN$ Yang-Mills measure should converge to a deterministic limit, which is called the \emph{master field}, as $N\to\infty$. When the manifold $\cM$ is the plane, an extension of the convergence of Brownian motion on $\UN$ to free multiplicative Brownian motion is proved in~\cite{Biane1997a}, and L\'{e}vy~\cite{Levy2011a} proved the convergence to the master field not only for $\UN$, but also for $\ON$ and $\SpN$. For the spherical case, this conjecture has been proved in~\cite{dahlqvist2020} and also studied in~\cite{Hall2018a}. Recently,~\cite{lemoine2022large, dahlqvist2022large} proved it for the torus and discussed relevant results for higher genus surfaces.
In Section~\ref{sec-corollaries}, we apply our surface interpretation of the Wilson loop expectation to provide alternative proofs for most of the already known results on the fundamental properties of the master field and Wilson loop expectations on the plane. 

\medskip
\noindent
\textit{The Makeenko-Migdal equation.}
\medskip

Makeenko and Migdal~\cite{makeenko1979} discovered that the Yang-Mills holonomy process satisfies a particular set of differential equations. These equations have been proved in~\cite{Levy2011a} for the plane case and later in~\cite{driver2017a,Driver2017} for general compact surfaces. Furthermore, Driver~\cite{driver2019functional} derived the same result by justifying Makeenko and Migdal's heuristic arguments and making sense of stochastic quantization. The Makeenko-Migdal equation has a particularly simple form for the master field and was used by L\'{e}vy to compute the master field for single loops with up to three self-intersections; see~\cite[Appendix B]{Levy2011a}.  In Section~\ref{sec-mm}, we present a new proof of the Makeenko-Migdal equation in our framework.

\medskip
\noindent
\textit{Explicit Wilson loop expectations.}
\medskip

As we noted above, one special case of the Gross-Taylor expansion~\cite{Gross1993a} has been rigorously established in~\cite{Levy2008} by applying the so-called \emph{Schur-Weyl duality}. In this special case, the gauge group is $\UN$, and each loop in the collection has the form $\ell^n$, where $\ell$ is a fixed simple planar loop. For these loops,~\cite{Levy2008} formulates the Wilson loop expectation explicitly as a sum over ramified covers of the disk.
This is done first by writing the Wilson loop expectations as sums over paths on the symmetric group, and then identifying ramified covers from the bijection between paths on the symmetric group and the corresponding monodromies. See~\cite[page 8]{Biane1997a} for a closed expression for the Wilson loop expectation in this setting.
In Section~\ref{sec-permutations}, we give another proof, which is expressed in terms of a random walk on permutations. Our result provides another way of computing Wilson loop expectations from the transition probability matrix of this random walk. We implement an algorithm that allows a more straightforward computation of various Wilson loop expectations for $G=\UN$ for finite $N$. As mentioned above, Section~\ref{sec-chart} contains a table of Wilson loop expectations generated by Mathematica, which extends the ``master field'' table in~\cite{Levy2011a} (which corresponds to $N=\infty$) to the finite $N$ regime.

\medskip
\noindent
\textit{The master field and the counting of non-crossing forests.}
\medskip

Finally, in Section~\ref{sec-forest} we show that the master field can be computed from the partition function of a model of non-crossing same-color forests (see Section~\ref{sec-forest-cont}), which in particular provides an alternative way of computing the master field for general loops. The techniques in Section~\ref{sec-forest} are somewhat different from those in the rest of the paper.
The proof of the main result of Section~\ref{sec-forest} is based on the moments of the free multiplicative Brownian motions in~\cite[page 8]{Biane1997} together with the moment-cumulant relations from free probability theory (see e.g.\ \cite{Mingo2017}).

\subsection{Basic notation}\label{sec-notation}

\begin{itemize}
 \item For $n \in \BB{N}$, we denote the set $[1,n]\cap\BB Z = \{1,\ldots,n\}$ by $[n]$. 
 \item For a set $A$, we let $\binom{A}{2}$ denote the unordered set of ordered pairs of elements of $A$.
\end{itemize}

\subsection{Background: Yang-Mills holonomy process in 2D}\label{sec-background}

In this section, we briefly review the rigorous construction of the Yang-Mills holonomy process in two dimensions, and we define several important concepts, such as a lasso representation of a loop. The exposition that follows is based mainly on the introductory notes~\cite{Levy2019a} on two-dimensional Yang-Mills theory, and we refer the reader to these notes for further details. We have not included any proofs in our review of these concepts.

Let $\cM$ be a surface (i.e.\ a two-dimensional manifold), and let $G$ be a compact Lie group whose Lie algebra $\frk g$ has an invariant scalar product $\left\langle \cdot, \cdot \right\rangle_{\frk g}$. The well-known works~\cite{sengupta1997,Driver1989,levy2003} showed that, in this setting, we can rigorously define the continuum Yang-Mills measure in terms of the standard heat kernel $p_t(g)$ on $G$. The heat kernel $p: (0,\infty) \times G \to (0,\infty)$ is the solution to 
\begin{equation*} 
\partial_t p_t(g) = \frac{1}{2}\Delta p_t(g)
\end{equation*}
with initial condition $\lim_{t \to \infty} p_t(g) = \delta_1(g)$, where $\Delta$ is the Laplace-Beltrami operator on $\cM$ and $\delta_1$ is the $\delta$-distribution at the identity in $G$. Crucially, the heat kernel is invariant under conjugation, meaning that $p_t(xyx^{-1}) = p_t(y)$ for all $x,y \in G$.

We begin by defining a \emph{lattice Yang-Mills measure} for a fixed graph $\BB{G}$ on $\cM$. We first introduce some preliminary notation. We let $\mathsf{P}(\BB{G})$ denote the set of paths formed by concatenating edges of $\BB{G}$. For a path $\alpha \in \mathsf{P}(\BB{G})$, we denote its starting point by $\ul \alpha$ and its finishing point by $\ol \alpha$. If $\alpha, \beta \in \mathsf{P}(\BB{G})$ satisfy $\ol \alpha = \ul \beta$, we denote their concatenation by $\alpha \beta$.

The lattice Yang-Mills measure assigns a random element $h(\alpha)$ of $G$ to each path $\alpha \in \mathsf{P}(\BB{G})$, such that the following holds almost surely. 
\begin{enumerate}
\item\label{item-ym1}
The element of $G$ assigned to the empty path in $\BB{G}$ is the identity element of $G$.
\item\label{item-ym2}
If $\ol \alpha = \ul \beta$, then $h(\alpha \beta) = h(\alpha) h(\beta)$.
\end{enumerate}
By~\eqref{item-ym1} and~\eqref{item-ym2}, we can characterize the lattice Yang-Mills measure by the random variables it assigns to each edge of $\BB{G}$ (with some chosen orientation). Let $\BB{E}^+$ be the set of edges of $\BB{G}$ with each edge given an arbitrary orientation. We define the lattice Yang Mills measure as a probability measure on this set $G^{\BB{E}^+}$. To describe this measure, we first let $\BB{F}^b$ denote the set of bounded faces of $\BB{G}$, and for each $F \in \BB{F}^b$, we denote the area of $F$ by $|F|$. We let $\partial F$ be a path in $\BB{G}$ that goes once around the face $F$ in the positive direction. The path $\partial F$ is not unique, but the element $h(\partial F)$ of $G$ is defined up to conjugation by an element of $G$, so $p_t(h(\partial F))$ is well-defined.

 

\begin{defn}\label{def-lym}
We define the \textbf{lattice Yang-Mills measure} on $\BB{G}$ with parameter $T>0$ as the probability measure on $G^{\BB{E}^+}$ with density
\begin{equation}\label{eqn-dsf}
Z^{-1} \prod_{F\in\mathbb{F}^b} p_{T|F|}(g_{\partial F})\prod_{e\in\BB{E}_+}dg_e,
\end{equation}
with respect to Haar measure $\prod_{e\in\BB{E}_+}dg_e$ on $G^{\BB{E}^+}$. (The normalizing constant $Z = Z(\BB{G},T)$ is chosen so that the measure is a probability measure.)
\end{defn}

The expression~\eqref{eqn-dsf} is known as the \emph{Driver-Sengupta formula}, and was originally formulated in~\cite{sengupta1997,levy2003}. Throughout the remainder of this work, we set $T=1$ to simplify the resulting computations, though our results easily extend to general $T$. We denote the lattice Yang-Mills measure with $T=1$ by $\mu_{\text{YM}}^{\BB{G}}$.

We note that the lattice Yang-Mills measure on $\BB{G}$ has the following gauge invariance property. Let $(h(\alpha))_{\alpha}$ be the $G$-random variables that $\mu_{\text{YM}}^{\BB{G}}$ associates to the paths $\alpha$ in $\BB{G}$. Then, for any mapping $f$ from the set of vertices of $\BB{G}$ to the gauge group $G$,
\begin{equation} \label{eqn-gauge-invariance-lattice}
    \left(h(\alpha)\right)_{\alpha \in \mathsf{P}(\BB{G})} \stackrel{\mcl L}{\equiv} \left(f(\ol \alpha)^{-1} h(\alpha) f(\ul \alpha)\right)_{\alpha \in \mathsf{P}(\BB{G})}.
\end{equation}

The main result in two-dimensional lattice Yang-Mills theory is that the measure $\mu_{\text{YM}}^{\BB{G}}$ is invariant under subdivision, in the following sense. If we define a graph $\BB{G}'$ from $\BB{G}$ by subdividing and adding some number of edges, then 
each edge $e$ in $\BB{G}$ is a path $\alpha(e)$ in $\BB{G}'$. The measure $\mu_{\text{YM}}^{\BB{G}}$ associates a random element of $G^{\BB{E}^+}$ to the collection of edges $e \in \BB{E}^+$, and the measure
$\mu_{\text{YM}}^{\BB{G}'}$ associates a random element of $G^{\BB{E}^+}$ to the corresponding collection of 
paths $\alpha(e)$ in $\BB{G}'$. The result states that these two random elements of $G^{\BB{E}^+}$ have the same law.

This invariance by subdivision property makes it possible to take a continuum limit of the discrete measures 
$\mu_{\text{YM}}^{\BB{G}}$. We do not justify taking this limit in the present work, referring to~\cite{sengupta1997, Levy2010} for the details. The result of this work is the following theorem (see also~\cite[Theorem 4.1]{Levy2011a}).

\begin{lem}[The Yang-Mills holonomy process]\label{lem-holonomy} 
We can associate a $G$-valued random variable $H(\alpha)$ to every path $\alpha$ in $\cM$, such that the following is true.
\begin{itemize}
 \item For every graph $\BB{G}$, the joint law of the random variables associated to the edges of $\BB{G}$ is given by $\mu_{\text{YM}}^{\BB{G}}$.
 \item
 If $\{\alpha_n\}_{n \in \BB{N}}$ is a sequence of paths started at a given point, and the paths converge uniformly when parametrized at unit speed to a path $\alpha$, then $H(\alpha_n) \to H(\alpha)$ in probability.
\end{itemize}
Moreover, these two properties uniquely characterize the joint law of the random variables associated with all paths in $\cM$.
\end{lem}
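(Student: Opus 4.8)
The plan is to realise the holonomy process as a projective limit of the lattice measures $\mu_{\text{YM}}^{\BB G}$ and then extend it from graph paths to all finite-length paths by continuity. First I would fix a countable family $\mcl G$ of graphs on $\cM$ that is cofinal under refinement --- for $\cM=\BB R^2$, all finite graphs whose edges are affine segments with rational endpoints, closed under taking common subdivisions --- so that $\mcl G$ is a directed set. The invariance-by-subdivision statement recalled just above says precisely that whenever $\BB G'$ refines $\BB G$, the $\BB G$-edge variables, reconstructed as products (via multiplicativity, property~\ref{item-ym2}) of the $\BB G'$-edge variables, have law $\mu_{\text{YM}}^{\BB G}$; hence $(\mu_{\text{YM}}^{\BB G})_{\BB G\in\mcl G}$ is a consistent family. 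Since $G$ is compact (hence Polish), the Kolmogorov extension / projective limit theorem produces a single probability space carrying variables $H(e)$ for every edge $e$ of every graph in $\mcl G$ with the prescribed joint laws, and properties~\ref{item-ym1}--\ref{item-ym2} then define $H(\alpha)$ for every path $\alpha$ that is a concatenation of such edges, i.e.\ every rational piecewise-affine path.

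The analytic core is then a continuity estimate: if $\alpha,\beta$ are rational piecewise-affine paths with the same endpoints that are uniformly $\delta$-close when parametrised at unit speed, then $H(\alpha)$ and $H(\beta)$ are close in probability, with a modulus depending only on $\delta$ and the area swept between them (under the unit-speed topology of the lemma, this swept area tends to $0$). To see this, work in a graph of $\mcl G$ containing both $\alpha$ and $\beta$; then $\alpha\beta^{-1}$ is a product of loops each enclosing one bounded face $F$, and the Driver--Sengupta density~\eqref{eqn-dsf} weights the holonomy around $F$ by $p_{|F|}$, which by standard short-time heat-kernel bounds on $G$ forces $\Ex{d(1,H(\partial F))^2}\le C|F|$. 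A telescoping/martingale argument over the faces separating $\alpha$ from $\beta$, using conjugation-invariance of $p_t$, then bounds $\Ex{d(H(\alpha),H(\beta))^2}$ by $C$ times the enclosed area. Consequently $\alpha\mapsto H(\alpha)$ is uniformly continuous in probability on the countable dense set of rational piecewise-affine paths, so for an arbitrary finite-length path $\alpha$ we may define $H(\alpha)$ as the in-probability limit of $H(\alpha_n)$ along any approximating sequence $\alpha_n$; the estimate shows the limit exists and is independent of the sequence, and the second bullet point of the lemma is then an immediate restatement.

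It remains to check the first bullet point for an arbitrary graph $\BB G$, not a priori in $\mcl G$: this follows because $\BB G$ and any graph of $\mcl G$ admit a common refinement, so the edge variables of $\BB G$ are words in variables whose law we already control, together with the (soft) fact that $\mu_{\text{YM}}^{\BB G}$ depends only on $\BB G$. Uniqueness is then formal: if $H$ and $\wt H$ both satisfy the two properties, they induce the same law $\mu_{\text{YM}}^{\BB G}$ on every graph, hence the same joint law on every finite family of rational piecewise-affine paths; since every path is an in-probability limit of such paths for both processes, and in-probability limits are unique in law, the joint law of $(H(\alpha))_\alpha$ is pinned down.

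The step I expect to be the main obstacle is the continuity estimate --- specifically the per-face bound $\Ex{d(1,H(\partial F))^2}\le C|F|$ and its propagation to general loops. This is where the heat-kernel input genuinely enters: one needs quantitative control of $p_t$ as $t\to 0$ --- e.g.\ sub-Gaussian upper bounds, or the local expansion $p_t(g)\asymp(2\pi t)^{-\dim G/2}e^{-d(1,g)^2/2t}$ --- to obtain the variance bound, and then a careful ordering of the faces so that the partial products form a martingale and one avoids losing a factor growing with the number of faces. Everything else --- the projective limit, the density of piecewise-affine paths, and uniqueness of in-probability limits --- is routine.
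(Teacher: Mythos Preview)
The paper does not actually prove this lemma. The sentence immediately preceding it reads: ``We do not justify taking this limit in the present work, referring to~\cite{sengupta1997, Levy2010} for the details,'' and the result is attributed to~\cite[Theorem 4.1]{Levy2011a}. So there is no in-paper argument to compare against; the lemma is imported wholesale as background from the cited literature.

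Your sketch is broadly the strategy carried out in those references (especially L\'evy's): use subdivision-invariance to set up a consistent projective system indexed by a cofinal family of graphs, apply Kolmogorov to obtain the process on piecewise-linear paths, and then extend to all rectifiable paths via a uniform-continuity-in-probability estimate driven by the small-time behaviour of the heat kernel on $G$. You are right that the continuity estimate is where the real work lies; the per-face variance bound $\Ex{d(1,H(\partial F))^2}\le C|F|$ and the passage from faces to a global estimate on $d(H(\alpha),H(\beta))$ are handled in~\cite{levy2003,Levy2010} with some care (in particular the right topology on paths and the bookkeeping of how the face holonomies combine are not quite as innocent as a one-line ``telescoping/martingale'' suggests). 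But as an outline your proposal matches the literature approach and contains no wrong turns.
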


We call the collection of $G$-valued random variables described in Theorem~\ref{lem-holonomy} the \emph{Yang-Mills holonomy process}, and we denote their joint law by $\mu_{\text{YM}}$. We call the random variable associated with 
a path (or reduced path) $\alpha$ the \emph{holonomy} of $\alpha$. From the gauge invariance property~\eqref{eqn-gauge-invariance-lattice} of the lattice Yang-Mills measure on $\BB{G}$, it is possible to derive the following gauge invariance property of the Yang-Mills holonomy process. For any mapping $f:M \to G$,
\begin{equation} \label{eqn-gauge-invariance-holonomy}
    \left(H(\alpha)\right)_{\alpha \in \mathsf{P}(\BB{G})} \stackrel{\mcl L}{\equiv} \left(f(\ol \alpha)^{-1} H(\alpha) f(\ul \alpha)\right)_{\alpha \in \mathsf{P}(\BB{G})}.
\end{equation}
This means that if $\alpha$ is not a loop, $H(\alpha)$ is sampled uniformly from the Haar measure on $G$.

Therefore, to describe the Yang-Mills holonomy process, we analyze the holonomies of loops on $\cM$. To describe the holonomies of loops on $\cM$, we first express the set of loops on $\BB G$ in algebraic terms. Given a rooted graph $\BB{G}$ on $\cM$ with root vertex $v$, we consider the set of loops $\alpha$ in $\BB{G}$ based at $v$. We note that the concatenation of two loops based at $v$ is a loop based at $v$. To give this set of loops the structure of a group with the concatenation operation, we need to view $\alpha \alpha^{-1}$ as the constant loop. To accomplish this, we consider two loops \emph{equivalent} if we can construct one from the other by a finite sequence of insertions and erasures of sub-paths of the form $\alpha \alpha^{-1}$. We observe that the set of equivalence classes of loops has the structure of a group. We can represent each equivalence class by its unique \emph{reduced} loop, where we define a reduced loop as a loop with no sub-path of the form $\alpha \alpha^{-1}$. The group structure of the set of 
equivalence classes of loops induces a group structure on the set of reduced loops based at $v$. We denote this group as $\mcl L_v^{\mathsf{red}}(\BB{G})$.

One can show that the group $\mcl L_v^{\mathsf{red}}(\BB{G})$ is a free group. To describe a basis for this group, we define a type of loop on $\BB{G}$ called a \emph{lasso}. See Figure~\ref{fig-eg-lasso-g0} (C) for an example of lassos.

\begin{defn}\label{defn-lasso}
 We define a \textbf{lasso} $\lambda_F$ associated to $F \in \BB{F}^b$ as a loop of the form $\alpha \cdot \partial F \cdot \alpha^{-1}$ for some path $\alpha$ from the root vertex $v$ to a boundary vertex of $F$.
\end{defn}

\begin{figure}[ht!] \centering
    \includegraphics[width=\textwidth]{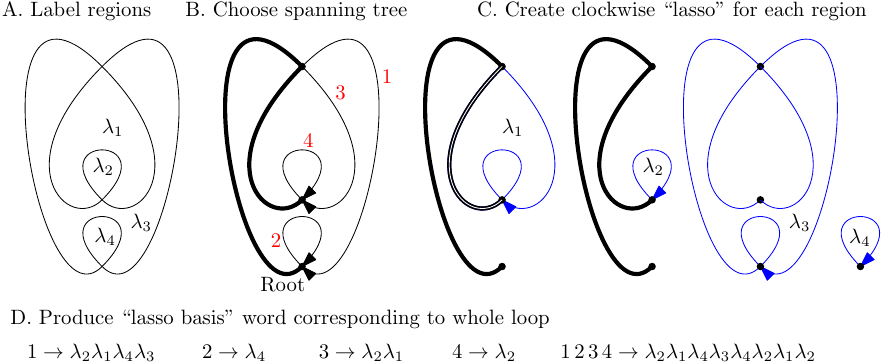}
    \caption{\label{fig-eg-lasso-g0}
    \textbf{(A)} A loop is drawn in the plane with 4 bounded faces. This loop defines the plane graph $\BB G$ with 3 vertices, 6 edges, and 4 faces. \textbf{(B)} A choice of root (the bottom vertex) and spanning tree (thick edges) is indicated. Each non-tree edge is oriented clockwise. \textbf{(C)} A lasso is constructed for each bounded face. Note that the ``loop'' part of the lasso is a (possibly self-intersecting but not self-crossing) loop that traces the full boundary of the (open and simply-connected) $\lambda_i$ region clockwise, starting and ending at the blue arrowhead.  \textbf{(D)} Each edge (labeled by red numbers) can be represented by a word of lassos. Concatenating all edges in order, we obtain a lasso representation of the loop in (A).
    }
\end{figure}

\begin{remark}\label{remark-tree}
If we specify a spanning tree $T$ of $\BB{G}$, then we can canonically define a collection $(\lambda_F)_{\BB{F}^b}$ of lassos associated with the bounded faces of $\BB{G}$. Suppose that $T$ is a fixed spanning tree of $\BB{G}$. We consider the dual graph $\wh{\BB{G}}$ of $\BB{G}$, whose vertices correspond to faces of $\BB{G}$. The spanning tree $T$ determines a spanning tree $\wh{T}$ of $\wh{ \BB{G}}$. This dual tree $\wh T$ associates a distinguished directed edge $e$ of $\BB{G}$ to each bounded face $F \in \BB{F}^b$; namely, if we consider the unique path $\alpha$ in $\wh T$ from $F$ to the unbounded face of $\BB{G}$, then the edge $e$ is the first edge of $\BB{G}$ crossed by $\alpha$, oriented so that it crosses $\alpha$ from right to left. For $F \in \BB{F}^b$ with associated distinguished directed edge $e$, we define the loop $\lambda_F$ as the loop that traverses
\begin{itemize}
\item first the path in $T$ from $v$ to the initial point of $e$,
\item then the boundary of $F$ starting with $e$, 
\item and finally the path in $T$ from the initial point of $e$ back to $v$.
\end{itemize}
\end{remark}

Any set of lassos that we associate to the bounded faces of $\BB{G}$ form a basis for the group of reduced loops based at $v$. For example, Figure~\ref{fig-eg-lasso-g0}\ (A-C) illustrates how we construct lassos from a choice of root vertex and spanning tree for a given loop, and (D) gives a representation of this loop in terms of the lassos we constructed.

\begin{prop}
For each $F \in \BB{F}^b$, let $\lambda_F$ be a lasso associated to $F$. Then the collection $(\lambda_F)_{F \in \BB{F}^b}$ of lassos that we obtain is a basis for the free group $\mcl L_v^{\mathsf{red}}(\BB{G})$. (We call this collection of lassos a \emph{lasso basis} of $\BB{G}$.)
\end{prop}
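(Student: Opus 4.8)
\noindent\emph{Proof proposal.}

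The plan is to prove the two assertions implicit in the statement: that $\mcl L_v^{\mathsf{red}}(\BB G)$ is free of rank $r:=|\BB F^b|$, and that the chosen lassos generate it. Since a finitely generated free group is Hopfian (a surjective endomorphism is an isomorphism), any generating family of cardinality equal to the rank is automatically a free basis, so the two assertions together give the proposition. The rank count is classical: by Euler's formula for the connected plane graph $\BB G$ one has $|\BB F^b| = |\BB E^+| - |V| + 1$, which is exactly the number of non-tree edges once a spanning tree $T$ (with $|V|-1$ edges) is fixed; collapsing $T$ to a point identifies $\BB G$ up to homotopy with a wedge of $r$ circles, so $\mcl L_v^{\mathsf{red}}(\BB G)\cong\pi_1(\BB G,v)$ is free with the ``$T$-loops'' $\ell_e$ (go along $T$ to $\ul e$, cross $e$, return along $T$), $e$ ranging over non-tree edges, as a free basis.

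So everything reduces to showing that the lassos generate, and this is the step that uses planarity; I would first carry it out for the lassos built from $T$ as in Remark~\ref{remark-tree}. The dual spanning tree $\wh T=\{\hat e: e\notin T\}$ of $\wh{\BB G}$, rooted at the unbounded face, gives a bijection $F\mapsto e(F)$ from $\BB F^b$ onto the set of non-tree edges (the edge of $\BB G$ crossed by the $\wh T$-geodesic from $F$ to the root), and a partial order in which $F'<F$ whenever $F'$ is a $\wh T$-descendant of $F$; fix a linear extension $F_1,\dots,F_r$ of it (descendants first, so $F_r$ touches the unbounded face). The key combinatorial observation is that the only non-tree edges occurring on the boundary cycle $\partial F_i$ are $e(F_i)$ itself (occurring exactly once, in its crossing orientation) and the edges $e(F_j)$ for the $\wh T$-children $F_j$ of $F_i$. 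Rewriting the closed walk $\lambda_{F_i}$ in the basis $(\ell_e)$ — each crossing of a non-tree edge $e$ contributing one factor $\ell_e^{\pm1}$ — therefore exhibits $\lambda_{F_i}$ as a word in $\{\ell_{e(F_j)}: F_j = F_i \text{ or a child of } F_i\}$ with a single occurrence of $\ell_{e(F_i)}^{\pm 1}$. In the chosen order this change of generators is lower unitriangular, so a short induction on $i$ gives $\langle\lambda_{F_1},\dots,\lambda_{F_i}\rangle=\langle\ell_{e(F_1)},\dots,\ell_{e(F_i)}\rangle$; at $i=r$ this says the lassos generate, hence (Hopfian) form a basis.

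Finally, to pass from tree-adapted lassos to a general lasso $\lambda_F=\alpha\cdot\partial F\cdot\alpha^{-1}$: changing the tail from the $T$-path to $\alpha$ conjugates $\lambda_F$ by the reduced loop $g_F$ traced out by $\alpha$ followed by the reverse $T$-path. I expect this to be the main obstacle, because conjugating the members of a free basis need not preserve the property of being a basis — or even of generating — when several members are moved at once (for instance $\{\,y^{-1}xy,\ xyx^{-1}\,\}$ generates only a proper subgroup of the free group $\langle x,y\rangle$, as one sees by mapping to $S_3$), so one cannot simply argue ``each $\lambda_F$ is conjugate to a canonical lasso.'' What has to be shown, using planarity, is that the conjugators $g_F$ can be absorbed into the group generated by the remaining lassos, e.g.\ by threading them through the same dual-tree induction as above; this is immediate when all the tails are chosen inside one spanning tree, which is the setting actually used in this paper (cf.\ Figure~\ref{fig-eg-lasso-g0}) and the one I would present in detail.
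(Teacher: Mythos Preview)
The paper does not prove this proposition: it is quoted in the background review of Section~\ref{sec-background}, where the authors explicitly omit proofs. So there is nothing in the paper to compare your argument against directly.

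Your argument for the tree-adapted lassos of Remark~\ref{remark-tree} is correct. The dual-tree observation --- that the only non-tree edges on $\partial F_i$ are $e(F_i)$ together with the $e(F_j)$ for the $\wh T$-children $F_j$ of $F_i$ --- gives exactly the unitriangular change of generators you describe, and combined with the Euler-formula rank count and the Hopfian property it settles the case that is actually used throughout the paper.

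You are right to be uneasy about the passage to arbitrary tails, and in fact you should stop there: the proposition is \emph{false} in that generality, and your own free-group example realises it geometrically. Take the theta graph with vertices $v,u$ joined by three edges $a,b,c$ (all oriented from $v$ to $u$), spanning tree $\{b\}$, so that $\mcl L_v^{\mathsf{red}}(\BB G)=\langle \ell_a,\ell_c\rangle$ with $\ell_e=eb^{-1}$. For the face $F_1$ between $a$ and $b$ take the tail $\alpha_1=c$; for the face $F_2$ between $b$ and $c$ take the tail $\alpha_2=a$. A direct computation gives, up to the orientation convention for $\partial F$, $\lambda_{F_1}=\ell_c\,\ell_a^{\pm1}\,\ell_c^{-1}$ and $\lambda_{F_2}=\ell_a\,\ell_c^{\pm1}\,\ell_a^{-1}$; under $\ell_a\mapsto(12)$, $\ell_c\mapsto(23)$ in $S_3$ both lassos map to $(13)$, so they generate a proper subgroup of $\mcl L_v^{\mathsf{red}}(\BB G)$. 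Planarity does not rescue the general statement. The proposition should be read --- and is only ever used in the paper --- with the tails taken inside a fixed spanning tree, which is precisely the case you prove.
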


In other words, 
if $(\lambda_F)_{F \in \BB{F}^b}$ is a lasso basis of $\BB{G}$, then every loop $\Gamma$ in $\BB{G}$ rooted at $v$ can be represented in $\mcl L_v^{\mathsf{red}}(\BB{G})$ as a concatenation of lassos $\lambda_1,\ldots,\lambda_L$; i.e., as
\begin{equation}
    \label{eqn-lasso-rep}
    \lambda_{c(1)}^{\ep(1)} \cdots \lambda_{c(M)}^{\ep(M)}
\end{equation}
for some \emph{coloring map} $c:[M] \to [L]$ and $\ep:[M] \to \{-1,1\}$.

\begin{defn}\label{defn-lasso-rep}
    Let $\Gamma$ be a loop in $\cM$ that can be represented as an element of $\mcl L_v^{\mathsf{red}}(\BB{G})$ for some rooted graph $(\BB{G}, v)$. Given a lasso basis $(\lambda_F)_{F \in \BB{F}^b}$ of $\BB{G}$, we call the representation~\eqref{eqn-lasso-rep} of $\Gamma$ its \textbf{lasso representation} in $\BB{G}$.
\end{defn}

We may now characterize the joint law of the holonomies of loops in $\mcl L_v^{\mathsf{red}}(\BB{G})$. The following proposition may be derived from Definition~\ref{def-lym}. (We consider just the case in which $\cM$ is the plane, since this is the surface that we study in this work.)

\begin{prop}\label{prop-hol-bm}
Suppose that $\cM$ is the plane. The holonomies of the lassos of $\BB{G}$ are jointly independent, and the law of the holonomy of each lasso $\lambda_F$ is given by $p_{|F|}(g) dg$. 
\end{prop}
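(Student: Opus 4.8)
The plan is to reduce everything to a change of variables inside the Driver--Sengupta density of Definition~\ref{def-lym}. By Lemma~\ref{lem-holonomy}, the joint law of the holonomies $H(e)$ of the edges of $\BB{G}$ is exactly $\mu_{\text{YM}}^{\BB{G}}$, and since each lasso is a reduced loop obtained by concatenating edges, $H(\lambda_F)$ is a fixed word in the $H(e)$'s; so it suffices to prove the statement for random variables $(g_e)_{e\in\BB{E}^+}$ distributed according to~\eqref{eqn-dsf} with $T=1$. First I would fix a spanning tree of $\BB{G}$ and work with the canonical lasso basis $(\lambda_F)_{F\in\BB{F}^b}$ it determines, as in Remark~\ref{remark-tree}. (The case of a general lasso basis is analogous: any lasso for $F$ has holonomy a conjugate of $h(\partial F)$, and one reduces to the canonical basis.)

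The heart of the argument is the claim that the map $\Psi:G^{\BB{E}^+}\to G^{\BB{E}^+}$ sending $(g_e)_e$ to $\big((g_e)_{e\in T},\,(h(\lambda_F))_{F\in\BB{F}^b}\big)$ — which makes sense because $|T|+|\BB{F}^b|=|\BB{E}^+|$ — is a bijection that transports the product Haar measure to the product Haar measure. For invertibility I would use the dual spanning tree $\wh{T}$ of Remark~\ref{remark-tree}: it gives a bijection $F\mapsto e_F$ between bounded faces and non-tree edges, and the non-tree edges lying on $\partial F$ are exactly $e_F$ together with the edges $e_{F'}$ for the children $F'$ of $F$ in $\wh{T}$ (rooted at the unbounded face), since every other dual edge at $F$ corresponds to a primal tree edge. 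Processing the faces in an order that refines $\wh{T}$ from the leaves inward, one sees that $h(\partial F)$ is a word in the (always-known) tree holonomies, the previously recovered $g_{e_{F'}}$, and the single new unknown $g_{e_F}$, which appears exactly once and to the power $\pm1$; since $h(\lambda_F)$ equals $h(\partial F)$ pre- and post-multiplied by tree holonomies, one solves for $g_{e_F}$ in terms of $h(\lambda_F)$ and already-determined quantities, proving $\Psi$ invertible. Moreover $\Psi^{-1}$ is triangular in this order, and each solving step expresses $g_{e_F}$ as a left translate, right translate, and possibly inverse of $h(\lambda_F)$ — all Haar-preserving bijections of $G$ — so by Fubini $\Psi^{-1}$, hence $\Psi$, preserves the product Haar measure.

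Granting this, I would push the density~\eqref{eqn-dsf} forward under $\Psi$: the pushforward has density $Z^{-1}\prod_{F\in\BB{F}^b}p_{|F|}(g_{\partial F})$ re-expressed in the new coordinates, with respect to the product Haar measure. But $\lambda_F=\alpha\cdot\partial F\cdot\alpha^{-1}$ forces $h(\lambda_F)$ to be a conjugate of $h(\partial F)$, and $p_t$ is conjugation-invariant, so $p_{|F|}(g_{\partial F})=p_{|F|}\big(h(\lambda_F)\big)$. Hence the pushforward density equals $Z^{-1}\prod_F p_{|F|}\big(h(\lambda_F)\big)$, which does not depend on any tree-edge variable and factors over $F$; this immediately yields that the $h(\lambda_F)$ are jointly independent (and that the tree holonomies are independent uniform). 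Integrating and using $\int_G p_t\,dg=1$ forces $Z=1$ and identifies the law of each $h(\lambda_F)$ as $p_{|F|}(g)\,dg$. I expect the genuinely delicate point to be the change-of-variables claim — verifying that the dual-tree ordering really does let one solve for the non-tree edge holonomies one at a time and that each such step is a Haar-preserving translation — so that is where I would spend the care; the rest is bookkeeping and the conjugation-invariance of the heat kernel.
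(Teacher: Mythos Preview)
Your argument is correct and is essentially the standard derivation of this fact from the Driver--Sengupta formula. However, the paper itself does not supply a proof here: Proposition~\ref{prop-hol-bm} sits in the background section, where the authors explicitly say ``We have not included any proofs in our review of these concepts,'' and the proposition is introduced only with the remark that it ``may be derived from Definition~\ref{def-lym}.'' So there is no paper proof to compare against; you have filled in what the paper leaves to the references (e.g.\ \cite{sengupta1997,levy2003,Levy2010}).

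Your change-of-variables via the tree/cotree bijection is exactly the right mechanism, and your identification of the delicate step is accurate: the point that needs care is that processing faces from the leaves of $\wh T$ inward makes the map triangular, with each step a one-sided translation (and possibly an inversion) in a single coordinate, hence Haar-preserving. One small caveat: your parenthetical about a ``general lasso basis'' is a bit quick. For the canonical tree-based lassos the conjugating path $\alpha$ lies in $T$, so $h(\alpha)$ depends only on tree variables and the factorization of the pushed-forward density is immediate. For an arbitrary lasso $\alpha\cdot\partial F\cdot\alpha^{-1}$ the path $\alpha$ may traverse non-tree edges, and while the marginal law of $h(\lambda_F)$ is still $p_{|F|}(g)\,dg$ by conjugation invariance, the joint independence of a general lasso basis requires an extra step (e.g.\ reducing to the canonical basis via the free-group isomorphism, or repeating the triangular argument with a different coordinate system). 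Since the paper only ever uses the tree-based lassos downstream, your main argument already covers everything the paper needs.
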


We can equivalently describe the law of the holonomy of a lasso in terms of \emph{Brownian motion} on $G$.
We recall that Brownian motion $U_t$ on a general Riemannian manifold started from a point $w$ is defined as the Markov process started from $w$ with generator one-half the Laplace-Beltrami operator associated to the manifold.\footnote{For Lie groups, the definition here does not involve the choice of specifying left or right independent increment, while in the SDE description~\eqref{bm-sde} below we will specify it to be left L\'{e}vy process and have right independent increment. However, it has been argued in~\cite[Lemma 1.4]{Levy2011a} that $(U_t)_{t\ge 0}$ and $(U_t^*)_{t\ge 0}$ have the same distribution and essentially the main results shall not depend on this choice.} This definition of Brownian motion directly implies that the holonomy of a lasso has the law of Brownian motion on $G$, started from the identity of $G$, at time $|F|$. 

We now define one of the most important scalar observables of the Yang-Mills measure.

\begin{defn}[Wilson loop expectation]\label{defn-wilson-loop}
We define the \textbf{Wilson loop} corresponding to a rectifiable loop $\Gamma$ in $\cM$ as the trace of 
its holonomy. For a collection $\bm \Gamma$ of rectifiable loops in $\cM$, we define their \textbf{Wilson loop expectation} (or the \textbf{Yang-Mills holonomy field}) as the $\mu_{\text{YM}}$-expectation of the product of the corresponding Wilson loops, denoted by $\Phi_N^G(\bm \Gamma)$.
\end{defn}

The Wilson loop expectations play the role of $n$-point functions in the theory, and one can show (from the compactness of $G$) that the Wilson loop expectations characterize the joint distribution of the conjugacy classes of the holonomies of all loops in $\mcl L_v^{\mathsf{red}}(\BB{G})$.\footnote{In fact, for many classes of gauge group $G$, such as those we consider in this paper, the Wilson loop expectations determine the joint law of the holonomies of the loops in $\mcl L_v^{\mathsf{red}}(\BB{G})$ up to \emph{simultaneous conjugation} by an element of $G$. See~\cite[Section 1.5]{Levy2019a} for a more detailed explanation.}

\section{Overview of main results with an example}\label{sec-overview}
Let $G=\UN$, and consider a loop $\Gamma_0$ that we illustrated in Figure~\ref{fig-eg-lasso-g0} (A), based at its root vertex (the bottom intersection). This loop is embedded on the plane with smooth curves, and it divides the plane into four finite faces. We can describe these four faces as the regions enclosed by the clockwise-directed loops $\lambda_1,\ldots,\lambda_4$ based at the root with a choice of spanning tree, called \emph{lassos} (Definition~\ref{defn-lasso}).

In the case of $\Gamma_0$, we can write its \emph{lasso representation} (Definition~\ref{defn-lasso-rep}) as
\begin{equation} \label{eqn-lasso-rep-g0}
\Gamma_0 \equiv \lr{2}\lr{1}\lr{4}\lr{3}\lr{4}\lr{2}\lr{1}\lr{2}
\end{equation}
which was demonstrated in Figure~\ref{fig-eg-lasso-g0}\ (D).

We denote by $\abs{\lambda_i}$ the Lebesgue area enclosed by $\lambda_i$. By a variational method and It\^o's formula, the Wilson loop expectation $\Phi_N(\Gamma_0)=\Phi_N^{\UN}(\Gamma_0)$ of $\Gamma_0$ can be computed as
\begin{align}
 \Phi_N(\Gamma_0) =& Ne^{-\frac{2\ls{1}+3\ls{2}+\ls{3}+2\ls{4}}{2}}\biggl(-\frac{N^2-1}{3}\cosh\frac{\ls{1}}{N}\nonumber \\
 &+\frac{N^2+2}{3}\cosh\frac{3\ls{2}+\ls{1}}{N}
    -N\sinh\frac{3\ls{2}+\ls{1}}{N}\biggr) \biggl(\cosh\frac{\ls{4}}{N}-N\sinh\frac{\ls{4}}{N}\biggr).
    \label{eqn-eg-g0}
\end{align}
For example, see~\cite[\S 2.4]{Levy2019a} which explains how such calculations can be done with simpler examples and discusses possible extensions to more complicated loops. In Section~\ref{sec-permutations}, we provide an algorithmic method (Theorem~\ref{thm::walk-on-permutations}) for computing Wilson loop expectations, producing a chart in Section~\ref{sec-chart}.

Our main result, Theorem~\ref{thm-main-simple}, allows us to obtain~\eqref{eqn-eg-g0} for the Wilson loop expectation of $\Gamma_0$ by taking a probability-centered approach rather than using techniques from representation theory. Moreover, our theorem expresses the Wilson loop expectation of $\Gamma_0$ as a sum over surfaces ``spanning'' the loop $\Gamma_0$. To state Theorem~\ref{thm-main-simple} in more precise terms, we state it in two parts. 
\begin{itemize}
 \item 
We first express the Wilson loop expectation of a loop $\Gamma$ for general gauge groups $G$ in terms of a Poisson point process on a space defined by the loop.
 \item
We then interpret the first result as a sum over surfaces for the classical gauge groups $\UN$, $\SON$, $\SUN$, and $\SphN$.
\end{itemize}
We restrict to the case in Theorem~\ref{thm-main-simple} of a single loop, deferring our treatment of the general case of multiple loops to Section~\ref{sec-multiple}.\footnote{The case of multiple loops is treated essentially the same way, with only very minor adjustments, but the statements and proofs are much more cumbersome because of the extra notation needed. Note that in~\cite{Levy2011a}, the product of traces of holonomies obtained from multiple loops is called a ``Wilson skein'' instead of a Wilson loop.}

\subsection{Statements of the main results}\label{sec-main-results}

In our first lemma, we take the gauge group $G$ to be an arbitrary compact connected\footnote{We need to assume $G$ is connected in order to approximate Brownian motion on $G$ by a random walk on the associated Lie algebra $\frk g$. See Proposition~\ref{prop-rw-convergence}.} Lie group. We consider a loop
$\Gamma$ in the plane with lasso representation~\eqref{eqn-lasso-rep}, and we express its Wilson loop expectation in terms of a Poisson point process $\Sigma$ in the space
\begin{equation}
 \label{eqn-def-mcl-D-1}
\mcl D := \bigsqcup_{\stackrel{m<m^*}{c(m) = c(m^*)}} D_{(m,m^*)},
\end{equation}
where $D_{(m,m^*)}$ on the plane is isomorphic to the region bounded by $\lambda_{c(m)} \equiv \lambda_{c(m^*)}$. We call $\mcl D$ the \emph{space of matching-color lasso pairs}; see Definition~\ref{defn-space-match} below.

To state the lemma, we define a \emph{pairing} $\pi \in \frk S_{2n}$ (for $n$ a non-negative integer) as a permutation whose cycles all have size $2$. Also, for a pairing $\pi$ and a collection of random variables $Z_1,\ldots,Z_{2n}$, we define $\left\langle Z_1, \ldots, Z_{2n} \right\rangle_\pi$ as in Lemma~\ref{lem-wicks}.

\begin{lem}\label{lem-main-poisson}
Let $G$ be a compact connected Lie group with associated Lie algebra $\frk g$, and let $W$ be the standard Brownian motion on $\frk g$ at time $1$, as defined in~\eqref{eqn-bm-lie-algebra}.
Consider the loop $\Gamma$ with lasso representation~\eqref{eqn-lasso-rep}. 
Let $\Sigma$ be a Poisson point process in the space $\mcl D$ of matching-color lasso pairs (Definition~\ref{defn-space-match}) whose intensity is the Lebesgue measure, and define the sign $\ep(\Sigma) \in \{-1,1\}$ and pairing $\pi(\Sigma) \in \frk S_{2|\Sigma|}$ associated to $\Sigma$ as in Definitions~\ref{def-epsilon-sigma} and~\ref{defn-pi}. 
Then the Wilson loop expectation of $\Gamma$ in the gauge group $G$ is given by
\begin{equation}\label{eqn-constant-term}
\exp\left(\frac{\frk c_{\frk g}}{2} \sum_{m=1}^M \abs{\lambda_{c(m)}} + \sum_{\stackrel{m<m^*}{c(m) = c(m^*)}} |\lambda_{c(m)}| \right)
\end{equation}
(with the constant $\frk c_{\frk g}$ defined in~\eqref{bm-sde}) times the expected value of $\ep(\Sigma)$ times the sum of
\begin{equation}
 \label{eqn-exp-single}
 \avg{W_{a_1a_2}, \ldots, W_{a_{2|\Sigma|} a_{2|\Sigma|+1}}}_{\pi(\Sigma)}
\end{equation}
over all $a_1,\ldots,a_{2|\Sigma|+1} \in [N]$ with $a_{2|\Sigma|+1} = a_1$.
\end{lem}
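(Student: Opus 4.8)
The plan is to compute the Wilson loop expectation $\Phi_N(\Gamma) = \avg{\tr \hol(\Gamma)}$ by substituting the lasso representation~\eqref{eqn-lasso-rep} and then expanding everything into Brownian motions on $\frk g$. First I would use Proposition~\ref{prop-hol-bm}: the holonomies $B_1,\dots,B_L$ of the lassos $\lambda_1,\dots,\lambda_L$ are independent, with $B_F$ having the law of Brownian motion on $G$ at time $|\lambda_F|$, so that $\hol(\Gamma) = B_{c(1)}^{\ep(1)}\cdots B_{c(M)}^{\ep(M)}$ and
\[
 \Phi_N(\Gamma) = \Ex{\tr\!\left(B_{c(1)}^{\ep(1)}\cdots B_{c(M)}^{\ep(M)}\right)}.
\]
Next I would pass from Brownian motion on the group to Brownian motion on the Lie algebra: write each $B_F$ as a time-$|\lambda_F|$ solution of the SDE~\eqref{bm-sde}, discretize it as a product of many small increments $\exp(\text{small Lie algebra element})$, and invoke the random-walk-approximation result (Proposition~\ref{prop-rw-convergence}, which is exactly why $G$ is assumed connected). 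Expanding each exponential $\exp(\sqrt{dt}\,X + \tfrac12 \frk c_{\frk g}\,dt\cdot \id + \cdots)$ to the appropriate order and keeping only terms surviving in the limit produces, for each lasso appearance, either (i) a drift contribution, which exponentiates to the $\exp(\tfrac{\frk c_{\frk g}}{2}\sum_m |\lambda_{c(m)}|)$ factor in~\eqref{eqn-constant-term}, or (ii) one of two ``Brownian'' insertions $W$ living on that lasso.

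The heart of the argument is then a Wick/Isserlis expansion (Lemma~\ref{lem-wicks}). After discretization, $\tr(\cdots)$ becomes a sum over index sequences $a_1,\dots \in [N]$ of a product of matrix entries of Gaussian increments; since increments on \emph{distinct} lassos are independent and increments on the \emph{same} lasso at \emph{disjoint} infinitesimal times are independent, the only nonzero Gaussian pairings pair up two insertions that lie on a common lasso $\lambda_{c(m)}\equiv\lambda_{c(m^*)}$ with $m<m^*$ and $c(m)=c(m^*)$, and the ``location'' of such a matched pair ranges over (a copy of) the region bounded by that lasso. In the continuum limit, the collection of such matched pairs is governed precisely by a Poisson point process $\Sigma$ on $\mcl D = \bigsqcup_{m<m^*,\,c(m)=c(m^*)} D_{(m,m^*)}$ with Lebesgue intensity (the Poissonization being the standard mechanism by which ``choose $k$ infinitesimal pairing times out of a continuum, weighted by $(dt)^k$'' converts a sum over $k$ and an area$^k/k!$ factor into an expectation over a rate-one Poisson process, which also supplies the $\exp(\sum_{m<m^*} |\lambda_{c(m)}|)$ normalization in~\eqref{eqn-constant-term}). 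Each realization of $\Sigma$ determines: the number $|\Sigma|$ of $W$-insertions (hence $2|\Sigma|$ Gaussian factors $W_{a_ia_{i+1}}$ around the trace cycle with $a_{2|\Sigma|+1}=a_1$), the pairing $\pi(\Sigma)$ telling which insertions were Wick-paired (Definition~\ref{defn-pi}), and a sign $\ep(\Sigma)\in\{\pm1\}$ collecting the $\ep(m)$'s (a lasso appearing with exponent $-1$ contributes $B_F^{-1}=B_F^*$, i.e.\ a transpose/conjugate that flips an index pattern and a sign) together with any signs from commuting insertions past one another; this sign is pinned down in Definitions~\ref{def-epsilon-sigma} and~\ref{def-epsilon-sigma}. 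Assembling these gives exactly the claimed formula: the prefactor~\eqref{eqn-constant-term} times $\Ex{\ep(\Sigma)\sum_{a} \avg{W_{a_1a_2},\dots,W_{a_{2|\Sigma|}a_{2|\Sigma|+1}}}_{\pi(\Sigma)}}$.

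The main obstacle I expect is the bookkeeping in the limit rather than any single hard estimate: one must (a) justify interchanging the continuum limit of the random-walk approximation with the trace and the (infinite) Wick sum, controlling the error terms from higher-order terms in $\exp(\cdot)$ and from ``self-pairings'' of an insertion with the drift on the same lasso; and (b) verify that the combinatorial data (which insertions sit on which lasso, in what cyclic order around the trace, with what orientation coming from $\ep(m)=\pm1$) is faithfully encoded by the triple $(\Sigma,\pi(\Sigma),\ep(\Sigma))$ — in particular that the geometric identification of the Poisson points with points of $D_{(m,m^*)}$ is consistent with the combinatorial pairing $\pi(\Sigma)$ and that the sign $\ep(\Sigma)$ really is a function of $\Sigma$ alone. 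Both points are essentially careful but routine; the conceptual content is the Poissonization of the Wick pairings, which I would isolate as a separate lemma and then feed the rest of the computation through it.
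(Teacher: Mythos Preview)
Your proposal is correct and follows essentially the same route as the paper's proof in Section~\ref{sec-poisson-sum}: lasso representation $\to$ random-walk/Euler--Maruyama approximation of each $B_F$ $\to$ expansion of the product $\to$ Wick's formula $\to$ identification of compatible pairings with points in $\mcl D$ $\to$ Poissonization in the $J\to\infty$ limit. Two small clarifications: the paper uses the \emph{linear} Euler--Maruyama increment $(1+\tfrac{\frk c_{\frk g}}{2}\Delta t)I_N \pm \Delta W$ rather than $\exp(\cdot)$, and the sign $\ep(\Sigma)$ is purely $\prod \ep(m)\ep(m^*)$ from the $\pm$ in the inverse approximation---there are no additional signs from commuting insertions, nor any ``self-pairings with the drift'' to worry about since the drift is scalar and factors out as $Z_J\to Z$.
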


We can represent the Poisson point process $\Sigma$ in Lemma~\ref{lem-main-poisson} as follows. We can represent the $M$ lassos in the lasso representation~\eqref{eqn-lasso-rep} of $\Gamma$ by a ``rose graph'' with $M$ petals, where the petals represent the lassos in~\eqref{eqn-lasso-rep} in cyclic order around the graph. We have drawn this rose graph in Figure~\ref{fig-eg-surface-g0}\ (G) for the example of the loop $\Gamma_0$, where there are $M=8$ petals. We can represent each point in $\Sigma$ by a pair of points in two matching petals, as illustrated in Figure~\ref{fig-eg-surface-g0}\ (H), where there are $\abs{\Sigma} = 5$ pairs of points. Thus, the points in $\Sigma$ determine $2|\Sigma|$ points in the rose graph.

\begin{figure}[ht!] \centering
	\includegraphics[width=\textwidth]{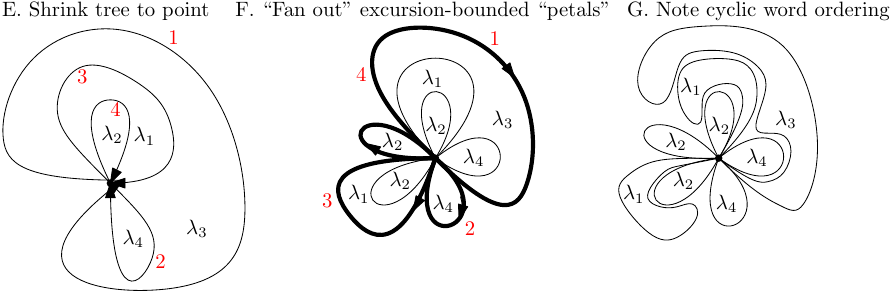}
	\includegraphics[width=0.9\textwidth]{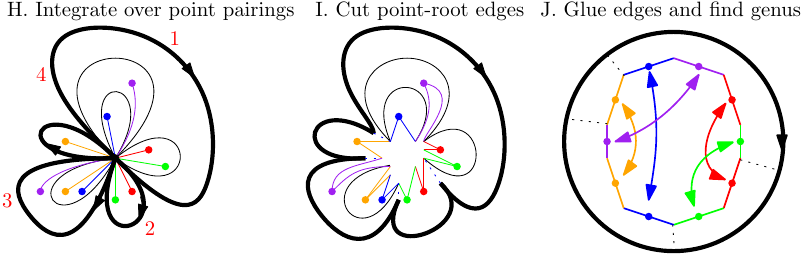}
	\caption{\label{fig-eg-surface-g0}
     {\bf Construction of a rose graph:} (E-G) illustrate how to construct a rose graph associated with the example in Figure~\ref{fig-eg-lasso-g0}. The number of petals is equal to the length of the lasso representation. In fact, this construction of a rose graph suggests an alternative way to obtain the lasso representation of a given loop with the specified root and spanning tree, by reading petals in the clockwise order.
    {\bf Sum over surfaces:} The integral (H) is over all compatible pairings with finitely many pairs, with weight depending on the genus of the surface constructed in (I) and (J).
    }
\end{figure}

Now, suppose that we draw a slit from each of these $2|\Sigma|$ points to the center of the rose graph, and then we cut along these $2|\Sigma|$ slits, so that each slit becomes an edge of a petal. See Figure~\ref{fig-eg-surface-g0}\ (I). 
These $2|\Sigma|$ new edges come in pairs, since each point in $\Sigma$ gives rise to two of these edges. By gluing or contracting each of these $|\Sigma|$ pairs of edges, we can construct a surface that ``spans'' the loop $\Gamma$, in the sense that the boundary of the new surface is a covering space of the original loop, which is explained further in Section~\ref{sec-spanning}.

If the gauge group $G$ is one of the classical Lie groups, then we can express the expectation~\eqref{eqn-exp-single} as a sum over these ``spanning'' surfaces.
In stating this result precisely, we will diverge slightly from the intuition we just described, by essentially ignoring the boundaries of the original petals and focusing on how we glue or contract the new edges formed by the $2|\Sigma|$ slits. We start with a region $H$ whose boundary is a directed cycle graph with $2|\Sigma|$ (oriented) edges with vertices labeled $1,\ldots,2|\Sigma|$ in cyclic order. 
We interpret the pairing $\pi(\Sigma)$ as a way to partition the edges of $H$ into $|\Sigma|$ pairs; namely, We consider two edges paired if their vertex labels modulo $2|\Sigma|$ are $p,p+1$ and $q,q+1$, where $p$ and $q$ are transposed by $\pi(\Sigma)$.\footnote{In terms of our intuitive explanation above, we can think of the boundary of $H$ as the edges formed by cutting along the slits in Figure~\ref{fig-eg-surface-g0}\ (H), and the pairing $\pi(\Sigma)$ identifies pairs of edges formed from the same point in $\Sigma$.}
We construct surfaces from $H$ by gluing or contracting pairs of edges paired by $\pi(\Sigma)$ as described in Table~\ref{table-gluing}.

\begin{table}[htbp]
\centering
\begin{tabular}{ll}
\toprule
Lie group $G$ 
& \\ 
\midrule
$\UN $ 
& We glue the pair of edges with the opposite orientation.\\
$\SON$ 
& We glue the pair of edges with the same or the opposite orientation.\\
$\SUN$ 
& We either contract both edges or glue them with the opposite orientation.\\
$\SphN$ 
& We glue the pair of edges with the same or the opposite orientation.\\
\bottomrule
\end{tabular}
\caption{The gluing/contracting operations on each pair of edges matched by $\pi(\Sigma)$.}\label{table-gluing}
\end{table}
We then interpret~\eqref{eqn-exp-single} as a sum indexed by the set of possible surfaces we can construct. The simplest case is $G = U(N)$, since for this gauge group, we have only one choice of operation, and so the sum over surfaces reduces to a single term.

\begin{lem}\label{lem-un}
 Let $G = \UN$. Then~\eqref{eqn-exp-single} is equal to ${(-1)}^{|\Sigma|} N^{-1+\chi}$, where $\chi$ is the Euler characteristic of the surface formed from $H$ by gluing pairs of edges matched by $\pi(\Sigma)$ with the opposite orientation.
\end{lem}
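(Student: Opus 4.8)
The plan is to evaluate the sum $\sum \avg{W_{a_1a_2},\ldots,W_{a_{2|\Sigma|}a_{2|\Sigma|+1}}}_{\pi(\Sigma)}$ for $G=\UN$ directly using the explicit form of Wick's pairing formula (Lemma~\ref{lem-wicks}) together with the known covariance structure of the $\uN$-valued Brownian motion $W$. Recall that for $\uN$, the standard Brownian motion $W$ at time $1$ is (up to the normalization fixed in~\eqref{bm-sde}) a Gaussian with covariance $\Ex{W_{ij}W_{kl}} = -\frac{1}{N}\delta_{il}\delta_{jk}$ (this is the statement that $W$ is a scaled GUE-type matrix; the precise constant is dictated by the invariant inner product on $\uN$ and must be pinned down from the definitions in the excerpt). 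Given a pairing $\pi = \pi(\Sigma)$ of the $2|\Sigma|$ factors, the quantity $\avg{\cdots}_\pi$ is the product over the $|\Sigma|$ pairs of the corresponding covariances, so each paired factor $W_{a_pa_{p+1}}$ with $W_{a_qa_{q+1}}$ contributes $-\frac{1}{N}\delta_{a_pa_{q+1}}\delta_{a_{p+1}a_q}$. This produces the overall scalar $(-1/N)^{|\Sigma|}$ times a product of Kronecker deltas, and summing over all indices $a_1,\ldots,a_{2|\Sigma|}\in[N]$ (with $a_{2|\Sigma|+1}=a_1$) counts the number of ways to consistently assign labels in $[N]$ to the vertices of $H$ after the deltas identify them.

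The second and main step is the combinatorial identification of that index-contraction count with $N^{\#\text{faces}}$ of the glued surface. Here is the picture: $H$ is a $2|\Sigma|$-gon (a disk) whose boundary edges carry the labels $a_1a_2, a_2a_3,\ldots$; I think of each oriented boundary edge as carrying the pair of endpoint indices. The delta functions coming from a pair $\{p,q\}$ of $\pi$ say precisely that gluing edge $(a_p,a_{p+1})$ to edge $(a_{q+1},a_q)$ — i.e., with the \emph{opposite} orientation, matching head-to-tail — forces the identified endpoints to carry equal labels. So after all $|\Sigma|$ gluings we obtain a closed surface (a CW-complex built from one $2|\Sigma|$-gon, $|\Sigma|$ edges, and some number $V$ of vertices) and the free sum over indices ranges over functions from the vertex set of this surface to $[N]$, giving exactly $N^{V}$. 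Since the surface has a single $2$-cell, $|\Sigma|$ one-cells, and $V$ zero-cells, its Euler characteristic is $\chi = V - |\Sigma| + 1$, hence $N^{V} = N^{\chi - 1 + |\Sigma|}$. Combining with the scalar $(-1/N)^{|\Sigma|}$ from the covariances yields $(-1)^{|\Sigma|} N^{-|\Sigma|} N^{\chi-1+|\Sigma|} = (-1)^{|\Sigma|} N^{-1+\chi}$, as claimed.

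I expect the main obstacle to be the careful bookkeeping in the orientation/gluing correspondence: one must verify that the particular index pattern produced by Wick pairing (which deltas get set equal) coincides with ``glue the pair of edges with the opposite orientation'' under the labeling conventions of Table~\ref{table-gluing} and the footnote identifying boundary edges of $H$ via their vertex labels $p,p+1$ and $q,q+1$. In particular the sign of the covariance (negative, because $\uN$ consists of skew-Hermitian matrices) is what makes the orientation-reversing gluing the correct one, and one should check there is no stray factor of $2$ or mismatch from the ``modulo $2|\Sigma|$'' cyclic indexing. A minor related point is confirming that the resulting CW-complex is genuinely a surface (the gluing is edge-to-edge and each edge is used exactly twice), so that $\chi$ is well-defined and the count $N^V$ is literally $N$ to the number of vertices; this is immediate from the construction but worth stating. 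Everything else — expanding $\avg{\cdot}_\pi$ as a product of covariances, and the elementary identity $\sum_{\text{consistent labelings}} 1 = N^{V}$ — is routine.
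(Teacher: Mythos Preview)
Your proposal is correct and follows essentially the same approach as the paper: the paper likewise expands $\avg{\cdots}_{\pi(\Sigma)}$ as a product of covariances, uses the $\uN$ covariance $\avg{W_{ab}W_{cd}} = -\frac{1}{N}\1_{a=d}\1_{b=c}$ to obtain the factor $(-1/N)^{|\Sigma|}$ and the identification constraints corresponding to opposite-orientation gluing, and then counts compatible index assignments as $N^{V}$ via the vertex set of the resulting CW complex, concluding with $V = \chi - 1 + |\Sigma|$. The only difference is packaging: the paper proves this as the $\UN$ case of a single proposition covering all four classical groups simultaneously, but restricted to $\UN$ the argument is exactly yours.
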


For each of the other three gauge groups, we have $2^{|\Sigma|}$ ways to construct a surface, and we express~\eqref{eqn-exp-single} as a sum over these $2^{|\Sigma|}$ surfaces, where the summand is a function of the following four observables.
\begin{itemize}
 \item the Euler characteristic $\chi$ of the surface,
 \item the number $\alpha$ of pairs of edges glued with the opposite orientation,
 \item the number $\gamma$ of contracted edges (which is always even), and
 \item the non-orientable genus $\mu$ of the surface (which is zero if the surface is orientable).
\end{itemize}

\begin{lem}\label{lem-other}
    Let $G = \SON$, $\SUN$ or $\SphN$. Let $H$ be defined as above, and consider the $2^{|\Sigma|}$ ways to construct a surface from $H$ by gluing or contracting each of the $|\Sigma|$ pairs of edges matched by $\pi(\Sigma)$ according to one of the two operations listed in Table~\ref{table-gluing}. Then we can express~\eqref{eqn-exp-single} as the sum, over this set of $2^{|\Sigma|}$ surfaces, of
    \begin{equation}
    \label{eqn-cov-N-1}
        \begin{cases}
        {(-1)}^{\alpha} N^{-1+\chi}, & \text{if $G=\SON$} \\
            {(-1)}^{\alpha} N^{-1- \gamma +\chi}, & \text{if $G=\SUN$} \\
            {(-1)}^{\alpha+\mu} N^{-1+\chi}, & \text{if $G=\SphN$} 
        \end{cases}
    \end{equation}
\end{lem}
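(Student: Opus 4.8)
The plan is to expand \eqref{eqn-exp-single} by Wick's formula (Lemma~\ref{lem-wicks}) and then read off the $2^{|\Sigma|}$ spanning surfaces directly from the structure of the Gaussian covariance (the ``propagator'') of $W$ on the relevant Lie algebra. After applying Wick's formula, \eqref{eqn-exp-single} equals
\[
 \sum_{a_1,\ldots,a_{2|\Sigma|}\in[N]}\ \prod_{(m,m^*)\in\pi(\Sigma)}\Ex{W_{a_m a_{m+1}}\,W_{a_{m^*}a_{m^*+1}}},
\]
with indices read cyclically so that $a_{2|\Sigma|+1}=a_1$. So the only new input needed is the explicit second moment $\Ex{W_{ij}W_{kl}}$, which one computes from the invariant inner product on $\frk g$ together with the linear relations cutting out $\frk g$ inside $\glN$: antisymmetry for $\soN$, tracelessness for $\suN$, and the relation $X^{\top}J=-JX$ for $\sphN$. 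For $\uN$ this propagator is the single term $-N^{-1}\delta_{il}\delta_{jk}$, which is exactly what makes Lemma~\ref{lem-un} a one-term statement; for the three groups here it is a sum of two terms, schematically $N^{-1}(\delta_{ik}\delta_{jl}-\delta_{il}\delta_{jk})$ for $\soN$, $-N^{-1}\delta_{il}\delta_{jk}+N^{-2}\delta_{ij}\delta_{kl}$ for $\suN$, and $N^{-1}\bigl(-\delta_{il}\delta_{jk}\pm J_{ik}J_{jl}\bigr)$ for $\sphN$ (normalizations fixed to agree with the $\uN$ convention). Writing these down carefully is the first step.

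Next I match each propagator term with one row of Table~\ref{table-gluing}. Viewing $W_{a_m a_{m+1}}$ as decorating the oriented edge from vertex $m$ to vertex $m+1$ of $\partial H$: the factor $\delta_{il}\delta_{jk}$ forces $a_m=a_{m^*+1}$, $a_{m+1}=a_{m^*}$, which is the identification ``glue with opposite orientation''; the factor $\delta_{ik}\delta_{jl}$ (present for $\soN$, and for $\sphN$ decorated by $J$) forces $a_m=a_{m^*}$, $a_{m+1}=a_{m^*+1}$, i.e.\ ``glue with the same orientation''; and $\delta_{ij}\delta_{kl}$ (present for $\suN$) forces $a_m=a_{m+1}$ and $a_{m^*}=a_{m^*+1}$, i.e.\ ``contract both edges''. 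Expanding the product over the $|\Sigma|$ pairs distributes the sum into $2^{|\Sigma|}$ summands indexed precisely by a choice of one operation at each pair, hence by the $2^{|\Sigma|}$ surfaces $S$ built from $H$.

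Then I evaluate a single summand. Its absolute value is obtained by the same Euler-characteristic count used for the $\UN$ case (Lemma~\ref{lem-un}): the propagator constants give $N^{-1}$ for each glued pair and $N^{-2}$ for each contracted pair, and the free index sum over the classes into which the Kronecker deltas (and, for $\sphN$, the signed permutation $J$) collapse the $2|\Sigma|$ corner indices gives $N^{V}$, with $V$ the number of such classes. For the CW structure on $S$ with one $2$-cell, one edge per glued pair, the residual $1$-skeleton after contractions, and these $V$ vertices, the identity $\chi=V-(|\Sigma|-\gamma/2)+1$ rearranges this into $N^{-1+\chi}$ for $\SON$ and $\SphN$ and into $N^{-1-\gamma+\chi}$ for $\SUN$ (the $N^{-\gamma}$ absorbing the $N^{-2}$'s and any loop created when one contracts an edge whose endpoints already coincide). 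The sign of the summand is the product of the signs of the chosen propagator terms: $-1$ for each opposite-orientation gluing and $+1$ for each same-orientation gluing or contraction, giving $(-1)^\alpha$ for $\SON$ and $\SUN$; for $\SphN$ the $J$'s carried by the same-orientation gluings compose around the closed-up surface to one further sign, which I must show equals $(-1)^\mu$. Summing over the $2^{|\Sigma|}$ summands produces \eqref{eqn-cov-N-1}.

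The main obstacle is this last sign identity for $\SphN$: one has to fix a precise convention for the symplectic propagator and then prove that the product of the $J$-insertions introduced by the same-orientation gluings, transported around $S$, equals $(-1)$ raised to the non-orientable genus of $S$ --- using $J^2=-\id$ together with the classification of surfaces built from a single polygon, so that the relevant parity is exactly $\mu$ rather than, say, the raw number of same-orientation gluings. The analogous $\soN$ computation, where these insertions carry no sign, is a built-in consistency check. A secondary but genuinely fiddly point is making the $\SUN$ Euler-characteristic bookkeeping uniform over all configurations: contracting a pair of edges can merge vertex classes and create loops, and one must verify in every case that the net power of $N$ is precisely $N^{-1-\gamma+\chi}$ (for instance, contracting both edges of a bigon yields a sphere, giving $N^{-1}$, as it should).
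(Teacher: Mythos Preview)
Your proposal is correct and follows essentially the same approach as the paper: expand each pairwise covariance via the explicit two-term propagator~\eqref{eqn-casimir}, interpret the Kronecker constraints as the gluing/contracting operations of Table~\ref{table-gluing}, and reduce the index sum to $N^{V}$ via the vertex count on the resulting CW complex, with the Euler-characteristic bookkeeping exactly as you describe. The paper likewise isolates the $\SphN$ sign identity as the nontrivial step and proves it (Proposition~\ref{prop-sgn}) by reducing to the normal forms~\eqref{eqn-et-1}--\eqref{eqn-et-2} via elementary surface transformations---precisely the classification-of-surfaces argument you anticipate.
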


To demonstrate how we can apply Lemma~\ref{lem-main-poisson} to compute the Wilson loop expectation of any loop $\Gamma$, we present another example of the eight-shaped loop $\Gamma_1$ depicted in Figure~\ref{fig-eg-surface-g1}\ (A) with $G = \UN$. In this case, the lasso representation of $\Gamma_1$ corresponding to the choice of root vertex and spanning tree described in Figure~\ref{fig-eg-surface-g1} is
\begin{equation}\label{eqn-lasso-rep-ex}
\Gamma_1 \equiv \lambda_2^{-1} \lambda_3^{-1} \lambda_1^{-1} \lambda_2 \lambda_4 \lambda_1.
\end{equation}
This is slightly more general than the previous example of $\Gamma_0$ in that the lasso representation may contain the inverses of lassos.

 \begin{figure}[ht!] \centering
	\includegraphics[width=\textwidth]{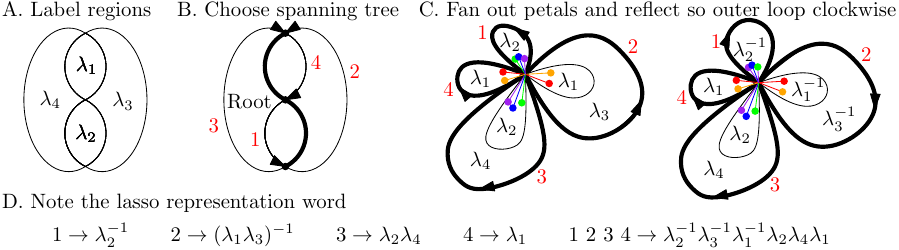}
	\includegraphics[width=0.95\textwidth]{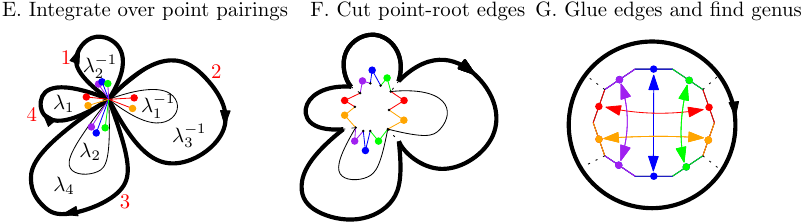}
	\caption{\label{fig-eg-surface-g1}
    \textbf{(A-D)} Construction of the lassos, the lasso representation, and the rose graph for the loop $\Gamma_1$ with the specified root and spanning tree. \textbf{(E-G)}
    An example of the surface $H$ in Lemma~\ref{lem-un} for the loop $\Gamma_1$ with $G = \UN$.
    }
\end{figure}

Recalling the definitions of $c$ and $\ep$ from~\eqref{eqn-lasso-rep}, this means that the space
of matching-color lasso pairs is given by
\begin{equation} \label{eqn-space-example-g0}
D_{(1,4)} \oplus D_{(3,6)},
\end{equation}
where $D_{(1,4)}$ and $D_{(3,6)}$ are isomorphic to the regions bounded by the lassos $\lambda_2$ and $\lambda_1$, respectively. 
Let $\Sigma$ be a Poisson point process in~\eqref{eqn-space-example-g0} with $k$ points in $D_{(3,6)}$ and $m$ points in $D_{(1,4)}$. We have $\ep(\Sigma) = (-1)^{k+m}$. Following Lemma~\ref{lem-un}, we consider a surface $H$ whose boundary is a directed cycle graph with $2 |\Sigma| = 2(k+m)$ edges. See Figure~\ref{fig-eg-surface-g1}\ (E) for an illustration of $H$ with $k=2$ and $m=3$. (The edges corresponding to the points in $D_{(3,6)}$ are red; the edges corresponding to the points in $D_{(1,4)}$ are in blue.) The pairing $\pi(\Sigma)$ corresponds to matching the $2(k+m)$ edges exactly as illustrated in Figure~\ref{fig-eg-surface-g1}\ (G). By Lemma~\ref{lem-un},~\eqref{eqn-exp-single} is equal to $(-1)^{|\Sigma|} N^{-1+\chi}$, where $\chi$ is the Euler characteristic of the surface formed by gluing pairs of matching edges with the opposite orientation. We can observe directly from Figure~\ref{fig-eg-surface-g1}\ (G) that the surface we obtain from this gluing is a torus if $k$ and $m$ are both nonzero, and a sphere if either $k$ or $m$ is zero. Therefore, by Lemma~\ref{lem-main-poisson}, the Wilson loop expectation of $\Gamma_0$ is equal to
\[
e^{-\frac{2\abs{\lambda_1}+2\abs{\lambda_2}+\abs{\lambda_3}+\abs{\lambda_4}}{2}}
\left(N\left(1+\sum_{k=1}^\infty\frac{\abs{\lambda_1}^k}{k!} + \sum_{m=1}^\infty \frac{\abs{\lambda_2}^m}{m!}\right) + \frac{1}{N}\sum_{k,m=1}^\infty \frac{\abs{\lambda_1}^k\abs{\lambda_2}^m}{k!m!}\right) \] or \[ N e^{-\frac{2\abs{\lambda_1}+2\abs{\lambda_2}+\abs{\lambda_3}+\abs{\lambda_4}}{2}}
\left(e^{\abs{\lambda_1}}+e^{\abs{\lambda_2}}-1+\frac{1}{N^2}(e^{\abs{\lambda_1}}-1)(e^{\abs{\lambda_2}}-1)\right).
\]
This agrees with the Wilson loop expectation of $\Gamma_1$ computed in~\cite[\S 2.4]{Levy2019a}. In principle, the expression~\eqref{eqn-eg-g0} for $\Phi_N(\Gamma_0)$ can be obtained for the previous loop $\Gamma_0$ in the same way, while computing the genus for each surface term is not straightforward compared to the case of $\Gamma_1$. Indeed, we provide a simple algorithm in Section~\ref{sec-permutations} that allowed us to compute~\eqref{eqn-eg-g0}.

\subsection{An outline of the proofs}\label{sec-overview-poisson}

We now outline the steps in our proofs of Lemmas~\ref{lem-main-poisson} and~\ref{lem-un}, and we highlight the key ideas and intuition for each step while glossing over some technical details. (The proof of Lemma~\ref{lem-other} follows the same main idea as that of Lemma~\ref{lem-un}, with some additional details.) We present the full proof of Lemma~\ref{lem-main-poisson} in Section~\ref{sec-poisson-sum}, and the proofs of Lemmas~\ref{lem-un} and~\ref{lem-other} in Section~\ref{sec-surface-story}.

\medskip
\noindent
\textit{Step 1: Write the holonomy as a product of Brownian motions on $G$.}
\medskip

\noindent
As we explain further in Section~\ref{sec-background}, it is well-known that the holonomy of a \emph{simple} loop $\lambda$, which is homeomorphic to a circle, has the same law as the \emph{standard Brownian motion} on $G$ run for time $\abs{\lambda}$. By identifying $G$ as a subgroup of $\GLNC$, we may view the Brownian motion on $G$ as a stochastic process on the space of $N\times N$ complex matrices. 
Moreover, the holonomies of any collection of face-disjoint simple loops are \emph{independent}. (See Proposition~\ref{prop-hol-bm}.) This means that 
the holonomy of each lasso $\lambda_i$ is a Brownian motion $U^{(i)} = U^{(i)}_t$ run for time $|\lambda_i|$, and the Brownian motions $U^{(1)},\ldots,U^{(4)}$ are independent. Moreover, the holonomy of $\lambda_i^{-1}$ has the same law as $\left( {U^{(i)}_{|\lambda_i|}} \right)^{-1}$, and the holonomy of a loop represented as a product of lassos $\lambda_i$ is given by the associated product of the variables $U^{(i)}_{|\lambda_i|}$.
This allows us to express the Wilson loop expectation of the 8-shaped loop $\Gamma_1$ in Figure~\ref{fig-eg-surface-g1} by
\begin{equation}\label{eqn-holonomy-example}
{U^{(2)}_{\abs{\lambda_2}}}^{-1}{U^{(3)}_{\abs{\lambda_3}}}^{-1}{U^{(1)}_{\abs{\lambda_1}}}^{-1}{U^{(2)}_{\abs{\lambda_2}}}{U^{(4)}_{\abs{\lambda_4}}}{U^{(1)}_{\abs{\lambda_1}}}.
\end{equation}

\medskip
\noindent
\textit{Step 2: Approximate the Brownian motions by random walks on the Lie algebra.}
\medskip

\noindent
The Wilson loop expectation is the expected trace of the holonomy---in our example, the expectation of the trace of~\eqref{eqn-holonomy-example}. 
To compute this expectation, we approximate each Brownian motion $U^{(i)}_{\abs{\lambda_i}}$ by a \emph{random walk} on $\GLNC$. (See Definition~\ref{defn-hol-approx} and Proposition~\ref{prop-conv-prob-approx}.) We derive this first-order approximation from the expression of Brownian motion on $G$ as an SDE that `wraps' the linear Brownian motion on its Lie algebra $\uN$. The approximation we obtain is
\begin{equation}\label{eqn-rw-approx-heuristic}
	U^{(i)}_{\abs{\lambda_i}}\approx \prod_{j=1}^J \left((1- \frac{\abs{\lambda_i}}{2})I_N + \frac{W_{\abs{\lambda_i}}^{(i, j)}}{\sqrt{J}}\right)
\end{equation}
where the processes $W^{(i,j)}_t$ for $i=1,2,3,4$ and $1\le j\le J$ are independent standard Brownian motions on $\uN$. The $\approx$ symbol in~\eqref{eqn-rw-approx-heuristic} means that the random walk approximation in~\eqref{eqn-rw-approx-heuristic} converges to $U^{(i)}_{\abs{\lambda_i}}$ in probability as $J\to \infty$. To obtain the random walk approximation of the inverse of $U^{(i)}_{\abs{\lambda_i}}$, we simply reverse the order of the product in~\eqref{eqn-rw-approx-heuristic} and switch the sign of the second term in each factor in the product. In other words,
\begin{equation}\label{eqn-rw-approx-heuristic-inv}
	{\left({U^{(i)}_{\abs{\lambda_i}}}\right)}^{-1}\approx	\prod_{j=J}^1 \left((1-\frac{\abs{\lambda_i}}{2})I_N - \frac{W_{\abs{\lambda_i}}^{(i, j)}}{\sqrt{J}}\right)
\end{equation}
If we replace each Brownian motion in the holonomy~\eqref{eqn-holonomy-example} by its random walk approximation, we obtain a large product of $6J$ terms
where each term of this product has the form
\begin{equation}\label{eqn-term-form-outline}
\underbrace{\left(1-\frac{\abs{\lambda_i}}{2J}\right)}_{\mytag{A}{termA-outline}} I_N \pm
\underbrace{\frac{W_{\abs{\lambda_i}}^{(i, j)}}{\sqrt{J}}}_{\mytag{B}{termB-outline}}.
\end{equation}
In the example of the 8-shaped loop in Figure~\ref{fig-eg-surface-g1}, the holonomy approximation is given by
\begin{equation}
 	\label{eqn-large-product-outline}
\begin{split}
	\prod_{j=J}^1 \left((1-\frac{\abs{\lambda_2}}{2})I_N - \frac{W_{\abs{\lambda_2}}^{(2, j)}}{\sqrt{J}}\right)
	\prod_{j=J}^1 \left((1-\frac{\abs{\lambda_3}}{2})I_N - \frac{W_{\abs{\lambda_3}}^{(3, j)}}{\sqrt{J}}\right)\\
	\prod_{j=J}^1 \left((1-\frac{\abs{\lambda_1}}{2})I_N - \frac{W_{\abs{\lambda_1}}^{(1, j)}}{\sqrt{J}}\right)
	\prod_{j=1}^J \left((1-\frac{\abs{\lambda_2}}{2})I_N + \frac{W_{\abs{\lambda_2}}^{(2, j)}}{\sqrt{J}}\right)\\
	\prod_{j=1}^J \left((1-\frac{\abs{\lambda_4}}{2})I_N + \frac{W_{\abs{\lambda_4}}^{(4, j)}}{\sqrt{J}}\right)
	\prod_{j=1}^J \left((1-\frac{\abs{\lambda_1}}{2})I_N + \frac{W_{\abs{\lambda_1}}^{(1, j)}}{\sqrt{J}}\right).
\end{split}
\end{equation}

If we expand this product to a sum of $2^{6J}$ terms, each term of the sum is a product 
\begin{equation}\label{eqn-product-WW-outline}
\prod_{k=1}^K \ep(m_k) \frac{W_{\abs{\lambda_{c(m_k)}}}^{(c(m_k), j_k)}}{\sqrt{J}}
\end{equation}
of some number $K$ of matrices of the form~\eqref{termB-outline}, multiplied by $6J-K$ scalars of the form~\eqref{termA-outline}. We show by a series of arguments that, in computing the expected trace of the holonomy, we can fix $K$ and take $J \to \infty$
for each term of the sum separately. If we take the product of $6J-K$ scalars of the form~\eqref{termA-outline}, and we send $J \to \infty$ limit while keeping $K$ fixed, then we obtain the same limit as the limit
\[
\lim_{J \to \infty} \prod_{m=1}^6 \left(1-\frac{\abs{\lambda_{c(m)}}}{2J}\right)^J = \exp\left(-\frac{2\abs{\lambda_{1}}+2\abs{\lambda_{2}}+\abs{\lambda_{3}}+\abs{\lambda_{4}}}{2} \right),
\] of all $6J$ scalars of the form~\eqref{termA-outline}.

\begin{figure}[ht!] \centering
	\begin{tabular}{cc} 
	\includegraphics[width=0.45\textwidth]{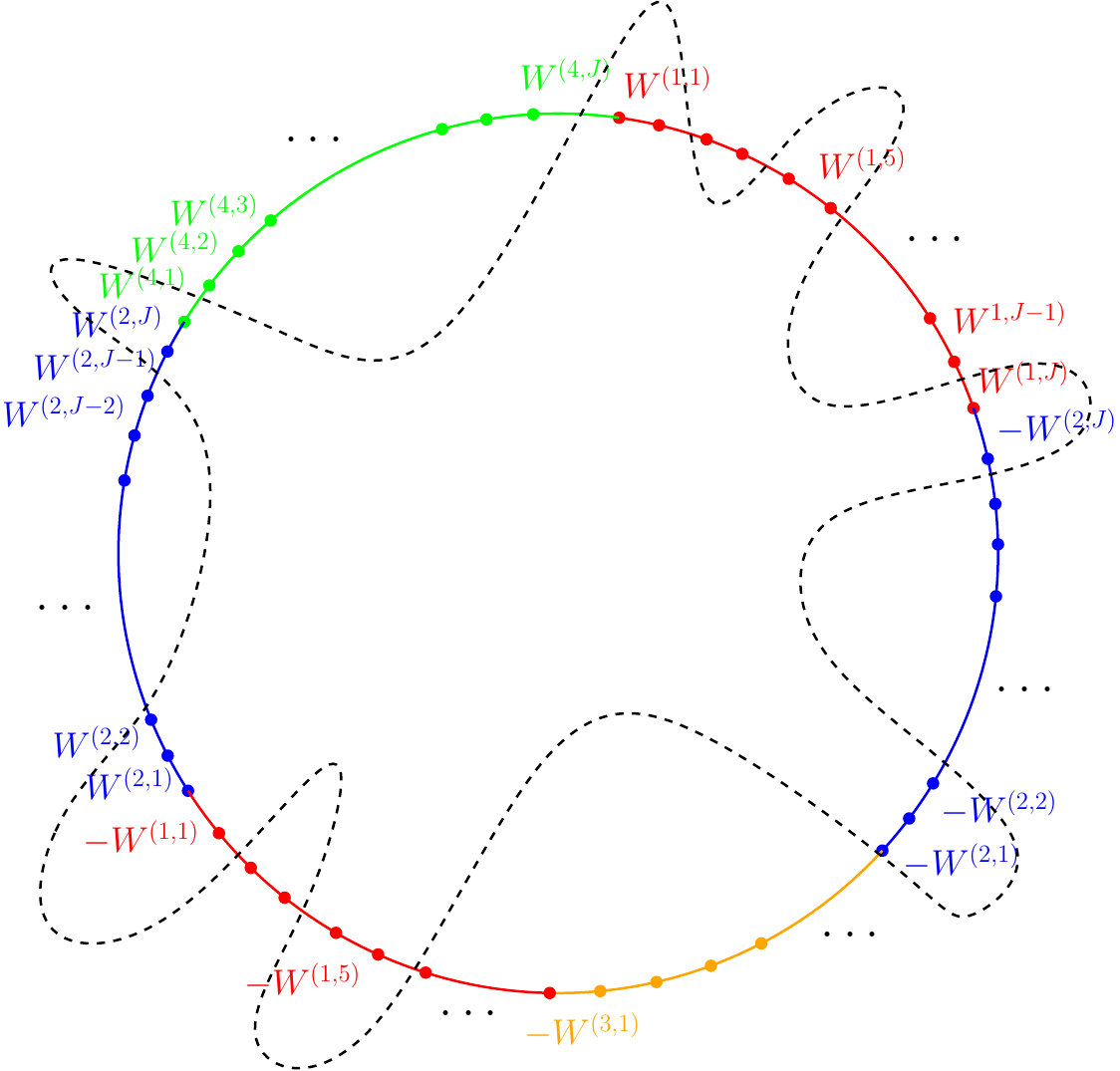}
	&
	\includegraphics[width=0.45\textwidth]{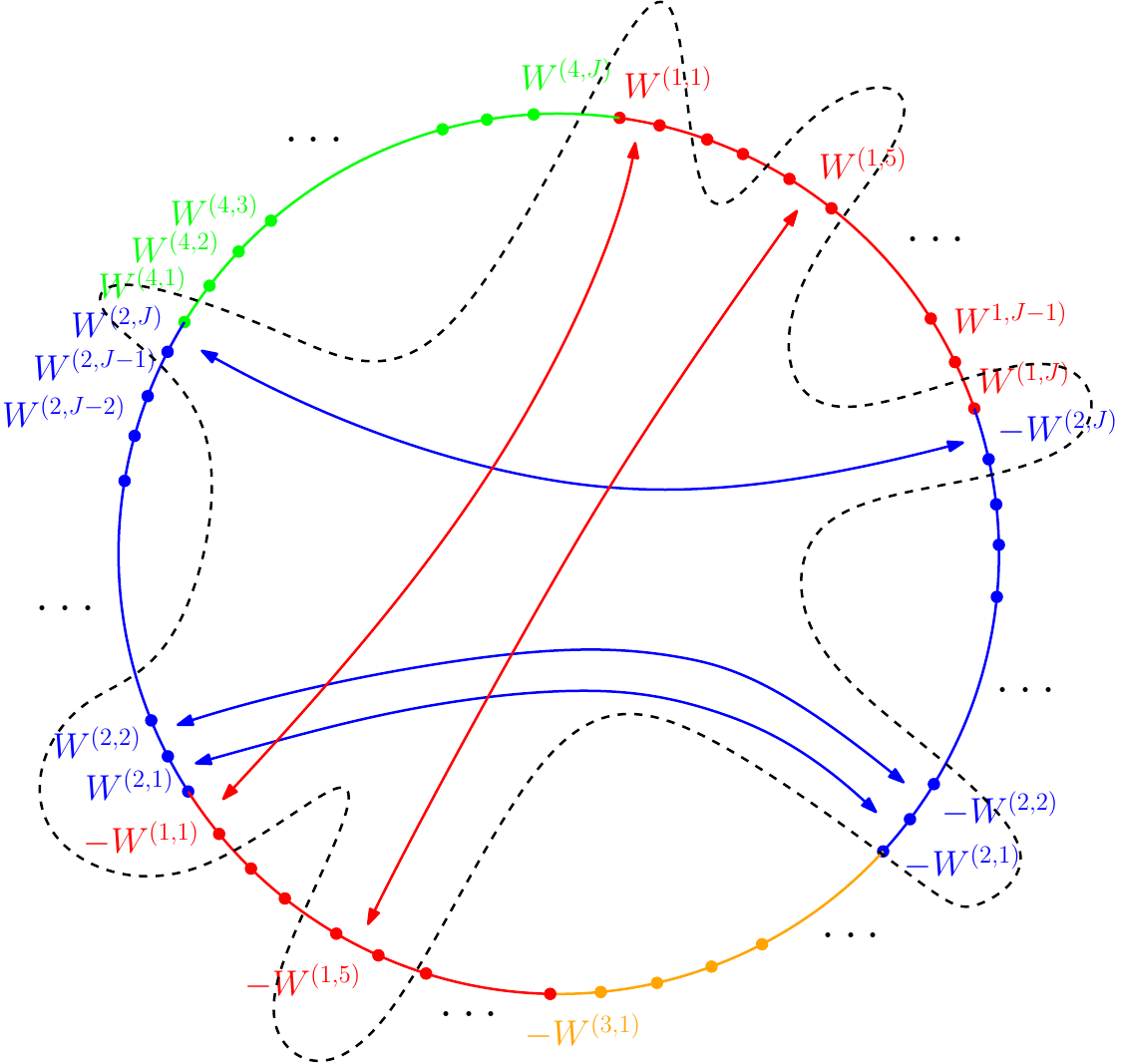}
	\end{tabular}
	\caption{A schematic diagram illustrating a term~\eqref{eqn-product-WW-outline} in the expanded product of random walk approximations~\eqref{eqn-term-form-outline}, for the example loop $\Gamma_1$ in Figure~\ref{fig-eg-surface-g1}. Each colored arc represents one of the six lassos in the lasso representation~\eqref{eqn-lasso-rep-ex} of $\Gamma_1$. The two arcs corresponding to $\lambda_1^{\pm 1}$ are colored red, the two arcs corresponding to $\lambda_2^{\pm 1}$ are colored blue, and the arcs corresponding to $\lambda_3^{-1}$ and $\lambda_4$ are green and orange. Every arc is divided by $J$ segments, and the location index $j=1,2\cdots, J$ is assigned in the clockwise or counterclockwise order depending on whether the corresponding lasso appears as its inverse in the lasso representation. \textbf{Left:} Each segment corresponds to a term~\eqref{eqn-term-form-outline} in the large product~\eqref{eqn-large-product-outline}. Each term of the expansion of~\eqref{eqn-large-product-outline} corresponds to a \emph{subset} of these $6J$ segments. (We have circled an example of a subset with the dotted line.) \textbf{Right:} Given a term of the expansion, its expected trace is non-zero only if the segments in the corresponding subset can be ``paired properly'' according to Wick's formula. That is, if two segments are paired, their color $i$ and location index $j$ should both match. In the example we have illustrated in the figure, there is only \emph{one} such pairing of the $10$ segments in our subset, which we have depicted with arrows.
	}\label{fig-expansion-diagram}
\end{figure}

\medskip
\noindent
\textit{Step 3: Simplify by Wick's formula}
\medskip

\noindent
We now focus on the product of matrices~\eqref{eqn-product-WW-outline}. The trace of this matrix is a sum of product of Gaussians; therefore, to compute its expectation, we apply \emph{Wick's formula} (also known as \emph{Isserlis’ theorem}). To state Wick's formula, we define the notion of a \emph{pairing}.

\begin{defn}
Given a totally ordered set $X$ of size $K$, we define a \emph{pairing} $\pi$ of $X$ as a permutation of $X$ with all cycles of size 2. (Note that $X$ has no pairings if $|X|$ is odd. If $K = 0$, then we consider the empty permutation a pairing.) We say that two elements $x,x^*$ of $X$ are \emph{paired} by $\pi$ if $\pi(x) = x^*$. We view $\pi$ as an element of the symmetric group $\frk S_K$ by identifying $X$ with $\{1,\ldots,K\}$ via the order-preserving map.
\end{defn}

\begin{lem}[Wick's formula]\label{lem-wicks}
Let $(Z_1,\ldots,Z_K)$ be a centered Gaussian vector. For each pairing $\pi \in \frk S_K$, let 
\begin{equation}
 \label{eqn-defn-paired-exp}
\left\langle Z_1, \ldots, Z_K \right\rangle_\pi := \prod_{\stackrel{i<j}{\pi(i) = j}} \left\langle Z_i Z_j \right\rangle
\end{equation}
be the product of the expectations $\left\langle Z_i Z_j \right\rangle$ for all $i,j$ paired by $\pi$. (If $K=0$ or $\pi$ is the empty pairing, we set~\eqref{eqn-defn-paired-exp} equal to $1$.)
Then the expectation of the product of random variables $Z_1 \cdots Z_K$ is the sum of the expectations~\eqref{eqn-defn-paired-exp} over all pairings $\pi \in \frk S_K$.
\end{lem}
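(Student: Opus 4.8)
The plan is to prove Wick's formula via the moment generating function of the Gaussian vector, reducing the identity to a coefficient extraction. Write $C_{ij} := \langle Z_i Z_j\rangle$ for the covariance entries, and recall that for a centered, jointly Gaussian vector one has, for every $t=(t_1,\ldots,t_K)\in\BB R^K$,
\[
F(t) := \Bigl\langle \exp\Bigl(\textstyle\sum_{i=1}^K t_i Z_i\Bigr)\Bigr\rangle = \exp\Bigl(\tfrac12 \textstyle\sum_{i,j=1}^K C_{ij}\,t_i t_j\Bigr),
\]
which is just the one-dimensional Gaussian Laplace transform applied to the centered Gaussian variable $\sum_i t_i Z_i$, whose variance is $\sum_{i,j}C_{ij}t_i t_j$. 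Since $F$ is everywhere finite and real-analytic, differentiating under the expectation is justified and $\langle Z_1\cdots Z_K\rangle = \partial_{t_1}\cdots\partial_{t_K}F(t)\big|_{t=0}$; moreover, for a polynomial $P(t)$ the operator $\partial_{t_1}\cdots\partial_{t_K}|_{t=0}$ extracts precisely the coefficient of the squarefree monomial $t_1 t_2\cdots t_K$. (If one prefers to avoid any integrability discussion, run the same argument with the characteristic function $\langle e^{i\sum_j t_j Z_j}\rangle$ and divide by $i^K$.)

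First I would dispose of the case $K$ odd. Expanding $F(t)=\sum_{n\ge0}\frac{1}{2^n n!}Q(t)^n$ with $Q(t):=\sum_{i,j}C_{ij}t_i t_j$, every monomial occurring has even total degree, so the coefficient of $t_1\cdots t_K$ vanishes; hence $\langle Z_1\cdots Z_K\rangle=0$, which matches the right-hand side, an empty sum since $\{1,\ldots,K\}$ has no pairing. (Equivalently, apply the symmetry $Z\mapsto -Z$.)

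Now take $K=2n$ even. Only the $n$-th term of the series for $F$ has degree $K$, so
\[
\langle Z_1\cdots Z_{2n}\rangle = \frac{1}{2^n n!}\,\bigl[t_1\cdots t_{2n}\bigr]\,Q(t)^n,
\qquad
Q(t)^n = \sum_{(i_1,j_1),\ldots,(i_n,j_n)\in[K]^2} C_{i_1 j_1}\cdots C_{i_n j_n}\; t_{i_1}t_{j_1}\cdots t_{i_n}t_{j_n}.
\]
A term contributes to the coefficient of $t_1\cdots t_{2n}$ exactly when the multiset $\{i_1,j_1,\ldots,i_n,j_n\}$ equals $\{1,\ldots,2n\}$, i.e.\ when the sequence of $n$ ordered pairs is an ordered, oriented version of a pairing $\pi$ of $\{1,\ldots,2n\}$. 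The key combinatorial point is that each unordered pairing $\pi$ arises from exactly $2^n n!$ such sequences ($n!$ orderings of the $n$ blocks, $2^n$ orientations within the blocks), and, since $C_{ij}=C_{ji}$, all of them produce the same value $\prod_{\{i,j\}\in\pi}C_{ij}$. Hence $[t_1\cdots t_{2n}]\,Q(t)^n = 2^n n!\sum_{\pi}\prod_{\{i,j\}\in\pi}C_{ij}$, and dividing by $2^n n!$ gives $\langle Z_1\cdots Z_{2n}\rangle = \sum_{\pi}\prod_{\{i,j\}\in\pi}\langle Z_iZ_j\rangle = \sum_{\pi}\langle Z_1,\ldots,Z_{2n}\rangle_\pi$, since $\langle Z_1,\ldots,Z_K\rangle_\pi$ as defined in~\eqref{eqn-defn-paired-exp} is exactly $\prod_{\{i,j\}\in\pi}C_{ij}$.

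The only genuinely delicate point is the bookkeeping in this last step --- correctly matching ordered $n$-tuples of ordered index-pairs to unordered pairings and pinning down the overcount factor $2^n n!$; the rest is routine. An alternative route that avoids generating functions is induction on $K$ via Gaussian integration by parts: for a centered Gaussian vector, $\langle Z_1\, g(Z_2,\ldots,Z_K)\rangle = \sum_{j=2}^K \langle Z_1 Z_j\rangle\,\langle \partial_j g\rangle$, and applying this with $g(z_2,\ldots,z_K)=z_2\cdots z_K$ gives $\langle Z_1\cdots Z_K\rangle = \sum_{j=2}^K \langle Z_1 Z_j\rangle\,\langle \prod_{\ell\ne 1,j}Z_\ell\rangle$; the inductive hypothesis rewrites each factor as a sum over pairings of $\{2,\ldots,K\}\setminus\{j\}$, and summing over $j$ reconstructs every pairing of $\{1,\ldots,K\}$ exactly once, organized by the partner of $1$. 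I would present the generating-function argument as the main proof and mention this alternative in a remark.
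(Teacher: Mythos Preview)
Your proof is correct and is the standard moment-generating-function derivation of Wick's formula (Isserlis' theorem). Note, however, that the paper does not actually prove this lemma: it is stated as a well-known result and used as a black box, so there is no ``paper's own proof'' to compare against. Both the generating-function argument you give and the inductive alternative via Gaussian integration by parts are textbook proofs; either would be a perfectly acceptable way to fill in what the paper leaves unproved.
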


By Wick's formula (and Brownian scaling), the expected trace of~\eqref{eqn-product-WW-outline} is a large sum of terms of the form
\begin{equation}\label{eqn-under-o}
\ep(m_1) \cdots \ep(m_K) \cdot
\sqrt{\frac{\abs{\lambda_{c(m_1)}}}{J}} \cdots\sqrt{\frac{\abs{\lambda_{c(m_K)}}}{J}} \cdot 
 \avg{W^{(c(m_1), j_1)}_{a_1a_2} \cdots W^{(c(m_K), j_K)}_{a_K a_{K+1}}}_\pi.
\end{equation}
for some indices $a_1,\ldots,a_{K+1} \in [N]$ with $a_1 = a_{K+1}$.

\medskip
\noindent
\textit{Step 4: Represent the Wilson loop expectation by a Poisson process.}
\medskip

\noindent
We defined
\begin{equation}
 \label{eqn-paired-expec-ov}
 \avg{W^{(c(m_1), j_1)}_{a_1a_2} \cdots W^{(c(m_K), j_K)}_{a_K a_{K+1}}}_\pi
\end{equation}
in~\eqref{eqn-under-o} as the product of expression of the form
\begin{equation}\label{eqn-each-pair}
 \left\langle W^{(c(m), j)}_{a,b} W^{(c(m^*), j^*)}_{a^*,b^*} \right\rangle
\end{equation}
for some $(m,j)$ and $(m^*,j^*)$ in $[6] \times [J]$ and indices $a,b,a^*,b^* \in [N]$. 

We observe that, if~\eqref{eqn-paired-expec-ov} is nonzero, then we must have $c(m) = c(m^*)$ and $j = j^*$. This means that, if $J$ is very large, we can view 
both indices $(m,j)$ and $(m^*,j^*)$ as corresponding roughly to a single point in the region $D_{(m,m^*)}$ bounded by $\lambda_{c(m)} \equiv \lambda_{c(m^*)}$. Since $J$ is very large, $[J]$ closely approximates the continuous interval $[0,1]$; so, roughly speaking, we can view $j = j^*$ as an element of $[0,1]$ that records the location of our point within the region $D_{(m,m^*)}$.

\begin{defn}\label{defn-space-match}
Let $\mcl C$ be the \emph{set of matching-color lasso pairs}
\[\mcl C = \{ (m,m^*) : \text{$m < m^*$ and $c(m) = c(m^*)$}\}.\]
We define the \emph{space of matching-color lasso pairs} as
\begin{equation}
\mcl D := \bigsqcup_{(m,m^*) \in \mcl C} D_{(m,m^*)},\label{eqn-space-mcp}
\end{equation}
where $D_{(m,m^*)}$ is isomorphic to the region bounded by $\lambda_{c(m)}$. 
We define a parametrizing bijection $\eta: \mcl C \times [0,1] \to \mcl D$ by defining $\eta$ on each cylinder set $(m,m^*) \times [0,1]$ as a space-filling curve in $D_{(m,m^*)}$. (We only use the curve $\eta$ to record the locations of points in $\mcl D$.)
\end{defn}

By representing every $(m,j)$ and $(m^*,j^*)$ paired by $\pi$ in~\eqref{eqn-paired-expec-ov} as a point in $\mcl D$, we can represent the expression
\begin{equation}
 \label{eqn-under-detail}
\underbrace{\ep(m_1) \cdots \ep(m_K)}_{\mytag{A}{termA1}} \cdot
\underbrace{\sqrt{\frac{\abs{\lambda_{c(m_1)}}}{J}} \cdots\sqrt{\frac{\abs{\lambda_{c(m_K)}}}{J}} }_{\mytag{B}{termB1}} \cdot 
\underbrace{\avg{W^{(c(m_1), j_1)}_{a_1a_2} \cdots W^{(c(m_K), j_K)}_{a_K a_{K+1}}}_\pi}_{\mytag{C}{termC1}},
\end{equation}
we derived in~\eqref{eqn-under-o} as a function of a collection of $K/2$ points in $\mcl D$. Therefore, we can view the \emph{sum} of the expressions~\eqref{eqn-under-detail} over all possible choices of indices as an expectation with respect to a \emph{random} collection of points $\Sigma$ in $\mcl D$, where~\eqref{termB1} is the probability measure of $\Sigma$. Roughly speaking, if we split each region comprising $\mcl D$ into $J$ equal-area blocks, then~\eqref{termB1} gives the probability that $|\Sigma| = K/2$ and that these $K/2$ points are contained in a given subset of $K/2$ blocks. Since the probability~\eqref{termB1} is proportional to the product of the areas of the blocks, we deduce that, in the $J \to \infty$ limit, the points $\Sigma$ are uniformly distributed from the set of ordered tuples in $\bigsqcup_{k = 0}^\infty \mcl D^k$. (Here, the space $\bigsqcup_{k = 0}^\infty \mcl D^k$ is endowed with the product measure induced from Lebesgue measure on $\mcl D$, and we define the ordering on this space in terms of $\eta$.) In other words, $\Sigma$ is a \emph{Poisson point process} on $\mcl D$. 

Finally, we show that we can interpret both the signs $\ep(\cdot)$ in~\eqref{termA1} and the pairing $\pi$ in~\eqref{termC1} as the following functions of $\Sigma$. 

\begin{defn}\label{def-epsilon-sigma}
Let $\Sigma$ be a collection of points in $\mcl D$. We define $\ep(\Sigma)$ as 
\[
 \prod_{((m,m^*), t) \in \eta^{-1}(\Sigma)} \ep(m) \ep(m^*).
\]
\end{defn}

\begin{defn}\label{defn-pi}
Let $\Sigma$ be a collection of points in $\mcl D$. We define $\pi(\Sigma) \in \frk S_{2|\Sigma|}$ as the pairing of the lexicographically ordered set
\begin{equation}
 \label{eqn-lex-a}
 \bigcup_{((m,m^*), t) \, \in \, \eta^{-1}(\Sigma)} \{(m,t),(m^*,t)\}
 \end{equation}
that pairs $(m,t)$ and $(m^*,t)$ for each $((m,m^*), t) \in \eta^{-1}(\Sigma)$.
\end{defn}

When we express the sum of terms~\eqref{eqn-under-detail} as an expectation with respect to the Poisson point process $\Sigma$, the term~\eqref{termA1} contributes to the constant~\eqref{eqn-constant-term}, the term~\eqref{termB1} corresponds to the sign $\ep(\Sigma)$, and the term~\eqref{termC1} corresponds to the term~\eqref{eqn-exp-single}. This completes the outline of the proof of Lemma~\ref{lem-main-poisson}.

\medskip
\noindent
\textit{Step 5: Represent~\eqref{termC1} as gluing edges to form a CW complex}
\medskip

The constant~\eqref{eqn-constant-term} is fairly explicit, as is the sign $\ep(\Sigma)$, but the sum of~\eqref{eqn-exp-single} over all possible indices $a_1,\ldots,a_{K+1}$ is more enigmatic for general Lie groups. However, it has a very nice geometric interpretation for certain choices of Lie groups. Here, we focus on the case $G = \UN$, which is treated in Lemma~\ref{lem-un}. Suppose we fix an instance of the point process $\Sigma$ and consider the sum of~\eqref{eqn-exp-single} over indices $a_1,\ldots,a_{K+1} \in [N]$ with $a_{K+1} = a_1$. This corresponds to summing a term of the form~\eqref{termC1} over all possible $a_1,\ldots,a_{K+1}$. 
As in the schematic diagram in Figure~\ref{fig-expansion-diagram}, we can express the $K$ matrices that appear in~\eqref{termC1} as $K/2$ matching pairs of edges of a cycle graph, with the pairing described by $\pi(\Sigma)$. We now remove all the other edges in the diagram, so we are left with a cycle graph with exactly $K$ edges, divided into $K/2$ pairs. We view the cycle graph as forming the boundary of a region $H$. We have drawn this graph in Figure~\ref{fig-expansion-diagram}, with the arrows representing the pairing of edges. The edges correspond, in cyclic order, to the terms $W_{a_pa_{p+1}}$ in~\eqref{termC1}.\footnote{We can ignore the superscript $(c(m_p),j_p)$, since we have already taken it into account when choosing the pairing $\pi(\Sigma)$. We explain this more precisely in the proof itself.} We can represent the indices $a_1,\ldots,a_K$ as labels of the \emph{vertices} of the graph. 
Now, in analyzing the sum of~\eqref{termC1} over all possible choices of indices $a_1,\ldots,a_{K+1}$, we observe that for a fixed choice of $a_1,\ldots,a_{K+1}$, the term~\eqref{termC1} is the product of $K/2$ covariances of the form~\eqref{eqn-each-pair}. Each of these $K/2$ covariances equals either $0$ or $-1/N$ (see~\eqref{eqn-casimir} below), so the product of covariances is equal to $(-N)^{-K/2}$ or zero. The key challenge, therefore, is to determine \emph{which} choices of $a_1,\ldots,a_{K+1}$ correspond to a nonzero value of~\eqref{termC1}.

The key idea is that, even though we have $K$ indices 
$a_1,\ldots,a_{K}$ (recall that $a_{K+1} = a_1$), we do not have $K$ degrees of freedom in choosing these indices---meaning that we cannot choose values for all $K$ indices independently if we want~\eqref{termC1} to be nonzero. If two edges of the graph are paired by $\pi(\Sigma)$, then in order for the corresponding covariance~\eqref{eqn-each-pair} to be nonzero, the vertex labels of these two edges must satisfy some relation. The specific relation that the labels must satisfy depends on the choice of Lie group. In the case $G = \UN$, we can interpret the relationship as follows: if we glue the two edges with the opposite orientation (where the edges are oriented in the same direction around the face), then the vertex labels of the glued edges must match. We can glue all $K/2$ pairs of edges this way to form a closed surface, which we view as a CW complex with a single face and $K/2$ edges. Having formed this CW complex, we can view the number $V$ of vertices of the CW complex as representing the numbers of \emph{degrees of freedom} we have in choosing the indices $a_1,\ldots,a_{K}$. To conclude, the sum of~\eqref{termC1} over all possible choices of indices $a_1,\ldots,a_{K+1}$ contains $N^V$ nonzero terms, with each nonzero term equal to $(-1/N)^{-K/2}$. Therefore, the sum equals $(-1)^{|\Sigma|} N^{-1+\chi}$, where $\chi$ is the Euler characteristic of the surface. This completes the outline of the proof of Lemma~\ref{lem-un}.

In Figure~\ref{fig-expansion-diagram}, we visualized a term~\eqref{eqn-product-WW-outline} in the expanded product of random walk approximations~\eqref{eqn-term-form-outline} as a collection of $K$ color-coded segments, which we circled with a dotted line. In this example, we have $K=10$. We remove all the segments not included in the subset to obtain a cycle graph with $10$ edges. We can express the expected trace of~\eqref{eqn-product-WW-outline} as a sum over pairings of these $10$ edges and labelings of the vertices of the graph by elements of $[N]$. For a term of the sum to be nonzero, both the pairing and the labeling must be ``compatible'' with the underlying data in a manner that we explain in the text. As we noted in the caption to Figure~\ref{fig-expansion-diagram}, the only compatible pairing in this example is the one that we have depicted with arrows.

\medskip
\noindent
\textit{Step 6: Interpret the CW complex in the previous step as a spanning surface}
\medskip

\begin{figure}[ht!] \centering
	\includegraphics[width=\textwidth]{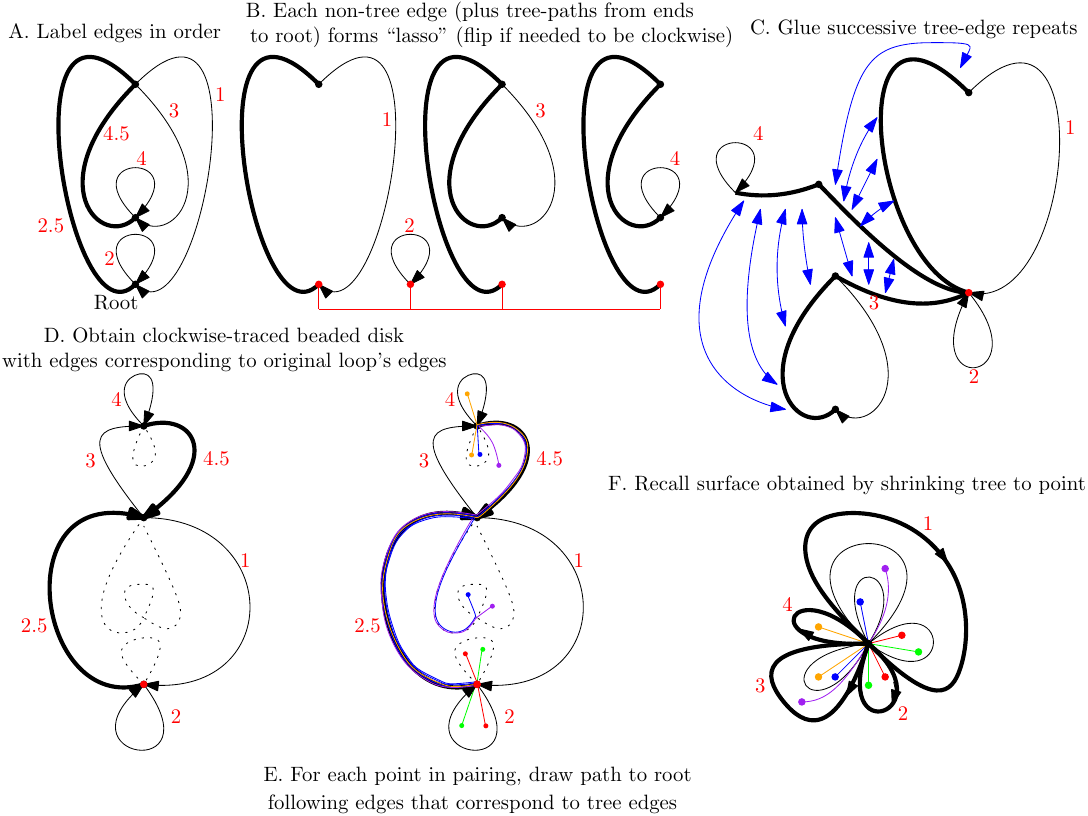}
	\caption{\textbf{Reference and spanning surfaces:} (A-D) illustrates a construction of the reference covering of the loop $\Gamma_0$. Beaded disk (D) is the same as Figure~\ref{fig-eg-surface-g0}\ (E) except the tree is \emph{not} shrunk to a point. (E) represents an example of surfaces spanning $\Gamma_0$, analogous to the previous procedure resulting in (F). The same construction with a more complicated loop is then shown in Figure~\ref{fig-petals-complicated}.
 }\label{fig-eg-petals2-g0}
\end{figure}

Up to this point, we expressed the Wilson loop expectation of a certain loop in terms of the sum of CW complexes constructed from the combinatorial structure of a lasso representation of the loop.
To establish a type of gauge-string duality story, we want to interpret these CW complexes further as topological spaces geometrically constructed from the loop, which span the loop as their boundaries. For this purpose, we define the \emph{reference covering} (Definition~\ref{defn-reference-covering}) and \emph{spanning surfaces} (Definition~\ref{defn-spanning}) of a loop. Spanning surfaces can be considered generalized ramified surfaces, which are different from the usual ramified surfaces unless the loop is special, e.g.\ in the case where the loop winds a simple loop several times, as studied in~\cite{Levy2008}. Figure~\ref{fig-eg-petals2-g0} illustrates the reference covering and an example of spanning surface for the loop $\Gamma_0$ in Figure~\ref{fig-eg-lasso-g0}. Roughly speaking, instead of shrinking the spanning tree to a point (as in Figure~\ref{fig-eg-surface-g0}), we keep the lassos with tree edges attached to obtain the reference covering. Analogous to the procedure described in Figure~\ref{fig-eg-surface-g0}, we can consider pairs of points that play the role of ``ramification points'' to construct a spanning surface. A more detailed description can be found in Section~\ref{sec-spanning}.

f\section{Proof for the Poisson sum of a single loop}\label{sec-poisson-sum}
In this section, we prove Lemma~\ref{lem-main-poisson}. 
We are given a loop $\Gamma$ in the plane with lasso representation
$\lambda_{c(1)}^{\ep(1)} \cdots \lambda_{c(M)}^{\ep(M)}$
for some collection of lassos ${(\lambda_\ell)}_{\ell \in [L]}$ and some coloring map $c:[M] \to [L]$ and $\ep:[M] \to \{-1,1\}$, as in Definition~\ref{defn-lasso-rep}.
By Proposition~\ref{prop-hol-bm}, the holonomy of each lasso $\lambda_\ell$ is a Brownian motion $U^{(\ell)} = U^{(\ell)}_t$ run for time $|\lambda_\ell|$, and the Brownian motions $U^{(1)},\ldots,U^{(L)}$ are independent. The holonomy of $\Gamma$ is
\begin{equation}\label{eqn-holonomy-general}
\left(U^{c(1)}_{\abs{\lambda_{c(1)}}}\right)^{\ep(1)} \cdots \left(U^{c(M)}_{\abs{\lambda_{c(M)}}}\right)^{\ep(M)}.
\end{equation}
The Wilson loop expectation of $\Gamma$ is the expected trace of~\eqref{eqn-holonomy-general}. Our method is to compute this expected trace by approximating the Brownian motions $U^{(i)}$ by random walks on the corresponding Lie algebra. We can expand this product of random walks as a sum of products of Gaussian matrices. We calculate the expected trace of this sum by applying Wick's formula. Finally, we interpret the result we obtain as an expectation with respect to a Poisson point process.

\subsection{Approximating Brownian motion on \texorpdfstring{$G$}{G} by a random walk}

To approximate Brownian motion on $G$ by a random walk, we first describe Brownian motion on $G$ as an It\^{o} SDE driven by a Brownian motion on the associated Lie algebra. (See, e.g.,~\cite[Chapter 2]{Liao2004} for a more detailed exposition of this formulation of Brownian motion on Lie groups.) We assume that $G$ is equipped with a bi-invariant metric that is defined in terms of an inner product $\inp{\cdot}{\cdot}_{\fg}$ on the associated Lie algebra $\fg$. We define Brownian motion on the Lie algebra $\fg$ as the process
\begin{equation} \label{eqn-bm-lie-algebra}
 W_t = \sum_{j=1}^{\dim \fg} B_t^{(j)} X_j,
\end{equation}
where $\{X_j\}_{j=1}^{\dim \fg}$ is an orthonormal basis of $\fg$ and $\{B_t^{(j)}\}_{j=1}^{\dim \fg}$ is a set of independent standard real Brownian motions. 
We can express Brownian motion on the Lie group $G$ started from $U_0 \in G$ in terms of $W_t$ by the Stratonovitch SDE
\[ dU_t = U_t \circ dW_t;\]
or, equivalently, by the 
It\^{o} SDE
\begin{equation}\label{bm-sde}
dU_t = U_t dW_t + \frac{\fc_\fg}{2} U_t dt.
\end{equation}
where the constant $\fc_\fg$ is defined by the expression
\begin{equation}\label{eq-constant}
 \fc_\fg I = \sum_{j=1}^{\dim \fg} X_j^2
\end{equation}
for $I$ the identity in $\frk g$.
We define \emph{standard} Brownian motion on $G$ as Brownian motion on $G$ started from the identity of $G$. Also, with $U_t$ defined by~\eqref{bm-sde}, we say that the Brownian motion $U_t$ is \emph{generated} by $W_t$.

For the classical Lie groups $G$ that we consider in this paper---namely, $\UN, \SON, \SUN$ and $\SphN$---we define the inner product on $\fg$ in terms of the Frobenius inner product \begin{equation}\label{eq:f-ip}
\inp{X}{Y}_{F} := \Tr(X^*Y)
\end{equation}
on $\BB{C}^{N \times N}$ as
\[
\inp{X}{Y}_{\fg} = \frac{\beta N}{2}\inp{X}{Y}_{F},
\]
where $\beta$ is 1 for $G=\SON$, 2 for $G=\UN$ or $G=\SUN$, and 4 for $G=\SphN$.
We note that $G=\ON$ has the same Lie algebra as $\SON$, but is not connected. The values of $\fc_\fg$ in these examples are computed in~\cite[Lemma 1.2]{Levy2011a} and summarized in Table~\ref{table-lie}. 

\begin{table}[htbp]
\centering
\begin{tabular}{llc}
\toprule
Lie group $G$ 
& Lie algebra $\frk g$ & $\fc_\fg$\\ 
\midrule
$\ON = \{O\in \GLNR: O^t O=I_N\}$ 
& $\oN = \{ X\in M_N(\mathbb R): X^t+X = 0 \}$ & \\
$\SON = \{O\in \ON: \det(O)=1\}$ 
& $\soN = \oN$ & $-1+\frac{1}{N}$\\
$\UN = \{U\in \GLNC: U^*U=I_N\}$ 
& $\uN = \{ X\in M_N(\mathbb C): X^*+X = 0 \}$ & $-1$ \\
$\SUN = \{U \in \UN : \det(U)=1\}$ 
& $\suN = \{ X\in \uN: \Tr(X) = 0 \}$ & $-1+\frac{1}{N^2}$\\
$\SphN = \{ S\in \mathrm{U}(N): S^t J S =J\}$ & $\sphN = \{ X\in \mathfrak{u}(N): X^t J+JX = 0 \}$ & $-1-\frac{1}{N}$\\
\bottomrule
\end{tabular}
\caption{A table of the definitions of the four classical Lie groups we consider in this paper: namely, the special orthogonal group $\SON$, the unitary group $\UN$, the special unitary group $\SUN$, and the unitary symplectic group $\SphN$ (for $N$ even). The table also describes the Lie algebras corresponding to these four Lie groups. In the table, $J$ denotes the matrix $\begin{pmatrix}
 0 & I_{N/2} \\
 -I_{N/2} & 0
\end{pmatrix}$.}\label{table-lie}
\end{table}

The SDE formulation~\eqref{bm-sde} of Brownian motion on a compact Lie group allows us to explicitly describe a random walk approximation of Brownian motion that will be integral to our proof of Lemma~\ref{lem-main-poisson}. Let $G$ be a compact Lie group. By the Peter-Weyl theorem, we may assume that $G$ is a closed subgroup of the general linear group $\GLNC$ for some $N$~\cite[Corollary IV.4.22]{Knapp2002}. We can approximate Brownian motion on $G$ by a random walk in $\BB{C}^{N \times N}$ using the \emph{Euler-Maruyama method}, which is the stochastic version of the forward Euler method in ODE theory. Specifically, for fixed $T>0$ and $J \in \BB{N}$, we define a random walk $\wt U_t = \wt U_t^J$ on the interval $[0,T]$ with step size $\Delta t:=T/J$ as follows. For $j=1,2,\dots, J$, we define $\widetilde{U}_{j\Delta t}$ recursively as
\begin{align}
\widetilde{U}_{j\Delta t} &:= \widetilde{U}_{(j-1)\Delta t} + \widetilde{U}_{(j-1)\Delta t}(W_{j\Delta t}-W_{(j-1)\Delta t}) + \frac{\fc_\fg}{2}\widetilde{U}_{(j-1)\Delta t}\Delta t\nonumber \\
&= \widetilde{U}_{(j-1)\Delta t}\left((1+\frac{\fc_\fg}{2}\Delta t)I_N+(W_{j\Delta t}-W_{(j-1)\Delta t})\right),\label{eq:walk-recursion}
\end{align}
where $W_t$ is 
as defined in~\eqref{eqn-bm-lie-algebra}. We then define $\widetilde{U}_t$ for other values of $t\in [0,T]$ by linear interpolation. The random walk $\widetilde{U}_t$ is measurable with respect to $W_t$, but is not contained in $G$ for finite $J$. The following result, whose proof is outlined in~\cite[Proposition 7]{Said2012}, asserts that this random walk converges to Brownian motion on $G$ locally uniformly in probability as $J \to \infty$ in the Frobenius norm (the norm defined by the inner product~\eqref{eq:f-ip} above).

\begin{prop}[Random-walk convergence in probability]\label{prop-rw-convergence}
Let $U_t$ be the standard Brownian motion on a connected
compact Lie group $G$. Then its random-walk approximation $\widetilde{U}_t$ defined by~\eqref{eq:walk-recursion} converges to $U_t$ locally uniformly in probability as $J\to\infty$, that is for any $T>0$ and fixed $\ep>0$ we have
\begin{equation}
 \adjustlimits\lim_{J\to\infty}\sup_{0\le t\le T}\Pr*{\nrm{\widetilde{U}_t-U_t}_F>\ep} = 0.
\end{equation}
\end{prop}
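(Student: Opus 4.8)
The plan is to view \eqref{eq:walk-recursion} as the Euler--Maruyama discretization of the It\^o SDE \eqref{bm-sde}, regarded as an SDE in the finite-dimensional vector space $\BB C^{N\times N}$ (in which $G$ is embedded via Peter--Weyl), and to run the standard strong-convergence argument for that scheme. The essential structural observation is that, with $W_t = \sum_j B_t^{(j)} X_j$ the Lie-algebra Brownian motion, both the drift $U \mapsto \tfrac{\fc_\fg}{2}U$ and the diffusion $U\mapsto (UX_1,\dots,UX_{\dim\fg})$ appearing in \eqref{bm-sde} are \emph{linear} in $U$; hence they are globally Lipschitz and of linear growth on all of $\BB C^{N\times N}$, and one is in the textbook setting where Euler--Maruyama converges strongly of order $1/2$ --- no localization or truncation is needed. (Connectedness of $G$ plays no role here, since Brownian motion started at $I$ automatically stays in the identity component; it is assumed only for consistency with the rest of the paper.)

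The first ingredient is an a priori moment bound on the scheme, uniform in $J$. Conditioning on $\mcl F_{(j-1)\Delta t}$ in \eqref{eq:walk-recursion} and using that the conditional second moment of $\widetilde U_{(j-1)\Delta t}(W_{j\Delta t}-W_{(j-1)\Delta t})$ given $\mcl F_{(j-1)\Delta t}$ is at most $C\,\Delta t\,\nrm{\widetilde U_{(j-1)\Delta t}}_F^2$, together with $\nrm{(1+\tfrac{\fc_\fg}{2}\Delta t)I_N}\le 1+C\,\Delta t$, one gets $\Ex{\nrm{\widetilde U_{j\Delta t}}_F^2}\le (1+C\,\Delta t)\,\Ex{\nrm{\widetilde U_{(j-1)\Delta t}}_F^2}$, so discrete Gronwall gives $\sup_{J}\Ex{\max_{0\le j\le J}\nrm{\widetilde U_{j\Delta t}}_F^2}\le e^{CT}<\infty$; the companion bound $\Ex{\sup_{t\le T}\nrm{U_t}_F^2}<\infty$ for the SDE is classical. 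Next, introduce the natural (left-endpoint) continuous-time interpolation $\wh U_t$ of the scheme, so that $\wh U_{j\Delta t}=\widetilde U_{j\Delta t}$ and, with $s_-$ denoting the largest grid point $\le s$, $\wh U_t = U_0 + \int_0^t \wh U_{s_-}\,dW_s + \tfrac{\fc_\fg}{2}\int_0^t \wh U_{s_-}\,ds$. Setting $e_t := \wh U_t - U_t$ and subtracting the integral equation for $U_t$, the Burkholder--Davis--Gundy inequality and Cauchy--Schwarz give
\[
\Ex{\sup_{u\le t}\nrm{e_u}_F^2}\;\le\;C\int_0^t\Ex{\nrm{\wh U_{s_-}-U_s}_F^2}\,ds\;\le\;C\int_0^t\Ex{\sup_{u\le s}\nrm{e_u}_F^2}\,ds\;+\;C\int_0^t\Ex{\nrm{U_{s_-}-U_s}_F^2}\,ds,
\]
and the last integral is $O(\Delta t)$ by the standard SDE increment bound $\Ex{\nrm{U_s-U_{s'}}_F^2}\le C\abs{s-s'}$. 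Gronwall's lemma then yields $\Ex{\sup_{t\le T}\nrm{e_t}_F^2}\le C_T\,\Delta t = C_T\,T/J \to 0$, hence $\sup_{t\le T}\nrm{\wh U_t-U_t}_F\to 0$ in probability.

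Finally, one passes from $\wh U_t$ to the piecewise-linearly interpolated process $\widetilde U_t$ of the statement. On each subinterval the two processes agree at the endpoints, and within the subinterval $\wh U$ moves by at most $\nrm{\widetilde U_{t_-}}_F$ times the oscillation of $W$ over that subinterval plus an $O(\Delta t)$ drift term, so $\sup_{t\le T}\nrm{\widetilde U_t-\wh U_t}_F\le\bigl(\max_{0\le j\le J}\nrm{\widetilde U_{j\Delta t}}_F\bigr)\cdot\bigl(\omega_W(T/J)+C\,T/J\bigr)$, where $\omega_W$ is the modulus of continuity of $W$ on $[0,T]$. The first factor is bounded in probability by the moment bound above and $\omega_W(T/J)\to 0$ almost surely, so this difference tends to $0$ in probability; combined with the previous paragraph this gives $\sup_{0\le t\le T}\nrm{\widetilde U_t-U_t}_F\to0$ in probability, which is stronger than the asserted $\adjustlimits\lim_{J\to\infty}\sup_{0\le t\le T}\Pr*{\nrm{\widetilde U_t-U_t}_F>\ep}=0$. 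The argument has no deep obstacle --- linearity of the coefficients is exactly what removes the usual difficulty --- so the only points requiring care are the uniform-in-$J$ a priori moment estimate and the comparison between the linear interpolation actually used in \eqref{eq:walk-recursion} and the stochastic left-endpoint interpolation for which the Gronwall estimate is cleanest; both are routine but must be carried out to obtain \emph{uniform} convergence on $[0,T]$.
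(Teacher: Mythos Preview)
The paper does not actually prove this proposition; it only states that the proof ``is outlined in [Said2012, Proposition~7]'' and moves on. Your proposal therefore supplies strictly more than the paper does, and the approach you take---recognizing that \eqref{bm-sde} is a \emph{linear} SDE in the ambient vector space $\BB C^{N\times N}$, so that the drift and diffusion coefficients are globally Lipschitz and the textbook Euler--Maruyama strong convergence of order $1/2$ applies without localization---is exactly the right one. The structure (a priori moment bound, BDG + Gronwall on the left-endpoint interpolation $\wh U$, then passage to the linear interpolant $\widetilde U$) is standard and sound, and your final conclusion is indeed stronger than what is stated.

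One small point of sloppiness: from the one-step recursion $\Ex{\nrm{\widetilde U_{j\Delta t}}_F^2}\le (1+C\Delta t)\,\Ex{\nrm{\widetilde U_{(j-1)\Delta t}}_F^2}$, discrete Gronwall gives only $\sup_{j}\Ex{\nrm{\widetilde U_{j\Delta t}}_F^2}\le e^{CT}$, not the bound on $\Ex{\max_{j}\nrm{\widetilde U_{j\Delta t}}_F^2}$ you assert. You actually use the maximal version in the last paragraph to control the factor $\max_j\nrm{\widetilde U_{j\Delta t}}_F$. This gap is routine to close: either observe that $(c^2+C_0\Delta t)^{-j}\nrm{\widetilde U_{j\Delta t}}_F^2$ is a nonnegative martingale (the conditional second moment computation gives an exact multiplicative factor, not just an inequality) and apply Doob, or---cleaner---apply BDG directly to the stochastic-integral representation of $\wh U_t$ to get $\Ex{\sup_{t\le T}\nrm{\wh U_t}_F^2}<\infty$, which is all you need. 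Either fix is a couple of lines.
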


Proposition~\ref{prop-rw-convergence} implies that we can approximate the holonomy 	of $\Gamma$ in terms of Brownian motions on the Lie algebra.

\begin{defn}\label{defn-hol-approx}
Given a loop in the plane, we define its \emph{holonomy approximation} as the expression we obtain by replacing each $U^{(i)}_{\abs{\lambda_i}}$ and $\left({U^{(i)}_{\abs{\lambda_i}}}\right)^{-1}$ in~\eqref{eqn-holonomy-general} by
\[\wt{U}^{(i)}_{+}=\prod_{j=1}^J \left(\left(1+ \frac{\frk c_{\frk g} \abs{\lambda_i}}{2}\right)I_N + \frac{W_{\abs{\lambda_i}}^{(i, j)}}{\sqrt{J}}\right) \quad\text{and}\quad \wt{U}^{(i)}_{-}=\prod_{j=J}^1 \left(\left(1+\frac{\frk c_{\frk g} \abs{\lambda_i}}{2}\right)I_N - \frac{W_{\abs{\lambda_i}}^{(i, j)}}{\sqrt{J}}\right),\]
respectively.
\end{defn}

\begin{prop}\label{prop-conv-prob-approx}
Given a collection of loops in the plane, their holonomy approximations converge jointly in probability as $J \to \infty$ to their holonomies.
\end{prop}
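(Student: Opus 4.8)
The plan is to reduce the statement to the single--Brownian--motion convergence of Proposition~\ref{prop-rw-convergence} and then to combine the finitely many matrix factors by elementary continuity arguments; essentially all of the real content lies in the cited Proposition~\ref{prop-rw-convergence}.

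First I would fix a coupling. Write each loop of the given collection through its lasso representation in a common lasso basis $(\lambda_\ell)_{\ell\in[L]}$, so that the holonomy of each loop is a fixed finite word in the matrices $U^{(\ell)}_{\abs{\lambda_\ell}}$ and their inverses. By Proposition~\ref{prop-hol-bm} the processes $U^{(\ell)}$ are independent standard Brownian motions on $G$ with time parameters $\abs{\lambda_\ell}$; realize them simultaneously by taking independent Brownian motions $W^{(1)},\dots,W^{(L)}$ on $\fg$ and letting $U^{(\ell)}$ solve~\eqref{bm-sde} driven by $W^{(\ell)}$. For each $J$ put $\Delta_\ell t:=\abs{\lambda_\ell}/J$ and set $W^{(\ell,j)}_{\abs{\lambda_\ell}}:=\sqrt J\,\bigl(W^{(\ell)}_{j\Delta_\ell t}-W^{(\ell)}_{(j-1)\Delta_\ell t}\bigr)$; these are i.i.d.\ over $j$ with the law of $W^{(\ell)}_{\abs{\lambda_\ell}}$, as in Definition~\ref{defn-hol-approx}, and with this choice $\wt U^{(\ell)}_+$ coincides with the Euler--Maruyama approximation~\eqref{eq:walk-recursion} of $U^{(\ell)}$ evaluated at the terminal time $\abs{\lambda_\ell}$ (up to aligning the drift normalization). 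Proposition~\ref{prop-rw-convergence} then gives $\wt U^{(\ell)}_+\to U^{(\ell)}_{\abs{\lambda_\ell}}$ in probability as $J\to\infty$, and jointly over $\ell$ since the $\wt U^{(\ell)}_+$ are functions of the independent $W^{(\ell)}$.

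Second I would dispose of the inverted lassos, which is the one step requiring a little care. The observation is that $\wt U^{(\ell)}_-$ is just the conjugate transpose of $\wt U^{(\ell)}_+$. Since $G$ is compact we may take the embedding $G\hookrightarrow\GLNC$ to be unitary (conjugating the Peter--Weyl embedding into $\UN$ leaves the relation $\sum_j X_j^2=\fc_\fg I$ defining $\fc_\fg$ unchanged), so $\fg\subset\uN$, each $W^{(\ell,j)}_{\abs{\lambda_\ell}}$ is skew-Hermitian, and $\fc_\fg$ is real; taking the conjugate transpose of the product $\prod_{j=1}^J\bigl((1+\tfrac{\fc_\fg\abs{\lambda_\ell}}{2J})I_N+\tfrac{W^{(\ell,j)}_{\abs{\lambda_\ell}}}{\sqrt J}\bigr)$ reverses the order of the factors and flips the sign of each noise term, which is exactly the product defining $\wt U^{(\ell)}_-$. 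Since $A\mapsto A^*$ is a Frobenius isometry and $g^*=g^{-1}$ for $g\in G\subset\UN$, we get $\wt U^{(\ell)}_-=(\wt U^{(\ell)}_+)^*\to (U^{(\ell)}_{\abs{\lambda_\ell}})^*=(U^{(\ell)}_{\abs{\lambda_\ell}})^{-1}$ in probability. (One can avoid invoking a unitary embedding by instead showing $\wt U^{(\ell)}_+\wt U^{(\ell)}_-\to I$ in probability: the inner pair telescopes into factors $(\alpha I+W)(\alpha I-W)=\alpha^2 I-W^2=I+O(1/J)$ whose error has mean $O(1/J^2)$, and all partial products stay bounded in probability by Proposition~\ref{prop-rw-convergence}, so a second-moment bound closes it; but the conjugate-transpose identity makes this superfluous.)

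Finally I would assemble the pieces. Each loop's holonomy is a fixed word in the $U^{(\ell)}_{\abs{\lambda_\ell}}$ and $(U^{(\ell)}_{\abs{\lambda_\ell}})^{-1}$, and its holonomy approximation is the same word with each letter replaced by $\wt U^{(\ell)}_+$ or $\wt U^{(\ell)}_-$, using the \emph{same} increments $W^{(\ell,j)}$ for every occurrence of $\lambda_\ell^{\pm 1}$ --- which is precisely what makes $\wt U^{(\ell)}_+$ and $\wt U^{(\ell)}_-$ genuine approximate inverses. Since each of the finitely many letters converges in probability to its limit and matrix multiplication is continuous, each holonomy approximation converges in probability to the corresponding holonomy; and since all of these random variables live on the one coupled probability space, the convergence is automatically joint over the whole collection. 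The main obstacle, such as it is, is the inverse step: making the argument manifestly valid for the general compact connected group $G$ of Lemma~\ref{lem-main-poisson}, not merely for the classical groups, together with the routine bookkeeping that matches Definition~\ref{defn-hol-approx} to the Euler scheme~\eqref{eq:walk-recursion}.
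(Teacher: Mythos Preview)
Your proof is correct and follows the same overall structure as the paper: reduce to Proposition~\ref{prop-rw-convergence} for each lasso factor, handle the inverse factors separately, and then invoke continuity of matrix multiplication to assemble the word. The paper's proof is essentially a one-liner to this effect.

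The only point of divergence is the inverse step. The paper suggests deriving the SDE satisfied by $U_t^{-1}$ and recognizing $\wt U^{(\ell)}_-$ as its Euler--Maruyama scheme, so that Proposition~\ref{prop-rw-convergence} applies again verbatim. You instead observe that, once one takes the Peter--Weyl embedding to be unitary (so $\fg\subset\uN$ and $\fc_\fg\in\BB R$), one has the exact identity $\wt U^{(\ell)}_-=(\wt U^{(\ell)}_+)^*$, whence convergence of $\wt U^{(\ell)}_-$ follows immediately from that of $\wt U^{(\ell)}_+$ and the fact that $g^*=g^{-1}$ on $G$. Your route is arguably cleaner --- it avoids writing down a second SDE and a second invocation of the Euler--Maruyama convergence --- at the modest cost of fixing the embedding to be unitary, which is harmless here. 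Either way the argument is short and the content is the same.
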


\begin{proof}
This is stated in Proposition~\ref{prop-rw-convergence} for the case with a single lasso, and can be similarly derived for its inverse by applying the SDE for the inverse of Brownian motion on $G$. The proposition follows since convergence in probability respects
multiplications.
\end{proof}

Therefore, to compute the expected trace of the holonomy directly, we fix a value of $J$, and we consider the expected trace of the holonomy approximation. Later, we will combine Proposition~\ref{prop-conv-prob-approx} with additional estimate to show that the expected trace of the holonomy approximation converges as $J \to \infty$ to the expected trace of the holonomy.

The holonomy approximation of a loop is a large product $\prod_{m=1}^M \wt{U}^{(c(m))}_{\ep(m)}$ of $MJ$ terms
where $\wt{U}^{(i)}_{\pm}$ is a product of $J$ terms as defined in Definition~\ref{defn-hol-approx},
with each term of the form
\begin{equation}\label{eqn-term-form}
\underbrace{\left(1+\frac{\frk c_{\frk g} \abs{\lambda_i}}{2J}\right)I_N}_{\mytag{A}{termA}} \pm
\underbrace{\frac{W_{\abs{\lambda_i}}^{(i, j)}}{\sqrt{J}}}_{\mytag{B}{termB}}.
\end{equation}
We expand this large product of $MJ$ terms to obtain a sum of $2^{MJ}$ terms. 
When expanding the product, each term of the expansion corresponds to choosing either the matrix~\eqref{termA} or the matrix~\eqref{termB} for each term~\eqref{eqn-term-form} in the product, and then multiplying the chosen matrices together.

For example, if we choose the matrix~\eqref{termA} for every term~\eqref{eqn-term-form} in the holonomy approximation, and we multiply these matrices together, we get the identity matrix $I_N$ times the constant
\begin{equation}
 \label{eqn-zj}
Z_J := \prod_{m=1}^M \left(1+\frac{\frk c_{\frk g} \abs{\lambda_{c(m)}}}{2J}\right)^J,
\end{equation}
which converges as $J \to \infty$ to
\begin{equation}
 \label{eqn-z}
Z = \exp\left(\frac{\frk c_{\frk g}}{2} \sum_{m=1}^M \abs{\lambda_{c(m)}} \right).
\end{equation}
In general, each term of the sum is a scalar multiplied by a product of matrices of the form
\begin{equation}
 \label{eqn-product-W}
\prod_{k=1}^K \ep(m_k) \frac{W_{\abs{\lambda_{c(m_k)}}}^{(c(m_k), j_k)}}{\sqrt{J}}
\end{equation}
for some non-negative integer $K$ between $0$ and $MJ$ and some collection of indices $\{(m_k,j_k)\}_{k=1}^K$ from $[M] \times [J]$. The scalar is the product of all but $K$ of the terms in~\eqref{eqn-zj}. This means that the scalar is bounded from above by $Z_J$; moreover, if we fix $K$ and take the $J \to \infty$ limit, the scalar converges to $Z$.
In summary, we have the following.\begin{prop}\label{prop-hol-approx}
The holonomy approximation of $\Gamma$ is equal to the sum, over all non-negative integers $0 \leq K \leq MJ$ and all lexicographically ordered tuples $((m_k,j_k))_{k=1}^K$ of distinct elements of $[M] \times [J]$, of the product of matrices~\eqref{eqn-product-W}
times a scalar in $(0,Z_J]$ that converges to $Z$ as $J \to \infty$. (See~\eqref{eqn-zj} and~\eqref{eqn-z} for the definitions of $Z_J$ and $Z$.)
\end{prop}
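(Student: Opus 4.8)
The statement is a purely algebraic identity about the holonomy approximation at a fixed level $J$, supplemented by one elementary limit, so the plan is to expand a product of $MJ$ binomials and organize the resulting monomials according to which summand is chosen in each factor.

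First I would unfold the definitions. Substituting Definition~\ref{defn-hol-approx} into~\eqref{eqn-holonomy-general}, the holonomy approximation of $\Gamma$ is the ordered product, over $m=1,\dots,M$ in the cyclic order of the lasso representation, of the blocks $\wt{U}^{(c(m))}_{\ep(m)}$, and each block is itself the product of $J$ factors of the binomial form~\eqref{eqn-term-form}: the scalar matrix~\eqref{termA} plus $\ep(m)$ times the Gaussian matrix~\eqref{termB}, with the internal index $j$ running $1,\dots,J$ when $\ep(m)=+1$ and $J,\dots,1$ when $\ep(m)=-1$. Expanding this product of $MJ$ binomials by the multilinearity of matrix multiplication produces a sum of $2^{MJ}$ monomials, one for each way of selecting, at every one of the $MJ$ positions, either the~\eqref{termA} summand or the~\eqref{termB} summand. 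Such a selection is recorded by the subset $S\subseteq[M]\times[J]$ of positions at which~\eqref{termB} is chosen; listing $S=\{(m_k,j_k)\}_{k=1}^{K}$ in the order in which those positions occur in the product --- this is the lexicographic order named in the statement, each $j$-block being read in the direction prescribed by $\ep(m)$ --- one has $0\le K=\abs{S}\le MJ$.

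Next I would split each monomial into its scalar and matrix parts. The $MJ-K$ selected~\eqref{termA}-factors are scalar multiples of $I_N$; being scalars, they commute with the other factors and pull out as the single coefficient $\prod_{(m,j)\in([M]\times[J])\setminus S}\bigl(1+\tfrac{\fc_\fg\abs{\lambda_{c(m)}}}{2J}\bigr)$, while the selected~\eqref{termB}-factors, retained in their order of appearance, constitute precisely the product of matrices~\eqref{eqn-product-W} with the data $K$, $((m_k,j_k))_{k=1}^{K}$ and the signs $\ep(m_k)$ identified above (each sign $\ep(m_k)$ coming from the $\pm$ subscript in $\wt{U}^{(i)}_{\pm}$). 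Summing over $S$ --- equivalently over all $K$ and all lexicographically ordered $K$-tuples of distinct elements of $[M]\times[J]$ --- is exactly the asserted decomposition of the holonomy approximation.

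It remains to check the two properties of the scalar coefficient. By Table~\ref{table-lie} each $\fc_\fg$ is negative, so for $J>\tfrac12\max_m\abs{\fc_\fg}\abs{\lambda_{c(m)}}$ every factor $1+\tfrac{\fc_\fg\abs{\lambda_{c(m)}}}{2J}$ lies in $(0,1)$; the coefficient, a product of $MJ-K$ of these, is then a positive real number, bounded uniformly over $S$ and over all such $J$, that differs from the full product $Z_J$ of~\eqref{eqn-zj} only by the $K$ omitted factors. For the convergence I would hold $K$ fixed: each of the $K$ omitted factors tends to $1$ as $J\to\infty$, and $Z_J\to Z$ by~\eqref{eqn-z}, so the coefficient tends to $Z$. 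I do not expect any real obstacle in this proposition --- it is the routine combinatorial first step --- and the difficulty instead lies in the later steps, where Wick's formula (Lemma~\ref{lem-wicks}) is applied to the expected trace of~\eqref{eqn-product-W}, the $J\to\infty$ limit is interchanged with the (now infinite) sum over $S$, and the limiting sum is recognized as an expectation over the Poisson point process $\Sigma$.
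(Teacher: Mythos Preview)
Your proposal is correct and follows the same route as the paper: the paper does not give a separate proof of this proposition but simply presents the binomial expansion of the $MJ$-fold product, the extraction of the commuting scalar~\eqref{termA}-factors, and the observation that the remaining scalar is at most $Z_J$ and converges to $Z$ with $K$ fixed --- exactly the argument you wrote out (with a bit more care about the sign of $\fc_\fg$ and the reading direction of the $j$-blocks).
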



\subsection{Simplifying by Wick's formula}\label{sec-step-3}

We now describe how we compute the expectation of the trace of~\eqref{eqn-product-W} for a \emph{fixed} choice of $K$ and lexicographically ordered tuple of indices $((m_k,j_k))_{k=1}^K$.
First, we derive an expression for the law of trace of~\eqref{eqn-product-W} in terms of the entries of the matrices $W_1^{(c(m_k), j_k)}$.

\begin{prop}\label{prop-trace-hol-approx}
The trace of~\eqref{eqn-product-W} is equal in law to the sum of
\begin{equation}\label{eqn-single-term}
\ep(m_1) \cdots \ep(m_K) \cdot \sqrt{\frac{\abs{\lambda_{c(m_1)}}}{J}} \cdots \sqrt{\frac{\abs{\lambda_{c(m_K)}}}{J}} \cdot
(W_{1}^{(c(m_1), j_1)})_{a_1a_2} \cdots (W_{1}^{(c(m_K), j_K)})_{a_K a_{K+1}}
\end{equation}
over all $a_1,\ldots,a_{K+1} \in [N]$ with $a_1 = a_{K+1}$.
\end{prop}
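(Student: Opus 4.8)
The plan is to unwind the definitions of the Brownian motion $W_t$ on the Lie algebra $\frk g$ and of matrix multiplication, and then apply Brownian scaling. First I would recall from~\eqref{eqn-bm-lie-algebra} that each $W^{(c(m_k),j_k)}_{\abs{\lambda_{c(m_k)}}}$ is, by definition, the value at time $\abs{\lambda_{c(m_k)}}$ of a standard Brownian motion on $\frk g$; by Brownian scaling, $W^{(i,j)}_{t} \stackrel{\mcl L}{=} \sqrt{t}\, W^{(i,j)}_1$, and since the processes indexed by distinct $(i,j)$ are independent, this scaling can be applied jointly across all $K$ factors. Therefore, in law, we may replace each factor $\frac{1}{\sqrt J} W^{(c(m_k),j_k)}_{\abs{\lambda_{c(m_k)}}}$ by $\sqrt{\frac{\abs{\lambda_{c(m_k)}}}{J}}\, W^{(c(m_k),j_k)}_1$, pulling the scalar $\ep(m_k)\sqrt{\abs{\lambda_{c(m_k)}}/J}$ out in front.

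Next I would expand the trace of the matrix product. Writing $M_k := \ep(m_k) \sqrt{\abs{\lambda_{c(m_k)}}/J}\, W^{(c(m_k),j_k)}_1$ for the $k$-th factor (an $N\times N$ complex matrix), the standard formula for the trace of a product gives
\[
\Tr\left(\prod_{k=1}^K M_k\right) = \sum_{a_1,\ldots,a_K \in [N]} (M_1)_{a_1 a_2} (M_2)_{a_2 a_3} \cdots (M_K)_{a_K a_1},
\]
which, after relabeling with the convention $a_{K+1} = a_1$ and distributing the scalars $\ep(m_k)\sqrt{\abs{\lambda_{c(m_k)}}/J}$ out of each matrix entry, is exactly the sum of the expressions~\eqref{eqn-single-term} over all $a_1,\ldots,a_{K+1}\in[N]$ with $a_1 = a_{K+1}$. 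Combining this with the equality in law from the previous paragraph yields the claim.

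Since both manipulations — Brownian scaling and the trace-of-a-product expansion — are routine, there is no real obstacle here; the only point requiring a moment's care is that the scaling must be applied \emph{jointly} and that the identity holds in the sense of equality in law (not almost surely), because we are re-sampling each $W^{(i,j)}_{\abs{\lambda_i}}$ as $\sqrt{\abs{\lambda_i}}\,W^{(i,j)}_1$ simultaneously; the joint independence of the $W^{(i,j)}$ over distinct pairs $(i,j)$ is what makes this legitimate. I would note explicitly that, when two of the indices $(m_k,j_k)$ coincide — which cannot happen here since the tuple $((m_k,j_k))_{k=1}^K$ is assumed to consist of distinct elements of $[M]\times[J]$ — one would need to track repeated factors, but distinctness of the tuple means each $W^{(i,j)}_1$ appears at most once as a matrix factor (though the same $W^{(i,j)}_1$ may of course contribute multiple matrix \emph{entries} through different index choices $a_k$). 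This observation is what we will use in the subsequent application of Wick's formula.
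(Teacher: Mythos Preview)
Your proof is correct and follows essentially the same route as the paper: Brownian scaling to replace $W^{(i,j)}_{|\lambda_i|}/\sqrt{J}$ by $\sqrt{|\lambda_i|/J}\,W^{(i,j)}_1$ jointly, followed by the standard index expansion of the trace of a matrix product.

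One small correction to your closing remark (which does not affect the proof of this proposition but matters for what comes next): distinctness of the pairs $(m_k,j_k)$ does \emph{not} imply that each matrix $W^{(i,j)}_1$ appears at most once as a factor. The matrix in the $k$-th slot is indexed by $(c(m_k),j_k)$, and it can certainly happen that $c(m_k)=c(m_{k'})$ and $j_k=j_{k'}$ with $m_k\neq m_{k'}$---indeed, this is precisely the situation captured by the ``compatible'' pairings later in the argument. The Brownian scaling step is unaffected (you are rescaling one random matrix by one deterministic constant, regardless of how many times it appears in the product), but the subsequent Wick computation relies on exactly these repetitions to produce nonzero covariances.
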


\begin{proof}
By
Brownian scaling, the joint law of the matrices $W_{\abs{\lambda_i}}^{(i, j)}/\sqrt{J}$ for any collection of values of $i$ and $j$, is equal to the joint law of the matrices $\sqrt{\abs{\lambda_i}/J}\, W_{1}^{(i, j)}$. Therefore, after taking the product of matrices~\eqref{eqn-product-W}, the $\ell$-th diagonal element of the resulting matrix is the sum of the terms~\eqref{eqn-single-term}
over all $a_1,\ldots,a_{k+1}$ between $1$ and $N$ with $a_1=a_{k+1} = \ell$. We obtain the trace of~\eqref{eqn-product-W} by summing over all $\ell$. 
\end{proof}

To compute the expectation of this trace, we apply \emph{Wick's formula} (Lemma~\ref{lem-wicks}).
The term~\eqref{eqn-single-term} contains a product of entries of the matrices $W_{1}^{(i, j)}$---these entries are jointly centered (complex) Gaussian random variables. Therefore, by Wick's formula, we can express the expectation of any product of Gaussian random variables in terms of expected products of \emph{pairs} of the variables.

\begin{prop}\label{prop-latest}
The expected trace of~\eqref{eqn-product-W} is given by the sum of the terms
\begin{equation}\label{eqn-omega-a}
\ep(m_1) \cdots \ep(m_K) \cdot
\sqrt{\frac{\abs{\lambda_{c(m_1)}}}{J}} \cdots\sqrt{\frac{\abs{\lambda_{c(m_K)}}}{J}} \cdot 
\avg{(W_{1}^{(c(m_1), j_1)})_{a_1a_2} \cdots (W_{1}^{(c(m_K), j_K)})_{a_K a_{K+1}}}_\pi
\end{equation}
over all pairings $\pi$ of the lexicographically ordered tuple $((m_k, j_k))_{k=1}^K$, and all $a_1,\ldots,a_{K+1} \in [N]$ with $a_1 = a_{K+1}$.
\end{prop}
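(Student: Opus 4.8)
The plan is to combine the previous two propositions with Wick's formula directly. By Proposition~\ref{prop-trace-hol-approx}, the trace of~\eqref{eqn-product-W} is equal in law to the sum over all index tuples $(a_1,\ldots,a_{K+1})$ with $a_1 = a_{K+1}$ of the products~\eqref{eqn-single-term}. Since expectation is linear, the expected trace equals the sum over the same index tuples of the expectations of the terms~\eqref{eqn-single-term}. Fixing such an index tuple, the scalar prefactor $\ep(m_1)\cdots\ep(m_K) \cdot \sqrt{\abs{\lambda_{c(m_1)}}/J}\cdots\sqrt{\abs{\lambda_{c(m_K)}}/J}$ is deterministic and pulls out of the expectation, so what remains is to compute the expectation of the product of Gaussian entries
\[
\avg{(W_{1}^{(c(m_1), j_1)})_{a_1a_2} \cdots (W_{1}^{(c(m_K), j_K)})_{a_K a_{K+1}}}.
\]

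The key step is to verify that these entries form a centered Gaussian vector so that Wick's formula (Lemma~\ref{lem-wicks}) applies. Each $W_1^{(i,j)}$ is a standard Brownian motion on $\fg$ at time $1$, hence by~\eqref{eqn-bm-lie-algebra} a (real) linear combination of the orthonormal basis elements $X_\ell$ with i.i.d.\ standard Gaussian coefficients; its matrix entries are therefore fixed complex-linear combinations of jointly centered Gaussians, so the whole collection $(W_1^{(i,j)})_{ab}$ over all relevant $i,j,a,b$ is a centered (complex) Gaussian family. Applying Lemma~\ref{lem-wicks} to this family expresses the expectation of the product of the $K$ entries as a sum over pairings $\pi$ of the ordered index set $\{1,\ldots,K\}$ — equivalently, pairings of the lexicographically ordered tuple $((m_k,j_k))_{k=1}^K$ — of the paired-expectation $\avg{(W_{1}^{(c(m_1), j_1)})_{a_1a_2} \cdots (W_{1}^{(c(m_K), j_K)})_{a_K a_{K+1}}}_\pi$ as defined in~\eqref{eqn-defn-paired-exp}.

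Substituting this Wick expansion back into the summand and interchanging the (finite) sums over index tuples and over pairings yields exactly the claimed double sum of the terms~\eqref{eqn-omega-a} over all pairings $\pi$ of $((m_k,j_k))_{k=1}^K$ and all $a_1,\ldots,a_{K+1}\in[N]$ with $a_1 = a_{K+1}$. The only mild subtlety — and the one point worth stating carefully rather than the main obstacle — is the bookkeeping convention that, per the definition preceding Lemma~\ref{lem-wicks}, the pairing $\pi$ is viewed as an element of $\frk S_K$ via the order-preserving identification of the ordered tuple $((m_k,j_k))_{k=1}^K$ with $\{1,\ldots,K\}$, so that "pairings of the tuple" and "pairings in $\frk S_K$" refer to the same objects. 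There is no analytic difficulty here: everything is a finite sum and the result is an immediate consequence of linearity of expectation plus Wick's formula, so the proof is essentially two lines once the Gaussianity of the entry vector is noted.
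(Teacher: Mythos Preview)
Your proof is correct and follows exactly the same approach as the paper: apply Wick's formula (Lemma~\ref{lem-wicks}) to the trace expression from Proposition~\ref{prop-trace-hol-approx}. The paper's proof is a single sentence to this effect; you have simply spelled out the linearity step and the verification that the matrix entries form a centered Gaussian family, which the paper leaves implicit.
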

\begin{proof}
The proposition follows from applying Wick's formula (Lemma~\ref{lem-wicks}) to the expression for the trace of~\eqref{eqn-product-W} in Proposition~\ref{prop-trace-hol-approx}.
\end{proof}

Combining Propositions~\ref{prop-hol-approx} and~\ref{prop-latest}, we have the following expression for the expected trace of the holonomy approximation.

\begin{prop}\label{prop-hol-approx-update}
The expected trace of the holonomy approximation of $\Gamma$ is equal to the sum, over all
\begin{itemize}
 \item non-negative integers $K$ between $0$ and $MJ$,
 \item lexicographically ordered tuples $((m_k,j_k))_{k=1}^K$ of distinct elements of $[M] \times [J]$,
 \item pairings $\pi \in \frk S_K$, and
 \item indices $a_1,\ldots,a_{K+1} \in [N]$ with $a_1 = a_{K+1}$;
\end{itemize}
of~\eqref{eqn-omega-a}
times a scalar in $(0,Z_J]$ that converges to $Z$ as $J \to \infty$. (We defined $Z_J$ and $Z$ in~\eqref{eqn-zj} and~\eqref{eqn-z}, and $I_N$ is the $N \times N$ identity matrix.)
\end{prop}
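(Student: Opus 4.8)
The plan is to obtain this statement as an immediate consequence of Propositions~\ref{prop-hol-approx} and~\ref{prop-latest}, using only the linearity of the trace and of the expectation together with the finiteness of all the sums involved.

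First I would recall the content of Proposition~\ref{prop-hol-approx}: the holonomy approximation of $\Gamma$ is a finite sum, indexed by the non-negative integers $0 \le K \le MJ$ and the lexicographically ordered tuples $((m_k,j_k))_{k=1}^K$ of distinct elements of $[M]\times[J]$, whose summand is the matrix product~\eqref{eqn-product-W} multiplied by a scalar lying in $(0,Z_J]$ that converges to $Z$ as $J\to\infty$. The one point worth isolating is that this scalar is \emph{deterministic}---it is a product of coefficients of the matrices~\eqref{termA}, each of which is a fixed scalar multiple of $I_N$ and carries no randomness---and that it depends only on $K$ and on the tuple $((m_k,j_k))$.

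Next I would take the trace of both sides of this identity and then the expectation. Since the trace is linear and the sum is finite, the expected trace of the holonomy approximation equals the finite sum over the same index set of the (deterministic) scalar times the expected trace of the corresponding product~\eqref{eqn-product-W}: the scalars may be pulled out of the expectation precisely because they are deterministic, and interchanging a finite sum with the expectation needs no justification. I would then apply Proposition~\ref{prop-latest} to each of these expected traces; for fixed $K$ and fixed tuple $((m_k,j_k))_{k=1}^K$ it expresses the expected trace of~\eqref{eqn-product-W} as the sum of the terms~\eqref{eqn-omega-a} over all pairings $\pi\in\frk S_K$ of $((m_k,j_k))_{k=1}^K$ and all index tuples $a_1,\ldots,a_{K+1}\in[N]$ with $a_1=a_{K+1}$.

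Finally I would substitute this into the previous display and collapse the nested finite sums into a single sum ranging over the four families of indices named in the statement---$K$, the tuple $((m_k,j_k))$, the pairing $\pi$, and the index tuple $a$---with each summand~\eqref{eqn-omega-a} carrying the same scalar in $(0,Z_J]$ converging to $Z$ that Proposition~\ref{prop-hol-approx} attached to it. This is exactly the asserted formula. There is no real obstacle here: the only bookkeeping point is that the scalar does not depend on $\pi$ or on $a$, so it factors out cleanly before the sums over those indices are introduced, and since every sum in the argument is finite no analytic subtleties arise.
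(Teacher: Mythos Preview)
Your proposal is correct and follows exactly the approach the paper takes: the paper simply states that the proposition is obtained by ``Combining Propositions~\ref{prop-hol-approx} and~\ref{prop-latest},'' and you have spelled out precisely that combination, correctly noting that the scalars are deterministic and that all sums are finite so no analytic issues arise.
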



\subsection{Analyzing the \texorpdfstring{$J \to \infty$}{J→∞} limit}

Now that we have derived an expression for the expected trace of the holonomy approximation, we turn to the task of proving that this expected trace converges to the Wilson loop expectation in the $J \to \infty$ limit. This is the most technically involved step in the proof of Lemma~\ref{lem-main-poisson}. Roughly speaking, we will need to show that the expected trace of the holonomy approximation is uniformly integrable in $J$, and we will prove this assertion by bounding its $L^2$ norm.

To prepare for this argument, we first
observe that, while we are summing over a very large number of terms in Proposition~\ref{prop-hol-approx-update}, most of the terms of the sum are zero. If we examine the summand~\eqref{eqn-omega-a}, we observe that the summand is nonzero \emph{only if} the tuple of indices $((m_k,j_k))_{k=1}^K$ and the pairing $\pi$ are ``compatible'' in the sense that we now define.

\begin{defn}
Let $((m_k,j_k))_{k=1}^K$ be a lexicographically ordered tuple of distinct elements of $[M] \times [J]$, and let $\pi \in \frk S_K$ be a pairing. We say that $((m_k,j_k))_{k=1}^K$ and $\pi$ are \emph{compatible} if each pair of indices $(m,j)$, $(m^*,j^*)$ paired by $\pi$ satisfies $c(m) = c(m^*)$ and $j=j^*$.
\end{defn}

To bound the number of compatible tuples and pairings, we express the set of compatible tuples and pairings in the following alternative form.
 
 \begin{prop}\label{prop-matching-bijection}
Let $\mcl C$ be the \emph{set of matching-color lasso pairs}
\begin{equation*}\mcl C = \{ (m,m^*) : \text{$m < m^*$ and $c(m) = c(m^*)$}\}\end{equation*}
that we defined in Definition~\ref{defn-space-match}.
Then, for every even integer $K$ between $0$ and $MJ$, we have a bijection between compatible tuples $((m_k,j_k))_{k=1}^K$ and pairings $\pi$ on the one hand, and collections of $K/2$ elements of $\mcl C \times [J]$ on the other.
\end{prop}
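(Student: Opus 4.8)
The plan is to construct the bijection explicitly and verify it is well-defined in both directions. Fix an even integer $K$ between $0$ and $MJ$. Given a compatible pair consisting of a lexicographically ordered tuple $((m_k,j_k))_{k=1}^K$ of distinct elements of $[M]\times[J]$ together with a pairing $\pi\in\frk S_K$, I would associate to it a collection of $K/2$ elements of $\mcl C\times[J]$ as follows. For each $2$-cycle $\{k,k'\}$ of $\pi$ (with $k<k'$), compatibility tells us that $c(m_k)=c(m_{k'})$ and $j_k=j_{k'}$; set $m:=\min(m_k,m_{k'})$, $m^*:=\max(m_k,m_{k'})$, and $j:=j_k=j_{k'}$. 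Then $(m,m^*)\in\mcl C$ since $m<m^*$ and $c(m)=c(m^*)$, so $((m,m^*),j)\in\mcl C\times[J]$. Ranging over all $K/2$ cycles of $\pi$ yields a collection of $K/2$ such elements, and I would check these are distinct: if two cycles produced the same $((m,m^*),j)$, then two distinct cycles of $\pi$ would both involve the index pair $(m,j)$, contradicting the fact that $\pi$ is a permutation (each index lies in exactly one cycle). This gives the forward map.

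For the inverse map, start with a collection $\{((m_i,m_i^*),j_i)\}_{i=1}^{K/2}$ of $K/2$ distinct elements of $\mcl C\times[J]$. Each such element determines the two elements $(m_i,j_i)$ and $(m_i^*,j_i)$ of $[M]\times[J]$; by definition of $\mcl C$ we have $m_i<m_i^*$ and $c(m_i)=c(m_i^*)$, and the two elements are distinct since $m_i\ne m_i^*$. I would first argue that, as $i$ ranges over $[K/2]$, all $K$ of these elements of $[M]\times[J]$ are distinct: if $(m_i,j_i)=(m_{i'},j_{i'})$ or $(m_i,j_i)=(m_{i'}^*,j_{i'})$ for $i\ne i'$, one can trace through the cases (using $m_i<m_i^*$, $m_{i'}<m_{i'}^*$, and $c(m_i)=c(m_i^*)$) to conclude $((m_i,m_i^*),j_i)=((m_{i'},m_{i'}^*),j_{i'})$, contradicting distinctness of the collection. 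Having the $K$ distinct elements, list them in lexicographic order to obtain the tuple $((m_k,j_k))_{k=1}^K$, and define $\pi\in\frk S_K$ to be the pairing that, for each $i$, pairs the position of $(m_i,j_i)$ with the position of $(m_i^*,j_i)$ in this ordered list. This $\pi$ is compatible by construction. One then checks the two maps are mutually inverse — both compositions recover the original data because the cycles of $\pi$ correspond exactly to the elements of the collection, and lexicographic ordering of the underlying set of indices is canonical.

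The main obstacle — though it is more bookkeeping than difficulty — is verifying the distinctness claims in both directions, i.e.\ that the forward map lands in genuine \emph{collections} (sets of distinct elements, not multisets) and that the inverse map starting from distinct elements produces $K$ genuinely distinct indices so that a lexicographic tuple of length $K$ exists. Both follow from the defining constraint $m<m^*$ for elements of $\mcl C$, which breaks the symmetry between the two indices in a paired block and prevents collisions, combined with the fact that in a permutation each element belongs to exactly one cycle. Once these are in hand, the bijection and its inverse are clearly mutually inverse, completing the proof. As an immediate consequence (used in the uniform-integrability estimate that follows), the number of compatible tuple-pairing combinations with a given $K$ equals $\binom{|\mcl C|\cdot J}{K/2}$, which I would record since it is the quantitative payoff of the proposition.
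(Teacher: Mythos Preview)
Your approach matches the paper's: construct both maps explicitly. The forward direction and its distinctness check are fine. The gap is in the inverse direction. Your claim that the $K$ elements of $[M]\times[J]$ arising from an arbitrary $K/2$-element subset of $\mcl C\times[J]$ are always distinct is false. Take any color appearing at least three times in the lasso word, say $c(1)=c(2)=c(3)$ (this occurs already in the paper's running example $\Gamma_0$); then $((1,2),1)$ and $((1,3),1)$ are two distinct elements of $\mcl C\times[J]$, yet both contribute the same element $(1,1)\in[M]\times[J]$. Your ``trace through the cases'' cannot work here: knowing $m_i=m_{i'}$ and $j_i=j_{i'}$ says nothing about $m_i^*$ versus $m_{i'}^*$, and the constraint $m<m^*$ does not help.

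This reflects an imprecision in the stated proposition rather than only in your argument: the forward map is injective but not surjective onto \emph{all} $K/2$-subsets of $\mcl C\times[J]$. The paper's own proof is terse enough not to make the false claim explicitly (it just asserts ``it is easy to check that this is a bijection''), and does not address the issue. Fortunately, what the paper actually uses is unaffected: (i) injectivity of the forward map already gives the needed upper bound on the number of compatible pairs, so your recorded count $\binom{|\mcl C|\,J}{K/2}$ is valid as an inequality $\le$, not as an equality; and (ii) the inverse map is only ever applied to collections coming from a Poisson point process $\Sigma$, where almost surely the underlying $t$-coordinates are distinct, hence for $J$ large the $j$-values are distinct, and the collision above cannot occur.
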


\begin{proof}
Every compatible $((m_k,j_k))_{k=1}^K$ and $\pi$ determines a collection of $K/2$ elements of $\mcl C \times [J]$. Conversely, any collection of $K/2$ elements of $\mcl C \times [J]$ determines the tuple $((m_k,j_k))_{k=1}^K$. Though the collection of elements of $\mcl C \times [J]$ is unordered, we stipulated that $((m_k,j_k))_{k=1}^K$ is lexicographically ordered; this ordering determines the pairing $\pi$ as well. By definition, the tuple and pairing are compatible. It is easy to check that this is a bijection.
\end{proof}

We are now ready to prove that the expected trace of the holonomy approximation converges to the Wilson loop expectation.

\begin{prop}\label{prop-limit-legal}
The expected trace of the holonomy approximation converges as $J \to \infty$ to the expected trace of the holonomy.
\end{prop}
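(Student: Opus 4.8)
The plan is to promote the convergence in probability supplied by Proposition~\ref{prop-conv-prob-approx} to convergence of expectations, by checking uniform integrability. Write $\wt H_J := \prod_{m=1}^M \wt{U}^{(c(m))}_{\ep(m)}$ for the holonomy approximation of $\Gamma$ (the product of $M$ matrices, each as in Definition~\ref{defn-hol-approx}), and let $H$ denote the holonomy~\eqref{eqn-holonomy-general}. Since $\tr(\cdot)$ is Lipschitz for the Frobenius norm, Proposition~\ref{prop-conv-prob-approx} already gives $\tr(\wt H_J) \to \tr(H)$ in probability as $J \to \infty$, and $\Ex{\tr(H)}$ is exactly the Wilson loop expectation. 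By the Vitali convergence theorem it thus suffices to check that $\{\tr(\wt H_J)\}_{J \ge 1}$ is uniformly integrable, for which it is enough to establish
\[
 \sup_{J \ge 1}\, \Ex*{\abs{\tr(\wt H_J)}^2} \;<\; \infty .
\]

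For this $L^2$ bound I would pass to the operator norm. From $\abs{\tr A} \le N\nrm{A}_{\mathrm{op}}$ and submultiplicativity of $\nrm{\cdot}_{\mathrm{op}}$,
\[
 \abs{\tr(\wt H_J)}^2 \;\le\; N^2 \nrm{\wt H_J}_{\mathrm{op}}^2 \;\le\; N^2 \prod_{m=1}^M \nrm[\big]{\wt{U}^{(c(m))}_{\ep(m)}}_{\mathrm{op}}^2 .
\]
The $M$ factors on the right are \emph{not} independent --- lassos sharing a color share their driving Brownian motions --- but this causes no difficulty: by the generalized Hölder inequality with $M$ equal exponents,
\[
 \Ex*{\,\prod_{m=1}^M \nrm[\big]{\wt{U}^{(c(m))}_{\ep(m)}}_{\mathrm{op}}^2\,}
 \;\le\; \prod_{m=1}^M \Ex*{\,\nrm[\big]{\wt{U}^{(c(m))}_{\ep(m)}}_{\mathrm{op}}^{2M}\,}^{1/M},
\]
so it is enough to bound each single-lasso moment $\Ex{\nrm{\wt{U}^{(i)}_{\pm}}_{\mathrm{op}}^{2M}}$ uniformly in $J$.

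The single-lasso bound rests on a cancellation in $M_j^{*}M_j$, where $M_j = \bigl(1 + \tfrac{\fc_\fg\abs{\lambda_i}}{2J}\bigr)I_N \pm \tfrac{1}{\sqrt J}\,W^{(i,j)}_{\abs{\lambda_i}}$ is a generic factor of $\wt{U}^{(i)}_{\pm}$. Realizing $G$ as a closed subgroup of $\UN$ (as we may by Peter--Weyl), its Lie algebra $\fg \subseteq \uN$ consists of anti-Hermitian matrices, so $\bigl(W^{(i,j)}_t\bigr)^{*} = -W^{(i,j)}_t$; together with Brownian rescaling $W^{(i,j)}_{\abs{\lambda_i}} \stackrel{d}{=} \abs{\lambda_i}^{1/2}\,W^{(i,j)}_1$ this yields
\[
 \nrm{M_j}_{\mathrm{op}}^2 \;=\; \nrm{M_j^{*}M_j}_{\mathrm{op}} \;\stackrel{d}{=}\; \Bigl(1 + \tfrac{\fc_\fg\abs{\lambda_i}}{2J}\Bigr)^{2} + \frac{\abs{\lambda_i}}{J}\,\nrm[\big]{W^{(i,j)}_1}_{\mathrm{op}}^2 ,
\]
with \emph{no} term of order $J^{-1/2}$. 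As $\fc_\fg \le 0$, the first summand is $\le 1$ for $J$ large; since $\nrm{W_1}_{\mathrm{op}}$ has finite moments of all orders, expanding the $M$-th power then gives $\Ex{\nrm{M_j}_{\mathrm{op}}^{2M}} \le 1 + C/J$ for a constant $C$ depending only on $M$, $\max_i\abs{\lambda_i}$, and the law of $W_1$. Because the factors $M_1,\dots,M_J$ of $\wt{U}^{(i)}_{\pm}$ are independent,
\[
 \Ex*{\,\nrm[\big]{\wt{U}^{(i)}_{\pm}}_{\mathrm{op}}^{2M}\,} \;\le\; \prod_{j=1}^J \Ex*{\,\nrm{M_j}_{\mathrm{op}}^{2M}\,} \;\le\; \Bigl(1 + \frac{C}{J}\Bigr)^{J} \;\le\; e^{C}
\]
for all large $J$ (the finitely many small $J$ being harmless), and substituting back through the two previous displays gives $\sup_J \Ex{\abs{\tr(\wt H_J)}^2} \le N^2 e^{C} < \infty$, as required.

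I expect the crux to be exactly this cancellation: one must estimate $\nrm{M_j}_{\mathrm{op}}$ through $M_j^{*}M_j$, since the crude bound $\nrm{M_j}_{\mathrm{op}} \le 1 + \tfrac{\fc_\fg\abs{\lambda_i}}{2J} + \tfrac{1}{\sqrt J}\nrm{W^{(i,j)}_{\abs{\lambda_i}}}_{\mathrm{op}}$ leaves a term of size $O(J^{-1/2})$ with positive expectation, and a product of $J$ such factors would then blow up like $e^{c\sqrt J}$; anti-Hermiticity is precisely what removes that term and makes the estimate stable as $J \to \infty$. (Alternatively, closer to the compatible-pairs bijection of Proposition~\ref{prop-matching-bijection}, one can expand $\Ex{\abs{\tr(\wt H_J)}^2}$ as a doubled Wick sum and bound it by counting nonzero terms: that bijection, together with the fact that each pairwise covariance of entries of $W_1$ is $O(1/N)$, makes the $J$-dependence cancel and leaves an absolutely convergent series in the number of pairs, bounded uniformly in $J$. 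The operator-norm argument above is shorter.)
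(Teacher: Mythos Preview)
Your proof is correct and takes a genuinely different route from the paper's. Both arguments begin the same way---convergence in probability from Proposition~\ref{prop-conv-prob-approx}, then uniform integrability via an $L^2$ bound---but diverge at the $L^2$ estimate. The paper expands $\abs{\tr(\wt H_J)}^2$ as a doubled Wick sum (exactly the alternative you sketch in your final parenthetical), bounds each nonzero summand by $C^{2K+1}J^{-K}$, and counts the nonzero terms using the compatible-pairs bijection of Proposition~\ref{prop-matching-bijection}; the $J$-powers cancel and one is left with a convergent exponential series. Your argument instead exploits anti-Hermiticity of $\fg\subseteq\uN$ directly: the cross term in $M_j^*M_j$ vanishes, so $\nrm{M_j}_{\mathrm{op}}^2 = a^2 + \tfrac{\abs{\lambda_i}}{J}\nrm{W_1}_{\mathrm{op}}^2$ has no $J^{-1/2}$ contribution, and a product of $J$ independent factors each with $2M$-th moment $\le 1+C/J$ is bounded by $e^C$. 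Your route is shorter and yields moment bounds of all orders without invoking the Wick/pairing machinery; the paper's route has the virtue of reusing the very combinatorics (compatible tuples and pairings) that drive the main theorem, so no new ingredient is needed. Your remark that the naive triangle-inequality bound $\nrm{M_j}_{\mathrm{op}}\le 1+O(J^{-1/2})$ would blow up, and that anti-Hermiticity is precisely what rescues it, correctly isolates the crux.
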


We note that the proof of this proposition is the most tedious of the arguments in the proof of Lemma~\ref{lem-main-poisson}, and the reader may skip it and return to it later without missing any of the core ideas of our approach.

\begin{proof}
We already know that the trace of the holonomy approximation converges in probability to the trace of the holonomy (Proposition~\ref{prop-conv-prob-approx}). Therefore, to show that the trace of the holonomy approximation converges in mean as $J \to \infty$, it suffices to show that the trace of the holonomy approximation is bounded in $L^2$ uniformly in $J$. 

By propositions~\ref{prop-hol-approx} and~\ref{prop-trace-hol-approx}, the law of the trace of the holonomy approximation is given by the sum of 
a scalar in $(0,Z_J]$ times~\eqref{eqn-single-term} over all $K \in [MJ]$, indices $a_1,\ldots,a_{K+1} \in [N]$ with $a_1 = a_{K+1}$, and lexicographically ordered tuples $((m_k,j_k))_{k=1}^K$ of distinct elements of $[M] \times [J]$. Since $Z_J \to Z$ as $J \to \infty$, we can bound each of the scalars and each of the terms $|\lambda_{c(\cdot)}|$ by a constant $C>0$ that does not depend on $J$. Therefore, if we square the expression for the law of the trace and apply Wick's formula, we find that the expectation of the squared trace is bounded from above by the sum of
\begin{equation}
 \label{eqn-sq-t}
C^{K+1} J^{-K}
\left| \left\langle
(W_{1}^{(c(m_1), j_1)})_{a_1a_2} \cdots (W_{1}^{(c(m_K), j_K)})_{a_K a_{K+1}}
(W_{1}^{(c(m_1^*), j_1^*)})_{a_1a_2} \cdots (W_{1}^{(c(m_K^*), j_K^*)})_{a_K a_{K+1}}
\right\rangle_\pi
\right|
\end{equation}
over all indices $a_1,\ldots,a_{K+1},a_1^*,\ldots,a_{K+1}^*$ in $[N]$ with $a_1 = a_{K+1}$ and $a_1^* = a_{K+1}^*$, all pairings $\pi \in \frk S_{2K}$, and all lexicographically ordered tuples $((m_k,j_k))_{k=1}^K$ and $((m_k^*,j_k^*))_{k=1}^K$ of distinct elements of $[M] \times [J]$. By replacing each $m_k^*$ by $m_k^*+K$, we can represent the pair of tuples $((m_k,j_k))_{k=1}^K$ and $((m_k^*,j_k^*))_{k=1}^K$ by a single lexicographically ordered tuple $((m_k,j_k))_{k=1}^{2K}$ of distinct elements of $[2M] \times [J]$. The corresponding term~\eqref{eqn-sq-t} is nonzero only if $((m_k,j_k))_{k=1}^{2K}$ and $\pi$ are compatible. 
By proposition~\ref{prop-matching-bijection}, the number of compatible tuples $((m_k,j_k))_{k=1}^{2K}$ and pairings $\pi \in \frk S_{2K}$ is at most ${(2M)^{2} \binom{J}{K}}$. Therefore, the number of nonzero terms~\eqref{eqn-sq-t} in our expression for the expectation of the squared trace is at most \[N^{2K} {(2M)^{2} \binom{J}{K}}
\leq N^{2K} (2M)^{2K} J^K / K!.\] Moreover, after making $C$ larger if necessarily, we can bound each term~\eqref{eqn-sq-t} by $C^{2K+1} J^{-K}$. We conclude that the expectation of the squared trace is at most
\[
\sum_{K=0}^{MJ} N^{2K} \frac{(2M)^{2K} J^K}{K!} C^{2K+1} J^{-K} = \sum_{K=0}^{MJ} C \frac{(2CMN)^{2K}}{K!},\]
which is bounded uniformly in $J$ by $C e^{2CMN}$.
\end{proof}

By restricting the sum in proposition~\ref{prop-hol-approx-update} to compatible $((m_k,j_k))_{k=1}^K$ and $\pi$, we can also simplify the expression for the summand. We observe that, if $((m_k,j_k))_{k=1}^K$ and $\pi$ are compatible, we have
\[
\avg{(W_{1}^{(c(m_1), j_1)})_{a_1a_2} \cdots (W_{1}^{(c(m_K), j_K)})_{a_K a_{K+1}}}_\pi = 
\avg{W_{a_1a_2} \cdots W_{a_K a_{K+1}}}_\pi.
\]
This observation, plus an additional simplification, allows us to express the expected trace of the holonomy as the following $J \to \infty$ limit.

\begin{prop}\label{prop-new-wilson}
Let $W$ be the Brownian motion on $\frk g$ at time $1$.
We can express the expected trace of the holonomy as the constant $Z$ (defined in~\eqref{eqn-z}) times the $J \to \infty$ limit of the sum of
\begin{equation}\label{eqn-under}
\underbrace{\ep(m_1) \cdots \ep(m_K)}_{\mytag{A}{term1}} \cdot
\underbrace{\sqrt{\frac{\abs{\lambda_{c(m_1)}}}{J}} \cdots\sqrt{\frac{\abs{\lambda_{c(m_K)}}}{J}} }_{\mytag{B}{term2}} \cdot 
\underbrace{\avg{W_{a_1a_2} \cdots W_{a_K a_{K+1}}}_\pi}_{\mytag{C}{term3}}
\end{equation}
over the set of compatible $((m_k,j_k))_{k=1}^K$ and $\pi$ and indices $a_1,\ldots,a_{K+1} \in [N]$ with $a_1 = a_{K+1}$.
\end{prop}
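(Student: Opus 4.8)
The plan is to start from the formula for the expected trace of the holonomy approximation in Proposition~\ref{prop-hol-approx-update}, delete the vanishing terms, identify the surviving Gaussian factors with a single Brownian motion $W$ on $\frk g$ at time $1$, and then pull the $J$-dependent scalar out of the sum as $J\to\infty$, using Proposition~\ref{prop-limit-legal} together with the uniform bound established in its proof.

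First I would observe that the factor $\avg{(W_{1}^{(c(m_1), j_1)})_{a_1a_2} \cdots (W_{1}^{(c(m_K), j_K)})_{a_K a_{K+1}}}_\pi$ in~\eqref{eqn-omega-a} is, by the definition in Lemma~\ref{lem-wicks}, the product over the pairs of $\pi$ of covariances of the form $\langle (W_{1}^{(c(m), j)})_{ab}\,(W_{1}^{(c(m^*), j^*)})_{a^*b^*}\rangle$. Since $W^{(i,j)}$ and $W^{(i',j')}$ are independent whenever $(i,j)\neq(i',j')$, such a covariance vanishes unless $c(m)=c(m^*)$ and $j=j^*$; hence the whole summand vanishes unless $((m_k,j_k))_{k=1}^{K}$ and $\pi$ are compatible, and the sum in Proposition~\ref{prop-hol-approx-update} may be restricted to compatible configurations. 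For a compatible configuration, every covariance in $\avg{\cdot}_\pi$ pairs two entries of the \emph{same} matrix $W_{1}^{(c(m_k),j_k)}$, which is distributed as $W$; therefore $\avg{(W_{1}^{(c(m_1), j_1)})_{a_1a_2}\cdots}_\pi=\avg{W_{a_1a_2}\cdots W_{a_K a_{K+1}}}_\pi$, and the summand becomes $\mathrm{scalar}_J$ times the expression~\eqref{eqn-under}. Thus the expected trace of the holonomy approximation equals the sum of $\mathrm{scalar}_J$ times~\eqref{eqn-under}, taken over all compatible $((m_k,j_k))_{k=1}^{K}$, pairings $\pi$, and indices $a_1,\dots,a_{K+1}\in[N]$ with $a_1=a_{K+1}$, where $\mathrm{scalar}_J$ lies in $(0,Z_J]$ by Proposition~\ref{prop-hol-approx}, depends on $J$, $K$ and the tuple, and converges to $Z$ for each fixed $K$ and tuple.

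It then remains to interchange the $J\to\infty$ limit with the sum and to replace $\mathrm{scalar}_J$ by $Z$. By Proposition~\ref{prop-limit-legal} the expected trace of the holonomy approximation converges to the expected trace of the holonomy, so it suffices to prove that the sum of $(\mathrm{scalar}_J-Z)$ times~\eqref{eqn-under} tends to $0$ as $J\to\infty$; since $Z\neq 0$, the convergence of $Z$ times the sum of~\eqref{eqn-under} then follows automatically. This last point is where I expect the main obstacle to lie, and I would settle it by a dominated-convergence argument whose key input is already contained in the proof of Proposition~\ref{prop-limit-legal}: bounding $\abs{\lambda_{c(\cdot)}}$ and $\mathrm{scalar}_J$ by a $J$-independent constant and counting compatible configurations via Proposition~\ref{prop-matching-bijection} shows that the sum of the absolute values of~\eqref{eqn-under} is bounded uniformly in $J$ and that its tail over $K>K_0$ is uniformly small; meanwhile, for each fixed $K\leq K_0$ one has $\sup\abs{\mathrm{scalar}_J-Z}\to0$ as $J\to\infty$, because $\mathrm{scalar}_J$ is $Z_J$ with $K$ factors of the form $1+O(1/J)$ removed and $Z_J\to Z$. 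Splitting the sum at $K_0$ and combining these two estimates gives the claim, and factoring out $Z$ yields the asserted formula. Everything else --- the compatibility reduction and the identification of the paired Gaussians with $W$ --- is purely algebraic; the only genuinely delicate step is simultaneously exchanging the infinite sum over $K$ with the limit and absorbing the $J$-dependent scalar into the constant $Z$, for which the uniform integrability from Proposition~\ref{prop-limit-legal} is essential.
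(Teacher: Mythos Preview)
Your proposal is correct and follows essentially the same route as the paper: reduce to compatible configurations, identify the paired covariances with those of a single $W$, and then justify replacing $\mathrm{scalar}_J$ by $Z$ via a uniform absolute bound on the sum obtained from Proposition~\ref{prop-matching-bijection}. The only cosmetic difference is that the paper invokes the dominated convergence theorem directly on the sum, whereas you unroll the same estimate as a tail-splitting argument; the underlying bound $\sum_K (CN)^K/(K/2)!<\infty$ is identical in both.
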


\begin{proof}
By proposition~\ref{prop-hol-approx-update}, we can express the Wilson loop expectation as the $J \to \infty$ limit of the sum, over the set of tuples $((m_k,j_k))_{k=1}^K$, pairings $\pi$, and $a_1,\ldots,a_{K+1} \in [N]$ with $a_1 = a_{K+1}$, of~\eqref{eqn-omega-a} times a constant in $(0,Z_J]$ that tends to $Z$ as $J \to \infty$.
As we explained above, we can restrict the sum to \emph{compatible} tuples $((m_k,j_k))_{k=1}^K$ and pairings $\pi$. Moreover, if $((m_k,j_k))_{k=1}^K$ and $\pi$ are compatible, we have
\[
\avg{(W_{1}^{(c(m_1), j_1)})_{a_1a_2} \cdots (W_{1}^{(c(m_K), j_K)})_{a_K a_{K+1}}}_\pi = 
\avg{W_{a_1a_2} \cdots W_{a_K a_{K+1}}}_\pi.
\]
To complete the proof of the proposition, it suffices to show that we can take the limit inside the summation, so that we can extract the multiplicative scalar in each term and replace it by its limit $Z$. We will justify take the limit inside the summation by applying the dominated convergence theorem. Each term in the summation is bounded by $C^{K/2} J^{-K/2}$. Moreover, by Proposition~\ref{prop-matching-bijection}, the number of such terms at most ${M^2 \binom{J}{K/2}} \leq M^{K} J^{K/2}/(K/2)!$. Since the sum of $(C^{K/2} J^{-K/2})(M^{K} J^{K/2}/(K/2)!)$ over all even non-negative integers $K$ is finite, the dominated convergence theorem applies.
\end{proof}

\subsection{Representing the Wilson loop expectation via a Poisson point process}

To obtain a probabilistic expression for the $J \to \infty$ limit in Proposition~\ref{prop-new-wilson}, we 
first describe a compatible tuple and pairing as a function of a collection of points in the space $\mcl D$ that we defined in Definition~\ref{defn-space-match}. This formulation has the advantage that the definition of $\mcl D$ does not depend on $J$.

Suppose that $\Sigma$ is a collection of points in $\mcl D$. Each point $x \in \Sigma$ lies in $\eta((m,m^*) \times [\frac{j-1}{J},\frac{j}{J}])$ for some $(m,m^*) \times j$ in $\mcl C \times [J]$. 
Let $\mcl E = \mcl E(\Sigma,J)$ be the event that the elements $(m,m^*) \times j$ of $\mcl C \times [J]$ are distinct for different $x \in \Sigma$. Note that, for a fixed collection $\Sigma$, the event $\mcl E(\Sigma,J)$ holds for all large enough $J$. On this event, the elements of $\Sigma$ determine $|\Sigma|$ distinct elements of $\mcl C \times [J]$.
By applying the bijection in the proof of Proposition~\ref{prop-matching-bijection}, we can express these elements of $\mcl C \times [J]$ as a compatible tuple $((m_k,j_k))_{k=1}^{2|\Sigma|}$ and pairing $\pi$. 

\begin{defn}
We call the tuple $((m_k,j_k))_{k=1}^{2|\Sigma|}$ and pairing $\pi$ just defined the compatible tuple and pairing \emph{associated} to $\Sigma$. (If $\mcl E$ does not hold, then $\Sigma$ does not have an associated tuple and pairing.)
\end{defn}

We have defined both the tuple and pairing associated to $\Sigma$ for a fixed value of $J$. We now show that, on $\mcl E$, the pairing $\pi$ associated to $\Sigma$ does not depend on $J$. 
\begin{prop}\label{prop-pi}
Let $\Sigma$ be a collection of points in $\mcl D$. Then, for each $J$ for which $\mcl E$ holds, the pairing associated to $\Sigma$ is equal to the pairing $\pi(\Sigma)$ defined in Definition~\ref{defn-pi}.
\end{prop}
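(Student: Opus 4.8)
The plan is to recognize that $\pi(\Sigma)$ and the pairing associated to $\Sigma$ are produced by one and the same recipe, applied to two labellings of a single underlying configuration that differ only through the discretization $[0,1]\to[J]$, and that this discretization is an order isomorphism intertwining the relevant ``same point'' involutions; the equality then follows from an elementary fact about pairings of finite totally ordered sets.

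Concretely, fix $J$ with $\mcl E$ in force. Each $x\in\Sigma$ has a unique preimage $\eta^{-1}(x)=((m_x,m_x^*),t_x)\in\mcl C\times[0,1]$ and contributes two ``endpoints''. Definition~\ref{defn-pi} records these as the continuous labels $(m_x,t_x),(m_x^*,t_x)\in[M]\times[0,1]$, while the construction of the associated pairing --- via the bijection in the proof of Proposition~\ref{prop-matching-bijection} --- records them as the discrete labels $(m_x,j_x),(m_x^*,j_x)\in[M]\times[J]$, where $j_x$ is the index with $t_x\in(\tfrac{j_x-1}{J},\tfrac{j_x}{J}]$. In both cases one lexicographically orders the resulting $2|\Sigma|$ endpoints, identifies the ordered set with $\{1,\dots,2|\Sigma|\}$ via the order-preserving bijection, and declares the two endpoints of each $x$ to be paired; the resulting fixed-point-free involution of $\{1,\dots,2|\Sigma|\}$ is $\pi(\Sigma)$, respectively the associated pairing. (As in Definition~\ref{defn-pi} and the proof of Proposition~\ref{prop-matching-bijection}, we are on the sub-event of $\mcl E$ on which neither labelling has a repetition, so that both objects genuinely lie in $\frk S_{2|\Sigma|}$.)

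Next I would use the elementary observation that if $(S,\preceq,P)$ and $(S',\preceq',P')$ are finite totally ordered sets each carrying a fixed-point-free involution, and $\phi\colon S\to S'$ is an order isomorphism with $\phi\circ P=P'\circ\phi$, then transporting $P$ and $P'$ to $\frk S_{|S|}$ through their order-preserving identifications with $\{1,\dots,|S|\}$ yields the same permutation --- the identification $S'\to\{1,\dots,|S|\}$ precomposed with $\phi$ is the identification $S\to\{1,\dots,|S|\}$, so the conjugating maps coincide. Applying this with $S$ the set of continuous-label endpoints, $S'$ the set of discrete-label endpoints, $P,P'$ the ``same point of $\Sigma$'' involutions, and $\phi$ the map $(m,t)\mapsto(m,j(t))$ that replaces the second coordinate by its interval index, the intertwining $\phi\circ P=P'\circ\phi$ is automatic (the two endpoints of $x$ go to the two endpoints of $x$), so the proposition reduces to showing that $\phi$ is an order isomorphism for the two lexicographic orders.

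To see that, compare two endpoints $(a,t),(a',t')$. If $a\ne a'$, or if they come from the same $x$ (so $\{a,a'\}=\{m_x,m_x^*\}$ with $m_x<m_x^*$), the lexicographic comparison is decided by the first coordinate and is unchanged by $\phi$. The only remaining case is $a=a'$ with the two endpoints coming from distinct $x\ne x'$; then $t\ne t'$ by distinctness of the continuous labels, and since $t\mapsto j(t)$ is nondecreasing, $\phi$ could reverse or collapse the order only if $j(t)=j(t')$, which would force $(a,j(t))=(a',j(t'))$ and contradict distinctness of the discrete labels. Hence $\phi$ is strictly monotone, hence an order isomorphism, and the two pairings agree. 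I expect this last case to be the only step requiring genuine care: one must confirm that on $\mcl E$ the discretization separates exactly the parameter values $t_x$ that need to be separated (equivalently, that the discrete labels remain pairwise distinct), and perhaps add a line reconciling this ``no repetition'' point with the hypothesis as literally stated; the rest of the argument is purely formal.
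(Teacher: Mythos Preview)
Your proposal is correct and follows essentially the same line as the paper's proof. Both arguments express the associated pairing as the lexicographic ordering of the discrete endpoint set $\{(m,t_J),(m^*,t_J)\}$, observe that the discretization $t\mapsto t_J$ is nondecreasing, and conclude that the positions of corresponding elements in the continuous and discrete orderings coincide; your version simply packages this as an abstract ``order isomorphism intertwining involutions'' statement and is slightly more careful about isolating where the hypothesis $\mcl E$ is used to prevent the discretization from collapsing distinct labels.
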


\begin{proof}
For $t \in [0,1]$, let $t_J = \lfloor J t \rfloor + 1$. Then the tuple associated to $\Sigma$ is the lexicographical ordering of the set
\begin{equation}
 \label{eqn-lex-b}
 \bigcup_{((m,m^*), t) \, \in \, \eta^{-1}(\Sigma)} \{(m,t_J),(m^*,t_J)\}
 \end{equation}
This means that the pairing associated to $\Sigma$ is the pairing of the lexicographically ordered set~\eqref{eqn-lex-b}
 that pairs $(m,t_J)$ and $(m^*,t_J)$ for each $((m,m^*), t) \in \eta^{-1}(\Sigma)$.
Since $s < t$ implies that $s_J \leq t_J$, the positions of $(m,t_J)$ and $(m^*,t_J)$ in the lexicographical ordering of~\eqref{eqn-lex-b} is the same as the positions of $(m,t)$ and $(m^*,t)$ in the lexicographical ordering of~\eqref{eqn-lex-a}. This means that the pairing associated to $\Sigma$ is the pairing that pairs each $(m,t)$ and $(m^*,t)$ in~\eqref{eqn-lex-a}.
\end{proof}

By Proposition~\ref{prop-pi}, on $\mcl E$, the pairing associated to $\Sigma$ does not depend on $J$. This means that, if we express the term~\eqref{term3} in~\eqref{eqn-under} as a function of $J$ and $\Sigma$ by taking the associate compatible tuple and pairing, then the term~\eqref{term3} does not depend on $J$. The same is clearly true for the term~\eqref{term1}.

\begin{prop}
Let $\Sigma$ be a collection of points in $\mcl D$. Then, if $\mcl E$ holds, the term~\eqref{term1} for the associated compatible tuple and pairing is given by the sign $\ep(\Sigma)$ defined in Definition~\ref{def-epsilon-sigma}.
\end{prop}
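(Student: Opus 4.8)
The plan is to unwind the definition of the compatible tuple associated to $\Sigma$ and then reduce the claim to the observation that the product in~\eqref{term1} depends only on the multiset of first coordinates of that tuple.

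First I would recall, combining the bijection from the proof of Proposition~\ref{prop-matching-bijection} with the computation in the proof of Proposition~\ref{prop-pi}, that on the event $\mcl E$ the compatible tuple $((m_k,j_k))_{k=1}^{2|\Sigma|}$ associated to $\Sigma$ is exactly the lexicographic reordering of the set~\eqref{eqn-lex-b}, i.e.\ of $\bigcup_{((m,m^*),t)\in\eta^{-1}(\Sigma)}\{(m,t_J),(m^*,t_J)\}$ where $t_J=\lfloor Jt\rfloor+1$. Consequently, as $((m,m^*),t)$ ranges over $\eta^{-1}(\Sigma)$, the first coordinates $m_1,\ldots,m_{2|\Sigma|}$ range, with multiplicity, over the multiset $\bigcup_{((m,m^*),t)\in\eta^{-1}(\Sigma)}\{m,m^*\}$; the lexicographic reordering merely permutes the $2|\Sigma|$ entries and leaves this multiset unchanged.

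Then I would conclude: since~\eqref{term1} is the product $\ep(m_1)\cdots\ep(m_{2|\Sigma|})$ and each factor lies in $\{-1,1\}$, this product is symmetric in its factors and therefore equals $\prod_{((m,m^*),t)\in\eta^{-1}(\Sigma)}\ep(m)\,\ep(m^*)$, which is $\ep(\Sigma)$ by Definition~\ref{def-epsilon-sigma}. There is no real obstacle here; the argument is pure bookkeeping, and the only point worth a sentence of care is that producing the ordered tuple from the (unordered) collection of elements of $\mcl C\times[J]$ involves a lexicographic reordering, which is harmless precisely because $\ep(\cdot)$ is $\{-1,1\}$-valued, so the product over the multiset is independent of the order.
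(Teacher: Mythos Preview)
Your proposal is correct and is essentially the same as the paper's proof, which consists of the single line ``The proposition follows immediately from the definition of $\ep(\Sigma)$ in Definition~\ref{def-epsilon-sigma}.'' You have simply spelled out the bookkeeping (that the lexicographic reordering does not affect the product because the factors are $\pm 1$) that the paper leaves implicit.
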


\begin{proof}
The proposition follows immediately from the definition of $\ep(\Sigma)$ in Definition~\ref{def-epsilon-sigma}.
\end{proof}

Now, suppose that $\Sigma$ is a Poisson point process on $\mcl D$ with intensity given by the Lebesgue measure. Then the probability that $\Sigma$ is associated to a particular compatible tuple $((m_k,j_k))_{k=1}^K$ and pairing $\pi$ is given by~\eqref{term2}. Hence, we can interpret the sum in Proposition~\ref{prop-new-wilson} as the expected value of some function of $\Sigma$.

\begin{prop}\label{prop-final-J}
Let $\Sigma$ be a Poisson point process on $\mcl D$ with intensity given by the Lebesgue measure. We can express the
sum in Proposition~\ref{prop-new-wilson} as the constant $Z$ defined in~\eqref{eqn-z} times the expected value of the sum of
\begin{equation}\label{eqn-before-limit}
 \exp\left(\sum_{(m,m^*) \in \mcl C} |\lambda_{c(m)}| \right)
 \mathbf{1}_{\mcl E(\Sigma,J)} \ep(\Sigma) \avg{W_{a_1a_2} \cdots W_{a_{2|\Sigma|} a_{2|\Sigma|+1}}}_{\pi(\Sigma)} 
\end{equation}
over all $a_1,\ldots,a_{2|\Sigma|+1}$ in $[N]$ with $a_1 = a_{2|\Sigma|+1}$.
\end{prop}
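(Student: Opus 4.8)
The plan is to prove, for each fixed $J$, that the sum of~\eqref{eqn-under} over all compatible tuples $((m_k,j_k))_{k=1}^K$, pairings $\pi$, and index strings $a_1,\dots,a_{K+1}\in[N]$ with $a_1=a_{K+1}$ --- i.e.\ the sum appearing inside the $J\to\infty$ limit of Proposition~\ref{prop-new-wilson} --- equals the expected value of $\sum_{a_1,\dots,a_{2\abs{\Sigma}+1}}\eqref{eqn-before-limit}$. Multiplying this identity by the constant $Z$ of~\eqref{eqn-z} and quoting Proposition~\ref{prop-new-wilson} then yields the stated formula. (The indicator $\mathbf{1}_{\mcl E(\Sigma,J)}$ is still $J$-dependent on both sides; it is removed by a separate $J\to\infty$ step afterwards.) The only external ingredient is the elementary description of a Poisson point process restricted to a finite partition.

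The first step is to partition $\mcl D$ into cells indexed by $\mcl C\times[J]$. Using the parametrizing bijection $\eta$ of Definition~\ref{defn-space-match}, chosen on each cylinder $(m,m^*)\times[0,1]$ so that it pushes $\abs{\lambda_{c(m)}}$ times Lebesgue measure on $[0,1]$ forward to Lebesgue measure on $D_{(m,m^*)}$, set
\[
C_{(m,m^*),j}\ :=\ \eta\!\left((m,m^*)\times\left[\tfrac{j-1}{J},\tfrac{j}{J}\right]\right),\qquad (m,m^*)\in\mcl C,\ \ j\in[J].
\]
These cells are pairwise disjoint, cover $\mcl D$, and have Lebesgue measures $\abs{C_{(m,m^*),j}}=\abs{\lambda_{c(m)}}/J$; in particular $\abs{\mcl D}=\sum_{(m,m^*)\in\mcl C}\abs{\lambda_{c(m)}}$, so the prefactor $\exp(\sum_{(m,m^*)\in\mcl C}\abs{\lambda_{c(m)}})$ in~\eqref{eqn-before-limit} equals $e^{\abs{\mcl D}}$. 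The event $\mcl E(\Sigma,J)$ is precisely the event that no cell contains two or more points of $\Sigma$, and on $\mcl E(\Sigma,J)$ the set of cells occupied by $\Sigma$ determines, via the bijection in the proof of Proposition~\ref{prop-matching-bijection}, the compatible tuple $((m_k,j_k))_{k=1}^{2\abs{\Sigma}}$ and pairing associated to $\Sigma$; by Proposition~\ref{prop-pi} this pairing is $\pi(\Sigma)$ (Definition~\ref{defn-pi}), and by the proposition immediately preceding the one to be proved the term~\eqref{term1} for that tuple is the sign $\ep(\Sigma)$ (Definition~\ref{def-epsilon-sigma}). Thus $\ep(\Sigma)$, $\pi(\Sigma)$, and $\sum_a\avg{W_{a_1a_2}\cdots W_{a_{2\abs{\Sigma}}a_{2\abs{\Sigma}+1}}}_{\pi(\Sigma)}$ all depend on $\Sigma$ only through its set of occupied cells.

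The second step is the Poisson calculation. For $\Sigma$ a Poisson point process of intensity Lebesgue measure on $\mcl D$ and any bounded function $F$ of the (finite) occupied-cell set,
\[
\Ex{\mathbf{1}_{\mcl E(\Sigma,J)}\,F}\ =\ \sum_{\ell\ge 0}\ \sum_{\{c_1,\dots,c_\ell\}}\ \Pr{\Sigma\text{ occupies exactly }c_1,\dots,c_\ell}\,F(\{c_1,\dots,c_\ell\}),
\]
the inner sum being over unordered $\ell$-element sets of distinct cells, and for disjoint cells $\Pr{\Sigma\text{ occupies exactly }c_1,\dots,c_\ell}=e^{-\abs{\mcl D}}\prod_{i=1}^\ell\abs{c_i}$. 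Applying this with $F=e^{\abs{\mcl D}}\ep(\Sigma)\sum_a\avg{W_{a_1a_2}\cdots W_{a_{2\ell}a_{2\ell+1}}}_{\pi(\Sigma)}$: the factor $e^{\abs{\mcl D}}$ cancels $e^{-\abs{\mcl D}}$; the product $\prod_i\abs{c_i}=\prod_i\abs{\lambda_{c(m_i)}}/J$ is exactly the term~\eqref{term2} for the corresponding tuple; $\ep(\Sigma)$ is the term~\eqref{term1}; and --- using, exactly as in Proposition~\ref{prop-new-wilson}, that on a compatible tuple the matrix superscripts play no role --- $\avg{W_{a_1a_2}\cdots}_{\pi(\Sigma)}$ is the term~\eqref{term3}. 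Re-indexing the sum over $\ell$-element subsets of $\mcl C\times[J]$ as a sum over compatible tuples-with-pairings (Proposition~\ref{prop-matching-bijection}) turns the right-hand side into exactly the sum of~\eqref{eqn-under} over compatible data and index strings. The interchange of $\Ex{\cdot}$ with the sum over $\ell$ is legitimate by the absolute-summability estimate already established in the proof of Proposition~\ref{prop-new-wilson}: a term with $\ell$ pairs is $O(C^\ell J^{-\ell})$ and the number of $\ell$-element subsets of $\mcl C\times[J]$ is $\binom{\abs{\mcl C}J}{\ell}\le(\abs{\mcl C}J)^\ell/\ell!$, so the double sum is bounded uniformly in $J$. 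Multiplying by $Z$ and invoking Proposition~\ref{prop-new-wilson} completes the argument.

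I do not expect a genuine obstacle here: the proposition merely repackages the combinatorial sum of Proposition~\ref{prop-new-wilson} as an expectation over the Poisson process, and the only standard input is the ``occupies exactly $c_1,\dots,c_\ell$'' probability. The points that need a line of care are (i) choosing $\eta$ so that the cells $C_{(m,m^*),j}$ carry equal Lebesgue mass $\abs{\lambda_{c(m)}}/J$ --- harmless, since $\eta$ is used only to record point locations; (ii) the observation that $\ep(\Sigma)$ and $\pi(\Sigma)$ are functions of the occupied-cell set alone, which follows from Definitions~\ref{def-epsilon-sigma} and~\ref{defn-pi} together with Proposition~\ref{prop-pi}; and (iii) the sum/expectation interchange, which is immediate from the summability bound carried over from Proposition~\ref{prop-new-wilson}.
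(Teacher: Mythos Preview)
Your proposal is correct and follows essentially the same approach as the paper. The paper's own justification for this proposition is extremely brief---just the paragraph preceding the statement, which asserts that the probability that $\Sigma$ is associated to a given compatible tuple and pairing is given by the term~\eqref{term2}, hence the sum is an expectation---and your write-up simply fills in the details of this argument (the cell partition, the Poisson occupancy probability $e^{-\abs{\mcl D}}\prod_i\abs{c_i}$, the cancellation of $e^{\abs{\mcl D}}$ against the prefactor, and the summability check).
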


\begin{proof}[Proof of Lemma~\ref{lem-main-poisson}]
Since $\BB{P}(\mcl E(\Sigma,J)) \to 1$ as $J \to \infty$, the theorem follows from taking the $J \to \infty$ limit of the sum in Proposition~\ref{prop-final-J} and applying the dominated convergence theorem.
\end{proof}

\section{Proofs for the surface sum of multiple loops}\label{sec-surface-story}
In this section, we prove Lemmas~\ref{lem-un} and~\ref{lem-other} as well as their multiple-loop versions.  We first recall that, in Lemma~\ref{lem-main-poisson}, we considered an arbitrary compact connected Lie group $G$, and we asserted that the Wilson loop expectation of a collection of loops with gauge group $G$ can be written as an expectation with respect to a Poisson point process $\Sigma$ on the \emph{space of matching-color lasso pairs}.  We first defined a sign $\ep(\Sigma)$  and a pairing $\pi(\Sigma)$  as functions of $\Sigma$.  (See Definitions~\ref{def-epsilon-sigma} and~\ref{defn-pi}.) We then showed that the Wilson loop expectation is given by a constant~\eqref{eqn-constant-term} times the expected value of $\ep(\Sigma)$ times the sum
\begin{equation}
    \label{eqn-exp-single-2}
 \avg{W_{a_1a_2}, \ldots, W_{a_{2|\Sigma|} a_{2|\Sigma|+1}}}_{\pi(\Sigma)}
\end{equation}
over all $a_1,\ldots,a_{2|\Sigma|+1} \in [N]$ with $a_{2|\Sigma|+1} = a_1$. We rewrite the product of expectations~\eqref{eqn-exp-single-2} as follows.    

\begin{prop}
Let $K = |\Sigma|$.\footnote{We note that this differs from the definition of $K$ in the previous section by a factor of $2$.}
If we write $\pi(\Sigma)$ canonically as
\[
\pi(\Sigma) = (p_1 \; q_1) \cdots (p_{K} \; q_{K}), \qquad p_j < q_j, \quad p_1 < \cdots < p_{K},
\]
then we can express~\eqref{eqn-exp-single-2} as 
\begin{equation} \label{eq-weight-term}
\prod_{k=1}^{K} \avg{W_{a_{p_k}a_{s(p_k)}} W_{a_{q_k}a_{s(q_k)}}},
\end{equation}
where $s$ is the cyclic permutation $(1 \cdots 2K)$.
\end{prop}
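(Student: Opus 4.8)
The plan is to simply unwind the definition of the pairing-weighted expectation $\avg{\cdot}_{\pi(\Sigma)}$ furnished by Wick's formula (Lemma~\ref{lem-wicks}) and to match it term by term against the claimed product. There is essentially no content beyond careful bookkeeping of indices.

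First I would rewrite the list of matrix entries appearing in~\eqref{eqn-exp-single-2}. Set $K = |\Sigma|$ and recall the cyclic convention $a_{2K+1} = a_1$. With $s = (1 \cdots 2K)$ the cyclic permutation, we have $a_{i+1} = a_{s(i)}$ for every $i \in \{1,\ldots,2K\}$, so the $i$-th entry in the list is $W_{a_i a_{i+1}} = W_{a_i a_{s(i)}}$ and hence~\eqref{eqn-exp-single-2} equals $\avg{W_{a_1 a_{s(1)}}, \ldots, W_{a_{2K} a_{s(2K)}}}_{\pi(\Sigma)}$. Here $\pi(\Sigma) \in \frk S_{2K}$ is a genuine pairing (all cycles of size $2$): by Definition~\ref{defn-pi} each of the $K$ points of $\Sigma$ contributes exactly one two-element block to the ordered set~\eqref{eqn-lex-a}, so $\pi(\Sigma)$ decomposes into exactly $K$ transpositions.

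Next I would apply the definition~\eqref{eqn-defn-paired-exp} of $\avg{\cdot}_\pi$: the above expression is, by that definition, the product over all pairs $i<j$ with $\pi(\Sigma)(i) = j$ of the covariances $\avg{W_{a_i a_{s(i)}} W_{a_j a_{s(j)}}}$. Finally I would observe that writing $\pi(\Sigma)$ in the canonical cycle form $(p_1\, q_1) \cdots (p_K\, q_K)$ with $p_k < q_k$ and $p_1 < \cdots < p_K$ puts the index set $\{(i,j) : i < j,\ \pi(\Sigma)(i) = j\}$ in bijection with $\{(p_k, q_k)\}_{k=1}^{K}$, since each transposition of $\pi(\Sigma)$ supplies exactly the pair formed by its smaller and larger elements. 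Substituting this reindexing into the product yields $\prod_{k=1}^{K} \avg{W_{a_{p_k} a_{s(p_k)}} W_{a_{q_k} a_{s(q_k)}}}$, which is precisely~\eqref{eq-weight-term}.

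I do not expect any genuine obstacle: the statement is a transcription of Wick's formula into the notation at hand. The only point requiring a moment's attention is confirming that the cyclic convention $a_{2K+1} = a_1$ is faithfully encoded by $s$, and that $\pi(\Sigma)$ really is a pairing of a $2K$-element set (so that it has exactly $K$ two-cycles) — both of which follow immediately from Definition~\ref{defn-pi} and the structure of~\eqref{eqn-lex-a}.
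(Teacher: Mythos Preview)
Your proposal is correct and takes essentially the same approach as the paper, which dispatches the claim in a single sentence: ``This follows from the definition of $\avg{\cdot}_{\pi}$ for a pairing $\pi$.'' Your version simply spells out the bookkeeping (the reindexing via $s$ and the bijection between transpositions and ordered pairs) that the paper leaves implicit.
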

\begin{proof}
    This follows from the definition of $\avg{\cdot}_{\pi}$ for a pairing $\pi$.
\end{proof}

Now, for fixed $p,q \in [2K]$, the covariance 
\begin{equation}\label{eq-covariance}
	\avg{W_{a_p a_{s(p)}} W_{a_q a_{s(q)}}}
\end{equation}
is nonzero if and only if the four values $a_p, a_{s(p)}, a_q, a_{s(q)}$ satisfy some relation.  The relation that these four values must satisfy depends on the covariance structure of $W$, which in turn depends on the particular Lie algebra $\frk g$. For the four Lie algebras that we are considering---namely, $\uN$, $\soN$, $\suN$, and $\SphN$---there are four possible relations to consider.  We will label each these relations, so that we can easily refer to them throughout the section.

\begin{defn}\label{defn-relation}
We define four types of \emph{relations} between $a_p, a_{s(p)}, a_q, a_{s(q)}$:
\begin{enumerate}[I.]
    \item $a_p = a_{s(q)}$ and $a_{s(p)} = a_q$;
    \item  $a_p = a_{q}$ and $a_{s(p)} = a_{s(q)}$;
    \item $a_p = a_{s(p)}$ and $a_{q} = a_{s(q)}$;
    \item  $a_p\equiv a_{q}+N\pmod {2N}$ and $a_{s(p)}\equiv a_{s(q)}+N\pmod {2N}$.
\end{enumerate}
We note that the relations of type \rom{1}, \rom{2}, and \rom{3} are satisfied simultaneously when $a_p=a_{s(p)} = a_q = a_{s(q)}$, and the relations of type \rom{1} and \rom{4} are satisfied simultaneously when $a_p=a_{s(p)}+N = a_q+N = a_{s(q)} \pmod {2N}$.
\end{defn}

\begin{prop}\label{prop-compatible-one}
    The covariance~\eqref{eq-covariance} is nonzero if and only if $a_p, a_{s(p)}, a_q, a_{s(q)}$ satisfy a relation of type $n$ for some $n \in \mcl R^G$, where 
    \begin{equation}
    \label{eqn-defn-R}
    \mathcal R^{\UN} = \{\romnum{1}\}, \qquad \mathcal R^{\SON} = \{\romnum{1}, \romnum{2}\}, \qquad \mathcal R^{\SUN} = \{\romnum{1},\romnum{3}\}, \qquad \mathcal R^{\SphN} = \{\romnum{1},\romnum{4}\},
\end{equation}
Consequently,~\eqref{eq-weight-term} is nonzero if and only if, for each $k$, the indices $a_{p_k}, a_{s(p_k)}, a_{q_k}, a_{s(q_k)}$ satisfy a relation of type $n$ for some $n \in \mcl R^G$.
\end{prop}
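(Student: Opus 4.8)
The plan is to reduce the whole statement to a single Gaussian computation---the covariance $\avg{W_{ab}W_{cd}}$ of two entries of $W$, carried out once for each of the four Lie algebras---and then to read off from the resulting explicit formula which of the relations of Definition~\ref{defn-relation} characterize its non-vanishing. The first step is to write $W$ in an orthonormal basis: by \eqref{eqn-bm-lie-algebra}, $W = W_1 = \sum_{j=1}^{\dim\fg} B_1^{(j)} X_j$, where $\{X_j\}$ is an $\inp{\cdot}{\cdot}_{\fg}$-orthonormal basis of $\fg$ and the $B_1^{(j)}$ are i.i.d.\ standard real Gaussians. Expanding entrywise in this \emph{real} basis avoids any complex-Gaussian bookkeeping, and since $\avg{B_1^{(j)}B_1^{(k)}} = \delta_{jk}$ we get
\begin{equation*}
 \avg{W_{ab}W_{cd}} \;=\; \sum_{j=1}^{\dim\fg}(X_j)_{ab}(X_j)_{cd} \;=:\; Q^G_{ab,cd}.
\end{equation*}
This quantity does not depend on the choice of orthonormal basis (under a real orthogonal change of basis it transforms by $\sum_j O_{jk}O_{jl} = \delta_{kl}$), so it is an invariant of $\fg$ equipped with the inner product $\tfrac{\beta N}{2}\inp{\cdot}{\cdot}_F$; the proposition is now reduced to computing $Q^G$ and checking that it is nonzero exactly on the union of the relations in $\mathcal R^G$.

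I would then carry out the four computations using convenient explicit orthonormal bases of the Lie algebras of Table~\ref{table-lie} (write $E_{kl}$ for the matrix with entries $0$ except a single $1$ in position $(k,l)$). For $\uN$ ($\beta = 2$), taking the orthonormal basis $\tfrac{i}{\sqrt N}E_{kk}$ ($1\le k\le N$), $\tfrac{1}{\sqrt{2N}}(E_{kl}-E_{lk})$ and $\tfrac{i}{\sqrt{2N}}(E_{kl}+E_{lk})$ ($k<l$), a short count of which basis matrices carry both an $(a,b)$- and a $(c,d)$-entry gives $Q^{\uN}_{ab,cd} = -\tfrac1N\delta_{ad}\delta_{bc}$. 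For $\suN$ one deletes the unit trace direction $\tfrac{i}{N}I_N$, which contributes $(\tfrac iN\delta_{ab})(\tfrac iN\delta_{cd})$, so $Q^{\suN}_{ab,cd} = -\tfrac1N\delta_{ad}\delta_{bc} + \tfrac1{N^2}\delta_{ab}\delta_{cd}$. For $\soN$ ($\beta = 1$), using $\tfrac1{\sqrt N}(E_{kl}-E_{lk})$ ($k<l$) gives $Q^{\soN}_{ab,cd} = \tfrac1N(\delta_{ac}\delta_{bd} - \delta_{ad}\delta_{bc})$. For $\sphN$ ($\beta = 4$), using a basis of $\uN$ adapted to the block form of $J$, the same bookkeeping gives $Q^{\sphN}_{ab,cd} = -\tfrac1N\delta_{ad}\delta_{bc} + \tfrac1N J_{ac}J_{bd}$, the second term being the one whose non-vanishing is governed by the explicit entries of $J$ and hence by relation~\romnum{4}. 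In each case one checks $\sum_c Q^G_{ac,cb} = \fc_{\fg}\,\delta_{ab}$, recovering the values $-1$, $-1+\tfrac1{N^2}$, $-1+\tfrac1N$, $-1-\tfrac1N$ of $\fc_{\fg}$ in Table~\ref{table-lie}; this is both a useful sanity check and a way to fix the sign of the $J$-term for $\sphN$.

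With $(a,b,c,d) = (a_p, a_{s(p)}, a_q, a_{s(q)})$, one reads off the non-vanishing locus of $Q^G_{ab,cd}$: for $\uN$ it is $\{a_p = a_{s(q)},\ a_{s(p)} = a_q\}$, i.e.\ relation~\romnum{1}; for $\soN$, relation~\romnum{1} or relation~\romnum{2}; for $\suN$, relation~\romnum{1} or relation~\romnum{3} (the coefficients $-\tfrac1N$ and $\tfrac1{N^2}$ cannot cancel even when both relations hold); for $\sphN$, relation~\romnum{1} or relation~\romnum{4}. This is precisely $\mathcal R^G$ as in \eqref{eqn-defn-R}, which proves the first assertion. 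The ``Consequently'' clause is immediate: \eqref{eq-weight-term} is a finite product over $k\in[K]$ of covariances \eqref{eq-covariance} with $(p,q) = (p_k,q_k)$, and such a product is nonzero iff every factor is.

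I expect the $\sphN$ case to be the main obstacle---getting the exact index placement and sign of the $J$-term in $Q^{\sphN}$, and reconciling the $\pmod{2N}$ form $a_p \equiv a_q + N \pmod{2N}$ of relation~\romnum{4} with the $N\times N$ realization of $\SphN$ used in Table~\ref{table-lie} (with the Casimir check $\fc_{\fg} = -1-\tfrac1N$ as the safeguard). A secondary point worth addressing is the behaviour on the locus where two relations of $\mathcal R^G$ hold simultaneously: for $\uN$, $\suN$, $\sphN$ there is no cancellation there, but for $\soN$ one has $Q^{\soN}_{ab,cd} = \tfrac1N(\delta_{ac}\delta_{bd} - \delta_{ad}\delta_{bc}) = 0$ at $a = b = c = d$ (indeed $W \in \soN$ has vanishing diagonal), so the ``if'' direction should be read with this harmless exception; the ``only if'' direction (nonzero $\Rightarrow$ some relation of $\mathcal R^G$ holds), which is what the surface counts in Lemmas~\ref{lem-un} and~\ref{lem-other} actually use, holds without exception.
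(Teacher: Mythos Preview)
Your approach is essentially the same as the paper's: the paper simply cites the explicit covariance formulas $\avg{W_{ab}W_{cd}}$ from~\cite{Levy2011a,Dahlqvist2017} (reproduced as~\eqref{eqn-casimir}) and reads off the result, whereas you derive those same formulas from an explicit orthonormal-basis computation and the Casimir check---more self-contained but otherwise identical in substance. Your observation about the $\SON$ edge case $a_p=a_{s(p)}=a_q=a_{s(q)}$ (where relations~\romnum{1} and~\romnum{2} both hold yet the covariance vanishes) is correct and is a harmless imprecision in the paper's ``if and only if'' statement that the paper does not acknowledge; as you say, only the ``only if'' direction is needed downstream.
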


\begin{proof}
    The proposition follows from the covariance structure of $W$, which is given, e.g., in~\cite[Lemma 1.1]{Levy2011a} or~\cite{Dahlqvist2017} as
            \begin{equation}
        \label{eqn-casimir} \avg{W_{ab} W_{cd}} = 
    \begin{cases}
    \left(-\frac{1}{N}\right){\1_{a=d} \1_{b=c}} & \text{if $G=\UN$} \\
		\left(-\frac{1}{N}\right){\1_{a=d} \1_{b=c}}
		+\left(\frac{1}{N}\right){\1_{a=c} \1_{b=d}}, & \text{if $G=\SON$} \\
		\left(-\frac{1}{N}\right){\1_{a=d} \1_{b=c}}+
		\left(\frac{1}{N^2}\right){\1_{a=b} \1_{c=d}}, & \text{if $G=\SUN$} \\
		\left(-\frac{1}{N}\right){\1_{a=d} \1_{b=c}}
		+\eta(a)\eta(b)\left(\frac{1}{N}\right){\1_{a \equiv c + N/2} \1_{b \equiv d + N/2}}, & \text{if $G=\SphN$} 
    \end{cases}
    \end{equation}
    where, in the fourth expression, the equivalence $\equiv$  is modulo $N$, and $\eta(i) = \1_{\{i \le N/2\}} - \1_{\{i > N/2\}}$.
   \end{proof}

By Proposition~\ref{prop-compatible-one}, the indices $a_1,\ldots,a_{2K}$ must satisfy some set of relations in order for~\eqref{eq-covariance} to be nonzero.  We can encode these relations geometrically as follows:   we view the indices $a_1,\ldots,a_{2K}$  as labelings of the vertices of a surface, with edges joining the vertices labeled $a_p$ and $a_{s(p)}$ for each $p$; and we view the relations as  ways of gluing or pinching edges. 

We now describe this geometric construction in detail.  We  define a $2$-dimensional CW complex given by an oriented surface $H$  whose boundary is a directed cycle graph with $2|\Sigma| = 2K$ edges.  (If $K = 0$, then $H$ has the sphere topology; otherwise, $H$ has the disk topology.)
We label the vertices of the boundaries $\partial H$ by $v_1,\ldots,v_{2K}$ in cyclic  order around the boundary. By definition of $s$, the edges of the boundaries  $\partial H^{(i)}$ are given by $\overrightarrow{v_p v_{s(p)}}$ for $p \in [2K]$. If we label each vertex $v_j$ by $a_j$, then we can associate each type of relation  between the $4$-tuple $a_p,a_{s(p)},a_q,a_{s(q)}$ to a way to \emph{glue} or \emph{contract} the pair of edges $\overrightarrow{v_p v_{s(p)}}$ and $\overrightarrow{v_q v_{s(q)}}$.  We choose the gluing/contraction operation so that the relation compares  the labels of the two pairs of vertices identified by the operation.
\begin{itemize}
\item
We associate relation \rom{1}  to gluing the pair of edges $\overrightarrow{v_{p} v_{s(p)}}$ and $\overrightarrow{v_{q} v_{s(q)}}$ so that  the vertices  $v_p,v_{q}$ are identified and the vertices  $v_{s(p)},v_{s(q)}$ are identified.
\item
We associate relations \rom{2} and \rom{4}  to gluing the pair of edges $\overrightarrow{v_{p} v_{s(p)}}$ and $\overrightarrow{v_{q} v_{s(q)}}$ so that the vertices  $v_p,v_{s(q)}$ are identified and the vertices  $v_q,v_{s(p)}$ are identified.
\item
We associate relation \rom{3} to contracting the pair of edges $\overrightarrow{v_p v_{s(p)}}$ and $\overrightarrow{v_q v_{s(q)}}$. This operation identifies the vertices  $v_p,v_{s(p)}$ and the vertices $v_q,v_{s(q)}$.
\end{itemize}

Given a sequence $\mathbf{r} = (r_k)_{k \in [K]}$ in $\{\romnum{1}, \romnum{2}, \romnum{3}, \romnum{4}\}^{K}$, we construct a closed surface by gluing/contracting each pair of edges $\overrightarrow{v_{p_k} v_{s(p_k)}}$ and $\overrightarrow{v_{q_k} v_{s(q_k)}}$ according to the relation $r_k$:

\begin{defn}[The surface $H(\pi, \bm r)$]\label{defn-H}
Let $\mathbf{r} = (r_k)_{k \in [K]}$ in $\{\romnum{1}, \romnum{2}, \romnum{3}, \romnum{4}\}^{K}$, and let $\sim$ be the equivalence relation that describes  applying to each pair of edges $\overrightarrow{v_{p_k} v_{s(p_k)}}$ and $\overrightarrow{v_{q_k} v_{s(q_k)}}$ on $\bigsqcup_{i=1}^n \partial H^{(i)}$ the gluing/contraction operation associated to the relation $r_k$. We define $H(\pi,\mathbf{r})$ as the CW complex given by the disjoint union of closed surfaces $\left( \bigsqcup_{i=1}^n H^{(i)}\right)/\sim$, with edges and vertices given by the embedded graph without crossings $\left(\bigsqcup_{i=1}^n \partial H^{(i)}\right)/\sim$.
\end{defn}

The following lemma asserts that we can view the set of labels $a_1,\ldots,a_{2K}$ such that each $a_{p_k}, a_{s(p_k)}, a_{q_k}, a_{s(q_k)}$ satisfies the relation $r_k$ as a labeling of the vertices of $H(\pi,\mathbf{r})$.

\begin{prop}\label{lem-label-bijection}
Given a sequence $\mathbf{r} = (r_k)_{k \in [K]}$ in $\{\romnum{1}, \romnum{2}, \romnum{3}, \romnum{4}\}^{K}$, there exists a bijection between the following two sets.
\begin{itemize}
\item The set $\mcl L_1(\pi,\mathbf{r})$ of labelings $a_1,\ldots,a_{2K}$ of the vertices of $\bigsqcup_{i=1}^n \partial H^{(i)}$ (with $v_j$ labeled by $a_j:= a(j)$) such that, for each $k$, the labels $a_{p_k}, a_{s(p_k)}, a_{q_k}, a_{s(q_k)}$ satisfy relation $r_k$.  
    \item The set $\mcl L_2(\pi,\mathbf{r})$ of labelings of the vertices of  $H(\pi,\mathbf{r})$ by the elements of $[N]$.
\end{itemize}
\end{prop}

\begin{proof}
We define a bi-directed graph $\BB{G}(\pi,\mathbf{r})$ on the vertices $\{v_1,\ldots,v_{2K}\}$ as the graph with the following two bi-directed edges for each $k \in [K]$.
\begin{itemize}
    \item We join by edges the two pairs of vertices in $\{v_{p_k},v_{s(p_k)},v_{q_k}v_{s(q_k)}\}$ identified by the gluing/contraction operation that corresponds to the relation $r_k$.
    \item We assign each of these two edges of this graph an orientation at both ends, such that the edges point outward toward the vertices $v_{p_k}, v_{q_k}$ and inward away from the vertices $v_{s(p_k)}, v_{s(q_k)}$.
\end{itemize}

Following standard terminology, we call an edge of the graph \emph{directed} if it is oriented outward toward one of its vertices and inward away from the other, and \emph{introverted} (resp. \emph{extroverted}) if it is oriented inward away from (resp.\ outward toward) both of its vertices.

  If $G = \SON, \UN,$ or $\SUN$, then a labeling in $\mcl L_1(\pi,\mathbf{r})$ is a labeling of each vertex in $\bigsqcup_{i=1}^n \partial H^{(i)}$ by an element of $[N]$  such that all the vertices in each connected component of $\BB{G}(\pi,\mathbf{r})$ have the same label.  This corresponds to a labeling of each connected component of $\BB{G}(\pi,\mathbf{r})$; or, equivalently, to a labeling of the vertices of $H(\pi,\mathbf{r})$ by $[N]$.
  
  If $G = \SphN$, then a labeling in $\mcl L_1(\pi,\mathbf{r})$ is a labeling of each vertex in $\bigsqcup_{i=1}^n \partial H^{(i)}$ by an element of $[N]$  such that any two vertices connected by a directed edge have the same label, and such that the labels of any two vertices connected by an introverted or extroverted edge differ by $N/2$.  Since each connected component of $\BB{G}(\pi,\mathbf{r})$ has an even number of introverted or extroverted edges, the set of such labelings is in bijection with the set of labelings of the connected components of $\BB{G}(\pi,\mathbf{r})$---or, equivalently, with the set of labelings of the vertices of $H(\pi,\mathbf{r})$ by $[N]$.
 \end{proof}

We are now ready to prove Lemmas~\ref{lem-un} and~\ref{lem-other}.  We state these two lemmas together, in slightly different terminology, as the following proposition.

\begin{prop}\label{prop-both-lemmas}
	Let $G$ be one of the classical Lie groups $\UN$, $\SON$, $\SUN$, $\SphN$. (For the last case, we assume $N$ is even.)  
	For $\mathbf{r} \in (\cR^G)^K$, we let
	$H(\pi,\bm r)$ be the CW complex defined in Definition~\ref{defn-H}.  Then~\eqref{eq-weight-term} is equal to the sum, over all $\mathbf{r} \in (\cR^G)^K$, of
	  \begin{equation}
        \label{eqn-omega-term-1}
    \begin{cases}
    (-1)^{-\#_{\romnum{1}}(\mathbf{r})} N^{-1+\chi}, & \text{if $G=\UN$} \\
	(-1)^{-\#_{\romnum{1}}(\mathbf{r})} N^{-1+\chi} , & \text{if $G=\SON$} \\
	 (-1)^{-\#_{\romnum{1}}(\mathbf{r})} N^{-2\#_{\romnum{3}}(\mathbf{r})} N^{-1+\chi}, & \text{if $G=\SUN$} \\
	 (-1)^{-\#_{\romnum{1}}(\mathbf{r})} (-1)^{\mu} N^{-1+\chi}, & \text{if $G=\SphN$} 
    \end{cases}
    \end{equation}
    where 	$\#_{n}(\mathbf{r})$ is the number of copies of $n$ in $\mathbf{r}$, and $\chi = \chi(H(\pi,\bm r))$ and $\mu = \mu(H(\pi,\bm r))$ are the Euler characteristic and non-orientable genus, respectively, of  $H(\pi,\bm r)$.
\end{prop}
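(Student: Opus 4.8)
The plan is to expand the product \eqref{eq-weight-term} using the explicit covariances \eqref{eqn-casimir}, and then to reorganize the expansion as a sum over gluing patterns $\mathbf r$ and over labelings of the associated closed surface $H(\pi,\mathbf r)$. For each of the four groups, \eqref{eqn-casimir} writes $\avg{W_{a_p a_{s(p)}}W_{a_q a_{s(q)}}}$ as a sum of at most two terms, each of the form $w\cdot\mathbf 1[\,a_p,a_{s(p)},a_q,a_{s(q)}\text{ satisfy relation }n\,]$ with $n\in\cR^G$ (Proposition~\ref{prop-compatible-one}), where the scalar $w$ is $-1/N$ for relation \romnum 1, $\,1/N$ for relation \romnum 2, $\,1/N^2$ for relation \romnum 3, and $\eta(a_p)\eta(a_{s(p)})/N$ for relation \romnum 4. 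Expanding over the $K$ factors gives
\[
\prod_{k=1}^K\avg{W_{a_{p_k}a_{s(p_k)}}W_{a_{q_k}a_{s(q_k)}}}=\sum_{\mathbf r\in(\cR^G)^K}\Bigl(\prod_{k=1}^K w(r_k)\Bigr)\prod_{k=1}^K\mathbf 1[\,a_{p_k},a_{s(p_k)},a_{q_k},a_{s(q_k)}\text{ satisfy }r_k\,],
\]
with $w(r_k)$ the corresponding scalar (independent of the labels unless $r_k=\romnum 4$). Summing over all labelings $a_1,\dots,a_{2K}\in[N]$ with the cyclic identification and exchanging sums, it suffices to evaluate, for each fixed $\mathbf r\in(\cR^G)^K$, the contribution of the labelings satisfying all of $r_1,\dots,r_K$.

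For fixed $\mathbf r$, Lemma~\ref{lem-label-bijection} puts this set of labelings in bijection with the labelings by $[N]$ of the vertices of $H(\pi,\mathbf r)$ (Definition~\ref{defn-H}); hence there are exactly $N^{V}$ of them, where $V=V(\pi,\mathbf r)$ is the number of vertices of $H(\pi,\mathbf r)$. To convert $V$ into an Euler characteristic I would count the cells of $H(\pi,\mathbf r)$ directly: the number of $2$-cells equals the number of faces of $H$ (one, in the single-loop case), and the $1$-cells arise from the $2K$ boundary edges by merging each relation-\romnum 1, -\romnum 2, or -\romnum 4 pair into a single edge and deleting both edges of each relation-\romnum 3 pair, so
\[
E=2K-\#_{\romnum 1}(\mathbf r)-\#_{\romnum 2}(\mathbf r)-\#_{\romnum 4}(\mathbf r)-2\#_{\romnum 3}(\mathbf r)=K-\#_{\romnum 3}(\mathbf r),
\]
using $\sum_n\#_n(\mathbf r)=K$. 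Thus $\chi:=\chi(H(\pi,\mathbf r))=V-E+1$ yields $V=\chi-1+K-\#_{\romnum 3}(\mathbf r)$. (The degenerate case $K=0$, where $H(\pi,\mathbf r)$ is a sphere and \eqref{eq-weight-term} is an empty product, is immediate.)

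For $G=\UN,\SON,\SUN$ the scalar $\prod_k w(r_k)$ does not depend on the labeling, so the contribution of $\mathbf r$ is $\bigl(\prod_k w(r_k)\bigr)N^{V}$; inserting the value of $V$ and simplifying the powers of $N$ gives exactly the corresponding line of \eqref{eqn-omega-term-1}. Concretely: for $\UN$ only $\mathbf r=(\romnum 1,\dots,\romnum 1)$ occurs, $\prod_k w(r_k)=(-1/N)^K$ and $\#_{\romnum 3}=0$, giving $(-1)^{\#_{\romnum 1}}N^{-1+\chi}$; for $\SON$, $\prod_k w(r_k)=(-1)^{\#_{\romnum 1}}N^{-K}$ and $\#_{\romnum 3}=0$, again giving $(-1)^{\#_{\romnum 1}}N^{-1+\chi}$; for $\SUN$, $\prod_k w(r_k)=(-1)^{\#_{\romnum 1}}N^{-\#_{\romnum 1}-2\#_{\romnum 3}}$, and $K-\#_{\romnum 1}=\#_{\romnum 3}$ gives $(-1)^{\#_{\romnum 1}}N^{-2\#_{\romnum 3}}N^{-1+\chi}$. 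Summing over $\mathbf r\in(\cR^G)^K$ proves the proposition for these three groups; once the cell count is in hand this is essentially the whole argument.

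The genuinely delicate case is $G=\SphN$, where relation \romnum 4 carries the label-dependent sign $\eta(a_{p_k})\eta(a_{s(p_k)})$ and $\mathcal R^{\SphN}=\{\romnum 1,\romnum 4\}$. Here the contribution of $\mathbf r$ is $(-1)^{\#_{\romnum 1}}N^{-K}$ times $\sum\prod_{k:\,r_k=\romnum 4}\eta(a_{p_k})\eta(a_{s(p_k)})$ over valid labelings, which I would evaluate using the component structure of the bi-directed graph $\BB G(\pi,\mathbf r)$ from the proof of Lemma~\ref{lem-label-bijection}. Fixing a base vertex and a $\mathbb Z/2$-parity function on each component $C$ (well defined up to a global flip, since each component carries an even number of introverted/extroverted edges and valid labelings exist), a valid labeling is a choice of base label $b_C\in[N]$ per $C$, with $\eta$ of the label of $v\in C$ equal to $(-1)^{\mathrm{par}(v)}\eta(b_C)$; since each introverted/extroverted edge has exactly one $\eta$-marked endpoint, the number $d_C$ of $\eta$-marked vertices in $C$ equals the number of such edges in $C$, hence is even, so $\sum_{b_C\in[N]}\eta(b_C)^{d_C}=N$ and the per-component sum is $(-1)^{O_C}N$, where $O_C$ is the (flip-invariant) number of odd-parity $\eta$-marked vertices in $C$. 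Thus the sum over valid labelings is $N^{V}\prod_C(-1)^{O_C}$, and since $\#_{\romnum 3}=0$ makes $N^{-K}N^{V}=N^{-1+\chi}$, the $\SphN$ case reduces to the identity $\prod_C(-1)^{O_C}=(-1)^{\mu(H(\pi,\mathbf r))}$. Proving this last identity is what I expect to be the main obstacle, since it is the one place where a purely combinatorial parity — read off from paths in $\BB G(\pi,\mathbf r)$ — must be matched with a topological invariant of the surface; the natural route is to interpret a per-component parity function as an orientation of $H(\pi,\mathbf r)$ on the complement of the relation-\romnum 4 gluings, the parity cochain (the indicator of the introverted/extroverted edges) as a representative of the first Stiefel--Whitney class $w_1(H(\pi,\mathbf r))$, and $\sum_C O_C\bmod 2$ as the total orientation monodromy, and then to invoke $\mu\equiv\chi\pmod 2$ for closed surfaces to conclude $\prod_C(-1)^{O_C}=(-1)^\chi=(-1)^\mu$. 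Granting this, the $\SphN$ contribution of $\mathbf r$ is $(-1)^{\#_{\romnum 1}+\mu}N^{-1+\chi}$, the last line of \eqref{eqn-omega-term-1}, which completes the proof.
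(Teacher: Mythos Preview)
Your argument for $\UN$, $\SON$, $\SUN$ is correct and essentially identical to the paper's: expand via \eqref{eqn-casimir}, sum over $\mathbf r\in(\cR^G)^K$, count compatible labelings by Lemma~\ref{lem-label-bijection} as $N^{V}$, and convert $V$ to $\chi$ via the cell count (the paper writes $E=\#_{\romnum 1}+\#_{\romnum 2}+\#_{\romnum 4}$, your $K-\#_{\romnum 3}$ is the same thing).

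For $\SphN$ there is a genuine gap. You correctly isolate the crux --- that the label-dependent sign $\prod_{k:r_k=\romnum 4}\eta(a_{p_k})\eta(a_{s(p_k)})$ is constant over valid labelings and equals $(-1)^{\mu}$ --- and this is exactly the content of the paper's Proposition~\ref{prop-sgn}. But your sketched proof of that identity is not complete: the phrase ``$\sum_C O_C\bmod 2$ as the total orientation monodromy'' does not have a precise meaning (orientation monodromy is a class in $H^1(M;\mathbb Z/2)$, not a number), and you have not actually established that your combinatorial parity $\sum_C O_C$ equals $\chi\bmod 2$; invoking $w_1$ suggestively is not a proof. The reduction to the bi-directed graph and the per-component factorization are fine, but the last step is the whole content.

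The paper's route for this step is quite different and fully elementary. It packages the edge labels of $H$ as a word in $L\cup L^{-1}$ and shows (Proposition~\ref{prop-eta-inv}) that the sign $\nu(C,\bm a)$ is invariant under four elementary transformations (edge-splitting, face-splitting, $xx^{-1}$-contraction, face-reversal with a simultaneous $N/2$-shift of vertex labels). By the classification theorem for closed surfaces (Proposition~\ref{prop-equiv-C}), these moves reduce any $C$ to a disjoint union of standard faces $x_1y_1x_1^{-1}y_1^{-1}\cdots$ (orientable, sign $+1$) and $x_1x_1\cdots x_mx_m$ (non-orientable genus $m$, sign $(-1)^m$), where the sign is computed by direct inspection (Example~\ref{ex-C}, Proposition~\ref{prop-eachface}). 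This completely avoids characteristic-class language and is what you should replace your final paragraph with.
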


To prove the proposition, we need the following result to handle the $\eta$ terms that appear in the covariance~\eqref{eqn-casimir} of $W$ in the case $G = \SphN$.

\begin{prop}\label{prop-sgn}
Let $\nu: [N]\to \{\pm 1\}$ be the mapping defined as
 	\begin{equation}
 	    \label{eqn-eta-def}
		\nu(i) = \1_{\{i \le N/2\}} - \1_{\{i > N/2\}}.
	 	\end{equation}
	so that $\nu(p_k)=-\nu(q_k)$ if $r_k=\romnum{4}$.
Suppose that $\mathbf{r} = (r_k)_{k \in [K]} \in \mathcal (R^{\SphN})^K$ is a sequence of relations.  
Then, if $a_1,\ldots,a_{2K}$ are such that $a_{p_k}, a_{s(p_k)}, a_{q_k}, a_{s(q_k)}$ satisfies the relation $r_k$ for each $k$, then \begin{equation}
\prod_{\{k : r_k = \romnum{4}\}} 	\nu(a_{p_k})\nu(a_{s(p_k)})\label{eqn-defn-sgn}
\end{equation}
is equal to $(-1)$ raised to the power of the non-orientable genus of $H(\pi,\bm r)$ (which is zero if the surface is orientable).
\end{prop}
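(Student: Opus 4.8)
The plan is to prove the equivalent statement that $\prod_{k:\,r_k=\romnum{4}}\nu(a_{p_k})\nu(a_{s(p_k)})$ equals $(-1)^{\chi(H(\pi,\bm r))}$, and then to invoke the classification of surfaces. Indeed, for any closed (possibly disconnected) surface $\Sigma$ one has $(-1)^{\mu(\Sigma)}=(-1)^{\chi(\Sigma)}$: an orientable component has even Euler characteristic $2-2g$ and contributes $0$ to the (total) non-orientable genus, while a non-orientable component has $\chi=2-\mu_i$, so $(-1)^{\chi(\Sigma)}=\prod_i(-1)^{\chi_i}=\prod_i(-1)^{\mu_i}=(-1)^{\mu(\Sigma)}$. (Using $a_{p_k}\equiv a_{q_k}+N/2$ and $a_{s(p_k)}\equiv a_{s(q_k)}+N/2$ modulo $N$ together with the definition of $\nu$, one has $\nu(a_{p_k})\nu(a_{s(p_k)})=\nu(a_{q_k})\nu(a_{s(q_k)})$, so the displayed product does not depend on how the two edges of each pair are named $p_k,q_k$.)

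To evaluate the product I would build $H(\pi,\bm r)$ from the disjoint union of the polygons $H^{(1)},\dots,H^{(n)}$ by performing the $K$ edge-gluings one at a time in an arbitrary order $k=1,\dots,K$ (each not-yet-glued boundary edge remains on the boundary, so this is well defined). Let $H_t$ be the compact surface-with-boundary obtained after the first $t$ gluings, let $\beta_t$ be its number of boundary circles, and set $\sigma_t:=\prod_{k\le t,\ r_k=\romnum{4}}\nu(a_{p_k})\nu(a_{s(p_k)})$. The claim to establish by induction on $t$ is
\[
\sigma_t=(-1)^{\chi(H_t)+\beta_t}.
\]
For $t=0$ both sides equal $1$, and for $t=K$ we have $\beta_K=0$, so $\sigma_K=(-1)^{\chi(H(\pi,\bm r))}=(-1)^{\mu(H(\pi,\bm r))}$ by the first paragraph, which is the assertion.

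The inductive step is a case analysis of the effect of gluing the two boundary arcs $b_{p_k},b_{q_k}$ at step $k=t+1$. Gluing two closed boundary arcs changes $\chi$ by $-1$, $0$, or $+1$ (according to how many of the two induced vertex identifications are already in force in $H_t$), and changes $\beta$ by $-1$ if the arcs lay on different boundary circles of $H_t$, and otherwise by $+1$ or $0$ according as the gluing is orientation-compatible ($r_k=\romnum{1}$, an ``$x\cdots x^{-1}$'' identification) or orientation-reversing ($r_k=\romnum{4}$, an ``$x\cdots x$'' identification); the degenerate configurations (arcs sharing an endpoint, a circle reduced to a single edge, and so on) are handled by the same bookkeeping. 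A routine check of the finitely many cases shows that $\chi(H_t)+\beta_t$ changes by an even number whenever $r_k=\romnum{1}$, consistently with $\sigma_{t+1}=\sigma_t$ in that case.

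The crux, and the step I expect to be the main obstacle, is the case $r_k=\romnum{4}$: there $\sigma_{t+1}=\sigma_t\cdot\nu(a_{p_k})\nu(a_{s(p_k)})$, and one must show that $\nu(a_{p_k})\nu(a_{s(p_k)})=-1$ exactly when $b_{p_k}$ and $b_{q_k}$ lie on the same boundary circle of $H_t$ (in which case $\chi+\beta$ changes parity) and $=+1$ when they lie on different circles (in which case it does not). This is where the specifically symplectic input — the sign $\eta(a)\eta(b)=\nu(a_{p_k})\nu(a_{s(p_k)})$ in the covariance~\eqref{eqn-casimir} — must be tied to the topology. I would do this by tracking, along the boundary circle of $H_t$ containing $b_{p_k}$ (and $b_{q_k}$, if on the same circle), the running product of the signs $\nu(a_j)$ picked up at successive corners, using the relations imposed by the already-performed gluings to control the $N/2$-jumps; because $(a_j)$ is a consistent labelling (Lemma~\ref{lem-label-bijection}) these jumps occur in a controlled parity, and comparing the count for the circle(s) through $b_{p_k}$ and $b_{q_k}$ yields the claimed value. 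At the level of the final closed surface the identity to be matched is the Wu relation $\langle w_1(H)^2,[H]\rangle=\chi(H)\bmod 2$, with $w_1(H)$ represented by the cochain that is nonzero on precisely the type-$\romnum{4}$ edges; I would use this to guide and sanity-check the computation.
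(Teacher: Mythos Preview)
Your reduction to showing the product equals $(-1)^{\chi}$ is fine, as is the observation $(-1)^{\chi}=(-1)^{\mu}$ for closed surfaces. The gap is the inductive invariant itself: the claim
\[
\sigma_t=(-1)^{\chi(H_t)+\beta_t}
\]
is false for general compatible labelings $(a_j)$, because the individual factors $\nu(a_{p_k})\nu(a_{s(p_k)})$ are \emph{not} determined by the topology of $H_t$. They depend on the global labeling, which is constrained by all $K$ relations simultaneously, not only by the gluings already performed.

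A concrete counterexample: take $K=3$, the hexagon $v_1\cdots v_6$, $\pi=(1\,4)(2\,5)(3\,6)$, $\bm r=(\romnum{4},\romnum{1},\romnum{4})$. Solving the relations shows $a_1,a_2\in[N]$ are free and $a_3=a_5\equiv a_2+N/2$, $a_4\equiv a_1+N/2$, $a_6=a_2$. With $N=4$ and $a_1=a_2=1$ this gives
\[
\sigma_1=\nu(a_1)\nu(a_2)=\nu(1)\,\nu(1)=+1,
\]
whereas gluing the opposite edges $e_1,e_4$ with the same orientation turns the hexagon into a M\"obius band, so $\chi(H_1)=0$, $\beta_1=1$ and $(-1)^{\chi_1+\beta_1}=-1$. (With $a_2=3$ instead, $\sigma_1=-1$ and the claim does hold; so the invariant matches for some labelings and not others.) Your ``crux''---that $\nu(a_{p_k})\nu(a_{s(p_k)})=-1$ precisely when the two arcs lie on the same boundary circle of $H_t$---is therefore not provable, because it is not true.

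The paper sidesteps this by never separating the topology from the labeling. It works with pairs $(C,\bm a)$ and shows that $\nu(C,\bm a)$ is preserved by elementary transformations acting on the cell complex \emph{and} the vertex labels together; it then reduces to the normal forms $x_1y_1x_1^{-1}y_1^{-1}\cdots$ and $x_1x_1\cdots x_mx_m$, where $\nu=(-1)^\mu$ is immediate. Your closing Wu-class remark is on target---the type-$\romnum{4}$ edges do represent $w_1$---but that is a statement about the final closed surface and does not rescue the step-by-step induction, which would need an invariant of the pair $(H_t,\bm a|_{H_t})$, not of $H_t$ alone.
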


We first prove Proposition~\ref{prop-both-lemmas} assuming Proposition~\ref{prop-sgn}. Then, in the next section, we prove Proposition~\ref{prop-sgn}.

\begin{proof}[Proof of Proposition~\ref{prop-both-lemmas} assuming Proposition~\ref{prop-sgn}]
    By~\eqref{eqn-casimir}, for a fixed $a: [2K]\to [N]$,~\eqref{eq-covariance} equals 
    \begin{equation}
        \label{eqn-cov-N-single}
    \begin{cases}
    \left(-\frac{1}{N}\right){\1_{\romnum{1}}(k)} & \text{if $G=\UN$} \\
		\left(-\frac{1}{N}\right){\1_{\romnum{1}}(k)}
		+\left(\frac{1}{N}\right){\1_{\romnum{2}}(k)}, & \text{if $G=\SON$} \\
		\left(-\frac{1}{N}\right){\1_{\romnum{1}}(k)}+
		\left(\frac{1}{N^2}\right){\1_{\romnum{3}}(k)}, & \text{if $G=\SUN$} \\
		\left(-\frac{1}{N}\right){\1_{\romnum{1}}(k)}
		+\nu(a_{p_k})\nu(a_{s(p_k)})\left(\frac{1}{N}\right){\1_{\romnum{4}}(k)}, & \text{if $G=\SphN$} 
    \end{cases}
    \end{equation}
     where $\1_{n}(k)$ is the indicator that $a_{p}, a_{s(p)}, a_{q}, a_{s(q)}$ satisfy the relation of type $n$.
    Substituting each term of the product~\eqref{eq-weight-term} by  the corresponding expression in~\eqref{eqn-cov-N-single}, we deduce that~\eqref{eq-weight-term} equals the sum of
     \begin{equation}
        \label{eqn-omega-term-single}
    \begin{cases}
    (-N)^{-\#_{\romnum{1}}(\mathbf{r})}, & \text{if $G=\UN$} \\
	(-N)^{-\#_{\romnum{1}}(\mathbf{r})} N^{-\#_{\romnum{2}}(\mathbf{r})}, & \text{if $G=\SON$} \\
	 (-N)^{-\#_{\romnum{1}}(\mathbf{r})} N^{-2\#_{\romnum{3}}(\mathbf{r})}, & \text{if $G=\SUN$} \\
	 (-N)^{-\#_{\romnum{1}}(\mathbf{r})} (-1)^{\mu(H(\pi,\bm r))} N^{-\#_{\romnum{4}}(\mathbf{r})}, & \text{if $G=\SphN$} 
    \end{cases}
    \end{equation}
    over all $\mathbf{r} \in (\mathcal{R}^G)^K $ such that $a$ is compatible with $\mathbf{r}$. (We applied Proposition~\ref{prop-sgn} to obtain the expression for $\SphN$.) Therefore, the sum~\eqref{eqn-exp-single-2} equals the sum of~\eqref{eqn-omega-term-single} over all $\mathbf{r} \in (\mathcal{R}^G)^K $ and all $a$ compatible with $\mathbf{r}$.  If we fix $\mathbf{r}$, then by Lemma~\ref{lem-label-bijection}, we can replace the sum over labelings $a$ compatible with $\mathbf{r}$ by a sum over all labelings of the set $V(H(\pi,\mathbf{r}))$ of vertices of $H(\pi,\mathbf{r})$.  Therefore,~\eqref{eq-weight-term} is equal to $N^{|V(H(\pi,\mathbf{r}))|}$ times the expression~\eqref{eqn-omega-term-single}.  Now, since $H(\pi,\mathbf{r})$ has $\#_{\romnum{1}}(\mathbf{r}) + \#_{\romnum{2}}(\mathbf{r})+ \#_{\romnum{4}}(\mathbf{r})$ edges and a single face, we have
    \[
    |V(H(\pi,\mathbf{r}))| = \chi(H(\pi,\bm r)) - 1 + \#_{\romnum{1}}(\mathbf{r}) + \#_{\romnum{2}}(\mathbf{r})+ \#_{\romnum{4}}(\mathbf{r}).
    \]
    Therefore, $N^{|V(H(\pi,\mathbf{r}))|}$ times~\eqref{eqn-omega-term-single} is equal to the summand in~\eqref{eqn-cov-N-single}.
\end{proof}


\subsection{The case of the symplectic group}

We now turn to proving Proposition~\ref{prop-sgn}. We note that Definition~\ref{defn-operations} and Proposition~\ref{prop-equiv-C} below appear elsewhere (in slightly different terms), where they are used to prove the so-called classification theorem for compact surfaces; see~\cite[Sections 6.1-6.2]{Gallier} for a comprehensive treatment.

We first formulate the data $H$, $\pi$ and $\bm r$ slightly differently. 
We recall that, to construct $H(\pi, r)$, we started with a CW complex $H$ given by an oriented surface (or \emph{face}) with cycle graph boundary.  We view $\pi$ as a matching of the edges of $H$, and $\bm r$ as a set of constraints  on the labels of the vertices of pairs of matching edges.  
We can encode $\pi$ and $\bm r$ as follows. Let $E$ be the set of edges of $H$, with $|E| = 2K$. We consider some set of labels $L$ with $|L| = K$, and we let ${L}^{-1}$ be the set of formal inverses of elements of $H$.  We define a map from $E$ to $L \cup {L}^{-1}$ that labels each edge of $H$ by an element of 
$L \cup {L}^{-1}$ as follows.
For each element $x \in L$, the preimage of $\{x,x^{-1}\}$ under the edge labeling map is a pair of edges $e,e'$ matched by $\pi$.  If the pair $e,e'$ are associated to the relation $\romnum{1}$, then we label the edges $x$ and $x^{-1}$; if the pair is associated to the relation $\romnum{4}$, then we label both edges $x$ (or both edges $x^{-1}$).   
This labeling of the edges of $H$ determines both the matching $\pi$ and the relations $\bm r$.

To compute~\eqref{eqn-defn-sgn} for a labeling $\bm a$ of the vertices of $H$, we will ``simplify'' $H$ by a sequence of elementary transformations of its faces and edge labels.  These transformations are defined so that they leave~\eqref{eqn-defn-sgn} unchanged. 

In other words, we express $H$ as an element of the following set. 

\begin{defn}
We let $\mcl C$ be the set of all collections $C$ of faces such that
\begin{itemize}
    \item  the boundaries of the faces are cycle graphs;
    \item each edge is labeled by an element of $L$ or its formal inverse $L^{-1}$, for some set $L$; and
    \item
    for each $x \in L$, exactly two edges are labeled by an element of the set  $\{x,x^{-1}\}$.
\end{itemize}
\end{defn}

We constructed a surface $H(\pi,\bm r)$ by gluing pairs of edges of $H$ with matching edges labels. Similarly, each element $C$ of $\mcl C$ determines a collection of surfaces constructed as follows. 

\begin{defn}
Let $C \in \mcl C$. We orient each edge is oriented counterclockwise around its face, and glue matching pairs of edges of $C$ as follows.
\begin{itemize}
    \item 
If a pair of edges $e$ and $e'$ are labeled $x$ and $x^{-1}$, respectively, for some $x$, then we glue the edges $e$ and $e'$ so that the initial vertex of $e$ is identified with the terminal vertex of $e'$, and the terminal vertex of $e$ is identified with the initial vertex of $e'$.
    \item
If a pair of edges $e$ and $e'$ are both labeled $x$ or both labeled $x^{-1}$ for some $x$, then we glue the edges $e$ and $e'$ so that the initial vertices of $e$ and $e'$ are identified and the terminal vertices of $e$ and $e'$ are identified.
\end{itemize}
We call the resulting collection of surfaces the \emph{surfaces constructed from $C$}.
\end{defn}

We say that a path of edges in the boundary of a face has label $x_1\cdots x_n$ if the edges in the path are labeled  $x_1,\ldots,x_n$ in counterclockwise order around the face. If the path of edges is the entire boundary, then we call  $x_1\cdots x_n$ a label of the boundary.  Note that the label of the boundary is unique only up to a cyclic permutation: if $x_1\cdots x_n$ is a  boundary label, then so is $x_{p+1} \cdots x_n x_1 \cdots x_p$ for any $p$.

As we explained above, the edge labels of $H$ describe relations between matching pairs of edges.  We recall that these relations impose constraints on the labels of the \emph{vertices} of $H$.  Generalizing to $C \in \mcl C$, we consider labelings of the vertices of $C$ by elements of $[N]$, and we view the edge labels of $C$ as imposed relations on these vertex labels.  We call a vertex labeling  \emph{compatible} if it obeys these constraints.

\begin{defn}[Vertex labelings of $C$]
Let $C \in \mcl C$, and let $\bm a$ be a labeling of the vertices of $C$ by elements of $[N]$. For an edge $e$ in $C$, we define $\iota(e)$ and $\tau(e)$ as the vertex labels of the initial and terminal vertices of $e$, respectively, with the edge oriented counterclockwise around its face.  We say that $\bm a$ is a \emph{compatible} labeling of $C$  if
the following is true.
\begin{itemize}
    \item 
If two edges $e,e'$ are labeled $x$ and $x^{-1}$ for some $x$, then their respective vertex labels $\iota(e),\tau(e)$ and $\iota(e'),\tau(e')$ satisfy relation  $\romnum{1}$; i.e., $\iota(e)=\tau(e')$ and $\iota(e')=\tau(e)$.  
\item
If both edges are labeled $x$ or both edges are labeled $x^{-1}$ (for some $x$), then their respective vertex labels
 $\iota(e),\tau(e)$ and $\iota(e'),\tau(e')$ satisfy relation  $\romnum{4}$; i.e., $\iota(e) \equiv \iota(e')+N/2$ and $\tau(e) \equiv \tau(e')+N/2$ modulo $N$.
 \end{itemize}
\end{defn}

We now define the product~\eqref{eqn-eta-def} for general $C \in \mcl C$ with compatible labeling $\bm a$. 

\begin{defn}\label{defn-eta-C}
We let  $\nu(C, \bm a)$ be the product of
\[
\nu(\iota(e)) \nu(\tau(e))
\]
over all pairs of edges $e,e'$ of $K$ with the same label (i.e., both $x$ or both $x^{-1}$ for some $x$). 
\end{defn}

We illustrate Definition~\ref{defn-eta-C} with two examples.

\begin{eg}\label{ex-C}

\begin{enumerate}
    \item 
Suppose that $C$ has a single face with boundary label  
\begin{equation}
    \label{eqn-et-1}
x_1 y_1 x_1^{-1} y_1^{-1} \cdots x_m y_m x_m^{-1} y_m^{-1}
\end{equation}
for some distinct labels $x_1,\ldots,x_m,y_1,\ldots,y_m$ in $L$.
Then $\nu(C,\bm a) = 1$ for \emph{any} labeling $\bm a$ of the vertices of $C$, since no two edges have the same edge label.
\item
Suppose that $C$ has a single face with boundary label  
\begin{equation}
    \label{eqn-et-2}
x_1 x_1 x_2 x_2 \cdots x_m x_m
\end{equation}
for some distinct labels $x_1,\ldots,x_m$ in $L$. Let $e_j$ and $e_j'$ be the edges 
labeled $x_j$ and $x_j^{-1}$, respectively. Then $\tau(e_j) = \iota(e_j')$, and  $\iota(e_j) \equiv \iota(e_j') + N/2$ modulo $N$. This means that $\nu(\iota(e_j))\nu(\tau(e_j)) = -1$.  We deduce that $\nu(C,\bm a) = (-1)^m$ for \emph{any} labeling $\bm a$ of the vertices of $C$.
\end{enumerate}
\end{eg}

The values of $\nu(C,\bm a)$ we computed in Example~\ref{ex-C} have a natural geometric interpretation in terms of the surfaces constructed from $C$.

\begin{prop}\label{prop-eachface}
    Let $C \in \mcl C$.  If each face of $C$ has one of the forms~\eqref{eqn-et-1} or~\eqref{eqn-et-2}, then for each vertex labeling $\bm a$ of $C$, the sign $\nu(C,\bm a)$ is equal to the sum of the non-orientable genuses of the surfaces constructed from $C$.
\end{prop}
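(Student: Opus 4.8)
The plan is to reduce the claim to a single face and then read off both sides from Example~\ref{ex-C} together with the classification of compact surfaces. (I read the conclusion of Proposition~\ref{prop-eachface} as the assertion that $\nu(C,\bm a) = (-1)^{g}$, where $g$ is the sum of the non-orientable genuses of the surfaces constructed from $C$, consistent with Proposition~\ref{prop-sgn}.) First I would observe that in each of the boundary words~\eqref{eqn-et-1} and~\eqref{eqn-et-2}, for every label $x$ the two edges carrying a label in $\{x,x^{-1}\}$ lie in the \emph{same} face. Hence the gluing relation that produces the surfaces constructed from $C$ never identifies points of two distinct faces, so those surfaces are exactly the disjoint union, over the faces $F$ of $C$, of the surface built from $F$ alone. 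Both sides of the asserted identity then factor over the faces: by Definition~\ref{defn-eta-C} we have $\nu(C,\bm a) = \prod_F \nu(F,\bm a|_F)$, and $g$ is the sum of the non-orientable genuses of the individual pieces. So it suffices to treat a single face of each of the two forms.

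For a single face with boundary word~\eqref{eqn-et-1}, namely $x_1 y_1 x_1^{-1} y_1^{-1}\cdots x_m y_m x_m^{-1} y_m^{-1}$, this is the standard polygonal presentation of the closed orientable surface of genus $m$ (see~\cite[Sections 6.1--6.2]{Gallier}), which has non-orientable genus $0$. Since every label occurs exactly once and its formal inverse exactly once, no two edges share a label, so the product in Definition~\ref{defn-eta-C} is empty and $\nu(C,\bm a)=1=(-1)^0$ for every $\bm a$. This is precisely the first part of Example~\ref{ex-C}.

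For a single face with boundary word~\eqref{eqn-et-2}, namely $x_1 x_1 x_2 x_2 \cdots x_m x_m$, this is the standard presentation of the connected sum of $m$ projective planes, of non-orientable genus $m$ (again~\cite[Sections 6.1--6.2]{Gallier}). Writing $e_j,e_j'$ for the two consecutive boundary edges labeled $x_j$, so that $\tau(e_j)=\iota(e_j')$ as vertices of the polygon, the compatibility constraint attached to this same-label pair is relation~\romnum{4}: $a(\iota(e_j))\equiv a(\iota(e_j'))+N/2$ and $a(\tau(e_j))\equiv a(\tau(e_j'))+N/2$ modulo $N$. Combining the first of these with $a(\tau(e_j))=a(\iota(e_j'))$ gives $a(\iota(e_j))\equiv a(\tau(e_j))+N/2$ modulo $N$, so exactly one of $a(\iota(e_j)),a(\tau(e_j))$ lies in $\{1,\dots,N/2\}$, whence $\nu(a(\iota(e_j)))\,\nu(a(\tau(e_j)))=-1$. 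Taking the product over $j\in[m]$ gives $\nu(C,\bm a)=(-1)^m=(-1)^{g}$, matching the non-orientable genus; this is the second part of Example~\ref{ex-C}, and assembling the faces via the first paragraph finishes the proof.

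The main obstacle is not analytic or combinatorial depth but bookkeeping: I need to make sure the factorization over faces is legitimate (i.e.\ that the two normal forms really are self-contained face by face, so that the surfaces, the sign $\nu$, and the total non-orientable genus all decompose), and I need to be careful in the symplectic sign computation to use the consecutive-vertex identification $\tau(e_j)=\iota(e_j')$ rather than $\iota(e_j)=\iota(e_j')$, since it is precisely this identification that converts relation~\romnum{4} into the statement that $a(\iota(e_j))$ and $a(\tau(e_j))$ differ by $N/2$. The identification of~\eqref{eqn-et-1} and~\eqref{eqn-et-2} with the standard orientable and non-orientable models, and their genuses, is quoted from the classification of surfaces in~\cite[Sections 6.1--6.2]{Gallier}.
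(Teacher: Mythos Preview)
Your proof is correct and follows essentially the same approach as the paper: reduce to a single face using the fact that the normal forms~\eqref{eqn-et-1} and~\eqref{eqn-et-2} are self-contained, then invoke Example~\ref{ex-C} for the sign computation and the classification of surfaces for the genus. The paper's version is terser (it just points to Example~\ref{ex-C} and~\cite[Section~6.4]{Gallier} and asserts the factorization over faces), while you spell out both the factorization and the symplectic sign calculation in more detail, but the logic is identical.
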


\begin{proof}
    The surface constructed from a face of the form~\eqref{eqn-et-1} is orientable, hence has zero non-orientable genus.    The surface constructed from a face of the form~\eqref{eqn-et-2}  has  non-orientable genus $m$ (see, e.g.,~\cite[Section 6.4]{Gallier}). Both these numbers agree with the value of $\nu(C,\bm a)$ we computed in the above example.  This implies the result for a collection of faces of the forms~\eqref{eqn-et-1} and~\eqref{eqn-et-2}.
\end{proof}

To apply Proposition~\ref{prop-eachface} to prove Proposition~\ref{prop-sgn}, we now show that we can reduce \emph{any} collection of faces $C \in \mcl C$ to a collection of faces of the form~\eqref{eqn-et-1} or~\eqref{eqn-et-2} by a sequence of \emph{elementary transformations} that leave $\nu(C,\bm a)$ unchanged.

\begin{defn}\label{defn-operations}
Given two elements $C,C' \in \mcl C$ with labelings $\bm a, \bm a'$ of its vertices, we say that $(C',\bm a')$ is an \emph{elementary transformation} of $(C,\bm a)$  if we can obtain $(C',\bm a')$  from $(C,\bm a)$  by one of the following four operations.
\begin{enumerate}[(I)]
\item\label{p4}
We contract a path of edges with label $xx^{-1}$.  We do not change the vertex labels of any edges.
\item\label{p1}
For a pair of edges labeled $x,x^{-1}$, we replace the edge labeled $x$ by a path labeled $yz$, and  $x^{-1}$ by $z^{-1} y^{-1}$, for some $y,z$ that do not appear in $C$.  If $x$ (resp.\ $x^{-1}$) has vertex labels $a,b$, then we give $y$ and $z$ (resp.\ $z^{-1}$ and $y^{-1}$)  vertex labels $a,c$ and $c,b$ for some $c$.  
\item\label{p2}
We consider a face with some boundary label $x_1\cdots x_p x_{p+1} \cdots x_m$, and we replace this face by one with boundary label  $x_1\cdots x_p w$ and one with boundary label $w^{-1} x_{p+1}\cdots x_m$, for some edge label $w$ that does not appear in $K$.  We do not change any vertex labels of the edges $x_1,\ldots, x_m$; this determines the vertex labels of the edges $w,w^{-1}$.
\item\label{p3}
We consider a face $F$ with boundary label  $x_1 \cdots x_m$, and we replace it by one with boundary label $x_m^{-1}\cdots x_1^{-1}$.  If an edge $e \in F$  had vertex labels $\iota(e) = a$ and $\tau(e) = b$, then we set the vertex labels of the corresponding edge $e'$ in the new face to be $\iota(e') \equiv b+N/2$ and $\tau(e') \equiv a+N/2$ modulo $N$.
\end{enumerate}
We define an equivalence relation on $\mcl C$ as  the  equivalence relation generated by the elementary subdivision relation.
\end{defn}

    \begin{figure}[ht]
          \centering
\includegraphics[width=\linewidth]{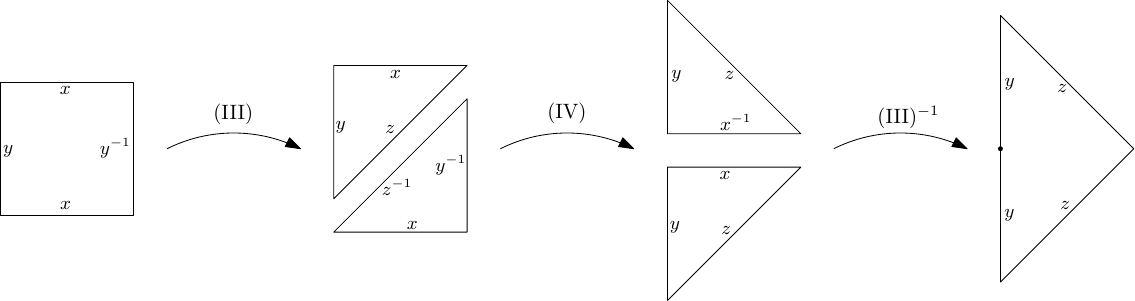} \caption{An example of replacing an element of $\mcl C$ by one of the form~\eqref{eqn-et-2} by a sequence of elementary transformations.}
        \label{fig-trans}
    \end{figure}

It is shown, e.g., in~\cite[Lemma 6.1]{Gallier} that we can reduce an arbitrary $C \in \mcl C$ to one with faces of the forms~\eqref{eqn-et-1} and~\eqref{eqn-et-2}.

\begin{prop}\label{prop-equiv-C}
Each $C \in \mcl C$ is equivalent, under the equivalence relation defined in Definition~\ref{defn-operations}, to an element of $\mcl C$ with each face of the form~\eqref{eqn-et-1} or~\eqref{eqn-et-2}.
\end{prop}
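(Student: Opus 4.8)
The plan is to recognize Proposition~\ref{prop-equiv-C} as a restatement of the combinatorial classification of compact surfaces and to run the classical normalization algorithm, checking that every move in that algorithm is realized by the four elementary transformations \ref{p4}--\ref{p3} of Definition~\ref{defn-operations}. Since the proposition concerns only the equivalence relation on $\mcl C$, I will ignore the vertex labels $\bm a$ throughout — they are auxiliary here, their role being to make $\nu(C,\bm a)$ an invariant of the equivalence class for the later Propositions~\ref{prop-eachface} and~\ref{prop-sgn} — so whenever a move calls for a choice of new vertex labels I simply fix an arbitrary consistent one. Recall also that the equivalence relation is the one \emph{generated} by the elementary transformations, so each of the moves below may be applied in either direction.

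First I would reduce to a single face per connected component. If $C$ has two distinct faces sharing a matched edge pair, then after possibly reversing the orientation of one of them by \ref{p3} I may assume the two occurrences of the common label read $z$ and $z^{-1}$; writing the two boundary words cyclically as $Az$ and $z^{-1}B$, the inverse of move~\ref{p2} merges them into one face with boundary $AB$. Iterating leaves each connected component a single face $F$ whose boundary word $w$, in the alphabet $L \cup L^{-1}$, has each letter of $L$ occurring either once together with its inverse or twice with the same sign. I then normalize $w$ by the classical steps, each a composition of elementary transformations: \emph{cancellation} of a consecutive pair $xx^{-1}$ by \ref{p4} whenever $|w|>2$ (and, if $w$ collapses to $xx^{-1}$, one further \ref{p4} gives the empty word, the $m=0$ instance of~\eqref{eqn-et-1}); \emph{vertex reduction} to a single vertex class by repeatedly cutting along a chord with a fresh label (\ref{p2}) and regluing along an existing matched pair (inverse of \ref{p2}), using \ref{p1} to subdivide an edge whenever a cut endpoint lands in its interior; \emph{collecting crosscaps}, $\cdots x \cdots x \cdots \sim \cdots xx \cdots$, by cut-and-paste; \emph{collecting handles}, $\cdots x \cdots y \cdots x^{-1} \cdots y^{-1} \cdots \sim \cdots xyx^{-1}y^{-1} \cdots$, by cut-and-paste; and finally \emph{Dyck's relation}, turning a surviving $aa\,bcb^{-1}c^{-1}$ into a product of three crosscaps. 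After these steps the boundary of each $F$ is a product of handles, i.e.\ of the form~\eqref{eqn-et-1}, or a product of crosscaps, i.e.\ of the form~\eqref{eqn-et-2} (up to the degenerate sphere), which is the claim. For the bookkeeping of the vertex-reduction and collecting steps I would cite \cite[Sections~6.1--6.2]{Gallier}, whose argument is exactly this.

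The one place needing genuine care is checking that moves~\ref{p2}, its inverse, and~\ref{p1} together generate the general ``cut along a diagonal and reglue'' operation that the collecting steps use repeatedly: one must verify that the auxiliary edge label created by each cut may be taken fresh (automatic after renaming), that the two faces produced by a cut can always be reglued along the intended matched pair after at most one orientation flip~\ref{p3}, and that subdividing by~\ref{p1} permits aligning cuts whose endpoints fall in the interior of an existing edge. These are precisely the elementary (if tedious) checks carried out in the proof of \cite[Lemma~6.1]{Gallier}, and I would reproduce them in the present notation.
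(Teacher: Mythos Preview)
Your proposal is correct and takes essentially the same approach as the paper: the paper's proof is a one-line citation to \cite[Lemma~6.1]{Gallier}, noting that the definitions and terminology differ slightly, and your outline is precisely an expansion of what that lemma says and why the moves there match the elementary transformations of Definition~\ref{defn-operations}. Your version is more detailed than the paper's, but the content and the cited source are the same.
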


\begin{proof}
    This result appears in~\cite[Lemma 6.1]{Gallier}, though the definitions and terminology differ slightly.
\end{proof}

Moreover, the four operations in Definition~\ref{defn-operations} do \emph{not} affect the value of the sign $\nu(C,\bm a)$, as the following proposition asserts.

\begin{prop}\label{prop-eta-inv}
    Suppose that $(C',\bm a')$ is an elementary transformation of $(C,\bm a)$.  If $\bm a$ is a compatible labeling of $C$, then $\bm a'$ is a compatible labeling of $C'$, and $\nu(C,\bm a) = \nu(C',\bm a')$.
\end{prop}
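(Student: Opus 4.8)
The plan is to prove both conclusions --- that $\bm a'$ is a compatible labeling of $C'$ and that $\nu(C,\bm a)=\nu(C',\bm a')$ --- by checking each of the four elementary transformations (I)--(IV) of Definition~\ref{defn-operations} in turn, since the equivalence relation on $\mcl C$ is generated by them. For operations (I), (II), and (III) the verification is short: each of these only deletes or inserts edges whose matching partner carries the \emph{opposite} formal label --- so the induced constraint on the vertex labels is a relation of type~\romnum{1} --- while fixing every other edge, every formal label, and every vertex label. Compatibility of the new pair is forced by the recipe: in (I) the type-\romnum{1} equality $\iota(e)=\tau(e')$ between the two endpoints being identified is exactly what makes the contraction well defined; in (II) and (III) the inserted edges inherit vertex labels from the endpoints of the edge they subdivide (resp.\ from the face they split), and a one-line check confirms relation~\romnum{1}. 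Since a type-\romnum{1} pair consists of two edges with \emph{distinct} formal labels, it does not appear in the product of Definition~\ref{defn-eta-C}, so $\nu$ is unchanged. I would dispatch these three cases quickly.

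The substance is in operation (IV), reversal of a face $F$ with boundary label $x_1\cdots x_m$: this inverts the formal label of every edge of $F$ and shifts the label of every vertex of $F$ by $N/2$ modulo $N$. First I would check compatibility. For a matched pair $\{e,f\}$ there are three cases: both edges outside $F$ (nothing changes); both inside $F$ (both formal labels invert, so a type-\romnum{1} pair stays type~\romnum{1} and a type-\romnum{4} pair stays type~\romnum{4}); or exactly one edge, say $e$, inside $F$ (then only one of the two formal labels inverts, so the pair switches between type~\romnum{1} and type~\romnum{4}). In each case the verification is pure modular arithmetic: reversing shifts both endpoints of $e$ by $N/2$, and since the two shifts of $N/2$ compose to the identity modulo $N$, a type-\romnum{1} equality constraint is carried to a type-\romnum{4} "differ by $N/2$" constraint and vice versa, while a constraint internal to a pair both of whose edges are shifted is preserved. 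This establishes that $\bm a'$ is compatible.

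For the invariance $\nu(C,\bm a)=\nu(C',\bm a')$ under (IV) I would compare the two products of Definition~\ref{defn-eta-C} term by term, using the elementary identity $\nu(i+N/2 \bmod N)=-\nu(i)$. Let $B$ denote the set of edges of $F$ whose matching partner lies outside $F$. Pairs lying entirely outside $F$, and type-\romnum{1} pairs lying entirely inside $F$, contribute identically (or not at all) to both sides. A type-\romnum{4} pair lying entirely inside $F$ has both of its endpoint labels shifted by $N/2$, so by two applications of $\nu(i+N/2)=-\nu(i)$ its contribution $\nu(\iota(e))\nu(\tau(e))$ is unchanged. A pair whose unique $F$-edge is $e\in B$ changes type, so exactly one of $\nu(C,\bm a)$, $\nu(C',\bm a')$ picks up the factor $\nu(\iota(e))\nu(\tau(e))$ (read off from $e$; in the \romnum{1}-to-\romnum{4} direction one again uses $\nu(i+N/2)=-\nu(i)$ to rewrite the $C'$-contribution, which is computed from the reversed edge, in terms of the original endpoint labels of $e$). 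Because every such factor is $\pm 1$ and hence equals its own reciprocal, the net outcome is $\nu(C',\bm a')=\nu(C,\bm a)\prod_{e\in B}\nu(\iota(e))\nu(\tau(e))$.

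It then remains to show $\prod_{e\in B}\nu(\iota(e))\nu(\tau(e))=1$. I would get this by comparing with the full product over all edges of the boundary cycle of $F$: going once around that cycle, each vertex label occurs once as an initial endpoint and once as a terminal endpoint, so $\prod_{e\in \partial F}\nu(\iota(e))\nu(\tau(e)) = \prod_v \nu(a(v))^2 = 1$; and for every matched pair $\{e,f\}$ with both edges in $F$ --- of type~\romnum{1} or type~\romnum{4} --- the relation forces $\nu(\iota(e))\nu(\tau(e))\nu(\iota(f))\nu(\tau(f)) = [\nu(\iota(e))\nu(\tau(e))]^2 = 1$, so the edges of $F\setminus B$ contribute $1$ to the full product. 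Dividing yields $\prod_{e\in B}\nu(\iota(e))\nu(\tau(e))=1$, and therefore $\nu(C',\bm a')=\nu(C,\bm a)$. The main obstacle throughout is the bookkeeping in operation (IV) --- keeping straight which matched pairs cross the boundary of $F$ and how their type and their contribution to $\nu$ flip; once that is organized, every individual check is a short computation with the relations~\romnum{1} and~\romnum{4} and the identity $\nu(i+N/2)=-\nu(i)$.
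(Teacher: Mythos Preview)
Your proposal is correct and follows essentially the same approach as the paper. The paper likewise dispatches operations (I)--(III) by noting they introduce or remove only type-\romnum{1} (opposite-label) pairs, then handles (IV) by showing the ratio $\nu(C')/\nu(C)$ equals a product of $\nu(\iota(e))\nu(\tau(e))$ over the cross-boundary edges and uses the same two ingredients you use --- the identity $\nu(\iota(e))\nu(\tau(e))\nu(\iota(e'))\nu(\tau(e'))=1$ for any matched pair, and the full-cycle product over $\partial F$ telescoping to $1$ --- just composed in the reverse order (extending from $B$ to all of $F$ rather than restricting). Your treatment is in fact slightly more explicit than the paper's, which omits operation (I) and the compatibility check as routine.
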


\begin{proof}
    The fact that $\bm a'$ is a compatible labeling of $K'$ is easy to check.  We now explain why $\nu(K,\bm a) = \nu(C',\bm a')$.
If we obtain $(C',\bm a')$  from $(C,\bm a)$ by either of the   operations~\eqref{p1} or~\eqref{p2}, then $\nu(C,\bm a) = \nu(C',\bm a')$ since we have not changed or added any pair of edges with the same label.  Now suppose that  we  obtain $(C',\bm a')$  from $(C,\bm a)$ by the  operation~\eqref{p3}.  Suppose we replaced the face $F$ in $C$ with a face $F'$. 

Suppose that $e$ and $e'$ are a matching pair of edges in $C$.  These two edges correspond to a matching pair of edges in $C'$; we will denote this matching pair of edges in $C'$ by $e$ and $e'$ as well.  If neither $e$ nor $e'$ is in $F$, or if both $e$ and $e'$ are in $F$, then the operation~\eqref{p3} does not change the quantity $\nu(\iota(e)) \nu(\tau(e))$.  Thus, we can express the quotient $\nu(C')/\nu(C)$ as the product of $\nu(\iota(e)) \nu(\tau(e))$ over all pairs of matching edges $e,e'$ such that $e \in F$ and $e' \notin F$. Since \[\nu(\iota(e)) \nu(\tau(e)) \nu(\iota(e')) \nu(\tau(e')) = 1\]
for any pair of matching edges $e,e'$, this means that $\nu(C')/\nu(C)$  is the product of $\nu(\iota(e)) \nu(\tau(e))$ over all edges $e \in F$.  This product is equal to $1$ since each vertex is the terminal vertex of one edge and the initial vertex of another edge. 
\end{proof}

\begin{proof}[Proof of Proposition~\ref{prop-sgn}]
    The result follows from Propositions~\ref{prop-eachface},~\ref{prop-equiv-C}, and Proposition~\ref{prop-eta-inv}.
\end{proof}

\begin{proof}[Proof of Lemmas~\ref{lem-un} and~\ref{lem-other}]
 Both lemmas are stated as Proposition~\ref{prop-both-lemmas}.
\end{proof}

\subsection{Generalizing to multiple loops}\label{sec-multiple}

We now describe the generalizations of Lemmas~\ref{lem-main-poisson},~\ref{lem-un} and~\ref{lem-other} to any \emph{finite collection} of loops in the plane.\footnote{The multiple-loop version of Wilson loops are also referred as ``Wilson skeins'' in~\cite{Levy2011a}.} Let $\Gamma^{(1)},\ldots,\Gamma^{(n)}$ be loops in the plane, and let $(\lambda_\ell)_{\ell \in [L]}$ be a collection of lassos such that each $\Gamma^{(i)}$ has the lasso representation
\begin{equation}
    \label{eqn-lasso-i}
\lambda_{c^{(i)}(1)}^{\ep^{(i)}(1)} \cdots \lambda_{c^{(i)}(M^{(i)})}^{\ep^{(i)}(M^{(i)})}
\end{equation}
for some $c^{(i)}:[M^{(i)}] \to [L]$ and $\ep^{(i)}:[M^{(i)}] \to \{-1,1\}$.  We let $M = \sum_{i=1}^n M^{(i)}$, and we define the ``concatenated'' mappings $c:[M] \to [L]$ and $\ep:[M] \to \{-1,1\}$  such that the concatenation of loops $\Gamma^{(1)} \cdots \Gamma^{(n)}$ has lasso representation~\eqref{eqn-lasso-rep}.  We define the space matching-color pairs $\mcl D$ in terms of this concatenated mapping $c$ as
\begin{equation}
    \label{eqn-def-mcl-D-2}
\mcl D := \bigsqcup_{\stackrel{m<m^*}{c(m) = c(m^*)}} D_{(m,m^*)},
\end{equation}
where $D_{(m,m^*)}$ is isomorphic to the region bounded by $\lambda_{c(m)} \equiv \lambda_{c(m^*)}$. 
If $\Sigma$ is a collection of points in $\mcl D$, then we can define the associated sign $\ep(\Sigma)$ and pairing $\pi(\Sigma)$ as in Lemma~\ref{lem-main-poisson}.

If we were considering the Wilson loop expectation of the concatenated loop $\Gamma^{(1)} \cdots \Gamma^{(n)}$, then the Wilson loop expectation would be given precisely by the expression in Lemma~\ref{lem-main-poisson}.  However, since we are considering the $n$ loops separately, we obtain a slightly different expression for the term~\eqref{eqn-exp-single-2}.  In the case of a single loop, the number $2|\Sigma|$   appears in~\eqref{eqn-exp-single-2} because the points $\Sigma$ correspond to $2|\Sigma|$ points in the space $\bigsqcup_{m = 1}^M D_m$,
where $D_m$ is the region bounded by $\lambda_{c(m)}$.
In the case of multiple loops, the points $\Sigma$ corresponds to $2|\Sigma|$ points in   the space
\begin{equation}\label{eqn-Dim}
\bigsqcup_{i=1}^n\bigsqcup_{m = 1}^{M^{(i)}} D^{(i)}_m, 
\end{equation}
where $D^{(i)}_m$ is isomorphic to the region bounded by $\lambda_{c^{(i)}(m)}$.  In other words,  $\Sigma$ determines $K_i$ points in
\[\bigsqcup_{m = 1}^{M^{(i)}} D^{(i)}_m\]
for each $i$, where $\sum_{i=1}^n K_i = 2|\Sigma|$.  
The result is that, in the case of multiple loops, we replace the term~\eqref{eqn-exp-single-2} by a term of the form
\begin{equation}
    \label{eqn-exp-multiple}
    \left\langle W_{a^{(1)}_1 a^{(1)}_2}, \ldots, W_{a^{(1)}_{K^{(1)}} a^{(1)}_{K^{(1)}+1}}, \ldots,
W_{a^{(n)}_1 a^{(n)}_2}, \ldots, W_{a^{(n)}_{K^{(n)}} a^{(n)}_{K^{(n)}+1}} \right\rangle_{\pi(\Sigma)}.
\end{equation}
In summary, we obtain the following analogue of Lemma~\ref{lem-main-poisson}.

\begin{lem}\label{lem-main-poisson-gen}
Let $G$, $\frk g$, and $W$ be as in Lemma~\ref{lem-main-poisson}, and consider the collection of loops $\Gamma^{(1)},\ldots,\Gamma^{(n)}$ with lasso representations~\eqref{eqn-lasso-i}.  Let $\Sigma$ be a Poisson point process in the space $\mcl D$ of matching-color lasso pairs, and let $\ep(\Sigma)$, $\pi(\Sigma)$, and the numbers $K_i$ with $\sum_i K_i = 2|\Sigma|$ be defined as above.  Then the Wilson loop expectation of the collection of loops $\Gamma^{(1)},\ldots,\Gamma^{(n)}$ in the gauge group $G$ is given by the constant
\begin{equation}\label{eqn-lem-const}
\exp\left(\frac{\frk c_{\frk g}}{2} \sum_{m=1}^M  \abs{\lambda_{c(m)}} + \sum_{\stackrel{m<m^*}{c(m) = c(m^*)}} |\lambda_{c(m)}| \right)
\end{equation}
times the expected value of $\ep(\Sigma)$ times the sum of
\begin{equation}
    \label{eqn-exp-multiple-2}
    \left\langle W_{a^{(1)}_1 a^{(1)}_2}, \ldots, W_{a^{(1)}_{K^{(1)}} a^{(1)}_{K^{(1)}+1}}, \ldots,
W_{a^{(n)}_1 a^{(n)}_2}, \ldots, W_{a^{(n)}_{K^{(n)}} a^{(n)}_{K^{(n)}+1}} \right\rangle_{\pi(\Sigma)}
\end{equation}
over all 
 $a^{(i)}_1,\ldots,a^{(i)}_{K^{(i)}+1} \in [N]$ with $a^{(i)}_{K^{(i)}+1} = a^{(i)}_1$, for each $i$.
\end{lem}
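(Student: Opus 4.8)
The plan is to follow, essentially verbatim, the six‑step proof of Lemma~\ref{lem-main-poisson}, the point being that nothing in that argument actually used the fact that the combinatorial data $(c,\ep)$ came from a single loop; the only genuine difference is the trace structure of the final answer, which reflects that one is expanding a \emph{product} of $n$ traces rather than a single trace.

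First I would note that, by Proposition~\ref{prop-hol-bm} applied to a common graph, root, and spanning tree containing all of $\Gamma^{(1)},\ldots,\Gamma^{(n)}$, the holonomy of $\Gamma^{(i)}$ is the ordered product $\prod_{m=1}^{M^{(i)}}\bigl(U^{(c^{(i)}(m))}_{|\lambda_{c^{(i)}(m)}|}\bigr)^{\ep^{(i)}(m)}$, where $U^{(1)},\ldots,U^{(L)}$ are \emph{jointly} independent Brownian motions on $G$ shared across the loops (one per lasso, regardless of how many $\Gamma^{(i)}$ use it). Replacing each $U^{(\ell)}$ by its random‑walk approximation (Definition~\ref{defn-hol-approx}) and invoking Proposition~\ref{prop-conv-prob-approx}, the $n$ holonomy approximations converge jointly in probability to the $n$ holonomies, hence so does the product of their traces. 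To upgrade convergence in probability to convergence in mean, one needs uniform integrability in $J$ of this product of traces; I would obtain it by running the counting argument of Proposition~\ref{prop-limit-legal} with $2n$ copies of the loop data in place of the two copies used there, which still bounds the relevant moment by a quantity of the form $Ce^{c(N,M,n)}$ uniform in $J$ — the number of compatible tuples and pairings over the enlarged index set still being small enough to absorb the powers of $J^{-1}$ coming from Brownian scaling — or equivalently by bounding each single‑loop trace in $L^{2n}$ this way and applying H\"older's inequality.

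Next I would expand each loop's random‑walk approximation into a sum of (scalars) times (products of Gaussian matrices), take traces, and multiply over $i$. Because a product of $n$ traces is being expanded, the index labels organize into $n$ separate cyclic blocks $a^{(i)}_1,\ldots,a^{(i)}_{K_i+1}$ with $a^{(i)}_{K_i+1}=a^{(i)}_1$, while the Gaussian factors from all $n$ loops are jointly Gaussian, so Wick's formula (Lemma~\ref{lem-wicks}) pairs them together \emph{across} loops. Introducing the concatenated mappings $c:[M]\to[L]$, $\ep:[M]\to\{-1,1\}$ and the space $\mcl D$ of matching‑color lasso pairs as in~\eqref{eqn-def-mcl-D-2}, the analogue of Proposition~\ref{prop-matching-bijection} (whose proof never used that the positions came from a single loop) identifies compatible tuples‑and‑pairings with finite subsets of $\mcl C\times[J]$; letting $J\to\infty$ turns this into a Poisson point process $\Sigma$ on $\mcl D$ with Lebesgue intensity, carrying the sign $\ep(\Sigma)$ and pairing $\pi(\Sigma)\in\frk S_{2|\Sigma|}$ of Definitions~\ref{def-epsilon-sigma} and~\ref{defn-pi}. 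The scalar prefactors contribute $\exp\bigl(\tfrac{\fc_\fg}{2}\sum_m|\lambda_{c(m)}|\bigr)$ and the Poissonization contributes $\exp\bigl(\sum_{m<m^*,\,c(m)=c(m^*)}|\lambda_{c(m)}|\bigr)$, together giving~\eqref{eqn-lem-const}, while the surviving Gaussian moment is exactly~\eqref{eqn-exp-multiple-2}, the $n$ cyclic constraints now encoding the $n$ separate traces. A final dominated‑convergence step (as at the end of the proof of Lemma~\ref{lem-main-poisson}) completes the argument, and the surface‑sum versions for $\UN,\SON,\SUN,\SphN$ then follow by feeding $\pi(\Sigma)$ into the same CW‑complex construction of Section~\ref{sec-surface-story} with the concatenated labeling.

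The main obstacle is the uniform‑integrability estimate: it is the one point where passing from one loop to many genuinely requires a new (if routine) bound rather than a line‑by‑line repetition, since one must control a product of $n$ traces. Everything else — the joint independence of the lasso Brownian motions, the expansion and Wick step, the bijection to finite subsets of $\mcl C\times[J]$, the Poissonization, and the identification of the constant — carries over unchanged once the per‑loop indices are threaded through as $n$ separate cyclic sequences.
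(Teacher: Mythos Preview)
Your proposal is correct and follows essentially the same approach as the paper, which simply states that the proof is the same as that of Lemma~\ref{lem-main-poisson} with only minor adjustments and omits the details. Your identification of the $n$ separate cyclic trace constraints and the need to adapt the uniform-integrability bound (via $2n$ copies or H\"older) is exactly the ``minor adjustment'' the paper alludes to.
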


\begin{proof}
The proof of Lemma~\ref{lem-main-poisson-gen} is essentially the same as that of Lemma~\ref{lem-main-poisson}, with only minor adjustments.  We omit the details.
\end{proof}

Next, we generalize Lemmas~\ref{lem-un} and~\ref{lem-other}.  In proving these lemmas, we wrote  $\pi(\Sigma)$ canonically as
\[
\pi(\Sigma) = (p_1 \; q_1) \cdots (p_K \; q_K), \qquad p_j < q_j, \quad p_1 < \cdots < p_K,
\]
where $K = |\Sigma|$,
and we expressed~\eqref{eqn-exp-single-2} as 
$\prod_{k=1}^{|\Sigma|} \avg{W_{a_{p_k}a_{s(p_k)}} W_{a_{q_k}a_{s(q_k)}}}$,
where $s$ is the cyclic permutation $(1 \cdots 2|\Sigma|)$.  In the case of $n$ loops, we have the same expression, with $s$ replaced by a permutation with $n$ cycles.  Specifically, if $J_k$ is the permutation matrix of the cyclic permutation $(1 \cdots k)$, then $s$ is the permutation
\begin{equation}\label{eqn-pmatrix}
\begin{pmatrix}
J_{K^{(1)}} & 0 & \cdots & 0 \\
0 & J_{K^{(2)}} & \cdots & 0 \\
0 & 0 & \ddots & 0 \\
0 & 0 & \cdots & J_{K^{(n)}}
\end{pmatrix},
\end{equation}
with $K^{(1)},\ldots,K^{(n)}$ the functions of $\Sigma$ defined above.  We can apply the same arguments in Section~\ref{sec-surface-story}, where instead of starting from a single region $H$ with directed cycle graph boundary of length $2|\Sigma|$, we consider $n$ regions $H^{(1)},\ldots,H^{(n)}$ with directed cycle graph boundaries of lengths $K^{(1)},\ldots,K^{(n)}$. (If $K_i = 0$, then $H^{(i)}$ has the sphere topology; if $K_i > 0$, then $H^{(i)}$ has the disk topology.) As in the single loop case, we label the vertices of the boundaries $\partial H^{(i)}$ by $1,\ldots,{2|\Sigma|}$ in order from $i=1$ to $n$ and in cyclic order around each boundary.\footnote{In other words, the vertices of $\partial H^{(i)}$ are labeled by ${\sum_{j=1}^{i-1} K_i+1}, \ldots, {\sum_{j=1}^{i} K_j}$ in cyclic counterclockwise order. } By definition of $s$, the edges of the boundaries  $\partial H^{(i)}$ have vertex labels $p$ and $s(p)$ for $p$ ranging in $[2K]$.  We view $\pi(\Sigma)$ as matching the edge with vertex labels $p_k$ and $s(p_k)$ to the edge with vertex labels $q_k$ and $s(q_k)$, for each $k \in [K]$. By repeating all the arguments in Section~\ref{sec-surface-story}, we obtain the following generalizations of Lemmas~\ref{lem-un} and~\ref{lem-other}.\footnote{We intentionally formulated the proof of Proposition~\ref{eqn-defn-sgn} so that the proof still applies if we start from a collection of faces $H^{(1)},\dots, H^{(n)}$.}

\begin{lem}\label{lem-un-gen}
 Let $G = U(N)$.  Then~\eqref{eqn-exp-multiple-2} is equal to $(-1)^{|\Sigma|} N^{-n+ \chi}$, where $\chi$ is the sum of the Euler characteristics of the surfaces formed from  $H^{(1)},\ldots,H^{(n)}$ by gluing pairs of edges matched by $\pi(\Sigma)$ with the opposite orientation.
\end{lem}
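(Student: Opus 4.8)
The statement to prove is Lemma~\ref{lem-un-gen}, the multiple-loop generalization of Lemma~\ref{lem-un}. The plan is to repeat the argument that established Proposition~\ref{prop-both-lemmas} in the case $G=\UN$, replacing the single face $H$ with cycle-graph boundary of length $2|\Sigma|$ by the disjoint union of $n$ faces $H^{(1)},\dots,H^{(n)}$ with cycle-graph boundaries of lengths $K^{(1)},\dots,K^{(n)}$, where $\sum_i K^{(i)} = 2|\Sigma|$. All the combinatorial bookkeeping---the canonical form of $\pi(\Sigma)$, the cyclic permutation $s$ (now block-diagonal as in~\eqref{eqn-pmatrix}), and the relation-to-gluing dictionary---carries over verbatim; the only genuine change is that the Euler-characteristic accounting now involves $n$ faces instead of one.

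\textbf{Steps.} First I would invoke Lemma~\ref{lem-main-poisson-gen} to reduce to evaluating the sum of~\eqref{eqn-exp-multiple-2} over all index tuples $a^{(i)}_1,\dots,a^{(i)}_{K^{(i)}+1}\in[N]$ with $a^{(i)}_{K^{(i)}+1}=a^{(i)}_1$ for each $i$. Second, using the block-diagonal permutation $s$ of~\eqref{eqn-pmatrix}, I would write $\pi(\Sigma)$ canonically as $(p_1\;q_1)\cdots(p_K\;q_K)$ with $K=|\Sigma|$ and express~\eqref{eqn-exp-multiple-2} as $\prod_{k=1}^{K}\avg{W_{a_{p_k}a_{s(p_k)}}W_{a_{q_k}a_{s(q_k)}}}$, exactly as in the single-loop case but where indices now range over the concatenated labeling $1,\dots,2|\Sigma|$ of $\bigsqcup_i \partial H^{(i)}$. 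Third, for $G=\UN$ the covariance~\eqref{eqn-casimir} forces each paired $4$-tuple to satisfy relation~\romnum{1} (so $\cR^{\UN}=\{\romnum{1}\}$ gives a single choice $\bm r$), and each nonzero covariance contributes $-1/N$; applying Lemma~\ref{lem-label-bijection} (whose proof, as the footnote notes, already applies to a collection of faces $H^{(1)},\dots,H^{(n)}$) the number of compatible labelings is $N^{|V(H(\pi,\bm r))|}$, where now $H(\pi,\bm r)$ is the disjoint union of the $n$ closed surfaces obtained by gluing each pair of $\pi(\Sigma)$-matched edges with opposite orientation. Fourth, I would compute the exponent: $H(\pi,\bm r)$ has a single face per original component (so $n$ faces in total), $K=|\Sigma|$ edges in total, and $|V|$ vertices, giving $\chi = |V| - |\Sigma| + n$, hence $|V| = \chi - n + |\Sigma|$. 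Assembling, the sum equals $N^{|V|}\cdot(-1/N)^{|\Sigma|} = (-1)^{|\Sigma|}N^{\chi-n+|\Sigma|-|\Sigma|} = (-1)^{|\Sigma|}N^{-n+\chi}$, which is the claimed formula.

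\textbf{Main obstacle.} There is no deep obstacle; the content is entirely a matter of confirming that every lemma invoked in Section~\ref{sec-surface-story} was stated (or provable) for a disjoint union of faces rather than a single face, and that the $-1+\chi$ in Lemma~\ref{lem-un} becomes $-n+\chi$ precisely because the Euler characteristic of the glued complex picks up one unit per face. The mildly delicate point---and the only place one must be careful---is the face-count and consequent Euler-characteristic bookkeeping when some $K^{(i)}=0$: in that case $H^{(i)}$ already has the sphere topology and contributes its own component with $\chi=2$ and no edges, which is consistent with the formula $\chi = |V| - (\text{edges}) + (\text{faces})$ provided one counts that sphere as one face and one vertex (or handles it as an isolated $S^2$ summand). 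Checking that the bijection of Lemma~\ref{lem-label-bijection} and the edge/vertex/face tally remain correct across these degenerate components is the step I would verify most carefully, but it requires no new ideas beyond those already deployed for the single-loop case.
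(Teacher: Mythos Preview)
Your proposal is correct and follows exactly the approach the paper takes: the paper's proof simply states that the arguments of Section~\ref{sec-surface-story} go through verbatim with the single face $H$ replaced by the disjoint union $H^{(1)},\ldots,H^{(n)}$, and omits the details. Your write-up in fact supplies more of those details than the paper does, including the correct Euler-characteristic bookkeeping $\chi = |V| - |\Sigma| + n$ and the check of the degenerate $K^{(i)}=0$ case.
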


\begin{lem}\label{lem-other-gen}
Let $G = \SON$, $\SUN$ or $\SphN$. Let $H^{(1)},\ldots,H^{(n)}$ be defined as above, and consider the $2^{|\Sigma|}$ ways to construct a collection of surfaces from these $n$ disjoint regions by gluing or contracting each of the $|\Sigma|$ pairs of edges matched by $\pi(\Sigma)$ according to one of the two operations listed in Table~\ref{table-gluing}.  Then we can express~\eqref{eqn-exp-multiple-2} as the sum, over this set of $2^{|\Sigma|}$ collections of surfaces, of 
 \begin{equation}
        \label{eqn-cov-N-multiple}
    \begin{cases}
    	(-1)^{\alpha}  N^{-n+\chi}, & \text{if $G=\SON$} \\
		(-1)^{\alpha}  N^{-n- \gamma +\chi}, & \text{if $G=\SUN$} \\
		(-1)^{\alpha+\mu}  N^{-n+\chi}, & \text{if $G=\SphN$} 
    \end{cases}
    \end{equation}
    where
    \begin{itemize}
    \item $\chi$ is the sum of the Euler characteristics of the surfaces,
    \item $\alpha$ is  the number of pairs of edges glued with the opposite orientation,
    \item $\gamma$ is the number of contracted edges (which is always even), and
    \item $\mu$ is sum of the non-orientable genuses of the surfaces (which is zero if the surface is orientable).
\end{itemize}
\end{lem}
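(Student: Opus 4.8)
The plan is to obtain Lemma~\ref{lem-other-gen} by transcribing, essentially line for line, the proof of Lemmas~\ref{lem-un} and~\ref{lem-other} — that is, Proposition~\ref{prop-both-lemmas} together with Proposition~\ref{prop-sgn} — with the single cyclic permutation $s=(1\cdots 2|\Sigma|)$ replaced everywhere by the block-diagonal permutation $\eqref{eqn-pmatrix}$ with $n$ cycles, and the single region $H$ replaced by the disjoint union $\bigsqcup_{i=1}^n H^{(i)}$ of the regions introduced above (each $H^{(i)}$ a disk if $K^{(i)}>0$ and a sphere if $K^{(i)}=0$, with directed cycle graph boundary $\partial H^{(i)}$ of length $K^{(i)}$, the vertices of all the $\partial H^{(i)}$ labelled $1,\dots,2|\Sigma|$ in order of $i$ and cyclically within each boundary). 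First I would write $\pi(\Sigma)$ in its canonical form $(p_1\,q_1)\cdots(p_K\,q_K)$ with $K=|\Sigma|$ and expand $\eqref{eqn-exp-multiple-2}$, via the definition of $\avg{\cdot}_\pi$, as $\prod_{k=1}^K \avg{W_{a_{p_k}a_{s(p_k)}}W_{a_{q_k}a_{s(q_k)}}}$ with $s$ now the permutation $\eqref{eqn-pmatrix}$. For each factor I invoke Proposition~\ref{prop-compatible-one}: this covariance is nonzero exactly when $a_{p_k},a_{s(p_k)},a_{q_k},a_{s(q_k)}$ satisfy a relation of type $n$ for some $n\in\mcl R^G$, with value read off from $\eqref{eqn-casimir}$; nothing here depends on $s$, so this step is identical to the single-loop case.

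Next I run the geometric encoding unchanged: the indices $a_1,\dots,a_{2K}$ label the vertices of $\bigsqcup_{i=1}^n \partial H^{(i)}$, edges of $\partial H^{(i)}$ join $v_p$ to $v_{s(p)}$ by definition of $s$, and a choice $\bm r=(r_k)_{k\in[K]}\in(\mcl R^G)^K$ of which relation holds for each matched pair corresponds to performing the associated gluing/contraction on the pair of edges $\overrightarrow{v_{p_k}v_{s(p_k)}}$, $\overrightarrow{v_{q_k}v_{s(q_k)}}$, producing the CW complex $H(\pi,\bm r)$ of Definition~\ref{defn-H} — which is already stated for a disjoint union of regions. By Lemma~\ref{lem-label-bijection} (likewise already formulated for $\bigsqcup_i \partial H^{(i)}$), the labellings $\bm a$ compatible with $\bm r$ are in bijection with labellings of the vertex set $V(H(\pi,\bm r))$ by $[N]$. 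Summing $\eqref{eqn-exp-multiple-2}$ over all $\bm a$ compatible with a fixed $\bm r$ therefore gives $N^{|V(H(\pi,\bm r))|}$ times the per-pair scalar $\eqref{eqn-omega-term-single}$, where for $G=\SphN$ the product of $\nu$-factors is identified with $(-1)^{\mu(H(\pi,\bm r))}$ by Proposition~\ref{prop-sgn} (whose proof, via the surface-classification transformations of Definition~\ref{defn-operations} and Propositions~\ref{prop-equiv-C} and~\ref{prop-eta-inv}, is already valid for a collection of faces, as noted in the footnote and as visible from the fact that those statements are phrased for arbitrary $C\in\mcl C$).

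The only arithmetic that changes is the vertex count. Since $H(\pi,\bm r)$ now has $n$ faces and $E=\#_{\romnum{1}}(\bm r)+\#_{\romnum{2}}(\bm r)+\#_{\romnum{4}}(\bm r)$ edges (contracted pairs are deleted), the relation $\chi=|V|-E+n$ gives $|V(H(\pi,\bm r))|=\chi(H(\pi,\bm r))-n+\#_{\romnum{1}}(\bm r)+\#_{\romnum{2}}(\bm r)+\#_{\romnum{4}}(\bm r)$; substituting this into $N^{|V(H(\pi,\bm r))|}\cdot\eqref{eqn-omega-term-single}$ and summing over $\bm r\in(\mcl R^G)^K$ — equivalently, over the $2^{|\Sigma|}$ collections of surfaces built from $H^{(1)},\dots,H^{(n)}$ — yields exactly $\eqref{eqn-cov-N-multiple}$, with $\alpha=\#_{\romnum{1}}(\bm r)$ the number of pairs glued with opposite orientation, $\gamma=2\#_{\romnum{3}}(\bm r)$ the number of contracted edges, and $\mu$ the total non-orientable genus. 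Lemma~\ref{lem-un-gen} is the specialization $\mcl R^{\UN}=\{\romnum{1}\}$, where the sum collapses to one term and $\alpha=K=|\Sigma|$.

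Since the argument is a faithful transcription, I do not expect a genuine obstacle; the one point that needs care is the bookkeeping that the ``$-1$'' in the exponent $N^{-1+\chi}$ of the single-loop case becomes ``$-n$'' precisely because the reference CW complex now has $n$ faces (one per loop), and the check that the $\SphN$ sign computation genuinely survives starting from several faces — which is exactly what the footnote after the statement promises and what the ``for arbitrary $C\in\mcl C$'' phrasing of Propositions~\ref{prop-equiv-C} and~\ref{prop-eta-inv} delivers. Given how cumbersome the extra indexing is, in the write-up I would, like the authors do for Lemma~\ref{lem-main-poisson-gen}, state that the proof repeats Section~\ref{sec-surface-story} with the substitutions above and omit the routine details.
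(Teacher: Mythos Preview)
Your proposal is correct and matches the paper's approach exactly: the paper's proof of Lemmas~\ref{lem-un-gen} and~\ref{lem-other-gen} literally says ``follow the same arguments as those used to prove Lemmas~\ref{lem-un} and~\ref{lem-other}, with $H$ replaced by $H^{(1)},\ldots,H^{(n)}$,'' and omits the details. You have in fact supplied more of those details than the paper does, and your identification of the only nontrivial bookkeeping change---that the single face becomes $n$ faces, turning $N^{-1+\chi}$ into $N^{-n+\chi}$---is exactly right.
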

\begin{proof}[Proofs of Lemmas~\ref{lem-un-gen} and~\ref{lem-other-gen}]
 Lemmas~\ref{lem-un-gen} and~\ref{lem-other-gen} are proved by following the same arguments as those used to prove Lemmas~\ref{lem-un} and~\ref{lem-other}, with $H$ replaced by $H^{(1)},\ldots,H^{(n)}$.  We omit the details.
\end{proof}

\subsection{Surfaces spanning loops}\label{sec-spanning}

In this section, we construct surfaces topologically equivalent to the CW complexes in Definition~\ref{defn-H} so that they ``span'' a given loop $\Gamma$. Combined with Lemma~\ref{lem-other-gen} where Wilson loop expectations are expressed in terms of sums over CW complexes, we eventually establish a complete story of gauge-string duality for the 2D Yang-Mills theory, as stated in Theorem~\ref{thm-main-simple}.

Let $\Gamma$ be a loop in the plane. After choosing a root and a spanning tree, we construct the lassos, e.g.\ as described in Figure~\ref{fig-eg-lasso-g0}. However, instead of shrinking the edges of the spanning tree to a point, we keep the copies of non-tree edges that consist of $\Gamma$ and the tree edges attached to it. After ``fan out'' these copies in the clockwise order around the root, we may glue tree edges to obtain the beaded disk for $\Gamma$, called the \emph{reference covering} of $\Gamma$ with the specified root and spanning tree. See Figure~\ref{fig-eg-petals2-g0} (A-C) for an illustration of this construction. Figure~\ref{fig-petals-complicated}\ (A) is another example of a reference covering for a more complicated loop.
We note that a similar notion has been introduced in~\cite{Shor1992,Mukherjee2011}.

\begin{figure}[ht!] \centering
	\includegraphics[width=\textwidth]{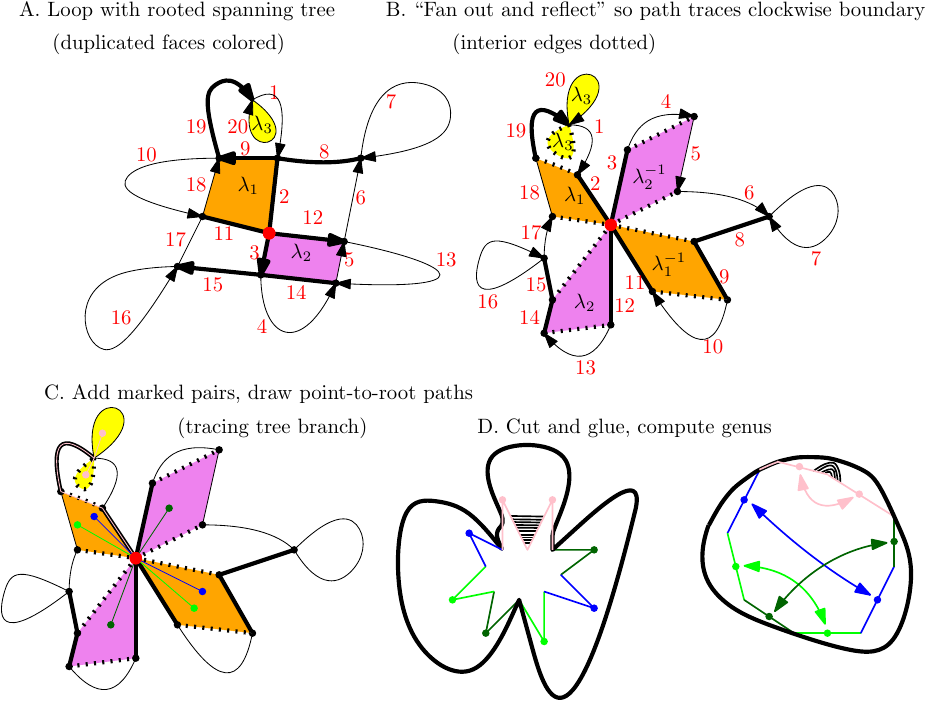}
	\caption{\textbf{(A)} A complicated loop drawn in the plane. \textbf{(B)} Fan out and reflect to produce a beaded disk. (See Figure~\ref{fig-eg-petals2-g0} for a simpler example.) \textbf{(C)} Some point-to-root paths are drawn on the beaded disk. \textbf{(D)}
    The point-to-root paths may intersect boundary and/or overlap each other, but this turns out not to affect the genus computation, see Figure~\ref{fig-gluing-trees} for further explanation.
 }\label{fig-petals-complicated}
\end{figure}

\begin{defn}
    \label{defn-reference-covering}
    We call a map $\phi: D\to \BB R^2$ of $(\BB R^2, \Gamma)$ a \textbf{reference covering} if $D$ is a beaded disk and it satisfies the following.
    \begin{itemize}
    \item When you restrict to the circle that forms the boundary of the disk, you get a parameterization of the loop $\Gamma$.
    \item If $A$ is a bounded region in the complement of $\Gamma$ then $\phi^{-1}(A)$ has some finite number $k$ of components, and $\phi$ maps each component homeomorphically onto $A$.
    \end{itemize}
    In addition, we call the reference covering of $\Gamma$ constructed from a choice of root and spanning tree as described in the previous paragraph the \textbf{rooted reference covering with spanning tree} of the loop $\Gamma$.
\end{defn}

From the rooted reference covering with spanning tree of $\Gamma$, we can construct a surface analogous to the procedure described in Figure~\ref{fig-eg-surface-g0}. Suppose that pairs of points are given on the faces of the reference covering. These points are referred as the pairs of \emph{ramification points}. Then we construct a path to each ramification point so that when we shrink the spanning to a point, the paths should be identical to the slits we constructed in Figure~\ref{fig-eg-surface-g0}. Some paths may share the same edge of the reference covering. See Figure~\ref{fig-eg-petals2-g0}\ (E) for the pahts to ramification points in the case of $\Gamma_0$, and compare this with Figure~\ref{fig-eg-petals2-g0}\ (F) which describes the corresponding slits described in Section~\ref{sec-surface-story}. We now define a \emph{spanning surface} by cutting and gluing these paths to ramification points. 

\begin{defn}
    \label{defn-spanning}
    We say a surface \textbf{spans the loop} $\Gamma$ as its boundary if the surface is obtained by cutting and gluing path-to-root trees constructed from ramification points.
\end{defn}

See Figure~\ref{fig-petals-complicated} for an example of surfaces spanning a complicated loop. Note that the cutting and gluing still work even though the slit paths share some edges to form a tree. See Figure~\ref{fig-gluing-trees} to see how this works. In particular, it is not difficult to see that the resulting surface obtained by gluing path-to-root trees is topologically equivalent to the CW complex we constructed in Section~\ref{sec-surface-story}.

\begin{figure}[ht!] \centering
	\includegraphics[width=\textwidth]{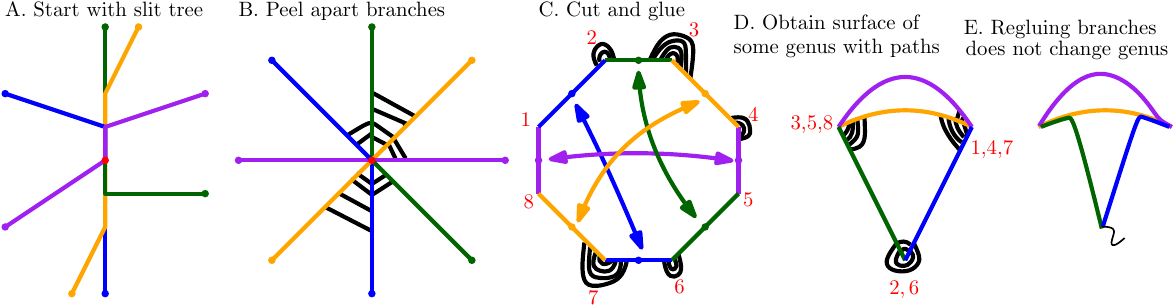}
	\caption{The cutting/gluing construction at the end of Figure~\ref{fig-petals-complicated} still works if the slit paths form a tree (instead of being disjoint) as shown above; we can ignore the extra ``tail'' in (E). The genus depends only on the ordering of edges within the polygon (C). This remains true if the slit paths also intersect or trace the boundary, as in Figure~\ref{fig-petals-complicated} (E). In that scenario, the construction proceeds as follows: we glue an extra disk to the outside of the loop in Figure~\ref{fig-petals-complicated} (E) to produce a topological sphere; we then proceed with the gluing/cutting within that sphere as above to create a new surface without boundary; finally, we can the remove the extra disk at the end to obtain a surface with a boundary loop.
 }\label{fig-gluing-trees}
\end{figure}

\begin{figure}[ht!] \centering
	\includegraphics[width=\textwidth]{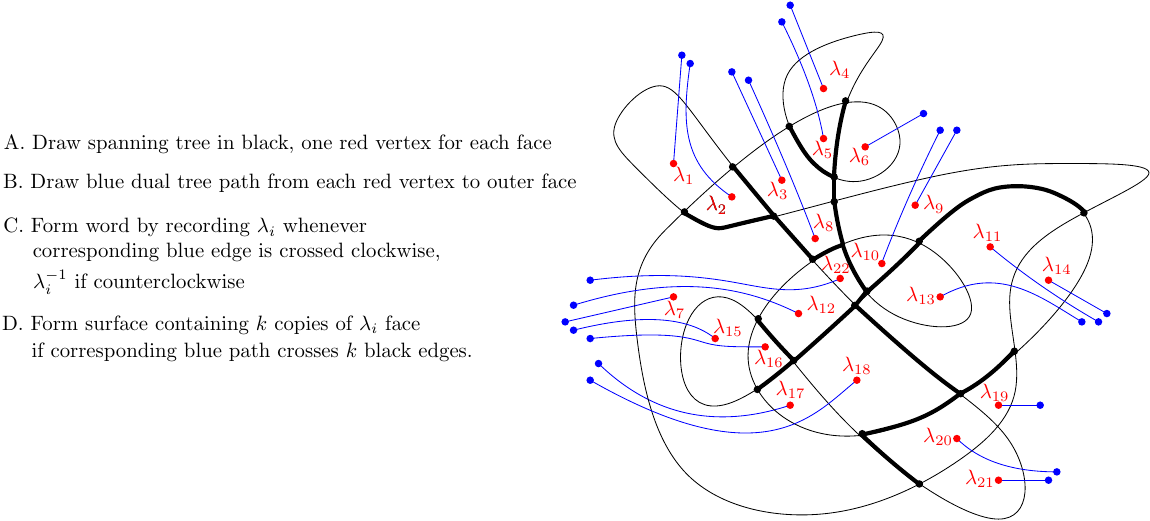}
	\caption{{\bf Another perspective on word and reference surface formation:} Suppose that $b_1,b_2, \ldots, b_M$ are the locations where the black curve intersects a blue segment, presented in the order in which they are traversed by the loop, starting at the root. Every $b_i$ corresponds to one copy of the corresponding face (centered at the red dot at the end of the blue segment). Suppose a black edge $e$ in the loop separates two faces, which together occur $2k+1$ times. Then the $2k+1$ faces will have to be paired up---with $k$ pairs of faces being glued together along $e$ and one unpaired face having $e$ as its boundary. 
 If $e$ is \emph{not} part of the (thick black) spanning tree, then one of the corresponding blue paths must hit $k$ edges (not including $e$) and the other must hit $k+1$ edges (including $e$). In this case it is easy to see how the faces are paired with each other: namely faces are paired when their corresponding $b_i$ occur along the same black edge; the face left unpaired corresponds to the $b_i$ that occurs along $e$. If $e$ \emph{is} an edge of the (thick black) spanning tree, then the first of the $b_i$ hit after the loop traverses $e$ (if $e$ is directed away from the root) or the last of the $b_i$ intersection hit before the loop traverses $e$ (if $e$ is directed toward the root) corresponds to the boundary face; and the remaining edges are paired together according to cyclic ordering. (That is, the first two blue/black intersections reached after the ``boundary face'' intersection are paired with each other, as are the next two, etc.) A pair could include two copies of the same face (with opposite orientations) or one copy of each type of face (with the same orientation).
 }\label{fig-dualtree}
\end{figure}

We end this section with another perspective on the rooted reference covering with spanning tree. The caption of Figure~\ref{fig-dualtree} roughly sketches an alternative way to understand the construction of a reference surface once one is given a loop and a spanning tree.

\section{Applications}\label{sec-applications}
\subsection{Properties of Wilson loop expectations}\label{sec-corollaries}
Our main results (Lemma~\ref{lem-main-poisson-gen},~\ref{lem-un-gen}, and~\ref{lem-other-gen}) have several corollaries. Although none of the statements in this section is original, we give new proofs based on our framework which are probabilistic and relatively simpler than known methods.

We denote by $\Phi_N^{G}(\bm{\Gamma})$ the Wilson loop expectation of the collection of loops $\bm{\Gamma}$ in the gauge group $G$.
Throughout this section, we assume the setting of Lemma~\ref{lem-main-poisson-gen},~\ref{lem-un-gen}, and~\ref{lem-other-gen}. Particularly, recall the definitions of $\mcl D$, $\Sigma$, and $\ep(\Sigma)$ in the first paragraph of Section~\ref{sec-multiple}. In addition, for $C>0$, denote by $\Sigma^C$ a Poisson point process on $\mcl D$ whose intensity is $C$ times the Lebesgue measure. Thus, $\Sigma$ and $\Sigma^1$ has the same distribution. We first observe a simple fact which is straightforward from the definition of $\Sigma^C$:
\begin{prop}\label{prop-sigma-shift}
	For $A, C>0$, let $f$ be some measurable function with respect to $\Sigma^C$. Then the expectation of $A^{\abs{\Sigma^C}} f(\Sigma^C)$ equals to $Z_{A,C}:=\exp\left(C(A-1)\sum_{\stackrel{m<m^*}{c(m) = c(m^*)}} \abs{\lambda_{c(m)}}\right)$ times the expectation of $f(\Sigma^{AC})$.
\end{prop}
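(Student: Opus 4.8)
The plan is to exploit the elementary structure of a Poisson point process on a space of finite total measure: conditionally on the number of its points, it is an i.i.d.\ sample. Write $\abs{\mcl D} := \sum_{(m,m^*) \in \mcl C} \abs{\lambda_{c(m)}}$ for the total Lebesgue mass of the space $\mcl D$ of matching-color lasso pairs (which is finite by Definition~\ref{defn-space-match}), and let $\mathrm{Leb}_1$ denote the probability measure on $\mcl D$ obtained by normalizing Lebesgue measure. For any $C > 0$, the process $\Sigma^C$ has $\abs{\Sigma^C}$ distributed as $\mathrm{Poisson}(C\abs{\mcl D})$, and, conditionally on $\abs{\Sigma^C} = k$, its points form an (unordered) collection of $k$ i.i.d.\ $\mathrm{Leb}_1$-distributed points. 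The key point is that this conditional law is the \emph{same} for every $C$; only the law of the number of points depends on $C$.

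First I would decompose both sides of the claimed identity according to the number of points. Assuming $f$ is such that the series below converge absolutely --- for instance $f$ bounded, which covers all the applications; the case $f \ge 0$ can alternatively be handled directly by Tonelli with values in $[0,\infty]$ --- one has
\[
\mathbb{E}\left[ A^{\abs{\Sigma^C}} f(\Sigma^C) \right] = \sum_{k=0}^\infty e^{-C\abs{\mcl D}} \frac{(C\abs{\mcl D})^k}{k!}\, A^k\, \mathbb{E}\left[ f(\{x_1,\dots,x_k\}) \right]
\]
and, with the \emph{same} i.i.d.\ $\mathrm{Leb}_1$-collection $\{x_1,\dots,x_k\}$ (here we use that the conditional law is $C$-independent),
\[
\mathbb{E}\left[ f(\Sigma^{AC}) \right] = \sum_{k=0}^\infty e^{-AC\abs{\mcl D}} \frac{(AC\abs{\mcl D})^k}{k!}\, \mathbb{E}\left[ f(\{x_1,\dots,x_k\}) \right].
\]

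Next I would compare the two series term by term. The $k$-th weight in the first display is $e^{-C\abs{\mcl D}} (AC\abs{\mcl D})^k / k!$, which equals $e^{C(A-1)\abs{\mcl D}}$ times the $k$-th weight $e^{-AC\abs{\mcl D}}(AC\abs{\mcl D})^k/k!$ appearing in the second display. Since the factor $e^{C(A-1)\abs{\mcl D}}$ does not depend on $k$, it pulls out of the sum, yielding $\mathbb{E}[A^{\abs{\Sigma^C}} f(\Sigma^C)] = e^{C(A-1)\abs{\mcl D}}\, \mathbb{E}[f(\Sigma^{AC})]$. Finally, unwinding the definition of $\mcl D$ gives $\abs{\mcl D} = \sum_{(m,m^*)\in\mcl C}\abs{\lambda_{c(m)}} = \sum_{\stackrel{m<m^*}{c(m)=c(m^*)}}\abs{\lambda_{c(m)}}$, so $e^{C(A-1)\abs{\mcl D}} = Z_{A,C}$, which is the assertion.

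There is no substantial obstacle here; the only point needing (mild) care is the interchange of expectation with the sum over the number of points, which is immediate once $f$ is bounded and which can in any case be bypassed for nonnegative $f$ by working in $[0,\infty]$. One could equivalently phrase the argument as an exponential tilting of the Poisson distribution of the point count, or --- when $A > 1$ --- as superposing an independent Poisson process of intensity $(A-1)C\,\mathrm{Leb}$ onto $\Sigma^C$, and --- when $A < 1$ --- as an independent $A$-thinning of $\Sigma^C$; the count-decomposition above treats all $A > 0$ uniformly.
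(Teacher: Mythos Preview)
Your argument is correct and is exactly the kind of computation the paper has in mind: the paper does not actually give a proof of this proposition, stating only that it is ``straightforward from the definition of $\Sigma^C$.'' Your decomposition by the number of points, together with the observation that the conditional law given $|\Sigma^C|=k$ is $C$-independent, is precisely the standard justification, so there is nothing to compare.
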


The first corollary establishes a connection between the Wilson loop expectations in $\UN$ and $\SUN$.
\begin{cor}[{\cite[Proposition 1.5]{Levy2011a}}]\label{cor-sun}
	\[\Phi_N^{\SUN}(\bm{\Gamma}) = 
 \Ex{\ep(\Sigma^{1/N^2})}
	\exp\left( \sum_{\ell=1}^L \frac{\abs{c^{-1}(\ell)}^2 \abs{\lambda_{\ell}}}{2N^2} \right) \Phi_N^{\UN}(\bm{\Gamma}).\]
\end{cor}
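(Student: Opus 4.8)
The plan is to compare the surface-sum expressions for $\Phi_N^{\SUN}$ and $\Phi_N^{\UN}$ provided by Lemmas~\ref{lem-main-poisson-gen} and~\ref{lem-un-gen},~\ref{lem-other-gen}, term by term over the same Poisson point process. First I would write out both Wilson loop expectations via Lemma~\ref{lem-main-poisson-gen} using the \emph{same} space $\mcl D$ of matching-color lasso pairs and the same Poisson point process $\Sigma$. For $G=\UN$ the constant prefactor is $\exp\left(-\frac12\sum_{m=1}^M\abs{\lambda_{c(m)}} + \sum_{m<m^*,\,c(m)=c(m^*)}\abs{\lambda_{c(m)}}\right)$ since $\fc_{\uN}=-1$, while for $G=\SUN$ it is the same expression but with $\fc_{\suN}=-1+\frac1{N^2}$, so the two prefactors differ by the factor $\exp\left(\frac{1}{2N^2}\sum_{m=1}^M\abs{\lambda_{c(m)}}\right)$. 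Rewriting $\sum_{m=1}^M\abs{\lambda_{c(m)}} = \sum_{\ell=1}^L \abs{c^{-1}(\ell)}\,\abs{\lambda_\ell}$ accounts for part of the exponential appearing in the statement; note, however, that the target has $\abs{c^{-1}(\ell)}^2$, not $\abs{c^{-1}(\ell)}$, so the remaining discrepancy must be produced by the Poisson intensity change, which is exactly where Proposition~\ref{prop-sigma-shift} enters.

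Next I would handle the summand. By Lemma~\ref{lem-un-gen}, for $\UN$ the inner sum~\eqref{eqn-exp-multiple-2} equals $(-1)^{\abs\Sigma} N^{-n+\chi}$ where $\chi$ is the Euler characteristic of the surface obtained by gluing all $\abs\Sigma$ pairs with opposite orientation. By Lemma~\ref{lem-other-gen}, for $\SUN$ the same inner sum equals $\sum (-1)^\alpha N^{-n-\gamma+\chi}$ over the $2^{\abs\Sigma}$ choices of gluing-versus-contracting each pair. The key algebraic observation is that the $\SUN$ sum factorizes over the $\abs\Sigma$ pairs: for each pair we either glue with opposite orientation (contributing the same local factor $-1$ and leaving $\chi$ alone relative to a reference) or contract (contributing a factor $N^{-2}$ through the $-\gamma$ term together with the change it induces in $\chi$). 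I would make this precise by observing that contracting a pair of edges, as opposed to gluing it, changes the CW complex in a way that alters $\chi$ by a fixed amount; combined with the explicit $N^{-\gamma}$ and $(-1)^\alpha$ weights one sees that each contracted pair contributes an extra multiplicative factor of $N^{-2}$ relative to the corresponding glued pair. Hence the $\SUN$ inner sum equals $(-1)^{\abs\Sigma}N^{-n+\chi_0}\prod_{\text{pairs}}(1 + N^{-2})$ where $\chi_0$ is the Euler characteristic of the all-glued surface, i.e.\ it equals $(1+N^{-2})^{\abs\Sigma}$ times the $\UN$ inner sum for the same $\Sigma$. Actually, the cleanest route is: the $\SUN$ expectation equals the expectation over $\Sigma$ of $\ep(\Sigma)$ times $(1+N^{-2})^{\abs\Sigma}$ times [the $\UN$ inner sum], so that absorbing $(1+N^{-2})^{\abs\Sigma}$ into the Poisson process is the remaining task.

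The final step is to apply Proposition~\ref{prop-sigma-shift} with $A = 1 + N^{-2}$ and $C=1$: the expectation of $(1+N^{-2})^{\abs\Sigma}$ times a function of $\Sigma$ equals $\exp\left(N^{-2}\sum_{m<m^*,\,c(m)=c(m^*)}\abs{\lambda_{c(m)}}\right)$ times the expectation of that function with respect to $\Sigma^{1+1/N^2}$. But the target corollary wants the answer expressed against $\Sigma^{1/N^2}$, so I would instead split the factor as $(1+N^{-2})^{\abs\Sigma}$ applied directly, or more carefully track which intensity the $\ep$-expectation in the statement refers to. I believe the correct bookkeeping is: one keeps the $\UN$ inner sum paired with a thinned/reweighted process, and the $\ep(\Sigma^{1/N^2})$ in the statement arises because the extra contraction option for $\SUN$ is, combinatorially, an independent Bernoulli decoration of each point of $\Sigma$ with probability $\propto 1/N^2$, which superimposes a second Poisson process of intensity $N^{-2}$ times Lebesgue measure carrying the sign $\ep$; marginalizing the glued points back recovers $\Phi_N^{\UN}$ while the decorating points become $\Sigma^{1/N^2}$. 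The counting of $\abs{c^{-1}(\ell)}^2$ versus $\abs{c^{-1}(\ell)}$: the $\mcl D$-measure of the region over color $\ell$ is $\binom{\abs{c^{-1}(\ell)}}{2}\abs{\lambda_\ell} \approx \tfrac12\abs{c^{-1}(\ell)}^2\abs{\lambda_\ell}$, so the intensity-shift exponential $C(A-1)\sum \abs{\lambda}$-type term in Proposition~\ref{prop-sigma-shift}, with the sum over $\mcl C$ rewritten by color, naturally produces $\sum_\ell \abs{c^{-1}(\ell)}^2\abs{\lambda_\ell}/(2N^2)$ (up to lower-order $\abs{c^{-1}(\ell)}$ terms that must be cancelled against the prefactor discrepancy identified in the first paragraph).

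The main obstacle I anticipate is precisely this bookkeeping of which Poisson intensity carries the sign $\ep$ and getting the $\abs{c^{-1}(\ell)}^2$ versus $\abs{c^{-1}(\ell)}$ terms to reconcile exactly: one must carefully separate the Casimir-prefactor contribution $\fc_\fg$ (linear in $\abs{c^{-1}(\ell)}$) from the $\mcl D$-area contribution (quadratic in $\abs{c^{-1}(\ell)}$), verify that the linear pieces cancel between the two and only the quadratic piece survives scaled by $1/(2N^2)$, and confirm that the remaining combinatorial factor $(1+N^{-2})^{\abs\Sigma}$ is correctly realized either as a reweighting of $\Sigma$ or, equivalently, as an auxiliary intensity-$N^{-2}$ Poisson process whose sign contribution is exactly $\ep(\Sigma^{1/N^2})$ while the underlying surface topology (and hence the $N^{-n+\chi}$ factor) reduces to the $\UN$ one. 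Everything else is a direct substitution into Lemmas~\ref{lem-un-gen} and~\ref{lem-other-gen} and an application of Proposition~\ref{prop-sigma-shift}.
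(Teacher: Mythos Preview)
Your outline contains the right ingredients but has a genuine gap in the second paragraph. The claimed factorization
\[
\text{($\SUN$ inner sum for $\Sigma$)} \;=\; (1+N^{-2})^{|\Sigma|}\cdot\text{($\UN$ inner sum for $\Sigma$)}
\]
is false. Replacing a single pair from ``glue'' to ``contract'' does \emph{not} change the Euler characteristic by a fixed amount independent of the other pairs: for two crossing pairs, the all-glued surface is a torus ($\chi=0$), while contracting one pair and gluing the other gives a sphere ($\chi=2$); for two non-crossing pairs, both configurations give a sphere. So there is no constant local ratio ``$N^{-2}$ per contracted pair relative to the same pair glued,'' and the $(1+N^{-2})^{|\Sigma|}$ product never appears. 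Concretely, for two crossing pairs the $\SUN$ inner sum is $N^{-n}-2N^{-n}+N^{-n-2}$, while your formula gives $(1+N^{-2})^2N^{-n}$.

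The correct topological fact---and this is the step the paper uses---is that the Euler characteristic of $H(\pi(\Sigma),\mathbf r)$ depends \emph{only} on the glued pairs $\Sigma_1$: contracting a pair of edges in the boundary of the $2|\Sigma|$-gon simply yields the $2|\Sigma_1|$-gon with its induced pairing $\pi(\Sigma_1)$, so $\chi=\chi(\Sigma_1)$. Combined with $\alpha=|\Sigma_1|$, $\gamma=2|\Sigma_2|$, and $\ep(\Sigma)=\ep(\Sigma_1)\ep(\Sigma_2)$, the $\SUN$ expectation becomes
\[
\Ex*{\,\sum_{\Sigma_1\sqcup\Sigma_2=\Sigma}\ep(\Sigma_1)(-1)^{|\Sigma_1|}N^{-n+\chi(\Sigma_1)}\cdot \ep(\Sigma_2)N^{-2|\Sigma_2|}\,}.
\]
Now the thinning idea you sketch in your third paragraph is exactly right and is what the paper does: interpret the sum over subsets as $2^{|\Sigma|}$ times an expectation over an independent fair-coin splitting of $\Sigma$ into $\Sigma_1,\Sigma_2$, each a Poisson process of intensity $\tfrac12$. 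The two factors are then independent, so the expectation factorizes; two applications of Proposition~\ref{prop-sigma-shift} (with $A=2,C=\tfrac12$ on the $\Sigma_1$ piece and $A=2/N^2,C=\tfrac12$ on the $\Sigma_2$ piece) turn the first factor into the $\UN$ expectation and the second into $\Ex{\ep(\Sigma^{1/N^2})}$ times an exponential. The exponential bookkeeping you anticipate then goes through cleanly: the $\fc_\fg$ discrepancy contributes $\tfrac{1}{2N^2}\sum_\ell|c^{-1}(\ell)|\,|\lambda_\ell|$, and the intensity shift on the $\Sigma_2$ piece contributes $\tfrac{1}{N^2}\sum_\ell\binom{|c^{-1}(\ell)|}{2}|\lambda_\ell|$; summing gives exactly $\tfrac{1}{2N^2}\sum_\ell|c^{-1}(\ell)|^2|\lambda_\ell|$.

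In short: abandon the $(1+N^{-2})^{|\Sigma|}$ factorization of the inner sum, replace it with the observation $\chi=\chi(\Sigma_1)$, and then the thinning argument you already outlined works without further obstacles.
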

\begin{proof}
 
 In light of Lemma~\ref{lem-main-poisson-gen}, $\Phi_N^{\SUN}(\bm{\Gamma})$ and $\Phi_N^{\UN}(\bm{\Gamma})$ can be represented as the constant~\eqref{eqn-lem-const} times the expectation of~\eqref{eqn-exp-multiple-2} multiplied by $\ep(\Sigma)$. The former constant differs by the factor
	\begin{equation}\label{eqn-const-diff}
		\exp\left({\sum_{m=1}^M\frac{|\lambda_{c(m)}|}{2N^2}}\right)
	\end{equation}
	because $\fc_{\suN} = \fc_{\uN} + 1/N^2$. Thus, it remains to analyze the latter expectation for each case.

 Suppose $G=\SUN$. For each point in $\Sigma$, we flip a fair coin to filter $\Sigma$, which becomes another Poisson point process, denoted by $\Sigma_2$. In particular, $\Sigma_1:=\Sigma\setminus \Sigma_2$ and $\Sigma_2$ have the same law as $\Sigma^{1/2}$. In the context of Lemma~\ref{lem-other-gen}, we may interpret $\Sigma_2$ as points corresponding to \emph{contracted edges} among points in $\Sigma$. This implies that the expectation of~\eqref{eqn-exp-multiple-2} multiplied by $\ep(\Sigma)$ equals to the expectation of
 \begin{equation}\label{eqn-sun-alt}
 \ep(\Sigma_1)\ep(\Sigma_2) 2^{\abs{\Sigma_1}+\abs{\Sigma_2}} N^{-n-2\abs{\Sigma_2}+\chi(\Sigma_1)}
 \end{equation}
 by Lemma~\ref{lem-other-gen}, where $\chi(\Sigma_1)$ is defined as $\chi$ in Lemma~\ref{lem-un-gen} by replacing $\Sigma$ to $\Sigma_1$. Since $\Sigma_1$ and $\Sigma_2$ are independent,~\eqref{eqn-sun-alt} can be reduced to the expectation of
 \begin{equation}\label{eqn-sun-alt-first}
 \ep(\Sigma_1)2^{\abs{\Sigma_1}} N^{-n+\chi(\Sigma_1)}
 \end{equation}
 times the expectation of
 \begin{equation}\label{eqn-sun-alt-second}
 \ep(\Sigma_2) (2/N^2)^{\abs{\Sigma_2}}.
 \end{equation}

 Using Proposition~\ref{prop-sigma-shift} and the constant $Z_{A,C}$ defined there, we have that~\eqref{eqn-sun-alt-first} equals to $Z_{2,1/2}$ times the expectation of $\ep(\Sigma) N^{-n+\chi(\Sigma)}$, and~\eqref{eqn-sun-alt-second} equals to $Z_{2/N^2,1/2}$ times the expectation of $\ep(\Sigma^{1/N^2})$.

	Finally, $Z_{2,1/2}Z_{2/N^2,1/2}$ times~\eqref{eqn-const-diff} equals to
	\[\exp\left({\frac{1}{2N^2}\sum_{m=1}^M {|\lambda_{c(m)}|} + \frac{1}{N^2}\sum_{\stackrel{m<m^*}{c(m) = c(m^*)}} \abs{\lambda_{c(m)}}}\right) = \exp\left(\frac{1}{2N^2}\sum_{{c(m) = c(m^*)}} \abs{\lambda_{c(m)}}\right)\]
	and collecting all terms prove the desired statement.
\end{proof}

The \textit{master field} $\Phi^{G}$ of $\bm{\Gamma}$ is the limit of a normalized holonomy field defined as
\begin{equation}
 \Phi^G(\bm{\Gamma}) := \lim_{N\to \infty} \left(\frac1N\right)^{n} \Phi^G_N(\bm{\Gamma}),
\end{equation}
if exists, that is the Wilson loop expectation with respect to the normalized trace $\tr := \frac1N \Tr$ in the $N\to \infty$ limit, also called the 't Hooft limit. Recall that for a connected compact surface $\mathcal S$, the Euler characteristic $\chi$ and the genus $g\ge 0$ are related as
	\begin{equation*}
		\chi = \begin{cases}
			2-2g & \text{if $\mathcal S$ is orientable,} \\
			2-g & \text{if $\mathcal S$ is non-orientable.}
		\end{cases}
	\end{equation*}
	In particular, the sphere $\mathbb S^2$ is the unique connected surface (up to homeomorphism) that maximizes $\chi$ by the classification theorem of connected compact surfaces. This explains why Wilson loop expectations are nontrivial when normalized by the number of loops raised to the power of $1/N$. The next corollaries show that $\Phi^G(\bm \Gamma)$ exists and is the same in all classical compact Lie groups.

\begin{cor}\label{cor-exists}
 For $G=\UN, \SON, \SUN, \SphN$, the master field $\Phi^G(\bm \Gamma)$ exists.
\end{cor}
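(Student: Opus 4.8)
\textit{Proof proposal.}
The plan is to feed Lemma~\ref{lem-main-poisson-gen} together with Lemmas~\ref{lem-un-gen} and~\ref{lem-other-gen} into a dominated‑convergence argument over the Poisson point process $\Sigma$ on $\mcl D$. Write $C_N$ for the constant~\eqref{eqn-lem-const}. By Lemma~\ref{lem-main-poisson-gen} we have $\Phi_N^G(\bm\Gamma) = C_N\cdot\Ex{\ep(\Sigma)\,S_N(\Sigma)}$, where $S_N(\Sigma)$ denotes the sum of~\eqref{eqn-exp-multiple-2} over all admissible index tuples; and by Lemmas~\ref{lem-un-gen}--\ref{lem-other-gen}, $S_N(\Sigma)$ equals $(-1)^{|\Sigma|}N^{-n+\chi}$ when $G=\UN$, and otherwise a sum over the $2^{|\Sigma|}$ ways of gluing or contracting the pairs of edges matched by $\pi(\Sigma)$ of the quantities~\eqref{eqn-cov-N-multiple}. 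Setting $T_N(\Sigma):=\left(\tfrac1N\right)^n S_N(\Sigma)$, one gets $\left(\tfrac1N\right)^n\Phi_N^G(\bm\Gamma) = C_N\,\Ex{\ep(\Sigma)\,T_N(\Sigma)}$, and it remains to let $N\to\infty$ inside the expectation.

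I would then record two elementary facts. First, $C_N$ converges: the only $N$-dependence of~\eqref{eqn-lem-const} is through $\fc_\fg$, and $\fc_\fg\to -1$ in all four cases by Table~\ref{table-lie}, so $C_N\to C_\infty:=\exp\bigl(-\tfrac12\sum_{m=1}^M|\lambda_{c(m)}| + \sum_{m<m^*,\,c(m)=c(m^*)}|\lambda_{c(m)}|\bigr)$; moreover $\fc_\fg\in[-2,0]$ for every admissible $N$, so $0<C_N\le e^{\Lambda}$ uniformly in $N$, where $\Lambda := \sum_{m<m^*,\,c(m)=c(m^*)}|\lambda_{c(m)}|$ is the total intensity mass of $\Sigma$. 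Second, every surface occurring in Lemmas~\ref{lem-un-gen}--\ref{lem-other-gen} is, by Definition~\ref{defn-H}, a disjoint union of \emph{closed} surfaces obtained by gluing the $n$ regions $H^{(1)},\dots,H^{(n)}$ along their boundary edges; since boundary gluings (and edge contractions) never increase the number of connected components, the result has at most $n$ components, each of Euler characteristic at most $2$. Hence the total Euler characteristic satisfies $\chi\le 2n$, and because $\gamma,\alpha,\mu\ge 0$, every summand of $T_N(\Sigma)$ is bounded in modulus by $N^{-2n+\chi}\le 1$ for $N\ge 1$; consequently $|T_N(\Sigma)|\le 1$ when $G=\UN$ and $|T_N(\Sigma)|\le 2^{|\Sigma|}$ for the other three groups.

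Passing to the limit is then routine. For $\Sigma$ with $|\Sigma|<\infty$ (which holds almost surely) there are only finitely many surface configurations, and each summand of $T_N(\Sigma)$ converges as $N\to\infty$, since $N^{-2n+\chi}\to\1_{\{\chi=2n\}}$ and $N^{-2n-\gamma+\chi}\to\1_{\{\chi=2n,\,\gamma=0\}}$, while the signs $(-1)^{\alpha}$ and $(-1)^{\alpha+\mu}$ are $N$-independent; thus $T_N(\Sigma)\to T_\infty(\Sigma)$, a finite signed count of those configurations with $\chi=2n$ (and, for $\SUN$, also $\gamma=0$). Since $|\ep(\Sigma)|\equiv 1$ and $|\ep(\Sigma)T_N(\Sigma)|\le 2^{|\Sigma|}$, which is integrable because $\Sigma$ is Poisson with $\Ex{2^{|\Sigma|}}=e^{\Lambda}<\infty$, dominated convergence gives $\Ex{\ep(\Sigma)T_N(\Sigma)}\to\Ex{\ep(\Sigma)T_\infty(\Sigma)}$; combined with $C_N\to C_\infty$ this shows $\Phi^G(\bm\Gamma)=\lim_{N\to\infty}\left(\tfrac1N\right)^n\Phi_N^G(\bm\Gamma)$ exists and equals $C_\infty\,\Ex{\ep(\Sigma)T_\infty(\Sigma)}$. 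The one point requiring genuine care — and the step I expect to be the crux of the write-up — is the uniform bound $\chi\le 2n$: it rests on the observation that the gluing/contraction operations of Section~\ref{sec-surface-story} never create new connected components, together with the classification fact that the sphere uniquely maximizes the Euler characteristic among closed surfaces.
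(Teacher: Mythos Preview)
Your proposal is correct and follows essentially the same route as the paper: apply Lemmas~\ref{lem-main-poisson-gen}, \ref{lem-un-gen}, \ref{lem-other-gen}, use $\chi\le 2n$ to bound each summand of $T_N(\Sigma)$ by $1$ so that $|T_N(\Sigma)|\le 2^{|\Sigma|}$, and then invoke dominated convergence via the finiteness of $\Ex{2^{|\Sigma|}}$. The paper's write-up is terser (it cites Proposition~\ref{prop-sigma-shift} for $\Ex{2^{|\Sigma|}}<\infty$ rather than computing $e^{\Lambda}$ directly, and it does not spell out the ``gluing cannot increase the number of components'' justification for $\chi\le 2n$ that you highlight), but the argument is the same.
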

\begin{proof}
 In the expression of $\left(\frac1N\right)^{n} \Phi^G_N(\bm{\Gamma})$ from Lemma~\ref{lem-main-poisson-gen}, the constant~\eqref{eqn-lem-const} is convergent when $N\to\infty$. By Lemma~\ref{lem-un-gen}, and~\ref{lem-other-gen}, the other term is expected value of $\ep(\Sigma)$ times the sum of $(-1)^{A} N^{-2n+\chi-B}$ over at most $2^{\abs{\Sigma}}$ surfaces for some $A,B\ge 0$ depending on the choice of $G$. In any case, the expectation is dominated by the expected value of $2^{\abs{\Sigma}}$ (which is finite by Proposition~\ref{prop-sigma-shift}) because $-2n+\chi\le 0$. Therefore, the dominated convergence theorem implies $\Phi^G(\bm \Gamma)$ exists.
\end{proof}
	
\begin{cor}[{\cite[Theorem 2.2]{Levy2011a}}]\label{cor-equals} For $G=\SUN, \SON,\SphN$, we have
	\begin{align}
		\Phi^G(\bm \Gamma) &= \Phi^{\UN}(\bm \Gamma)\nonumber \\
		&= \exp\left(-\frac{1}{2} \sum_{m=1}^M \abs{\lambda_{c(m)}} + \sum_{\stackrel{m<m^*}{c(m) = c(m^*)}} |\lambda_{c(m)}| \right) \Ex{\ep(\Sigma)(-1)^{\abs{\Sigma}} \1_{\{\chi=2n\}}}.\label{eqn-master}
 \end{align}
\end{cor}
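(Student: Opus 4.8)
The plan is to read both equalities in \eqref{eqn-master} off the surface-sum formulas of Section~\ref{sec-multiple}, pushing the limit $N\to\infty$ inside the expectation over the Poisson point process $\Sigma$. First I establish the second line of \eqref{eqn-master} and identify it with $\Phi^{\UN}(\bm\Gamma)$. By Lemma~\ref{lem-un-gen}, $N^{-n}\Phi_N^{\UN}(\bm\Gamma)$ equals the constant \eqref{eqn-lem-const} (with $\fc_{\uN}=-1$) times $\Ex{\ep(\Sigma)\,(-1)^{\abs{\Sigma}}\,N^{-2n+\chi}}$, where $\chi$ is the total Euler characteristic of the closed surfaces formed from $H^{(1)},\dots,H^{(n)}$ by opposite-orientation gluing along $\pi(\Sigma)$. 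These surfaces have at most $n$ connected components, each a closed connected surface of Euler characteristic $\le 2$, so $\chi\le 2n$ and the integrand has modulus $\le 1$. Dominated convergence then gives $N^{-2n+\chi}\to\1_{\{\chi=2n\}}$, while \eqref{eqn-lem-const} tends to $\exp\big(-\tfrac12\sum_{m=1}^M\abs{\lambda_{c(m)}}+\sum_{m<m^*,\,c(m)=c(m^*)}\abs{\lambda_{c(m)}}\big)$; combining these yields the second line of \eqref{eqn-master}, which by definition is $\Phi^{\UN}(\bm\Gamma)$ (existence of the limit is already part of Corollary~\ref{cor-exists}).

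For $G=\SON,\SUN,\SphN$ I run the same scheme starting from Lemma~\ref{lem-other-gen}. The constant \eqref{eqn-lem-const} has the same $N\to\infty$ limit because $\fc_\fg=-1+O(1/N)$ for each of these groups (Table~\ref{table-lie}). The integrand is now $\ep(\Sigma)$ times the sum over the $2^{\abs{\Sigma}}$ gluing/contraction configurations of the corresponding summand of \eqref{eqn-cov-N-multiple} divided by $N^n$; the $N$-exponent of each summand is $-2n+\chi$ (for $\SON,\SphN$) or $-2n-\gamma+\chi$ (for $\SUN$), and both are $\le 0$ since $\chi\le 2n$ and $\gamma\ge 0$, so the configuration sum has modulus $\le 2^{\abs{\Sigma}}$, which is integrable by Proposition~\ref{prop-sigma-shift}. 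Dominated convergence reduces the claim to computing, for a fixed realization of $\Sigma$, the $N\to\infty$ limit of this configuration sum.

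The combinatorial heart of the matter is the claim that for fixed $\pi(\Sigma)$ the only configuration surviving the limit is the one in which every pair is glued with the opposite orientation --- i.e.\ exactly the $\UN$ configuration --- with limiting contribution $(-1)^{\abs{\Sigma}}\1_{\{\chi=2n\}}$, $\chi$ being the $\UN$-gluing Euler characteristic. A configuration contributes in the limit only if its $N$-exponent vanishes, which forces $\chi=2n$ (and, for $\SUN$, additionally $\gamma=0$, i.e.\ no contracted pairs). But $\chi=2n$ means the surfaces are $n$ disjoint spheres, which requires: (i) $\pi(\Sigma)$ never pairs an edge of $H^{(i)}$ with an edge of $H^{(j)}$ for $i\ne j$ (any such pair merges the two pieces, leaving fewer than $n$ components); and (ii) inside each $H^{(i)}$ the induced matching is non-crossing and every pair is glued with the opposite orientation (a same-orientation self-gluing of a polygon creates a crosscap, hence a non-orientable --- non-spherical --- component, and a crossing creates positive genus). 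Hence if the $\UN$ surface already has $\chi=2n$ then all pairs are intra-component and non-crossing, every other configuration introduces a crosscap or a contraction and has strictly negative $N$-exponent, and the unique surviving term has $\alpha=\abs{\Sigma}$, $\gamma=0$, $\mu=0$ and value $(-1)^{\abs{\Sigma}}$; if instead the $\UN$ surface has $\chi<2n$, then either $\pi(\Sigma)$ has an inter-component pair (so every configuration has $\chi<2n$) or some polygon's matching is crossing (so every configuration has a non-spherical component), and the configuration sum tends to $0$. In all cases the limit equals the $\UN$ integrand $(-1)^{\abs{\Sigma}}\1_{\{\chi=2n\}}$, so $\Phi^G(\bm\Gamma)=\Phi^{\UN}(\bm\Gamma)$ and \eqref{eqn-master} follows. (For $G=\SUN$ this equality alternatively drops out of Corollary~\ref{cor-sun}: multiplying by $N^{-n}$ and letting $N\to\infty$, the Poisson process $\Sigma^{1/N^2}$ becomes empty in probability so $\Ex{\ep(\Sigma^{1/N^2})}\to1$, and the exponential prefactor there tends to $1$.)

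The main obstacle is the combinatorial step: one must carefully establish the three surface-topology facts --- that an inter-component gluing strictly lowers the number of connected components, that a same-orientation self-gluing of a polygon produces a non-orientable summand, and that a contraction preserves the Euler characteristic while being charged $-2$ in the $\SUN$ exponent --- and assemble them into the statement that $\chi=2n$ can be realized only by the all-opposite-orientation, intra-component, non-crossing configuration. All three are standard consequences of the classification of compact surfaces (cf.\ the discussion around Proposition~\ref{prop-equiv-C}), so the difficulty is organizational rather than conceptual; the analytic ingredients --- uniform integrability via Proposition~\ref{prop-sigma-shift}, convergence of the constants, and the interchange of limit and expectation --- are routine.
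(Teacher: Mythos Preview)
Your proof is correct and follows essentially the same route as the paper: dominated convergence (with the $2^{|\Sigma|}$ bound from Proposition~\ref{prop-sigma-shift}) to pass $N\to\infty$ inside the expectation, convergence of the constant \eqref{eqn-lem-const} via $\fc_\fg\to -1$, and then the observation that a configuration contributes in the limit only when $\chi=2n$, which forces $n$ spheres and hence all-opposite-orientation gluing ($\alpha=|\Sigma|$, $\mu=0$, and for $\SUN$ also $\gamma=0$), recovering exactly the $\UN$ integrand. The paper is terser on the combinatorial step --- it simply asserts that $n$ spheres ``cannot happen if we glue any of the edges in the same orientation'' --- whereas you spell out the inter-component and non-crossing constraints more explicitly; and like the paper you also note that the $\SUN$ case falls out directly from Corollary~\ref{cor-sun}.
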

\begin{proof}
 The case when $G=\SUN$ follows from Corollary~\ref{cor-sun}, so suppose $G$ is either $\SON$ or $\SphN$.

 We again use Lemma~\ref{lem-main-poisson-gen}, where the constant~\eqref{eqn-lem-const} for $G=\SON,\SphN$ converges to~\eqref{eqn-lem-const} for $G=\UN$ as $N\to\infty$ because $\lim_{N\to \infty}\fc_\fg = -1 = \fc_{\uN}$ for $\fg=\soN,\sphN$ (see Table~\ref{table-lie}). Hence, it remains to compute the expectation part.

 Continued from the proof of Corollary~\ref{cor-exists}, we can take $N\to \infty$ limit first in the expression of Lemma~\ref{lem-un-gen} and~\ref{lem-other-gen} (after normalized by $N^{-n}$) before taking the expectation to compute $\Phi^G(\bm \Gamma)$. In this limit, the only surviving terms are those with vanishing exponent of $N$, which happens exactly when $\chi=2n$ as already discussed. In other words, the glued surface formed from $H^{(1)}, \dots, H^{(n)}$ should be a collection of $n$ spheres, which cannot happen if we glue any of edges in the same orientation. That is, for $G=\SON, \SphN$, the $\alpha$ in~\eqref{eqn-cov-N-multiple} should be $\abs{\Sigma}$ for nontrivial terms after taking the $N\to \infty$ limit.
 
 In conclusion, $\Phi^G(\bm\Gamma)$ equals to
 \[
 \exp\left(-\frac{1}{2} \sum_{m=1}^M \abs{\lambda_{c(m)}} + \sum_{\stackrel{m<m^*}{c(m) = c(m^*)}} |\lambda_{c(m)}| \right)
 \]
 times the expectation of $\ep(\Sigma)(-1)^{\abs{\Sigma}} \1_{\{\chi=2n\}}$ for any $G=\UN,\SON,\SUN,\SphN$ as desired.\qedhere

\end{proof}

This immediately implies the following.
\begin{cor}[{\cite{Biane1997a},~\cite[Proposition 6.25]{Levy2011a}}]\label{cor-factorization}
	The master field can be factorized as
	\begin{equation}
		\Phi^{\UN}(\bm{\Gamma}) = \prod_{i=1}^n \Phi^{\UN}(\Gamma^{(i)}).
 \end{equation}
\end{cor}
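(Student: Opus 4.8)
The plan is to read the factorization straight off the Poisson--surface formula for the master field in Corollary~\ref{cor-equals}, using only the fact that a Poisson point process restricts to independent Poisson point processes over a partition of its base space, together with one elementary observation about Euler characteristics.

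First I would decompose the space $\mcl D$ of matching-color lasso pairs (Definition~\ref{defn-space-match}) of the concatenated loop $\Gamma^{(1)}\cdots\Gamma^{(n)}$ as $\mcl D = \mcl D_{\mathrm{intra}} \sqcup \mcl D_{\mathrm{inter}}$, where $\mcl D_{\mathrm{intra}}$ consists of the components $D_{(m,m^*)}$ for which the two positions $m,m^*$ lie in the same loop and $\mcl D_{\mathrm{inter}}$ of those for which they lie in different loops. Then $\mcl D_{\mathrm{intra}} = \bigsqcup_{i=1}^n \mcl D^{(i)}$, where $\mcl D^{(i)}$ is canonically identified with the space of matching-color lasso pairs of the single loop $\Gamma^{(i)}$, and the Lebesgue-intensity Poisson point process $\Sigma$ restricts to mutually independent Lebesgue-intensity Poisson point processes $\Sigma_{\mathrm{inter}}$ and $\Sigma^{(1)},\ldots,\Sigma^{(n)}$ on these pieces, with $\ep(\Sigma) = \ep(\Sigma_{\mathrm{inter}})\prod_i \ep(\Sigma^{(i)})$ and $\abs{\Sigma} = \abs{\Sigma_{\mathrm{inter}}} + \sum_i \abs{\Sigma^{(i)}}$.

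The key step is to understand the indicator $\1_{\{\chi = 2n\}}$ in Corollary~\ref{cor-equals}. By Lemma~\ref{lem-un-gen}, the relevant surface is built from the $n$ disks (or spheres) $H^{(1)},\ldots,H^{(n)}$ by gluing all boundary edges in matching pairs with opposite orientation, so it is a disjoint union of closed orientable surfaces and each of its connected components has Euler characteristic at most $2$. Each point of $\Sigma_{\mathrm{inter}}$ glues an edge of some $H^{(i)}$ to an edge of some $H^{(j)}$ with $i\neq j$ (intra-loop gluings keep each $H^{(i)}$ connected), so the number of connected components of the glued surface is exactly the number of connected components of the graph on $\{1,\dots,n\}$ having one edge per point of $\Sigma_{\mathrm{inter}}$; in particular it is at most $n$, and it equals $n$ precisely when $\Sigma_{\mathrm{inter}}=\emptyset$. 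Hence $\chi\le 2n$ always, and $\chi = 2n$ holds if and only if $\Sigma_{\mathrm{inter}}=\emptyset$ and each of the $n$ surfaces built from $H^{(i)}$ and $\Sigma^{(i)}$ is a sphere; that is, $\1_{\{\chi = 2n\}} = \1_{\{\Sigma_{\mathrm{inter}}=\emptyset\}}\prod_{i=1}^n \1_{\{\chi^{(i)} = 2\}}$, where $\chi^{(i)}$ denotes the Euler characteristic of the surface built from $H^{(i)}$ using $\Sigma^{(i)}$.

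Plugging this into Corollary~\ref{cor-equals} and using independence, $\Ex{\ep(\Sigma)(-1)^{\abs{\Sigma}}\1_{\{\chi = 2n\}}}$ becomes $\Pr*{\Sigma_{\mathrm{inter}}=\emptyset}\prod_{i=1}^n \Ex{\ep(\Sigma^{(i)})(-1)^{\abs{\Sigma^{(i)}}}\1_{\{\chi^{(i)}=2\}}}$, and $\Pr*{\Sigma_{\mathrm{inter}}=\emptyset} = \exp\bigl(-\sum \abs{\lambda_{c(m)}}\bigr)$, the sum being over the inter-loop matching-color pairs $(m,m^*)$, since $D_{(m,m^*)}$ has area $\abs{\lambda_{c(m)}}$. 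It then remains to do the bookkeeping of the deterministic prefactor in Corollary~\ref{cor-equals}: the term $-\tfrac12\sum_m \abs{\lambda_{c(m)}}$ splits as a sum over the loops $\Gamma^{(i)}$, while $\sum_{m<m^*,\,c(m)=c(m^*)}\abs{\lambda_{c(m)}}$ splits as its intra-loop part --- which again splits over the loops --- plus its inter-loop part, which is exactly cancelled by the factor $\exp\bigl(-\sum \abs{\lambda_{c(m)}}\bigr)$ above. After this cancellation each of the $n$ resulting factors is precisely the right-hand side of Corollary~\ref{cor-equals} applied to the single loop $\Gamma^{(i)}$, i.e.\ equals $\Phi^{\UN}(\Gamma^{(i)})$, giving $\Phi^{\UN}(\bm\Gamma)=\prod_{i=1}^n \Phi^{\UN}(\Gamma^{(i)})$. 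The only step with genuine content is the component-counting observation in the preceding paragraph; everything else is Poisson independence and constant-accounting, so I expect the main obstacle to be stating that topological argument precisely (e.g.\ checking that an $H^{(i)}$ with no boundary edges is an isolated spherical component, and that the only gluings identifying distinct $H^{(i)}$, $H^{(j)}$ come from $\Sigma_{\mathrm{inter}}$).
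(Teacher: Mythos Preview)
Your proposal is correct and follows essentially the same approach as the paper: the key observation in both is that the event $\{\chi = 2n\}$ forces the glued surface to be a disjoint union of $n$ spheres, which in turn forces $\Sigma_{\mathrm{inter}}=\emptyset$ and each single-loop surface to be a sphere. The paper's proof states only this topological observation and leaves the Poisson independence and the bookkeeping of the exponential prefactor implicit; your write-up spells those steps out carefully, which is an improvement in completeness rather than a different route.
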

\begin{proof}
	Starting from~\eqref{eqn-master}, recall that the event $\chi=2n$ happens when the glued surface formed from $H^{(1)},\dots, H^{(n)}$ is a collection of $n$ spheres. Therefore, in such a case, the gluing restricted to $H^{(i)}$ yields a sphere for each $i\in [n]$. 
\end{proof}

\begin{cor} Let $\NC_{2K}$ be the set of non-crossing partitions of $[2K]$ and $\NC:=\bigcup_{K=0}^\infty \NC_{2K}$. Then
	\begin{equation}
 \Phi^{\UN}(\bm{\Gamma}) =
 \exp\left(-\frac{1}{2} \sum_{m=1}^M \abs{\lambda_{c(m)}} + \sum_{\stackrel{m<m^*}{c(m) = c(m^*)}} |\lambda_{c(m)}| \right) \prod_{i=1}^n \Ex{\ep(\Sigma^{(i)})(-1)^{\abs{\Sigma^{(i)}}} \1_{\{\pi(\Sigma^{(i)})\in \NC \}}}.\label{eqn-master-nc}
	\end{equation}
\end{cor}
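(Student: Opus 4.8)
The plan is to reduce to a single loop via Corollary~\ref{cor-factorization}, feed the single loop into the $n=1$ case of Corollary~\ref{cor-equals}, and then recognize the event $\{\chi=2\}$ appearing there as the event that $\pi(\Sigma)$ is a non-crossing pairing.

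By Corollary~\ref{cor-factorization} we have $\Phi^{\UN}(\bm\Gamma)=\prod_{i=1}^n\Phi^{\UN}(\Gamma^{(i)})$, so it suffices to establish, for a single loop $\Gamma$ with lasso representation~\eqref{eqn-lasso-rep},
\[
\Phi^{\UN}(\Gamma)=\exp\left(-\frac12\sum_{m=1}^M|\lambda_{c(m)}|+\sum_{\stackrel{m<m^*}{c(m)=c(m^*)}}|\lambda_{c(m)}|\right)\Ex{\ep(\Sigma)(-1)^{\abs{\Sigma}}\1_{\{\pi(\Sigma)\in\NC\}}},
\]
and then multiply over $i$, a short bookkeeping of the area terms collecting the single-loop prefactors into the prefactor of~\eqref{eqn-master-nc}. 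For a single loop, the formula~\eqref{eqn-master} (which is Corollary~\ref{cor-equals} with $n=1$) is already of exactly this shape, with $\1_{\{\pi(\Sigma)\in\NC\}}$ replaced by $\1_{\{\chi=2\}}$, where $\chi=\chi(H(\pi(\Sigma),\mathbf{r}))$ and $\mathbf{r}$ is the constant sequence of relation~\romnum{1} (the only relation available for $G=\UN$, by~\eqref{eqn-defn-R}). Thus everything comes down to the purely combinatorial identity $\{\chi=2\}=\{\pi(\Sigma)\in\NC\}$, which should hold for every realization of $\Sigma$.

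To prove it, recall that $H(\pi(\Sigma),\mathbf{r})$ is obtained from a $2\abs{\Sigma}$-gon $H$ by gluing its boundary edges in $\abs{\Sigma}$ pairs --- the pairs prescribed by $\pi(\Sigma)$ --- each pair glued with opposite orientation (Table~\ref{table-gluing}); this is precisely the standard construction of an orientable surface from a $2K$-gon whose boundary word has every letter occurring once and once inverted. Such a surface is always orientable, so $\chi=2-2g\le 2$, and $\chi=2$ iff the surface is a sphere. Viewed as a CW complex it has one face and $\abs{\Sigma}$ edges, so $\chi=V-\abs{\Sigma}+1$, where $V$ is the number of vertices, and $\chi=2$ iff $V$ attains its maximal value $\abs{\Sigma}+1$. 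It is classical --- see, e.g.,~\cite[Sections~6.1--6.4]{Gallier}, or equivalently the fact that the genus of a chord diagram vanishes iff the diagram is planar --- that for a polygon gluing of this type $V$ is maximal, equivalently the glued surface is a sphere, exactly when the pairing is non-crossing as a chord diagram drawn inside the boundary circle. Finally, because $\pi(\Sigma)$ is a \emph{pairing}, non-crossingness with respect to the cyclic order on the $2\abs{\Sigma}$ boundary edges coincides with non-crossingness with respect to the linear order $1<\cdots<2\abs{\Sigma}$ that defines $\NC$ (every cyclic rotation of an interleaved four-point pattern remains interleaved, and interleaving is what crossing means); the degenerate case $\Sigma=\emptyset$ is consistent, since then $H$ is already a sphere and the empty pairing lies in $\NC$, while $\ep(\emptyset)=(-1)^0=1$. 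This yields $\{\chi=2\}=\{\pi(\Sigma)\in\NC\}$, and substituting into~\eqref{eqn-master} and taking the product over $\Gamma^{(1)},\dots,\Gamma^{(n)}$ completes the argument.

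The only real content --- and hence the main obstacle --- is the topological identification $\{\chi=2\}=\{\pi(\Sigma)\in\NC\}$: the classical correspondence between non-crossing pairings and genus-zero polygon gluings. The care required there is in matching the orientation convention of the $\UN$ row of Table~\ref{table-gluing} to the standard orientable polygon gluing, and in the (elementary) passage between cyclic and linear non-crossingness; the remainder of the proof is handed to us by Corollaries~\ref{cor-factorization} and~\ref{cor-equals}.
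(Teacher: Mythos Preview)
Your proposal is correct and follows essentially the same route as the paper: reduce to a single loop via Corollary~\ref{cor-factorization}, invoke~\eqref{eqn-master}, and identify $\{\chi=2\}$ with $\{\pi(\Sigma)\in\NC\}$. The only difference is in how that last identification is argued: you appeal to the classical fact that a $2K$-gon glued in pairs with opposite orientation is a sphere iff the pairing is non-crossing (equivalently, a chord diagram has genus zero iff it is planar), whereas the paper proves it directly by an induction on intermediate surfaces $H^{(i)}$, tracking how the Euler characteristic changes as one upgrades each contraction to a gluing. Your appeal to the classical result is legitimate and slightly more economical; the paper's argument is self-contained and makes the mechanism (a crossing pair produces a torus handle) explicit.
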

\begin{proof}
 By Corollary~\ref{cor-factorization}, it enough to show that in a single-loop case $H(\pi(\bm{\sigma}),\{\romnum{1}\}^{\abs{\bm{\sigma}}})\equiv\mathbb S^2$ if and only if $\pi(\bm{\sigma})\in \NC$. Suppose that $\bm{\sigma}=(\sigma_1, \sigma_2, \dots, \sigma_K)$. Define $\mathbf{r}^{(i)} := (r_1^{(i)}, r_2^{(i)}, \dots, r_K^{(i)})$ where $r_k^{(i)} = \romnum{1}$ if $k\le i$ and $r_k^{(i)} = \romnum{3}$ if $k> i$. Then $H^{(i)} := H(\pi(\bm{\sigma}),\mathbf{r}^{(i)})$ defines a sequence of intermediate surfaces for constructing $H^{(K)} = H(\pi(\bm{\sigma}),\{\romnum{1}\}^{\abs{\bm{\sigma}}})$.
 
 Note that $\chi_{H^{(i)}}$ is non-decreasing as $i$ increases. 
 This is because when the gluing rule is changed from $\mathbf{r}^{(i)}$ to $\mathbf{r}^{(i+1)}$, the number of edges increases by 1 and the number of vertices increases at most by 1, while the number of faces is fixed as 1.
 Hence $H^{(K)}\equiv \mathbb S^2$ if and only if $H^{(i)}\equiv \mathbb S^2$ for all $i\in [K]$.

 If $\pi(\bm{\sigma})\not\in \NC$, without loss of generality, we may assume the first and the second pair of $\pi(\bm{\sigma})$ crosses each other. In this case, $H^{(2)}$ made by gluing of a square is equivalent to the genus-1 torus $\mathbb T$. Thus $H^{(K)}\not\equiv \mathbb S^2$.

 Conversely, if $\pi(\bm{\sigma})\not\in \NC$ then the $(i+1)$'th pair of $\pi(\bm{\sigma})$ does not cross with any of the previous pairs. This implies that when we change the gluing rule from $\mathbf{r}^{(i)}$ to $\mathbf{r}^{(i+1)}$, the number of vertices exactly increases by 1 because two ends of the glued edge by $(i+1)$'th pair is not identified by one of the previous pairs, otherwise there is a crossing. This implies that $H^{(i)}\equiv \mathbb S^2$ for all $i\in [K]$.
\end{proof}

\subsection{The Makeenko-Migdal equation}\label{sec-mm}
In this section, we discuss an alternative proof of the so-called Makeenko-Migdal equation~\cite{makeenko1979, Levy2011a, driver2017a,Driver2017}, which is a variational formula for Wilson loop expectations. See Theorem~\ref{thm-mm} for a version written in our framework.



\begin{prop}\label{prop-smooth}
Let $\Gamma$ be a loop in a planar graph $\BB{G}$, and let $\mcl I$ be the space of plane embeddings $\iota$ of $\BB{G}$ into the plane such that each embedded edge is a non-crossing rectifiable curve. Then the mapping $\iota \mapsto \Phi^G_N(\iota(\Gamma))$ is a smooth function of the areas of the bounded faces of $\iota(\BB{G})$.
\end{prop}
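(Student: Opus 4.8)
The plan is to derive an explicit formula for $\Phi^G_N(\iota(\Gamma))$ as a function of the face areas and observe that this formula is visibly smooth (indeed real-analytic) in those areas. The key point is that Lemma~\ref{lem-main-poisson-gen}, together with Lemmas~\ref{lem-un-gen} and~\ref{lem-other-gen}, expresses $\Phi^G_N(\iota(\Gamma))$ as a prefactor of the form~\eqref{eqn-lem-const}, namely $\exp\!\left(\tfrac{\fc_\fg}{2}\sum_m |\lambda_{c(m)}| + \sum_{m<m^*,\, c(m)=c(m^*)} |\lambda_{c(m)}|\right)$, times the expectation over a Poisson point process $\Sigma$ on $\mcl D$ of a quantity that depends only on $|\Sigma|$, the sign $\ep(\Sigma)$, and the topology of the surfaces built from $\Sigma$. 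The areas $|\lambda_\ell|$ of the lassos are fixed integer linear combinations of the areas of the bounded faces of $\iota(\BB G)$ (each lasso region is a union of faces), so it suffices to prove smoothness in the variables $|\lambda_\ell|$, $\ell \in [L]$; equivalently, in the total intensity masses of the components $D_{(m,m^*)}$ of $\mcl D$.

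First I would write the expectation over $\Sigma$ as an explicit power series in the $|\lambda_\ell|$. Conditioning on $|\Sigma| = K$, the $K$ points of $\Sigma$ are i.i.d.\ uniform on $\mcl D$ with total mass $\sum_{(m,m^*)\in\mcl C} |\lambda_{c(m)}|$, and the integrand (the signed count of surfaces, weighted by powers of $N$) depends on $\Sigma$ only through which component $D_{(m,m^*)}$ each point lands in and the cyclic order of the points; in particular it is constant on each of the finitely many ``combinatorial types.'' Summing over $K$ and over combinatorial types, one gets
\begin{equation*}
\Phi^G_N(\iota(\Gamma)) = e^{-\sum_{(m,m^*)\in\mcl C}|\lambda_{c(m)}|}\cdot \mathrm{prefactor} \cdot \sum_{K=0}^\infty \frac{1}{K!}\sum_{\text{types } \tau \text{ of size } K} w_N(\tau)\prod_{(m,m^*)\in \mcl C}|\lambda_{c(m)}|^{\,k_{(m,m^*)}(\tau)},
\end{equation*}
where $w_N(\tau)$ is a bounded weight (of modulus at most $2^K$ by the discussion in the proof of Corollary~\ref{cor-exists}) and $\sum_{(m,m^*)} k_{(m,m^*)}(\tau) = K$. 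Each summand is a monomial in the $|\lambda_\ell|$, and the $e^{-\sum|\lambda|}$ factor and the prefactor are entire functions of the $|\lambda_\ell|$.

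Then I would verify uniform convergence on compact sets of area-parameters, which is where the real work lies and which I expect to be the main (though modest) obstacle. The number of combinatorial types of size $K$ is at most $(2M)^{2}\binom{\cdot}{\cdot}$-type polynomial-in-$K$ bounds times the crude count $|\mcl C|^K$, and since each $|w_N(\tau)| \le 2^K$, the $K$-th term is bounded in absolute value by a constant times $(C_1 K)^{?}\cdot$ nothing worse than $\big(2|\mcl C|\,\|\lambda\|_\infty\big)^K/K!$ after absorbing the multinomial count $\binom{K}{k_{(m,m^*)}}$ into the product — i.e.\ the series is dominated termwise by the Taylor series of $\exp(2|\mcl C|\,\|\lambda\|_\infty)$. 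Hence on any region where the face areas (and thus the $|\lambda_\ell|$) lie in a compact subset of $[0,\infty)^{L}$, the series converges absolutely and uniformly, and the same bound shows every partial derivative series converges uniformly as well; differentiation under the summation is then justified. Therefore $\iota \mapsto \Phi^G_N(\iota(\Gamma))$ is $C^\infty$ (in fact real-analytic) in the bounded-face areas of $\iota(\BB G)$, which is the claim. One minor point to check carefully is that the map from embeddings $\iota\in\mcl I$ to the tuple of bounded-face areas is well-defined and that the combinatorial data (lasso representation, coloring $c$, signs $\ep$, the incidence of faces in lasso regions) is locally constant on $\mcl I$ — this holds because these data depend only on the combinatorial type of the embedding, not on the precise rectifiable curves, so within $\mcl I$ the formula above is uniform.
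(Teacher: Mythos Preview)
Your approach is correct and yields a stronger conclusion (real-analyticity, not just smoothness), but it is genuinely different from the paper's. The paper does \emph{not} work at the Poisson limit: it goes back to the finite-$J$ random-walk expression in Proposition~\ref{prop-new-wilson}, bounds the $r$-th derivative of each term $x^K(1+x/J)^{-K}$ by $x_0^{-r}2^r(K+r)^r$ times the term itself, and then reuses the uniform-in-$J$ estimates from Propositions~\ref{prop-limit-legal} and~\ref{prop-new-wilson} to push the dominated convergence theorem through the $J\to\infty$ limit for every derivative. Your route is shorter because the Poisson representation has already absorbed the $J\to\infty$ limit, so you only need a single power-series bound; the paper's route has the virtue of being self-contained relative to the machinery already built (no new combinatorial bookkeeping, just a derivative estimate layered on top of the existing convergence proof).

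Two small points to tighten in your writeup. First, $|\lambda_\ell|$ is not an integer linear combination of the face areas: by the construction in Remark~\ref{remark-tree} each lasso $\lambda_F$ encloses exactly one bounded face $F$, so $|\lambda_\ell|$ \emph{equals} the corresponding face area and the change of variables is trivial. Second, the claim that $\int_{\mcl D^K}F$ is a polynomial of degree $K$ in the $|\lambda_\ell|$ implicitly uses that one may choose the parametrizations $\eta$ so that the pushforward of Lebesgue on each $D_{(m,m^*)}$ to $[0,1]$ is $|\lambda_{c(m)}|$ times uniform; then, conditionally on the component assignment, the $t$-coordinates are i.i.d.\ uniform and the order-structure integral is an area-independent constant. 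This is legitimate since $\Phi^G_N$ does not depend on the choice of $\eta$, but it is worth saying explicitly, since otherwise the cross-component comparison of $t$-values that determines $\pi(\Sigma)$ could in principle depend on the embedding.
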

\begin{proof}
    We use Proposition~\ref{prop-new-wilson} for the expression of $\Phi_N^G$ to check that it is a smooth function of the area of the bounded faces.
 Straightforward computation shows that for fixed $r$, $J\ge 1$, $x\ge x_0>0$ and $f(x) = x^K(1+\frac{x}{J})^{-K}$, the $r$-th derivative $f^{(r)}$ satisfies
 \begin{equation*}
 |f^{(r)}(x)|\le x_0^{-r}2^r(K+r)^rf(x);
 \end{equation*}
 and for $x_1, \dots, x_l\ge x_0>0$, $r=\sum_{i=1}^l r_i$, $K=\sum_{i=1}^l K_i$, and $f(x_1, \dots, x_l) = \prod_{i=1}^l x_i^{K_i}(1+\frac{x_i}{J})^{-K_i}$, one has
 \begin{equation}
 \abs{\frac{\partial^r}{\partial x_1^{r_1}\dots x_l^{r_l}}f(x_1, \dots, x_l)}\le x_0^{-r}2^r(K+r)^rf(x_1, \dots, x_r).
 \end{equation}
 Therefore, if we take the $r$-th derivative of the sum~\eqref{eqn-omega-a} times $Z_J^{-1}$, where $Z_J$ is defined in~\eqref{eqn-zj}, it follows that each term is bounded above from the original expression by the factor of $|\lambda_0|^{-r}2^r(K+r)^r$ where $|\lambda_0|$ is the area of the smallest face. From exactly the same argument as in the proof of Proposition~\ref{prop-limit-legal} and~\ref{prop-new-wilson}, this derivative is uniformly bounded from above. As a consequence, the $r$-th derivative of the product of $Z_J^{-1}$ and the sum~\eqref{eqn-omega-a} converges uniformly as $J\to\infty$ by the dominated convergence theorem. Since it is clear that $Z_J$ converges to a smooth function as $J\to\infty$, it follows that $\Phi_N^G(\iota(\Gamma))$, when viewed as a function of the area of faces, is smooth.
\end{proof}

\begin{defn}
 We define an \textbf{elementary} loop as a loop in the plane with transverse and finite self-intersection.
\end{defn}

\begin{thm}[{\cite[Proposition 6.24]{Levy2011a}}]\label{thm-mm}
 Let $\Gamma$ be an elementary loop in the plane, and let $(\BB G, T, \Lambda)$ be a lasso configuration such that $\Gamma$ can be traced as $e_1e_2\gamma_1e_3e_4\gamma_2$ where $e_i$'s are directed edges and $\gamma_i$'s are some directed paths. 
 Suppose the paths $e_1e_2$ and $e_3e_4$ intersect transversally at a vertex $v$ in $\BB{G}$.
 Let $\Gamma_1$ and $\Gamma_2$ be the loops formed by changing which incoming edges of $\Gamma$ at $v$ connect to which outgoing edges. More precisely, $\Gamma_1$ (resp., $\Gamma_2$) is traced as $e_2\gamma_1e_3$ (resp., $e_4\gamma_2e_1$). (See Figure~\ref{fig-mm-loops}.) Finally, let $\abs{F_1},\ldots,\abs{F_4}$ denote the areas of the four regions of $\BB{R}^2 \backslash \Gamma$ and adjacent to $v$, in cyclic order around $v$. Then
\begin{equation} \label{eqn-mm}
	\left(\frac{\partial}{\partial \abs{F_1}} - \frac{\partial}{\partial \abs{F_2}} + \frac{\partial}{\partial \abs{F_3}} - \frac{\partial}{\partial \abs{F_4}} \right) \Phi_N^G(\Gamma) =
	\begin{cases}
		\Phi_N^G(\Gamma_1, \Gamma_2)  &\quad\text{if}\quad G=\UN\\
		\left(1+\frac{1}{N^2}\right)\Phi_N^G(\Gamma_1, \Gamma_2) &\quad\text{if}\quad G=\SUN,\\
		\Phi_N^G(\Gamma_1, \Gamma_2) + \frac{1}{N}\Phi_N^G(\widehat{\Gamma}) &\quad\text{if}\quad G=\SON,\\
		\Phi_N^G(\Gamma_1, \Gamma_2) - \frac{1}{N}\Phi_N^G(\widehat{\Gamma}) &\quad\text{if}\quad G=\SphN.
	\end{cases}
\end{equation}
\end{thm}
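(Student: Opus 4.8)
The plan is to start from the Poisson-point-process representation of $\Phi_N^G(\Gamma)$ provided by Lemma~\ref{lem-main-poisson-gen}, together with the surface-sum evaluations in Lemmas~\ref{lem-un-gen} and~\ref{lem-other-gen}, and to differentiate it in the face areas. Fix a planar graph with spanning tree realizing the lasso configuration of the statement, chosen so that $F_1,\dots,F_4$ are (bounded) faces, hence carry lassos $\lambda_{\ell_1},\dots,\lambda_{\ell_4}$ with $\abs{\lambda_{\ell_i}}=\abs{F_i}$; if some $F_i$ is the unbounded face then $\partial/\partial\abs{F_i}$ acts as zero and the argument is unaffected. By Proposition~\ref{prop-smooth}, $\Phi_N^G(\Gamma)$ is a smooth function of the face areas, so the operator $D:=\partial_{\abs{F_1}}-\partial_{\abs{F_2}}+\partial_{\abs{F_3}}-\partial_{\abs{F_4}}$ may be applied termwise. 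The crucial structural observation is that in the representation of Lemma~\ref{lem-main-poisson-gen} each area $\abs{F_i}$ enters in exactly two places: the explicit prefactor~\eqref{eqn-constant-term}, and the intensity of the Poisson process $\Sigma$, since every region $D_{(m,m^*)}$ with $c(m)=c(m^*)=\ell_i$ has Lebesgue area $\abs{F_i}$. The sign $\ep(\Sigma)$, the pairing $\pi(\Sigma)$, and the surface weights entering~\eqref{eqn-cov-N-multiple} are combinatorial and carry no area dependence.

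Differentiating by the product rule therefore yields, for each $i$, two kinds of contribution to $\partial_{\abs{F_i}}\Phi_N^G(\Gamma)$: a \emph{prefactor term}, in which $\partial_{\abs{F_i}}$ of~\eqref{eqn-constant-term} multiplies $\Phi_N^G(\Gamma)$ by $\tfrac{\fc_\fg}{2}$ times the number of occurrences of $\lambda_{\ell_i}$ in the lasso word plus the number of matched pairs of such occurrences; and an \emph{add-a-point term}, where differentiating the Poisson intensity is handled by the Mecke/Palm formula, producing the sum over matched pairs $(m,m^*)$ of color $\ell_i$ of the same expectation defining $\Phi_N^G(\Gamma)$ but with one extra point inserted into $D_{(m,m^*)}$. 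Such an extra point adds one pair to $\pi(\Sigma)$, hence one extra glued or contracted pair of edges in the surface constructions of Lemmas~\ref{lem-un-gen} and~\ref{lem-other-gen}, altering the Euler characteristic, the number of boundary loops, the non-orientable genus, and the exponent of $N$ exactly as prescribed by Table~\ref{table-gluing}.

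The goal is then to show that, after applying $D$, everything cancels except the add-a-point contributions in which the inserted point ``resolves the crossing at $v$'', and that these reproduce the right-hand sides of~\eqref{eqn-mm}. Two inputs drive this. First, the transversality of the two strands of $\Gamma$ at $v$ forces the signed winding numbers of $\Gamma$ about $F_1,\dots,F_4$ to obey $w_1-w_2+w_3-w_4=0$; combined with an orientation-flipping involution on the combinatorial surface data (in the spirit of Proposition~\ref{prop-eta-inv}), this identity cancels the prefactor terms against all add-a-point terms coming from matched pairs $(m,m^*)$ whose two occurrences do not straddle the passage of $\Gamma$ through $v$. Second, for the matched pairs that do straddle $v$, gluing the corresponding pair of edges with the opposite orientation splits the boundary loop $\Gamma$ into precisely the two loops $\Gamma_1$ and $\Gamma_2$ (raising the loop count from $1$ to $2$ and shifting the $N$-exponent in exactly the way the $N^{-n}$ bookkeeping of Lemmas~\ref{lem-un-gen},~\ref{lem-other-gen} demands), so the term equals $\Phi_N^G(\Gamma_1,\Gamma_2)$ with the correct sign (read off from Table~\ref{table-gluing} and Definition~\ref{def-epsilon-sigma}); the ``contract'' option available for $\SUN$ gives the same split with an extra factor $N^{-2}$, producing the $1+N^{-2}$; and the ``same-orientation'' gluing available for $\SON$ and $\SphN$ instead fuses the two strands into the single loop $\widehat\Gamma$ with an extra factor $\pm N^{-1}$, the sign being $+$ for $\SON$ and $-$ for $\SphN$ on account of the $(-1)^\mu$ weight in~\eqref{eqn-cov-N-multiple}. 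Collecting the survivors gives the four cases of~\eqref{eqn-mm}.

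The main obstacle is the cancellation step: identifying, within the lasso word, which matched pairs $(m,m^*)$ correspond to straddling the crossing $v$; verifying that for each of the two strands exactly one such add-a-point term survives and reconstructs $\Gamma_1,\Gamma_2$ (resp.\ $\widehat\Gamma$); and building the involution that annihilates every remaining add-a-point term against the prefactor derivatives, which requires passing carefully from the signed winding-number identity to the unsigned, reduced lasso representation and tracking the Euler-characteristic and genus changes induced by the operations of Table~\ref{table-gluing} on boundary components. A purely combinatorial alternative would bypass the Palm calculus by checking~\eqref{eqn-mm} directly on the surface-sum expressions of Lemmas~\ref{lem-un-gen} and~\ref{lem-other-gen}, exhibiting the telescoping bijection between spanning surfaces of $\Gamma$ and spanning surfaces of $\Gamma_1\sqcup\Gamma_2$ (resp.\ $\widehat\Gamma$); this faces the same combinatorial core.
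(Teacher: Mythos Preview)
Your approach---differentiate the Poisson representation directly via the product rule and the Mecke/Palm formula, then argue that under the alternating operator $D$ all add-a-point terms cancel except those that ``straddle'' the crossing---is a genuinely different route from the paper's, and it is not complete as written. You correctly identify the obstacle: the cancellation step is the entire content of the proof, and you have only described the shape of the desired bijection, not constructed it. In particular, your appeal to the winding-number identity $w_1-w_2+w_3-w_4=0$ governs only the \emph{first} term of the prefactor~\eqref{eqn-constant-term}; the second term involves \emph{pairs} of same-colour lassos, and the number of such pairs for each face $F_i$ is $\binom{|c^{-1}(\ell_i)|}{2}$, which does not obey any simple alternating identity. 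Matching these against the non-straddling add-a-point terms would require tracking how each matched pair $(m,m^*)$ of colour $\ell_i$ decomposes under $D$, and this combinatorial bookkeeping is exactly what you have deferred.

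The paper sidesteps this by replacing direct differentiation with a geometric perturbation. It refines the graph near $v$ by adding two small edges that carve two new triangular faces of area of order $\ep$ out of $F_2$ and $F_4$, chooses a spanning tree so that these small faces carry the first two lassos $\lambda_1,\lambda_2$, and exhibits a perturbed loop $\Gamma_\ep$ whose lasso word differs from that of $\Gamma$ in a single position: $\Lambda(\Gamma)=\lambda_2\Lambda_1\lambda_1^{-1}\Lambda_2$ versus $\Lambda(\Gamma_\ep)=\lambda_1\Lambda_1\lambda_1^{-1}\Lambda_2$. The point is that $\Gamma_\ep$ has \emph{exactly one} extra matching-colour pair (the pair $(1,\wt m)$ between the two $\lambda_1$'s), and a direct comparison of the Poisson sums for $\Gamma$ and $\Gamma_\ep$---conditioning on whether $\Sigma$ charges this extra pair---shows $(\Phi_N(\Gamma)-\Phi_N(\Gamma_\ep))/\ep$ equals, up to $O(\ep)$, the expectation with one point forced into that pair. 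That forced point splits the surface as in your picture, yielding $\Phi_N(\Gamma_1,\Gamma_2)$ for the opposite-orientation gluing and the $\widehat\Gamma$ or contraction terms for the other gauge groups. The alternating derivative $D$ enters only at the very end, as the observation that shrinking the two triangles to zero area realises $D$ as the limit of the finite difference. In effect, the paper's construction \emph{is} the cancellation argument you are missing: the perturbation is engineered so that all the non-straddling contributions are identical for $\Gamma$ and $\Gamma_\ep$ (via an explicit bijection $\iota:\mcl C\to\Cep\setminus\{(1,\wt m)\}$ that preserves the surface weights), and subtracting kills them.
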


\begin{figure}[ht!]
    \centering
    \includegraphics[width=0.4\textwidth]{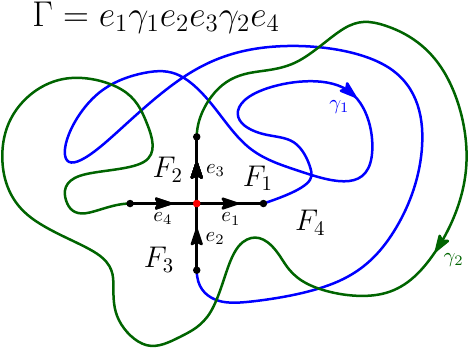}\qquad
    \includegraphics[width=0.4\textwidth]{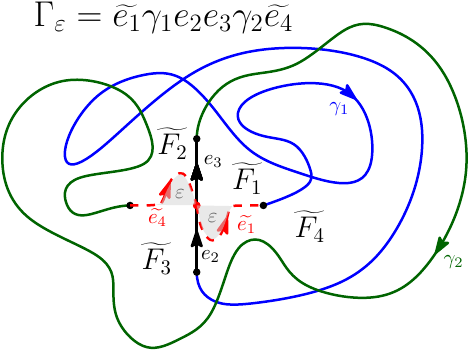} \\
	\includegraphics[width=0.4\textwidth]{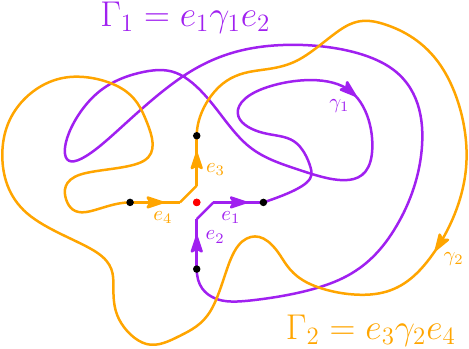}
	\qquad 
	\includegraphics[width=0.4\textwidth]{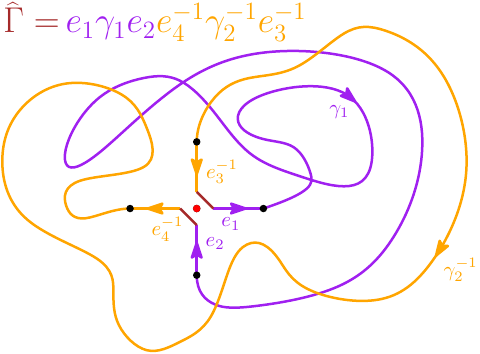}
	\caption{An illustration of loops appearing in the statement and the proof of Theorem~\ref{thm-mm}.
	}\label{fig-mm-loops}
\end{figure}

\begin{proof}
	We subdivide each edge $e_i$ by adding four new vertices and add two new edges among those vertices to construct the new graph $\BB G'$ such that
	 
	\begin{itemize}
		\item Each new vertex $v_i$ is on $e_i$, subdividing $e_i$ into $e_i'$ and $e_i''$ so that $\Gamma = e_1\gamma_1e_2e_3\gamma_2e_4$ in $\BB G$ can be traced as $e_1'e_1''\gamma_1e_2''e_2'e_3'e_3''\gamma_2e_4''e_4$ in $\BB G'$;
		\item Add two (directed) edges $d_4=\overrightarrow{v_2v_1}, d_2=\overrightarrow{v_4v_3}$ so that the edge $d_2$ (resp.~$d_4$) is contained in the face $F_2$ (resp.~$F_4$) of $\BB G$ that has $e_1, e_2$ (resp.~$e_3,e_4$) as its boundary;
		\item Each new edge subdivide the face $F_i$ into $F_i'$ and $F_i''$ for $i=2,4$ where $F_i'$ has the root $v$ as its vertex.
	\end{itemize}
 See Figure~\ref{fig-mm-G-T} (Left) for the illustration.
 Choose some spanning tree $T$ of $\BB G'$ which contains 8 edges $d_2, d_4, e_2', e_4', e_1'', e_2'', e_3'', e_4''$. The edges of $T$ including these edges are drawn with red shades in Figure~\ref{fig-mm-G-T}.
 \begin{figure}[ht]
 \centering
 \begin{tabular}{cc}
	\includegraphics[width=0.5\linewidth]{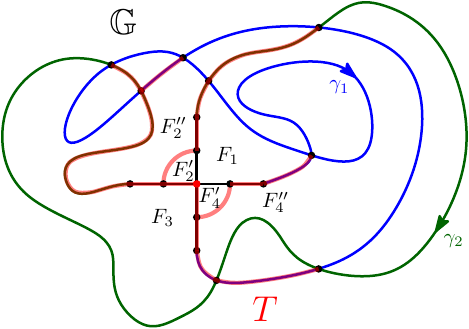}
	&
	\includegraphics[width=0.4\linewidth]{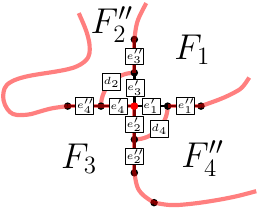}
	\end{tabular}
 \caption{An example of $(\mathbb{G},T)$ on which the loop $\Gamma$ is embedded. In this figure, $T$ is represented as the red spanning tree. The right figure zooms in near the origin.}
 \label{fig-mm-G-T}
 \end{figure}
 Let $(\Gamma, \BB G', T, \Lambda)$ be a loop-lasso configuration with $\Lambda = \{\lambda_\ell\}_{\ell=1}^L$.
 Without loss of generality, we assume $\lambda_1 = \lambda_{F_2'}, \lambda_2 = \lambda_{F_4'}$. Also let $\Gamma_i$ be the lasso representation of $\prod_{e\in \gamma_i\setminus T}\beta_{e}$ for $i=1,2$.

  From the choice of spanning tree, we have $\beta_{e_1'} = \lambda_2$, and $\beta_{e_3'} = \lambda_1^{-1}$ in terms of lassos, as seen in Figure~\ref{fig-mm-G-T}. Note that $\Lambda(\Gamma) := \beta_{e_1'}\Lambda_1\beta_{e_3'}\Lambda_2 = \lambda_2\Lambda_1\lambda_1^{-1}\Lambda_2$ is a lasso representation of $\Gamma$.
 We perturb this loop around $v$ to define a new loop $\Gamma_{\ep}$ whose lasso representation is
 $\Lambda(\Gamma_{\ep}) := \Lambda_1\beta_{e_3'}\Lambda_2\beta_{e_3'}^{-1} = \Lambda_1\lambda_1^{-1}\Lambda_2\lambda_1$ as illustrated in Figure~\ref{fig-mm-loops}.
 Therefore, we obtain lasso representations
 \begin{equation}\label{eq-lasso-mm}
 \Lambda(\Gamma) = \lambda_2\Lambda_1\lambda_1^{-1}\Lambda_2, \quad
 \Lambda(\Gamma_\ep) = \Lambda_1\lambda_1^{-1}\Lambda_2\lambda_1 \equiv \lambda_1\Lambda_1\lambda_1^{-1}\Lambda_2.
 \end{equation}

Let $\mcl C$ and $\Cep$ be the set of matching-color pairs of $\Gamma$ and $\Gamma_\ep$, respectively. (See Definition~\ref{defn-space-match}.) Note that $\Cep$ has exactly one more pair than $\mcl C$ which comes from the pair between $\lambda_1$ and $\lambda_1^{-1}$ explicitly shown in~\eqref{eq-lasso-mm}. Denote this new pair by $(1,\wt m)\in \Cep$; in other words, the explicit $\lambda_1^{-1}$ in the lasso expressions of~\eqref{eq-lasso-mm} is located at the $\wt m$'th position. Note that if $(m,m^*)\in \mcl C$, there are three cases:
\begin{itemize}
	\item $(m,m^*)\in \Cep$ and $\ep(m)=\wt \ep(m)$, $\ep(m^*)=\wt \ep(m^*)$;
	\item $(1,m-1)\in \Cep$ and $\ep(m)=\wt \ep(m-1)=1$;
	\item $(1,m+1)\in \Cep$ and $\ep(m)=\wt \ep(m+1)=-1$.
\end{itemize}
This is because if $\Lambda_1$ or $\Lambda_2$ includes $\lambda_2$ (resp.\ $\lambda_2^{-1}$), the left (resp.\ right) lasso basis element must be $\lambda_1$ (resp.\ $\lambda_1^{-1})$. As a consequence, we have a bijection between $\mcl C$ and $\Cep\setminus\{(1,\wt m)\}$. Denote this bijection by $\iota$. Let $\Sigma'$ be the points in $\Sigma$ that are not from the pairs of $\lambda_1$ or $\lambda_2$. Also define $\Sigma^{(i)}$ as the points in $\Sigma$ that are from the pairs of $\lambda_i$ for each $i=1,2$.

 To compute the Wilson loop expectations of $\Gamma$ and $\Gamma_\ep$, let $\Sigma$ and $\Sigma_\ep$ be the Poisson point processes defined as in Lemma~\ref{lem-main-poisson-gen} for $\Sigma$ and $\Sigma_\ep$, respectively.
 Let $A = \Sigma'$ and $B = \Sigma\setminus\Sigma'$ which form a partition of $\Sigma$. Define similarly $A_\ep$ and $B_\ep$ which form a partition of $\Sigma_\ep$; then $A_\ep=A$ from the previous observation.
 
 For $E\subset \Sigma$, denote by $\Sigma(E),\Sigma_\ep(E)$ the number of points of $\Sigma, \Sigma_\ep$ in $E$, respectively. We separate the Poisson sum in Lemma~\eqref{lem-main-poisson-gen} by conditioning on $\Sigma(B)$ and $\Sigma_\ep(B_\ep)$. We also simply denote by $\sgn(\Sigma)\omega_{\Sigma}^G$ the signed weight term in Lemma~\ref{lem-main-poisson-gen} defined as signed sum of ~\eqref{eqn-exp-multiple-2}. Here, we use $\sgn(\Sigma)$ instead of $\ep(\Sigma)$ to avoid confusion with the infinitesimal variable $\ep$ used in this proof.
 \begin{itemize}
 \item Because $A=A_\ep$ on which the intensity measures of $\Sigma$ and $\Sigma_\ep$ coincide, we have
 \begin{equation}
 \Ex^{\Sigma}{\sgn(\Sigma)\omega_{\Sigma}^G | \Sigma(B)=0} = \Ex^{\Sigma_\ep}{\sgn(\Sigma)\omega_{\Sigma}^G | \Sigma_\ep(B_\ep)=0}
 \end{equation}
 Therefore,
 \[
 \Ex^{\Sigma}{\sgn(\Sigma)\omega_{\Sigma}^G \1_{\Sigma(B)=0}} = e^\ep \Ex^{\Sigma_\ep}{\sgn(\Sigma)\omega_{\Sigma}^G \1_{\Sigma_\ep(B_\ep)=0}}
 \]
 as
 \[
 \Pr{\Sigma(B)=0}=e^{-\mu(\Sigma\setminus \Sigma')} = e^{\ep}e^{\mu(\Sigma\setminus \Sigma')} = e^{\ep}\Pr{\Sigma_\ep(B_\ep)=0}.
 \]
 \item Conditioned on $\Sigma(B)=\Sigma_\ep(B_\ep)=1$, a crucial fact to use is that the bijection $\iota$ preserves $\sgn(\Sigma)\omega_{\Sigma}$.
 This is because the sequence $\Sigma$ has only one pair $\{m_1,m_2\}$ in either of $\Sigma^{(i)}$ after conditioning. The relative orders of $m_1$ and $m_2$ among the other paired lassos not from $\Sigma^{(i)}$ in $\Gamma$ remain the same with those of $\wt m_1$ and $\wt m_2$ among the other paired lassos not from $\Sigma_\ep^{(i)}$ in $\Gamma_\ep$ if $\iota(\{m_1,m_2\}) = \{\wt m_1, \wt m_2\}$.
 Therefore, given $\Sigma(B)=\Sigma_\ep(B_\ep)=1$,
 \begin{align*}
 &\Ex^{\Sigma}{\sgn(\Sigma)\omega_{\Sigma}^G | \Sigma(B)=1} - \Ex^{\Sigma_\ep}{\sgn(\Sigma)\omega_{\Sigma}^G | \Sigma_\ep(B_\ep)=1}\\ = &\Ex^{\Sigma_\ep}{\sgn(\Sigma)\omega_{\Sigma}^G \1_{\Sigma_\ep(C)=1} | \Sigma_\ep(B_\ep)=1}
 \end{align*}
 where $C:=\{\{1, \wt m\}\}$. Since $\Pr{\Sigma(B)=1}=e^{-\mu(\Sigma\setminus \Sigma')}\ep$ and $\Pr{\Sigma_\ep(B_\ep)=1}=e^{-\mu(\Sigma_\ep\setminus \Sigma'_\ep)}\ep$, we have
 \begin{align}
 &\Ex^{\Sigma}{\sgn(\Sigma)\omega_{\Sigma}^G \1_{\Sigma(B)=1} } - e^{\ep}\Ex^{\Sigma_\ep}{\sgn(\Sigma)\omega_{\Sigma}^G \1_{\Sigma_\ep(B_\ep)=1}}\\
 =\quad & e^{-\mu(\Sigma_\ep\setminus \Sigma'_\ep)}\ep \times \Ex^{\Sigma_\ep}{\sgn(\Sigma)\omega_{\Sigma}^G \1_{\Sigma_\ep(C)=1} | \Sigma_\ep(B_\ep)=1}.
 \end{align}
 \item Finally, note that $\Pr{\Sigma(B)\ge 2} = O(\ep^{2})$, and thus
 \[
 \Ex^{\Sigma}{\sgn(\Sigma)\omega_{\Sigma}^G \1_{\Sigma(B)\ge 2} } = O(\ep^{2}) \;\text{ and }\; \Ex^{\Sigma_\ep}{\sgn(\Sigma)\omega_{\Sigma}^G \1_{\Sigma_\ep(B_\ep)\ge 2}} = O(\ep^{2}).
 \]
 \end{itemize} 
 As a consequence, we have
 \begin{equation}\label{eq-phi-differnce}
 \frac{\Phi_N^G(\Gamma)-\Phi_N^G(\Gamma_\ep)}{\ep} = Z(\Lambda(\Gamma_\ep)) e^{\ep} \Ex^{\Sigma_\ep}{\sgn(\Sigma)\omega_{\Sigma}^G \1_{\Sigma_\ep(C)=1} | \Sigma_\ep(B_\ep)=1} + O(\ep),
 \end{equation}
 where $Z(\Lambda(\Gamma_\ep))$ is the exponential constant described in Lemma~\ref{lem-main-poisson-gen}.
 Now it remains to simplify the RHS of~\eqref{eq-phi-differnce} using the geometric description of $\omega_{\Sigma}$ described in Lemma~\ref{lem-other-gen}.
 Conditioned on $\Sigma_\ep(B_\ep)=1$, suppose that $\Sigma_\ep(\Sigma'_\ep)=K\ge 0$ and $\Sigma_\ep(C)=1$.


 First, assume $G=\UN$. Recall how we interpret $\omega_{\Sigma}$ in this case as sums over surfaces by gluing $2(K+1)$-gon. Once we prescribe how the pair $\{\{1, \wt m\}\}$ is glued, 
 \begin{equation}\label{eqn-mm-partial}
 \Phi_N(\Gamma)-\Phi_N(\Gamma_\ep) = \ep \Phi_N(\Lambda_1, \Lambda_2)+O(\ep^{2}).
 \end{equation}
 Note that $\Gamma_1 = \beta_{e_1'}\Lambda_1 = \lambda_2\Lambda_1$ and $\Gamma_2 = \beta_{e_3'}\Lambda_2 = \lambda_1^{-1}\Lambda_2$. As a consequence, $\Phi_N(\Gamma_1, \Gamma_2)=\Phi_N(\Lambda_1, \Lambda_2)+O(\ep)$. Combined with~\eqref{eqn-mm-partial}, it follows that
 \begin{equation}
 \frac{\Phi_N(\Gamma)-\Phi_N(\Gamma_\ep)}{\ep} = \Phi_N(\Gamma_1, \Gamma_2)+O(\ep)
 \end{equation}
 which was to be proved.

 If $G=\SON$ or $\SphN$, there is an extra way to glue the pair $(1, \wt m)$, namely in non-orientable way. This non-orientable gluing corresponds to the Wilson loop expectation of another loop, corresponding to $\widehat \Gamma$ depicted in Figure~\ref{fig-mm-loops}. This explains the extra term in the theorem statement, compared to the previous $G=\UN$ case.
 If $G=\SUN$, we also have an option to contract the pair $(1, \wt m)$ instead, which contributes another factor $1/N^2$. Therefore, we proved the theorem for all cases.
\end{proof}

\begin{remark}
 In~\cite[Proposition 6.24]{Levy2011a} the Makeenko-Migdal equation is stated for any finite collection of loops, and in particular a multiple-loop version of Wilson loops are referred to as ``Wilson skeins'' there. Note that the perturbation we give in the proof of the Makeenko-Migdal equation only depends on the two loops locally at the intersection, and thus works for any arbitrary finite collection of elementary loops. Therefore, almost the same argument above can derive the multiple-loop version of Theorem~\ref{thm-mm}.
\end{remark}


 Note that in the large $N$ limit, we have shown in Corollary~\ref{cor-equals} and~\ref{cor-factorization} that the right hand side of Theorem~\ref{thm-mm} factorizes. This implies the Makeenko-Migdal equation for master field, but the proof of Theorem~\ref{thm-mm} already gives another proof of the same conclusion without using the previous corollaries.

\begin{cor}[Makeenko-Migdal for the master field] We have
$$\left(\frac{\partial}{\partial \abs{F_1}} - \frac{\partial}{\partial \abs{F_2}} + \frac{\partial}{\partial \abs{F_3}} - \frac{\partial}{\partial \abs{F_4}} \right) \Phi^{G}(\Gamma) = \Phi^{\UN}(\Lambda_1)\Phi^{\UN}(\Lambda_2)$$
for $G=\UN, \SUN, \SON, \SphN$.
\end{cor}
\begin{proof}
 Starting from~\eqref{eq-phi-differnce} in the proof of Theorem~\ref{thm-mm}, note that as $N\to \infty$
 the non-trivial contribution comes only when the pair $(1, \wt m)\in\Cep$ is glued in the orientable way, as other ways gives a factor of $1/N$ or $1/N^2$.
 
 Given the pair $(1, \wt m)\in\Cep$ glued in the orientable way, any further gluings (both orientable or non-orientable) between two parts separated by $(1,\wt{m})$ makes the total contribution zero as $N\to \infty$ from Lemma~\ref{lem-other-gen}. In addition, the surface corresponding to each part should be the sphere in order to have a non-trivial contribution as $N\to \infty$, also by Lemma~\ref{lem-other-gen}. This proves the desired result.
\end{proof}

\subsection{Random walks on permutations}\label{sec-permutations}
Let $S_M$ be the symmetric group on $[M]$, which is the set of permutations of $[M]$. The goal of this section is to express the Wilson loop expectations as sums over paths on permutations. In other words, we provide a random walk interpretation instead of the random surface interpretation we already established.

We note that~\cite{Levy2008} gave an explicit version of the Gross-Taylor expansion in a very special case, based on the relation of the random walk on the symmetric group and another random object in irreducible representations of $G=\UN$.
In particular, L\'evy wrote Wilson loop expectations of several windings of the same loop as sums over ramified covers on the disk, or in terms of a random ramified cover.

Using the surface sum story, we give an alternative proof and a generalization of the previous random walk interpretation, which is applicable to \emph{all} loops on the plane. Also, we note that the proof in~\cite{Levy2008} utilizes the Schur-Weyl duality, while our proof does not explicitly use any representation theory.

We first define the \emph{deficit} of a path on permutations.

\begin{defn}\label{defn-deficit}
	Let $\gamma=(\gamma_1,\gamma_2,\dots, \gamma_k)$ be a path on $S_M$. The deficit $d(\gamma)$ of $\gamma$ is defined as the total number of steps $\gamma_i\to \gamma_{i+1}$ such that the number of cycles of $\gamma_{i+1}$ (when represented as the cycle decomposition) is less than that of $\gamma_{i}$ for $i=1,2,\dots, k-1$.
\end{defn}

\begin{remark}\label{rmk-genus} For $G=\UN$, the genus of the surface $H(\pi,\mathrm{I}^k)$ defined in Definition~\ref{defn-H} is exactly the same as the so-called the genus of permutation $\pi$. From the other point of view, we may glue two vertices according to the given permutation, instead of two edges. In this case, the numbers of vertices and edges are fixed, while the number of faces varies. See Figure~\ref{fig-eg-genus} for an illustration. As a consequence, if we encode the face data by the cycle decomposition of underlying permutation, the deficit change is exactly the same as the genus change.
	\begin{figure}[ht!]
		\centering
		\begin{subfigure}{.4\textwidth}
			\centering
			\includegraphics[width=\textwidth]{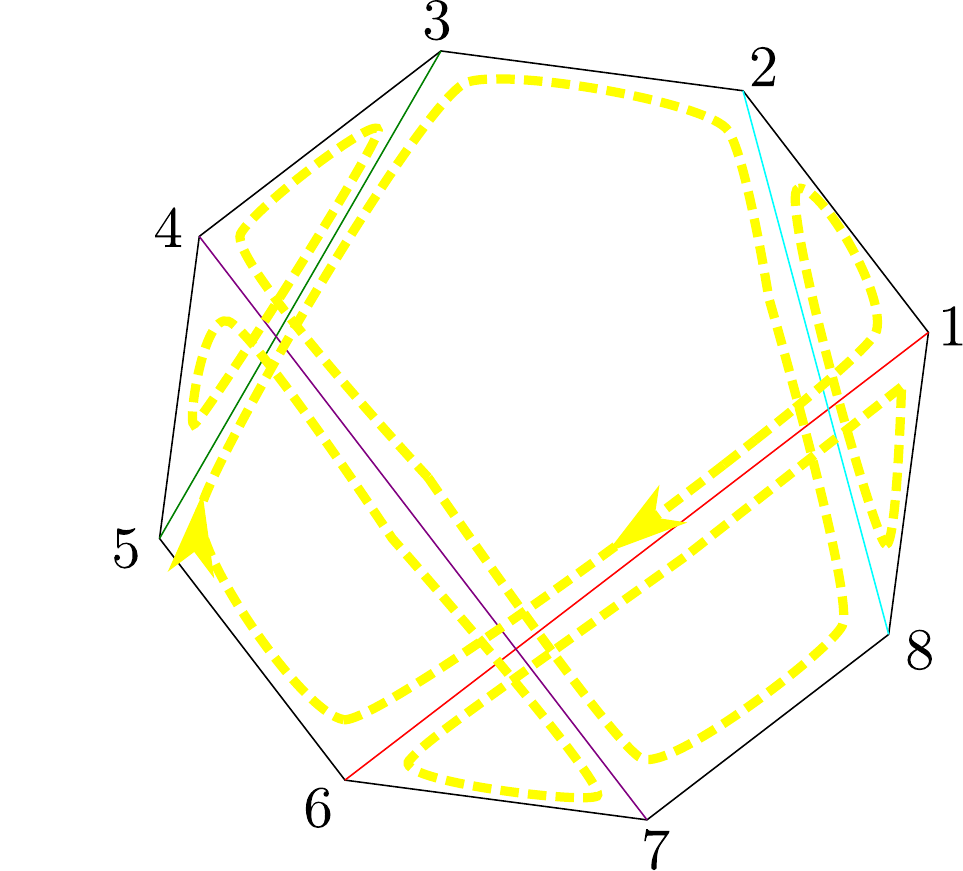}
		\end{subfigure}
		\hspace{5em}
		\begin{subfigure}{.4\textwidth}
			\centering
			\includegraphics[width=\textwidth]{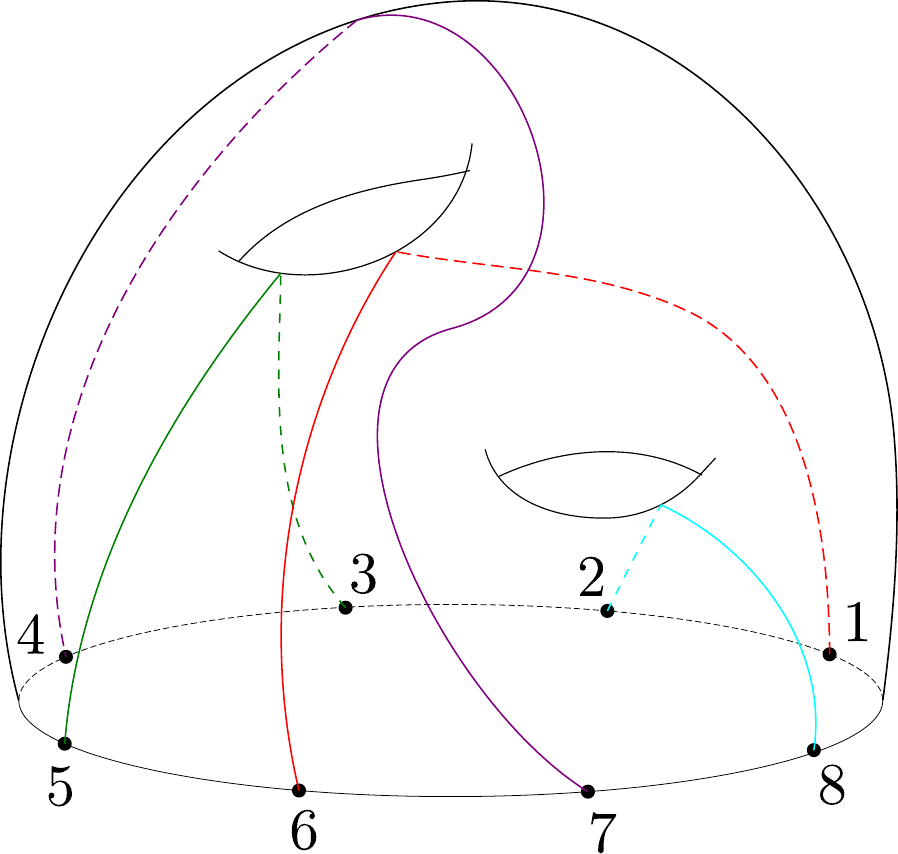}
		\end{subfigure}
		\caption{A graphical representation of the matching $\pi=(1\, 6)(2\, 8)(3\, 5)(4\,7)$ on top of $\zeta_8=(1\,2\,3\,4\,5\,6\,7\,8)$ and its embedding on a surface of genus 2. The yellow dashed line indicates how we trace a single face of the graph by following its boundary. The cyclic order determined by this boundary coincides with $\zeta_8^{-1}\pi = (1\,5\, 2\, 7\, 3\, 4\, 6\, 8)$.}\label{fig-eg-genus}
	\end{figure}
	\end{remark}

	From the above remark, we immediately recover L\'evy's formula~\cite{Levy2008}.
	\begin{cor}\label{cor-Levy-rw}
		Let $\mathfrak G$ be the Cayley graph of $S_M$ generated by transpositions, also called the transposition graph. Let $\Omega(\mathfrak G)$ be the set of paths on $\mathfrak G$ starting from $\zeta_n=(1\, 2\, \dots\, M)$. Then
		\begin{equation}
			\frac{1}{N}\ETr{U_t^\mu} = e^{-\mu t/2} \sum_{\gamma\in\Omega(\mathfrak G)}\frac{1}{\abs{\gamma}!} (-t)^{\abs{\gamma}}N^{-2d(\gamma)}
		\end{equation}
where $U_t$ is the Brownian motion on $G=\UN$ run for time $t$.

	\end{cor}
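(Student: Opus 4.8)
The plan is to deduce Corollary~\ref{cor-Levy-rw} by rewriting the surface sum of Lemma~\ref{lem-un} (single-loop $\UN$ case) in the special case $\Gamma = \ell^\mu$ for a simple loop $\ell$, and then re-expressing the resulting sum over matchings/Poisson points as a sum over paths in the transposition Cayley graph $\mathfrak G$. First I would spell out the lasso data: when $\Gamma$ winds $\mu$ times around a simple loop $\ell$ of area $t$, its lasso representation is $\lambda^\mu$ for a single lasso $\lambda$ with $|\lambda| = t$, so $M = \mu$, $c$ is constant, and the space $\mcl D$ of matching-color lasso pairs is $\binom{[\mu]}{2}$ copies of the region bounded by $\lambda$, each of area $t$. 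A Poisson point process $\Sigma$ on $\mcl D$ with unit intensity then has, for each realization, a total mass $|\Sigma| = K$ whose points are distributed among the $\binom{\mu}{2}$ pairs; conditioning on $K$, these are $K$ i.i.d.\ uniform points, and integrating out the areas produces (via Proposition~\ref{prop-sigma-shift} / the standard Poisson bookkeeping) a factor $e^{-\binom{\mu}{2} t} \cdot t^{K}/K!$ together with $\exp(\tfrac{\fc_\uN}{2}\mu t + \binom\mu2 t) = e^{-\mu t/2} e^{\binom\mu2 t}$ from the constant~\eqref{eqn-constant-term}, the two $e^{\binom\mu2 t}$ factors cancelling and leaving the prefactor $e^{-\mu t/2}$ and the $1/k!$ weight appearing in the statement.

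Next I would identify the combinatorial content. By Lemma~\ref{lem-un}, each realization of $\Sigma$ with $|\Sigma| = K$ contributes $\ep(\Sigma)(-1)^{K} N^{-1+\chi}$ where $\chi$ is the Euler characteristic of the surface built from a $2K$-gon by gluing the $K$ pairs of edges prescribed by $\pi(\Sigma)$ in the opposite orientation; here $\ep(\Sigma) = 1$ since all $\ep(m) = 1$, so the sign is simply $(-t)^{K}$ after combining with the area factor $t^K$. The key translation is Remark~\ref{rmk-genus}: the data of the matching $\pi(\Sigma)$ on top of the cyclic order is precisely a matching of $2K$ points on a circle, whose gluing genus $g$ satisfies $-1 + \chi = 1 - 2g$, i.e.\ $N^{-1+\chi} = N^{-2g}$; and the dual description glues \emph{vertices} according to a permutation built from $K$ transpositions, so that a sequence of $K$ transpositions $\tau_1,\dots,\tau_K$ applied successively to $\zeta_\mu = (1\,2\,\cdots\,\mu)$ traces out a path $\gamma$ in $\mathfrak G$ of length $K$, and the genus of the surface equals the deficit $d(\gamma)$. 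I would make this bijection explicit: lexicographically ordered points of $\Sigma$ $\leftrightarrow$ an ordered $K$-tuple of pairs in $\binom{[\mu]}{2}$ $\leftrightarrow$ a length-$K$ path $\gamma \in \Omega(\mathfrak G)$ (each pair $(i,j)$ being the transposition $(i\ j)$), with the multiplicity $k!$ in the Poisson expansion matching the ordering of the path's steps — this accounts for the $\tfrac{1}{k!}$ in front of $\sum_{\gamma}$.

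Assembling the pieces: taking the expectation over $\Sigma$ turns the Poisson sum into $\sum_{K\ge0} \tfrac{1}{K!}\sum_{\gamma: |\gamma| = K} (-t)^{K} N^{-2 d(\gamma)}$ times the prefactor $e^{-\mu t/2}$, which is exactly $\Etr{\fB_t \lambda^\mu}$ as claimed, once one checks that $\Etr{\fB_t\lambda^\mu}$ (the expected \emph{un-normalized} trace of the holonomy of $\ell^\mu$) is indeed $\Phi_N^{\UN}(\ell^\mu)$ in the notation of Lemma~\ref{lem-main-poisson}. The main obstacle I anticipate is not any single hard estimate — convergence and integrability are already furnished by the proof of Lemma~\ref{lem-main-poisson} — but rather getting the bijective dictionary exactly right: verifying that the lexicographic ordering on Poisson points corresponds to reading the transpositions left-to-right along the path, that the ``opposite orientation'' gluing of edges is dual to the ``glue two vertices by the permutation'' picture of Remark~\ref{rmk-genus} (so that genus $=$ deficit), and that the edge/vertex counts give $-1 + \chi = 1 - 2g$. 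I would therefore devote the bulk of the write-up to this translation, citing Remark~\ref{rmk-genus} for the genus--deficit identity and Lemma~\ref{lem-un} for the weight, and only briefly invoking Proposition~\ref{prop-sigma-shift} and the area integration for the analytic bookkeeping.
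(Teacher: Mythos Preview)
Your proposal is correct and follows essentially the same route as the paper's own proof, only with considerably more detail. The paper's argument is a two-line appeal to ``the bijection between $\sigma\in\Sigma^K$ and the path of length $K$'' together with Remark~\ref{rmk-genus} for genus $=$ deficit; you unpack exactly those two ingredients, and your Poisson/area bookkeeping (the cancellation of the $e^{\binom{\mu}{2}t}$ factors leaving $e^{-\mu t/2}$ and the residual $t^K/K!$) is correct. One small terminological point to watch when you write it up: the ``lexicographic'' ordering in Definition~\ref{defn-pi} is on the $2K$ split points $(m,t)$ used to define $\pi(\Sigma)\in\frk S_{2K}$, whereas the ordering you need to turn the $K$ Poisson points into a path in $S_\mu$ is simply by their $t$-coordinate; make sure you distinguish these when verifying that the deficit of the resulting path in $S_\mu$ equals the genus of $H(\pi(\Sigma),\text{I}^K)$.
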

	\begin{proof}
		This follows from the bijection between the Poisson point process $\Sigma$ in Lemma~\ref{lem-main-poisson} with $\abs{\Sigma}=K$ and the path in $\Omega(\mathfrak G)$ of length $K$. In this bijection, the genus of the corresponding surface is exactly the same as the deficit of the path because of the previous remark.
	\end{proof}

	A generalized formula is obtained by pushing this idea further. Indeed, we can consider the \emph{dual picture} of our random surface. Instead of gluing edges following gluing rules we defined, we consider putting a path with two vertices in the diagram depicted in Figure~\ref{fig-expansion-diagram}. Each such path will add 3 new edges and 2 new vertices. In addition, we add open spaces on both sides of each path, so that we actually add an open \emph{strip} instead of a path. Then we can define a minimum genus surface which can complete these open spaces to make a closed manifold. (This is also called the \emph{riboon graph}.)
    See Appendix~\ref{sec-appendix} for another description of this duality.
    
    We claim that this manifold has the same genus with the corresponding surface made by gluing explained in the previous section.

	\begin{prop}\label{prop-:bijection}
		Consider a surface $H(\pi, \mathbf{r})$ defined in Definition~\ref{defn-H}. Suppose that $\pi$ was induced from $\bm{\sigma}\in \Sigma^K$. From the diagram in Figure~\ref{fig-expansion-diagram}, we put either an open orientable strip or an open non-orientable strip (that is, a M\"obius strip) based on the gluing rule. Let $\wh{H}(\pi,\mathbf{r})$ be a topological surface with the minimum genus which completes all open strips, together with the original disk, making a closed surface. This defines a bijection, and furthermore, the genus of $H$ and $\wh{H}$ are the same.
	\end{prop}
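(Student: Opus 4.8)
The plan is to produce an explicit model of $\wh H(\pi,\mathbf r)$, recognize it as the $2$-dimensional Poincar\'e dual of the cellulation of $H(\pi,\mathbf r)$ (which yields at once both the asserted bijection and a homeomorphism, hence the equality of genera), and then record a self-contained combinatorial confirmation through an Euler-characteristic count. Concretely, keep the $2K$-gon $H$ underlying Definition~\ref{defn-H}: a disk with boundary vertices $v_1,\dots,v_{2K}$, boundary edges $e_p=\overrightarrow{v_pv_{s(p)}}$ with $s=(1\cdots2K)$, grouped into $K$ pairs by $\pi$. Instead of identifying the two edges of a pair $\{e_{p_k},e_{q_k}\}$ as in Definition~\ref{defn-H}, attach a band $R_k\cong[0,1]\times[0,1]$ along $\{0\}\times[0,1]\to e_{p_k}$ and $\{1\}\times[0,1]\to e_{q_k}$, untwisted when $r_k$ is the orientation-preserving rule for relation~\romnum{1} and with a half-twist when $r_k$ is the orientation-reversing rule for relation~\romnum{2} or~\romnum{4}; in each case fix the attaching homeomorphism so that the two pairs of corners joined by the two free edges $[0,1]\times\{0\}$ and $[0,1]\times\{1\}$ of $R_k$ are exactly the two pairs of corners that the rule $r_k$ identifies in $H(\pi,\mathbf r)$. (A relation~\romnum{3} pair contributes a band collapsed to an arc; since contracting an edge changes neither the homeomorphism type nor, on the dual side, anything beyond the number of edges, we assume below $\mathbf r\in\{\romnum{1},\romnum{2},\romnum{4}\}^K$.) Let $\Sigma'(\pi,\mathbf r)$ be the compact surface-with-boundary obtained from $H$ by attaching $R_1,\dots,R_K$, and $\wh H(\pi,\mathbf r)$ the closed surface obtained by capping each boundary circle of $\Sigma'(\pi,\mathbf r)$ with a disk.

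Now $\Sigma'(\pi,\mathbf r)$ is a regular neighborhood, inside $H(\pi,\mathbf r)$, of its dual $1$-skeleton (the barycenter of the unique face, with one dual edge transverse to each glued edge $E_k$, that dual edge being untwisted or twisted according to whether $E_k$ was glued by the orientation-preserving rule \romnum{1} or an orientation-reversing rule \romnum{2}/\romnum{4}), and the complement of this regular neighborhood in $H(\pi,\mathbf r)$ is a disjoint union of open disks, one around each vertex of $H(\pi,\mathbf r)$. Hence capping the boundary circles of $\Sigma'(\pi,\mathbf r)$ recovers $H(\pi,\mathbf r)$ up to homeomorphism: $\wh H(\pi,\mathbf r)\cong H(\pi,\mathbf r)$, so in particular they have the same genus, and $\mathbf r\mapsto\wh H(\pi,\mathbf r)$ is the passage to the Poincar\'e-dual cellulation, an involution on cellulated surfaces, which is the bijection in the statement.

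For readers who prefer to avoid invoking Poincar\'e duality, the same conclusion follows by bookkeeping. Attaching each band is a $1$-handle attachment along two disjoint boundary arcs, so $\chi(\Sigma'(\pi,\mathbf r))=\chi(H)-K=1-K$, and capping the $b$ boundary circles gives $\chi(\wh H(\pi,\mathbf r))=1-K+b$. On the other hand $H(\pi,\mathbf r)$ is a CW complex with one face, $K$ edges, and $V$ vertices, where $V$ is the number of blocks of the partition $P_{\mathrm{glue}}$ of $\{v_1,\dots,v_{2K}\}$ generated by the identifications of Definition~\ref{defn-H}; so $\chi(H(\pi,\mathbf r))=1-K+V$, and the genus identity reduces to (a) $b=V$, and (b) $\wh H(\pi,\mathbf r)$ is orientable iff $H(\pi,\mathbf r)$ is --- given these, the two closed surfaces have the same Euler characteristic and orientability type, hence the same genus componentwise. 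For (a): each corner $v_j$ contributes a short arc $\alpha_j\subset\partial H$ lying on a unique boundary circle of $\Sigma'(\pi,\mathbf r)$, and $\partial\Sigma'(\pi,\mathbf r)$ is the cyclic concatenation of the $\alpha_j$ with the $2K$ free band-edges; thus the partition $P_\partial$ of $\{v_1,\dots,v_{2K}\}$ by ``lying on a common boundary circle'' is generated by ``joined by a single free band-edge,'' and by the choice of attaching maps the two free edges of each $R_k$ join precisely the two pairs of corners that $r_k$ identifies, so $P_\partial=P_{\mathrm{glue}}$ and $b=V$. For (b): $\Sigma'(\pi,\mathbf r)$ carries an orientation extending the one on $H$ exactly when every band is untwisted, i.e.\ every $r_k$ is the relation~\romnum{1} rule, which by Propositions~\ref{prop-both-lemmas} and~\ref{prop-sgn} is exactly the condition for $H(\pi,\mathbf r)$ to be orientable.

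The main obstacle is the local normalization of the band attachments so that they genuinely realize the Poincar\'e-dual regular neighborhoods: one must check that the untwisted attachment goes with the orientation-preserving gluing rule \romnum{1} and that inserting a half-twist into $R_k$ corresponds exactly to replacing the relation~\romnum{1} identification of the endpoints of $e_{p_k},e_{q_k}$ by the relation~\romnum{2}/\romnum{4} one. Once this is fixed, the Euler-characteristic bookkeeping, the identity $b=V$, and the orientability dichotomy are all routine, and the general case with contractions reduces to the contraction-free one by collapsing edges, consistently with Corollary~\ref{cor-sun}.
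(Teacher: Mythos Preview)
Your argument is correct and rests on the same underlying observation as the paper's: the band-attached surface $\Sigma'(\pi,\mathbf r)$ and the glued polygon $H(\pi,\mathbf r)$ are two descriptions of the same closed surface, so the bijection and genus equality are immediate. The paper's proof is a two-sentence sketch (``disconnect the glued edges by inserting a strip; contract each strip to recover the gluing''), which is exactly the inverse-operations statement you formalize via the regular-neighborhood/Poincar\'e-dual picture. Your alternative Euler-characteristic bookkeeping ($\chi=1-K+b$ vs.\ $1-K+V$, with $b=V$ and matching orientability) is a genuine addition that the paper does not provide, and it makes the genus equality self-contained without appealing to duality.

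Two small points. First, the statement specifies $\wh H$ as the \emph{minimum-genus} closed surface completing the disk-with-strips; you cap each boundary circle with a disk without explicitly saying why this is minimal. It is standard (any other filling has nonnegative genus contribution), but worth one sentence. Second, your citation of Propositions~\ref{prop-both-lemmas} and~\ref{prop-sgn} for ``orientable iff every $r_k=\romnum{1}$'' is slightly misplaced: those propositions use this fact rather than state it. The fact itself is the classical criterion for a single polygon with edge identifications and needs no reference. The final appeal to Corollary~\ref{cor-sun} for the contraction case is also tangential; your own remark that edge contraction is a homeomorphism already suffices.
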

	\begin{proof}
		Given $H$, we can ``disconnect'' the glued edges by adding an open strip spanning two edges at the end. The orientability of this strip is determined by the gluing rule used to glue two edges. On the other hand, from $\wh{H}$, we can contract each strip to glue two edges at the ends. This proves the desired result.
	\end{proof}

The bijection described in Proposition~\ref{prop-:bijection} also gives a bijection between a random surface defined in Definition~\ref{defn-H} and a random walk on permutations, because we can recover $\wh{H}$ from how the face data changes while adding strips one by one. Furthermore, this implies that when a pair is added to $\bm{\sigma}$, it will move the corresponding path on permutations by one step, and thus we can define an action of $\Sigma$ acting on permutations $S_M$.

\begin{defn}\label{defn-action}
	Let $\mathfrak G(\Lambda_c^\ep)$ be the action graph on $S_M$ such that there is a directed edge $e=(x,y)$ if and only if $x$ represents the face data of some $\wh{H}$ with $\bm{\sigma}$ and gluing rule $\mathbf{r}$; and $y$ is obtained as the face data of $\wh{H^*}$ with $(\bm{\sigma},\sigma^*)$ and gluing rule $(\mathbf{r},r^*)$.
\end{defn}

This bijection works for any classical Lie group. In order to express Wilson loop expectations as expected weights of a simple random walk on this action graph, the weight on each edge can be assigned compatible with Lemma~\ref{lem-other-gen}. Note that when an added strip was orientable, an increase of the deficit should decrease the Euler characteristic by 2, and a decrease of the deficit should make the Euler characteristic remain the same. However, if an added strip was non-orientable, an increase of deficit should decrease the Euler characteristic by 3, and a decrease of deficit should decrease the Euler characteristic by 1. Therefore, there is additional $1/N$ factor for each non-orientable gluing.

To make the statement simple, we assume $G=\UN$ in the following.

\begin{thm}
	\label{thm::walk-on-permutations}
	Let $G=\UN$ and $A$ be the (weighted) adjacency matrix of $\mathfrak G(\Lambda_c^\ep)$ whose edges are weighted compatible to Lemma~\ref{lem-other-gen}. Then
	\begin{align*}
		\Ex{\tr(\Lambda_c^\ep)} &= \exp\left(-\frac{\sum_{n=1}^\nu c_n t_n}{2}\right) \sum_{\gamma \in \Omega(\mathfrak G(\Lambda_c^\ep))}\frac{1}{\abs{\gamma}!} \omega(\gamma) N^{-2d(\gamma)}\\
		&= \exp\left(-\frac{\sum_{n=1}^\nu c_n t_n}{2}\right) (1\, 1\, 1 \cdots 1)e^{A}(1\, 0\, 0\, \cdots 0)^{T}
	\end{align*}
	where $\omega(\gamma)$ denotes the product of weights on $\gamma$.
\end{thm}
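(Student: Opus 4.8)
The plan is to take the $\UN$ surface‑sum formula (Lemma~\ref{lem-un}, or Lemma~\ref{lem-un-gen} for a skein) and reorganize the Poisson expectation of Lemma~\ref{lem-main-poisson} into a power series in the adjacency matrix $A$ of the action graph.

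\emph{Step 1 (unwind the surface sum).} Specialize Lemmas~\ref{lem-main-poisson} and~\ref{lem-un} to $G=\UN$, so that $\fc_{\uN}=-1$. The total Lebesgue mass of $\mcl D$ is $\sum_{m<m^*,\,c(m)=c(m^*)}|\lambda_{c(m)}|$, so the constant~\eqref{eqn-constant-term} factors as $\exp\!\bigl(-\tfrac12\sum_n c_n t_n\bigr)\cdot e^{|\mcl D|}$, where $c_n=|c^{-1}(n)|$, $t_n=|\lambda_n|$, and $\sum_m|\lambda_{c(m)}|=\sum_n c_n t_n$. Since $\Sigma$ is a Poisson point process on $\mcl D$ with intensity the Lebesgue measure, its normalization contributes a factor $e^{-|\mcl D|}$, which cancels $e^{|\mcl D|}$. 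Writing $\tr=\tfrac1N\Tr$ and using $\chi\bigl(H(\pi(\Sigma),\romnum{1}^{|\Sigma|})\bigr)=2-2g$ for the single orientable one‑faced surface of genus $g$, we get
\[
\Ex{\tr(\Lambda_c^\ep)}\;=\;e^{-\frac12\sum_n c_n t_n}\sum_{k\ge0}\frac1{k!}\int_{\mcl D^k}\ep(\vec x)\,(-1)^k\,N^{-2g(\vec x)}\,dx_1\cdots dx_k,
\]
the interchange of $\Ex{}$ with the sum over $k$ being justified exactly as in the proof of Proposition~\ref{prop-limit-legal}. (For a genuine skein of $n$ loops the same holds with $N^{-2g}$ replaced by $N^{-2n+\chi}$ via Lemma~\ref{lem-un-gen}, and $\chi-2n$ is again $-2$ times a total deficit.)

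\emph{Step 2 (integrate out the positions).} Using the parametrizing bijection $\eta$ of Definition~\ref{defn-space-match}, a configuration $\vec x\in\mcl D^k$ corresponds, up to a null set, to a choice for each $i$ of a matching‑color pair $(m_i,m_i^*)\in\mcl C$ together with a parameter $t_i\in[0,1]$; integrating the $t_i$ contributes $\prod_i|\lambda_{c(m_i)}|$, and by Definition~\ref{defn-pi} the pairing $\pi(\Sigma)$ — hence the surface $H(\pi(\Sigma),\romnum{1}^k)$ of Definition~\ref{defn-H}, its genus $g(\vec x)$, and the sign $\ep(\vec x)$ — depends on $\vec x$ only through the ordered sequence $\bigl((m_1,m_1^*),\dots,(m_k,m_k^*)\bigr)$ read off in increasing order of the $t_i$. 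Thus the integral over $\mcl D^k$ becomes a sum over all such ordered sequences of $\ep\cdot(-1)^k\cdot\prod_i|\lambda_{c(m_i)}|\cdot N^{-2g}$, with the prefactor $\tfrac1{k!}$ accounting for the choice of which coordinate is smallest, etc. Then, by Proposition~\ref{prop-:bijection} and the discussion preceding Definition~\ref{defn-action}, adjoining one matching‑color pair advances the face‑data permutation by one step of $\mathfrak G(\Lambda_c^\ep)$, so ordered sequences of $k$ pairs are in bijection with length‑$k$ paths $\gamma\in\Omega(\mathfrak G(\Lambda_c^\ep))$ started at $\zeta_M$; under this bijection the genus of $H(\pi(\Sigma),\romnum{1}^k)$ equals the deficit $d(\gamma)$ by Remark~\ref{rmk-genus}, and the accumulated weight $\ep(\vec x)(-1)^k\prod_i|\lambda_{c(m_i)}|$ equals $\omega(\gamma)$, since for $\UN$ each edge of $\mathfrak G(\Lambda_c^\ep)$ carries weight $-\,\ep(m)\ep(m^*)\,|\lambda_{c(m)}|$ (compatibly with Lemma~\ref{lem-un-gen}; this reduces to the factor $(-t)^{|\gamma|}$ of Corollary~\ref{cor-Levy-rw} in the single‑simple‑loop case). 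This yields the first displayed equality.

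\emph{Step 3 (the matrix exponential).} By the definition of $A$ as the weighted adjacency matrix of $\mathfrak G(\Lambda_c^\ep)$, the $(\sigma,\zeta_M)$ entry of $A^k$ is the sum of $\omega(\gamma)$ over length‑$k$ paths from $\zeta_M$ to $\sigma$; summing over the terminal vertex $\sigma$ and over $k$ with weights $1/k!$ gives $(1\,1\,\cdots\,1)\,e^{A}\,(1\,0\,\cdots\,0)^{T}$, which is the second equality. \emph{Main obstacle.} The substantive work is concentrated in Step~2: one must check carefully that the topological invariants feeding Lemma~\ref{lem-un-gen} (the genus, equivalently $\chi$, and the sign $\ep(\Sigma)$) are functions of the ordered sequence of matching‑color pairs alone — which means tracking precisely how $\pi(\Sigma)$ depends on the lexicographic order in Definition~\ref{defn-pi} — and that the ``primal'' edge‑gluing description of $H(\pi,\bm r)$ in Definition~\ref{defn-H} agrees with the ``dual'' vertex‑gluing/permutation description of Remark~\ref{rmk-genus} \emph{at every intermediate step}, so that adjoining one pair changes the number of faces by exactly $\pm1$ and the deficit increment coincides with the genus increment. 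The analytic interchanges of sum and integral are routine given the bounds established in the proof of Proposition~\ref{prop-limit-legal}.
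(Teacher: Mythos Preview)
Your proposal is correct and follows the same approach as the paper. The paper's own proof is a single sentence (``This follows immediately from the bijection between the paths on the action graph and the pairing configurations''), and your three steps unpack precisely what that bijection is and how the normalization constants, the $1/k!$, the area factors, and the genus/deficit identification fit together. Your ``Main obstacle'' paragraph correctly isolates the only substantive point---that the primal edge-gluing description of Definition~\ref{defn-H} and the dual vertex-gluing description of Remark~\ref{rmk-genus} agree step by step---which is exactly what Proposition~\ref{prop-:bijection} and the surrounding discussion are there to supply.

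One small caution on your Step~2: you write that $\pi(\Sigma)$ depends on $\vec x$ ``only through the ordered sequence $((m_1,m_1^*),\dots,(m_k,m_k^*))$ read off in increasing order of the $t_i$.'' Strictly speaking, $\pi(\Sigma)$ is defined via the \emph{lexicographic} order on the pairs $(m,t)$ (first by $m$, then by $t$), not by the global $t$-order. What is true---and what you actually need---is that the \emph{genus} of $H(\pi(\Sigma),\romnum{1}^{|\Sigma|})$ coincides with the deficit of the path obtained by adding the pairs in any order, in particular in $t$-order; this is the content of Remark~\ref{rmk-genus} together with Proposition~\ref{prop-:bijection}, and the paper is equally terse about it. The distinction does not affect the argument, but the phrasing could be tightened.
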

\begin{proof}
	This follows immediately from the bijection between the paths on the action graph and the pairing configurations.
\end{proof}



\begin{eg}
	Let $w=\wA\wA\wA$. In this case $G(w)$ is the same as the transposition	 graph of $\fS_3$, where each edge is weighted by $-A$. 
	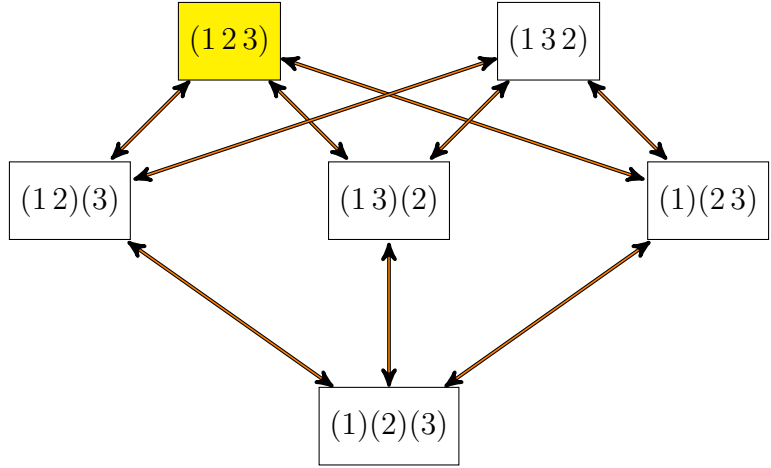
\begin{figure}[ht!]
	\centering
	\begin{tikzpicture}[>=stealth',shorten >=1pt,node distance=3cm,on grid,initial/.style ={}]
	\tikzset{every state/.style={rectangle}} 
	 \node[state] (Id)[fill=yellow] {$(1\, 2\, 3)$};
	 \node[state] (A) [below left =of Id] {$(1\,2)(3)$};
	 \node[state] (B) [below right =of Id] {$(1\,3)(2)$};
	 \node[state] (AB) [above right =of B] {$(1\,3\,2)$};
	 \node[state] (C) [right =of B, below right =of AB] {$(1)(2\, 3)$};
	 \node[state] (AC) [below =of B] {$(1)(2)(3)$};
	\tikzset{mystyle/.style={<->,double=orange}}
	\path (Id) edge [mystyle] node {} (A)
	 (Id) edge [mystyle] node {} (B)
	 (Id) edge [mystyle] node {} (C)
	 (AB) edge [mystyle] node {} (A)
	 (AB) edge [mystyle] node {} (B)
	 (AB) edge [mystyle] node {} (C)
	 (AC) edge [mystyle] node {} (A)
	 (AC) edge [mystyle] node {} (B)
	 (AC) edge [mystyle] node {} (C);
	\end{tikzpicture}
	\caption{The component of action graph starting from $\zeta_3$ for the pairing semigroup action associated with $\wA\wA\wA$}\label{fig-ag-ex1}
	\end{figure}
	The adjacency matrix $A$ is
	\begin{equation*}
		A = \begin{pmatrix}
		0	&	0	&	-A	&	-A	&	-A	&	0	\\
		0	&	0	&	-A	&	-A	&	-A	&	0	\\
		-A/N^2	&	-A/N^2	&	0	&	0	&	0	&	-A	\\
		-A/N^2	&	-A/N^2	&	0	&	0	&	0	&	-A	\\
		-A/N^2	&	-A/N^2	&	0	&	0	&	0	&	-A	\\
		0	&	0	&	-A/N^2	&	-A/N^2	&	-A/N^2	&	0	\\
		\end{pmatrix},
	\end{equation*}
	and thus the Wilson loop expectation is given by, after normalization,
	\begin{align*}
	& (1\,0\,0\,0\,0\,0)\cdot e^A\cdot (1\,1\,1\,1\,1\,1)^T \\
	=& 1+\frac{2+M^2}{3}\left(\cosh\frac{3A}{N}-1\right)-N\sinh\frac{3A}{N}.
	\end{align*}
	This computation is indeed exactly the same with what is described in~\cite{Levy2008}.
\end{eg}

Even in the case $G=\UN$, this formula is more general than Corollary~\ref{cor-Levy-rw} because we allow a pair between a lasso and its inverse. In this case, the action is not simply multiplication by transposition. If there are two copies of such a pair is added, the second pair does not affect the genus in any case. For an example, refer to Figure~\ref{fig-eg-surface-g1}.

\begin{eg}
	Let $w=\wA\wB\wAi\wB$. The component of $G(w)$ containing $\zeta_n$ has the following graph representation.
	\begin{figure}[h!]
	\centering
	\begin{tikzpicture}[>=stealth',shorten >=1pt,node distance=3cm,on grid,initial/.style ={}]
	\tikzset{every state/.style={rectangle}} 
	 \node[state] (Id)[fill=yellow] {$(1\, 2\, 3\, 4)$};
	 \node[state] (A) [below =of Id] {$(1\,2)(3\,4)$};
	 \node[state] (B) [below right =of Id, right =of A] {$(1\,4)(2\,3)$};
	 \node[state] (AB) [above right =of A, above =of B, right =of Id] {$(1\,4\,3\,2)$};
	\tikzset{mystyle/.style={->,double=orange}} 
	\tikzset{every node/.style={fill=white}} 
	\path (Id) edge [mystyle] node {$A$} (A)
	 (B) edge [mystyle] node {$A$} (AB);
	\tikzset{mystyle/.style={<->,double=orange}} 
	\path (Id) edge [mystyle, bend left=30] node {$-B$} (B)
	 (A) edge [mystyle, bend left=30] node {$-B$} (AB);
	\tikzset{every node/.style={}} 
	\path (A) edge [mystyle, loop left] node {$A$} ()
	 (AB) edge [mystyle, loop right] node {$A$} ();
	\end{tikzpicture}
	\caption{The component of action graph starting from $\zeta_4$ for the pairing semigroup action associated with $\wA\wB\wAi\wB$}\label{fig-ag-ex2}
	\end{figure}
	
	The adjacency matrix is then given by
	\begin{equation}
		A = \begin{pmatrix}
		0	&	0	&	A	&	-B	\\
		0	&	A	&	-B	&	0	\\
		0	&	-B/N^2	&	A	&	0	\\
		-B/N^2	&	A/N^2	&	0	&	0	\\
		\end{pmatrix}
	\end{equation}
	and therefore
	\begin{align}
	\Ex{\tr(w)} =& (1\,0\,0\,0)\cdot e^A\cdot (1\,1\,1\,1)^T \\
	=& e^A\cosh\frac{B}{N} - \frac{e^A}{N}\sinh\frac{B}{N} - \frac{N^2-1}{N}\sinh\frac{B}{N}.
	\end{align}
\end{eg}

In the simplest setting where Corollary~\ref{cor-Levy-rw} is applicable, there is another natural bijection between a random walk and a random ramified cover as described in~\cite{Levy2008}. Likewise, it would be interesting if our bijection leads to any non-trivial enumeration problem of an interesting class of surfaces.




\section{Explicit Wilson loop expectations for \texorpdfstring{$\UN$}{U(N)}}\label{sec-chart}
\subsection{A table of examples} \label{subsec:table}
In this section, we present a table of explicit Wilson loop expectations, which was computed using Theorem~\ref{thm::walk-on-permutations} and a Mathematica calculation to explicitly exponentiate the corresponding matrix. For simplicity, we only consider the gauge group $\UN$ here, but similar calculations could be made for other groups. For completeness, we list below all the entries from~\cite[Appendix B]{Levy2011a} (which correspond to all loops with at most three intersection points) but we leave three entries not computed.  (Computing these entries with our approach would require one to exponentiate a symbolic matrix of size up to 288 by 288; we suspect this is doable with the right implementation, but it took too long within the simple Mathematica program we wrote.) In the charts we follow the labeling convention of~\cite{Levy2011a} for lassos (meaning that we use variables $s$, $t$, $u$, $s_i$, $t_i$, etc.\ instead of the $\lambda_i$ used elsewhere in this paper). The \emph{lasso representation} is a word in an alphabet whose symbols are these variables; conversely, in the \emph{Wilson loop expectation} these variables are treated as positive real numbers (representing area). In addition, the Wilson loop expectations in this section are normalized by $\frac1N$ for single loops, compared to Definition~\ref{defn-wilson-loop}, and thus equals to $\frac1N \Phi_N(\Gamma)$. For each loop, a rooted spanning tree is shown in red, the non-tree edges are labelled in order, and the corresponding lasso word is given.

We include some remarks on extreme cases after the chart.

{

\begin{longtable}{ccc}
\toprule 
Loop & Lasso representation & Wilson loop expectation for $\UN$ \\ \otoprule 
\endhead
\loopimg{1} & $(s)$ & $e^{-\frac{\nabs{s}}{2}}$ \\ \midrule
\loopimg{2} & $(s_1)(s_2)^{-1}$ & $e^{-\frac{\nabs{s_1}+\nabs{s_2}}{2}}$ \\ \midrule
\loopimg{3} & $(t)(ts)$ & $e^{-\frac{2\nabs{t}+\nabs{s}}{2}}\biggl(\cosh\frac{\nabs{t}}{N}-N\sinh\frac{\nabs{t}}{N}\biggr)$ \\ \midrule
\loopimg{4} & $(s_1)(s_3)(s_2)^{-1}$ & $e^{-\frac{\nabs{s_1}+\nabs{s_2}+\nabs{s_3}}{2}}$ \\ \midrule
\loopimg{5} & $(t_1)(t_1t_2s)(t_2)$ & $\begin{aligned} e^{-\frac{2\nabs{t_1}+2\nabs{t_2}+\nabs{s}}{2}}\biggl(\cosh\frac{\nabs{t_1}}{N}-N\sinh\frac{\nabs{t_1}}{N}\biggr)\\ \biggl(\cosh\frac{\nabs{t_2}}{N}-N\sinh\frac{\nabs{t_2}}{N}\biggr)\end{aligned}$ \\ \midrule
\loopimg{6} & $(t_1)(t_2)^{-1}(t_1t_2s)$ &
$\begin{aligned}
e^{-\frac{2\nabs{t_1}+2\nabs{t_2}+\nabs{s}}{2}}
\biggl( e^{\nabs{t_2}}\cosh\frac{\nabs{t_1}}{N}-\frac{e^{\nabs{t_2}}}{N}\sinh\frac{\nabs{t_1}}{N}\\-\frac{N^2-1}{N}\sinh\frac{\nabs{t_1}}{N} \biggr)\end{aligned}$ \\ \midrule
\loopimg{7} & $(t)(ts_1)(s_2)^{-1}$ & $e^{-\frac{2\nabs{t}+\nabs{s_1}+\nabs{s_2}}{2}}\biggl(\cosh\frac{\nabs{t}}{N}-N\sinh\frac{\nabs{t}}{N}\biggr)$ \\ \midrule
\loopimg{8} & $(ut)(u)(uts)$ & $\begin{aligned}e^{-\frac{2\nabs{t}+3\nabs{u}+\nabs{s}}{2}}\biggl(-\frac{N^2-1}{3}\cosh\frac{\nabs{t}}{N}\\+\frac{N^2+2}{3}\cosh\frac{3\nabs{u}+\nabs{t}}{N}-N\sinh\frac{3\nabs{u}+\nabs{t}}{N}\biggr)\end{aligned}$ \\ \midrule
\loopimg{9} & $(s_1)(s_2)^{-1}(s_3)^{-1}(s_4)^{-1}$ & $e^{-\frac{\nabs{s_{1}}+\nabs{s_{2}}+\nabs{s_{3}}+\nabs{s_{4}}}{2}}$ \\ \midrule
\loopimg{10} & $(ut_1)(u)(ut_1t_2s)(t_2)$ & $\begin{aligned}e^{-\frac{2\nabs{t_1}+2\nabs{t_2}+3\nabs{u}+\nabs{s}}{2}}\biggl(-\frac{N^2-1}{3}\cosh\frac{\nabs{t_1}}{N}\\+\frac{N^2+2}{3}\cosh\frac{3\nabs{u}+\nabs{t_1}}{N}
-N\sinh\frac{3\nabs{u}+\nabs{t_1}}{N}\biggr)\\ \biggl(\cosh\frac{\nabs{t_2}}{N}-N\sinh\frac{\nabs{t_2}}{N}\biggr)\end{aligned}$ \\ \midrule
\loopimg{11} & $(t_1)(ut_2t_1s)(u)^{-1}(ut_2)^{-1}$ & $
\begin{aligned}
    e^{-\frac{2\nabs{t_1}+2\nabs{t_2}+3\nabs{u}+\nabs{s}}{2}}
    \biggl(\sinh\frac{\nabs{t_1}}{N}\\
    \left(e^{\nabs{u}}+(N^2-2)\sinh\frac{\nabs{u}}{N}-N\cosh\frac{\nabs{u}}{N}\right) \\
    +e^{\nabs{t_2}+\nabs{u}}\left(\cosh\frac{\nabs{t_1}}{N}-\frac{1}{N}\sinh\frac{\nabs{t_1}}{N}\right)\biggr)
\end{aligned}$ \\ \midrule
\loopimg{12} & $(ut)(u)(uts_1)(s_2)^{-1}$ & $\begin{aligned}e^{-\frac{2\nabs{t}+3\nabs{u}+\nabs{s_1}+\nabs{s_2}}{2}}\biggl(-\frac{N^2-1}{3}\cosh\frac{\nabs{t}}{N}\\+\frac{N^2+2}{3}\cosh\frac{3\nabs{u}+\nabs{t}}{N}-N\sinh\frac{3\nabs{u}+\nabs{t}}{N}\biggr)\end{aligned}$ \\ \midrule
\loopimg{13} & $(vu)(v)(vut)(vuts)$ & Not computed here. \\ \midrule
\loopimg{14} & $(t_1s_1)(t_1)(t_2)(t_2s_2)$ & $\begin{aligned}e^{-\frac{2\nabs{t_1}+2\nabs{t_2}+\nabs{s_1}+\nabs{s_2}}{2}}\biggl(\cosh\frac{\nabs{t_1}}{N}-N\sinh\frac{\nabs{t_1}}{N}\biggr)\\\biggl(\cosh\frac{\nabs{t_2}}{N}-N\sinh\frac{\nabs{t_2}}{N}\biggr)\end{aligned}$ \\ \midrule
\loopimg{15} & $(t_3t_1t_2s)(t_3)^{-1}(t_1)(t_2)^{-1}$ & $\begin{aligned}
    e^{-\frac{2\nabs{t_1}+2\nabs{t_2}+2\nabs{t_3}+\nabs{s}}{2}}\\
    \biggl(\frac{N^2-1}{N^2}(e^{\nabs{t_2}}-1)\cosh\frac{\nabs{t_1}}{N}
    -\frac{N^2-1}{N}\sinh\frac{\nabs{t_1}}{N}\\
    +\frac{N^2+e^{\nabs{t_2}}-1}{N^2}e^{\nabs{t_3}}\cosh\frac{\nabs{t_1}}{N}
    -\frac{1}{N}e^{\nabs{t_2}+\nabs{t_3}}\sinh\frac{\nabs{t_1}}{N}\biggr)
\end{aligned}
$ \\ \midrule
\loopimg{16} & $(s_2)(s_3)(s_1)^{-1}(s_4)$ & $e^{-\frac{\nabs{s_1}+\nabs{s_2}+\nabs{s_3}+\nabs{s_4}}{2}}$ \\ \midrule
\loopimg{17} & $(t)(s_3)^{-1}(ts_1)(s_2)^{-1}$ & $e^{-\frac{2\nabs{t}+\nabs{s_1}+\nabs{s_2}+\nabs{s_3}}{2}}\biggl(\cosh\frac{\nabs{t}}{N}-N\sinh\frac{\nabs{t}}{N}\biggr)$ \\ \midrule
\loopimg{18} & $(t_1)(t_2)(t_2t_1s_1)(s_2)^{-1}$ & $\begin{aligned}e^{-\frac{2\nabs{t_1}+2\nabs{t_2}+\nabs{s_1}+\nabs{s_2}}{2}}\biggl(\cosh\frac{\nabs{t_1}}{N}-N\sinh\frac{\nabs{t_1}}{N}\biggr)\\ \biggl(\cosh\frac{\nabs{t_2}}{N}-N\sinh\frac{\nabs{t_2}}{N}\biggr)\end{aligned}$ \\ \midrule
\loopimg{19} & $(t_1)(t_3)(t_3t_1t_2s)(t_2)$ & $\begin{aligned}e^{-\frac{2\nabs{t_1}+2\nabs{t_2}+2\nabs{t_3}+\nabs{s}}{2}}\biggl(\cosh\frac{\nabs{t_1}}{N}-N\sinh\frac{\nabs{t_1}}{N}\biggr)\\
\biggl(\cosh\frac{\nabs{t_2}}{N}-N\sinh\frac{\nabs{t_2}}{N}\biggr)\biggl(\cosh\frac{\nabs{t_3}}{N}-N\sinh\frac{\nabs{t_3}}{N}\biggr)\end{aligned}$ \\ \midrule
\loopimg{20} & $(t_1)(t_2)^{-1}(t_1t_2s_1)(s_2)^{-1}$ & $\begin{aligned}e^{-\frac{2\nabs{t_1}+2\nabs{t_2}+\nabs{s_1}+\nabs{s_2}}{2}}
\biggl( e^{\nabs{t_2}}\cosh\frac{\nabs{t_1}}{N}-\frac{e^{\nabs{t_2}}}{N}\sinh\frac{\nabs{t_1}}{N}\\-\frac{N^2-1}{N}\sinh\frac{\nabs{t_1}}{N} \biggr)\end{aligned}$ \\ \midrule
\loopimg{21} & $(ts_1)(t)(s_3)(s_2)^{-1}$ & $e^{-\frac{2\nabs{t}+\nabs{s_1}+\nabs{s_2}+\nabs{s_3}}{2}}\biggl(\cosh\frac{\nabs{t}}{N}-N\sinh\frac{\nabs{t}}{N}\biggr)$ \\ \midrule
\loopimg{22} & $(t_1)(t_2)^{-1}(t_1t_2t_3s)(t_3)$ & $\begin{aligned}e^{-\frac{2\nabs{t_1}+2\nabs{t_2}+2\nabs{t_3}+\nabs{s}}{2}}
\biggl( e^{\nabs{t_2}}\cosh\frac{\nabs{t_1}}{N}-\frac{e^{\nabs{t_2}}}{N}\sinh\frac{\nabs{t_1}}{N}\\
-\frac{N^2-1}{N}\sinh\frac{\nabs{t_1}}{N} \biggr)\biggl(\cosh\frac{\nabs{t_3}}{N}-N\sinh\frac{\nabs{t_3}}{N}\biggr)\end{aligned}$ \\ \midrule
\loopimg{23} & $(t_1)(t_2t_3t_1s)(t_3)(t_2)^{-1}$ &
    $\begin{aligned}
        e^{-\frac{2\nabs{t_1}+2\nabs{t_2}+2\nabs{t_3}+\nabs{s}}{2}}\\
        \biggl(e^{\nabs{t_2}}\cosh\frac{\nabs{t_1}}{N}\cosh\frac{\nabs{t_3}}{N}-Ne^{\nabs{t_2}}\cosh\frac{\nabs{t_1}}{N}\sinh\frac{\nabs{t_3}}{N}\\
        -\frac{N^2+e^{\nabs{t_2}}-1}{N}\sinh\frac{\nabs{t_1}}{N}\cosh\frac{\nabs{t_3}}{N}\\
        +e^{\nabs{t_2}}\sinh\frac{\nabs{t_1}}{N}\sinh\frac{\nabs{t_3}}{N}\biggr)
    \end{aligned}$ \\ \midrule
\loopimg{24} & $(t_2ut_1s)(t_2)^{-1}(ut_1)(u)$ &
    $\begin{aligned}e^{-\frac{2\nabs{t_1}+2\nabs{t_2}+3\nabs{u}+\nabs{s}}{2}}\\ 
        \biggl(-\frac{N^2-1}{3}\cosh\frac{\nabs{t_1}}{N}+\frac{N^2-1}{3N}(e^{\nabs{t_2}}-1)\sinh\frac{\nabs{t_1}}{N}\\ 
        +\left(\frac{N^2-1}{3}+e^{\nabs{t_2}}\right)\cosh\frac{\nabs{t_1}+3\nabs{u}}{N}\\
        -\frac{(N^2+2)e^{\nabs{t_2}}+2(N^2-1)}{3N}\sinh\frac{\nabs{t_1}+3\nabs{u}}{N}\biggr)\end{aligned}$ \\ \midrule
\loopimg{25} & $(u_2u_1ts)(u_2)(u_2u_1t)(u_1)$ & Not computed here.
\\ \midrule
\loopimg{26} & $(u_1)(u_2)^{-1}(u_1u_2t)(u_1u_2ts)$ & Not computed here. \\ \midrule
\loopimg{27} & $(s_1)(t)(s_2)(ts_3)$ & $e^{-\frac{2\nabs{t}+\nabs{s_1}+\nabs{s_2}+\nabs{s_3}}{2}}\biggl(\cosh\frac{\nabs{t}}{N}-N\sinh\frac{\nabs{t}}{N}\biggr)$ \\ \midrule
\loopimg{28} & $(t_1)(t_2s_2)(t_1s_1)^{-1}(t_2)^{-1}$ & $\begin{aligned}e^{-\frac{2\nabs{t_1}+2\nabs{t_2}+\nabs{s_1}+\nabs{s_2}}{2}}\biggl(e^{\nabs{t_1}}+e^{\nabs{t_2}}-1\\+\frac{1}{N^2}(e^{\nabs{t_1}}-1)(e^{\nabs{t_2}}-1)\biggr)\end{aligned}$ \\ \bottomrule
\end{longtable}

}

\subsection{Surface interpretation and large-area asymptotics when \texorpdfstring{$N=\infty$}{N is infinite}}

When $N \to \infty$ the limits of the expressions in the chart above are the same as those in L\'evy's ``master field'' table~\cite{Levy2011a}. Let us remark that Chatterjee's work in~\cite{Chatterjee2019a} also concerns the $N\to\infty$ limit, but on a coarse lattice approximation. In both Chatterjee's approach and our approach, taking $N\to\infty$ essentially allows one to ignore the non-simply-connected surfaces in the surface sum expansion. In Chatterjee's approach one effectively weights by the exponential of minus the total number of plaquettes---which means that small-area surfaces contribute the most to the sum.  If the $\lambda_i$ are large, one might guess that the string trajectories in~\cite{Chatterjee2019a} whose weights dominate the infinite sum would be those that approximately trace out minimal area simply connected surfaces (embedded with finitely many ramification points) as in our story.

Indeed, we highlight below a few examples of the leading terms of the $N = \infty$ (master field) equation for the $U(N)$ Wilson loop expectation as the area parameters tend to $\infty$. In each of these examples, the term in the exponent corresponds to $\frac12$ times the minimal area possible for a spanning loop, or equivalently, the minimal area ``swept out'' by a homotopic contraction of the loop to the identity. For example, in the top-right example the $t$ area is surrounded twice and must be swept out twice, hence the $2t$ in the exponent denominator. A more interesting example is the bottom right figure, where it is possible to divide the loop into two ``halves'' at the bottom vertex, and contract these halves ``around'' the $t_2$ region toward the bottom vertex (so that $t_1$ is swept out twice and $t_2$ not at all). It is also possible to divide the loop into two halves at the upper vertex and contract them ``around'' the $t_1$ region toward the upper vertex (so that $t_2$ is swept out twice and $t_1$ not at all). Taking the minimal-area option accounts for the $2 \min \{t_1, t_2 \}$ in the exponent denominator.

The polynomial factor accounts for the freedom in the ramification point placement, with each point also contributing a sign factor. In the top right example of the table below, the $-t$ accounts for the fact that we can place a ramification point anywhere in the $t$ region (which is surrounded twice). In the third-row-left example the $\Bigr(\frac32 u^2 + tu \Bigr)$ accounts for the fact that one can \emph{either} place $2$ ramification points within the $u$ region (roughly speaking: $u^2/2$ is the volume of the set of places to put two unordered points in the $u$ region, and the $3$ somehow accounts for the different ways to join the $3$ copies of $u$ that appear in the reference surface defined in Section~\ref{sec-spanning}) \emph{or} place one in the $t$ region and one in the $u$ region. (Section~\ref{sec-forest} gives a somewhat different explanation of these coefficients, as well as the lower-degree polynomial coefficients.)

\begin{center}
    
{
\begin{longtable}{cccc}
\toprule 
{\normalsize Loop} & \hspace{-.15in} {\normalsize Leading term when $N\!=\!\infty$ } & {\normalsize Loop} & \hspace{-.15in} {\normalsize Leading term when $N\!=\!\infty$} \\ \otoprule \endhead
\loopimg{1} & $e^{-\frac{s}{2}}$ &
\loopimg{3} & $-t e^{-\frac{2t+s}{2}}$ \\ \midrule
\loopimg{5} & $\begin{aligned}t_1t_2 e^{-\frac{2t_1+2t_2+s}{2}}\end{aligned}$ &
\loopimg{6} &
$\begin{aligned}
e^{-\frac{2t_1+s}{2}}
\end{aligned}$ \\ \midrule
\loopimg{8} &  \hspace{-.1in} $\begin{aligned}\Bigl(\frac{3}{2} u^2 + tu\Bigr)e^{-\frac{2t+3u+s}{2}}\end{aligned}$ &
\loopimg{19} & \hspace{-.1in} $\begin{aligned}-t_1t_2t_3e^{- \frac{2t_1+2t_2+2t_3+s}{2}}\end{aligned}$ \\ \midrule
\loopimg{27} & $-te^{-\frac{2t+s_1+s_2+s_3}{2}}$ &
\loopimg{28} & $\begin{aligned}e^{-\frac{2 \min \{t_1,t_2 \} +s_1+s_2}{2}}\end{aligned}$ \\ \bottomrule
\end{longtable}
}
\end{center}

These results are consistent with what one would expect (heuristically) based on the surface sum (vanishing string trajectory) expansion in~\cite{Chatterjee2019a}. We remark that the results in~\cite{Chatterjee2019a} also apply in dimension $d > 2$, so one might expect some version of this idea (that the minimal spanning surface area dictates the exponential decay rate in the $N=\infty$, large-area regime) to apply when $d>2$ as well.

\subsection{Surface interpretation when \texorpdfstring{$N=1$}{N is 1}}

We noted above that when $N=\infty$, the higher genus surfaces can be effectively ignored. By contrast, when $N=1$ all surfaces are treated equally regardless of their genus. Furthermore, when $N=1$ the gauge group is $U(1)$, which is abelian, and using simple scaling (multiplying a Gaussian by $k$ multiplies its variance by $k^2$) one can show that the Wilson loop expression is $$\prod_i e^{-k_i^2 \lambda_i/2}$$ where $k_i$ is the net number of times (positive minus negative) that the $\lambda_i$ region is surrounded --- i.e.\ the net number of times $\lambda_i$ occurs in the lasso representation. When verifying this in the first few examples in the chart, note that $\cosh(\frac{x}{N})-N\sinh(\frac{x}{N}) = e^{-x/N} = e^{-x}$ when $N=1$. There is an exact cancellation (removing the terms that grow as $e^{x/N}$) that only happens when $N=1$.

\subsection{Surface interpretation when \texorpdfstring{$N\in (1,\infty)$}{N is greater than 1} and areas are small}
When $N \in (1,\infty)$ is fixed and the $\lambda_i$ are small the first-order Taylor approximation for small $t$ gives $\cosh(\frac{t}{N}) \approx 1$ and $N \sinh(\frac{t}{N}) \approx t$. Because of this, in the first few chart items above, one can easily see that in the small $t$ limit the Wilson loop expectation for general $N$ agrees (to first order) with the Wilson loop expectation for $N=1$. This is because for the first order approximation, one need only consider surfaces with at most one ramification point. When we are considering only a single loop, a single ramification point, which amounts to one double arrow in the analog of e.g.\ Figure~\ref{fig-eg-surface-g1}, cannot change the genus. (Conversely, a \emph{pair} of ramification points may or may not change the genus, depending on whether the two double arrows cross each other.) Intuitively, this is because the theory is ``abelian at small scales'' in the sense that matrices that are close to the identity approximately commute with each other.
    
\subsection{Surface interpretation when \texorpdfstring{$N\in (1,\infty)$}{N is greater than 1} and areas are large} \label{subsec::largeareafixedN}
When $N \in (1,\infty)$ and the $\lambda_i$ are large, the Wilson loop expectation expressions are dominated by the terms with maximal exponential growth rate in the $\lambda_i$. (Note that $\cosh(t) \approx \sinh(t) \approx \frac{e^{t}}{2}$ when $t$ is large.) Intuitively, the ``typical'' number of ramification points should grow like the area (since the points come from a Poisson point process w.r.t.\ to a measure whose total mass is proportional to the area). Thus one should think of the ``typical'' surfaces in this regime as being surfaces with many marked points, and changing $N$ somehow affects the typical density of such points (since one expects the genus to grow roughly linearly in the number of such points).

Let us now remark on an interesting example. Consider the last example of the (finite $N$) chart in Section~\ref{subsec:table} in the degenerate case where $s_1=s_2=0$. Then the word becomes $t_1 t_2 t_1^{-1} t_2^{-1}$ and the leading term is the constant $\frac{1}{N^2}$ --- so that in particular the Wilson loop expectation does not decay to zero as the $t_i$ tend to $\infty$. This is because if $A$ and $B$ are matrices obtained by running Brownian motion on the Lie group for times $t_1$ and $t_2$ respectively, then in the $t_i \to \infty$ limit both matrices converge in law to Haar measure, and the expected trace of $A BA^{-1} B^{-1}$ is not zero in this case: expected traces of words of this form are precisely what were recently computed by Magee and Puder~\cite{Magee2019,Magee2019a} using techniques very different from ours. For example,~\cite[Table 1]{Magee2019} computed the expected trace of $A BA^{-1} B^{-1}$ as $\frac 1N$, which is compatible with our $1/N^2$ result for the normalized trace.  Several more complicated examples are worked out in~\cite{Magee2019,Magee2019a}.

\begin{figure}[ht!] \centering
	\includegraphics[width=.8\textwidth]{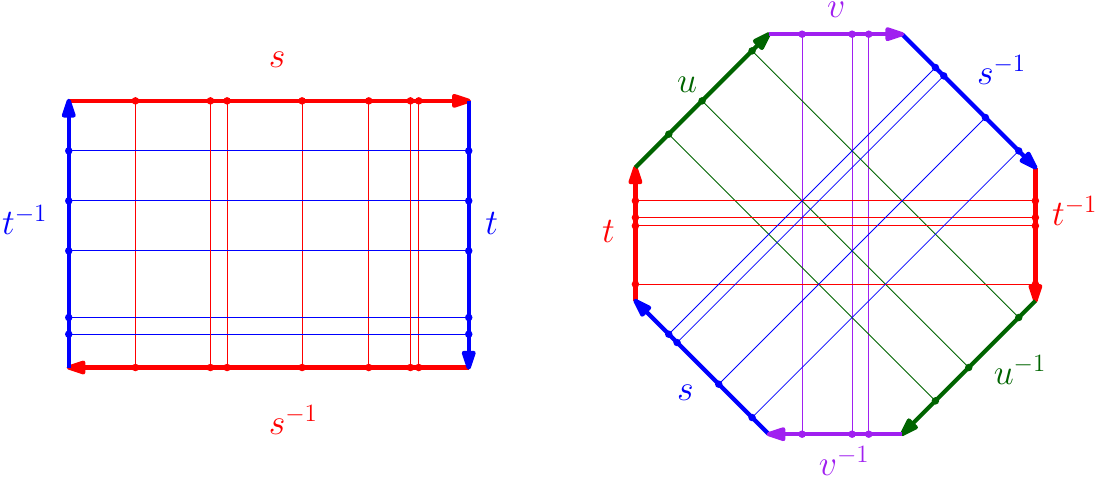}
	\caption{Two examples of Poisson point processes on the space of matched pairs (each pair shown as two dots connected by a thin line).}
 \label{fig-heavilycrossed}
\end{figure}

Let us roughly sketch a couple of ways to see this result. First, looking at the left image in Figure~\ref{fig-heavilycrossed}, it is not hard to see that the genus will be $1$ provided there is at least one vertical crossing and one horizontal crossing. Using the main result of this paper, one therefore obtains $e^{-(2t+2t)/2} e^{t_1}e^{t_2} (1/N^2) = 1/N^2$ in the limit as $t_1, t_2 \to \infty$. In fact, a similar calculation shows that for any surface where each symbol occurs at most once, the large area limit will be $1/N^{2g}$ where $g$ is the genus of the surface obtained when each edge is glued to its matched pair, see the right image in Figure~\ref{fig-heavilycrossed}.  More generally, if one wants to compare a word like $s t^2 s^{-1} t^{-2}$ with a word like $s s^{-1} t^2 t^{-2}$ (where the latter is equivalent to the identity and former is not) one has to somehow understand law of the change to the genus that occurs when one keeps the Poisson point process on matched pairs but moves around some entire edges (swapping the $t^2$ and the $s^{-1}$). We will not work this out here.

Alternatively, in the framework of~\cite{Chatterjee2019a}, the intuitive reason that the leading term is constant in these examples is non-zero is that it is possible to create a zero area surface spanning the loop (or equivalently, a ``vanishing string trajectory'') that sweeps out zero area. The reason that the constant is not $1$ is that any such surface cannot have zero genus (i.e., in the language of~\cite{Chatterjee2019a} there must be some ``division'' and ``merging'' of the strings in the string trajectory). The surface maps in~\cite{Magee2019,Magee2019a} can be interpreted as vanishing string trajectories that sweep out zero area, weighted appropriately by genus. We stress that the expected traces in~\cite{Magee2019,Magee2019a} are only non-zero when each symbol and its inverse occur the same number of times.


\section{Counting of non-crossing forests}\label{sec-forest}
In this section, we provide another combinatorial explanation of planar Yang-Mills in the large $N$ limit. We will argue that the the master field given in~\cite{Levy2011a} could also be computed via counting of non-crossing forests. The starting point is that for a simple word $w=A^n$, the counting of non-crossing forests would create a polynomial which matches perfectly with the moments of multiplicative free Brownian motion. For $N=\infty$, the Wilson loop expectation could be expressed via words of free Brownian motion using the Lasso basis. Using the moment-cumulant relation in free probability and the non-crossing rule of counting forests, we will see that if no inverses ever appears in the Wilson loop expectation then the master field will be identical to the previous `forest polynomial'. For general loops, we can also extend the counting of forests to words with inverses via splitting and show that this extension coincides with the planar master field.
 
In Section~\ref{sec-forest-discrete}, we consider the problem of counting non-crossing forests with respect to words. Namely, in each components all the vertices have the same letter, and components are not allowed to cross each other. We count the non-crossing forests over words without inverses and establish the forest polynomial. In Section~\ref{sec-forest-cont}, we extend our definition of forest polynomial to all words by splitting each letter into infinitely many pieces and reversing the order for inverses. Using equations from free probability theory, we show that this ``forest polynomial'' is identical to the master field given in~\cite{Levy2011a}. Finally in Section~\ref{sec-forest-pois}, we show that the splitting with respect to Poisson measure also produces the same result as in Section~\ref{sec-forest-cont}. Note that in this section we focus on the master field (i.e.\ $N=\infty$). We do not need to directly use the sum of surfaces interpretation from previous sections to derive the main results in this section, but the results themselves fit into that framework, and provide a systematic way to compute and understand the overall sum.

\subsection{The non-crossing forests at discrete level}\label{sec-forest-discrete}
Suppose we have a word $w=a_1\dots a_n$ without any inverses and some circle $|z|=l$. We break this circle into $n$ segments and put the letters $a_1, \dots , a_n$ in order on the segments. Also assume that the segments with the same letter have same length. For each segment we also put a vertex at its middle, and suppose $F$ is a forest over these $n$ vertices. We say $F$ is a non-crossing forest over the word $w$ if: (i) in each component, every vertex has the same letter; (ii) different components do not cross each other, while self-crossing is allowed. We let $\mathcal{F}_w$ be the collection of non-crossing forests over $w$.

\begin{defn}\label{def-forest-discrete}
	Given a word $w=a_1\dots a_n$ without inverses, we define its (non-crossing) \textbf{forest polynomial} $p(w)$ as follows. For a non-crossing forest $F=(T_1,\dots ,T_m)$ where the letter of the tree $T_k$ is $x_k$, set its forest polynomial $p(F)=\prod_{k=1}^m \frac{(-x_k)^{|T_k|-1}}{(|T_k|-1)!}$, and the forest polynomial $p(w)$ is defined by summing $p(F)$ over all $F\in \mathcal{F}_w$. Also define the normalized forest polynomial $\Phi(w) = p(w)e^{-\frac{\sum_{i=1}^na_i}{2}}$.  
\end{defn}

Here we have slightly abused the symbols, as the letter $a_i$ in the words $w$ corresponds to the generator of free groups, while in polynomials $p(F)$, $p(w)$ and $\Phi(w)$ they correspond to variables and hence commute with each other. This could be distinguished easily. 

In our discussion below, sometimes it is more convenient to consider components rather than forests. Parallel to the definition of non-crossing forests, let $\cG_w$ be the subset of $NC(w)$, the non-crossing partitions of $w$ such that in each component every vertex has the same letter. As we extend to inverses $a$ and $a^{-1}$ will be considered as the same letter. There is a natural projection $\pi$ from $\cF$ to $\cG$ defined for any word $w$, namely $\pi(F)$ is obtained by letting the components be the vertex set of each tree of $F$.

\begin{prop}\label{prop-forest-a-to-n}
	For the word $w=A^n$, its forest polynomial is given by $$p(w)=Q_n(A) = \sum_{k=0}^{n-1}\tbinom{n}{k+1}n^{k-1}\frac{(-A)^k}{k!}.$$
\end{prop}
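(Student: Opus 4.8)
The plan is to rewrite the sum defining $p(A^n)$ as a weighted sum over \emph{non-crossing partitions} of $[n]$, and then evaluate that sum by Lagrange inversion using the tree function. For $w=A^n$ the color condition in the definition of $\mathcal{F}_{A^n}$ is vacuous, so $\mathcal{F}_{A^n}$ consists of all forests on the $n$ cyclically arranged vertices whose distinct tree-components do not cross, and the weight $p(F)=\prod_{T}\frac{(-A)^{|T|-1}}{(|T|-1)!}$ depends only on the multiset of component sizes. I would group forests according to the partition $\pi(F)$ of $[n]$ into component vertex sets, and use the following combinatorial input:
\[
F\in\mathcal{F}_{A^n}\iff \pi(F)\text{ is a non-crossing partition,}
\]
and moreover, for any fixed non-crossing partition $\pi$, \emph{every} choice of a spanning tree on each block of $\pi$ yields a forest in $\mathcal{F}_{A^n}$. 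One direction is immediate from the elementary fact that two chords of a convex polygon cross iff their four endpoints interleave cyclically: if two edges from different components crossed, their endpoints would lie in two distinct blocks and interleave, so $\pi(F)$ would be crossing. The converse — a crossing partition forces two inter-component edges to cross — follows from a standard Jordan-curve argument applied to the unique tree-paths joining an interleaving pair of vertices in the two offending blocks. Combining this with Cayley's formula (a block of size $b$ carries $b^{b-2}$ spanning trees, with the convention $1^{-1}=1$) gives
\[
p(A^n)\;=\;\sum_{\pi}\ \prod_{B\in\pi}b_B^{b_B-2}\,\frac{(-A)^{b_B-1}}{(b_B-1)!}\;=:\;g_n ,
\]
the sum over non-crossing partitions $\pi$ of $[n]$, with $b_B:=|B|$.

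Next I would compute $g_n$ via the generating function $G(z)=\sum_{n\ge0}g_nz^n$ (with $g_0=1$). Decomposing a non-crossing partition by the block $B_1$ containing $1$ — every other block lies in exactly one of the $s=|B_1|$ cyclic gaps determined by $B_1$, and the partitions induced in the gaps are arbitrary, independent non-crossing partitions — and rewriting $b^{b-2}/(b-1)!=b^{b-1}/b!$, one gets the functional equation
\[
G(z)\;=\;1+\wt F\bigl(zG(z)\bigr),\qquad \wt F(y):=\sum_{s\ge1}\frac{s^{s-1}}{s!}\,(-A)^{s-1}y^s .
\]
Recognizing $\wt F(y)=-\tfrac1A\,T(-Ay)$ for the tree function $T(x)=\sum_{s\ge1}\frac{s^{s-1}}{s!}x^s=xe^{T(x)}$, and setting $v:=T(-AzG(z))$, the relation $T(x)=xe^{T(x)}$ yields the rational parametrization
\[
G\;=\;1-\frac{v}{A},\qquad z\;=\;\phi(v)\;:=\;\frac{-\,v\,e^{-v}}{A-v}.
\]
Here I treat $A$ as a nonzero formal parameter; the case $A=0$ is trivial (both $p(A^n)$ and $Q_n(A)$ equal $1$ for $n\ge1$), so the polynomial identity in $A$ follows in general once it holds for $A\neq0$.

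Finally, since $\phi(0)=0$ and $\phi'(0)=-1/A\neq0$, Lagrange–Bürmann inversion (applied to $G$ as the function $v\mapsto 1-v/A$ of $v$) gives
\[
[z^n]\,G\;=\;\frac1n\,[v^{n-1}]\,\Bigl(\!-\frac1A\Bigr)\Bigl(\frac{v}{\phi(v)}\Bigr)^{\!n}\;=\;\frac{-1}{nA}\,[v^{n-1}]\,(v-A)^n e^{nv},
\]
using $v/\phi(v)=(v-A)e^{v}$. Expanding $(v-A)^n$ and $e^{nv}$ and extracting the coefficient of $v^{n-1}$ gives $\sum_{j=0}^{n-1}\binom nj(-A)^{n-j}\frac{n^{n-1-j}}{(n-1-j)!}$; the substitution $k=n-1-j$ (so $n-j=k+1$ and $\binom nj=\binom{n}{k+1}$) turns $\tfrac{-1}{nA}$ times this expression into exactly $\sum_{k=0}^{n-1}\binom{n}{k+1}n^{k-1}\frac{(-A)^k}{k!}=Q_n(A)$, whence $p(A^n)=g_n=Q_n(A)$.

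The step I expect to be the main obstacle is the combinatorial reduction in the first paragraph — not the easy ``crossing $\Rightarrow$ interleaving'' implication, but the precise equivalence with non-crossing partitions together with the statement that \emph{arbitrary} spanning trees may be placed on the blocks — since once this is in hand, everything downstream is a mechanical (if slightly delicate) manipulation of the tree function. As an alternative to the Lagrange-inversion computation, one could instead verify $g_n=Q_n(A)$ directly from the first-return recursion by an Abel-type binomial identity, and one could also sanity-check the final formula against Biane's explicit moments of free multiplicative Brownian motion; but the generating-function route seems the cleanest to present.
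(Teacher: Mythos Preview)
Your proof is correct, and the Lagrange-inversion computation checks out line by line. The reduction you flag as the ``main obstacle'' is in fact immediate from the paper's definition: the clause ``different components do not cross each other, while self-crossing is allowed'' says precisely that $\pi(F)\in NC(n)$ with no constraint on the spanning trees inside blocks, so the passage to $g_n=\sum_{\pi\in NC(n)}\prod_B b_B^{b_B-2}\tfrac{(-A)^{b_B-1}}{(b_B-1)!}$ via Cayley is essentially definitional (and the paper uses exactly this identity later, in the proof of Proposition~\ref{prop-forest-free-bm-discrete}).

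Where your argument genuinely diverges from the paper is in how that sum is evaluated. The paper fixes the number of edges $k$, introduces an auxiliary set $\mathcal B_1$ of configurations of $k-1$ labeled ``red numbers'' and $k+1$ ``green circles'' on $n$ segments (whose cardinality is visibly $\binom{n}{k+1}n^{k-1}$), and constructs an explicit map to labeled non-crossing forests; it then shows the fibers over each non-crossing partition have size $\prod_i|G_i|^{|G_i|-2}$ by recognizing a parking-function count. Your route instead packages the block weights into the tree function, derives the functional equation $G=1-\tfrac1A T(-AzG)$ from the block-containing-$1$ decomposition, and extracts $[z^n]G$ by Lagrange--B\"urmann. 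The paper's approach is bijective and gives combinatorial insight (the parking-function structure on each component); yours is shorter, more mechanical, and makes the connection to the tree function $T$ transparent---which is natural given the later identification with moments of free multiplicative Brownian motion. Either argument is a complete proof of the proposition.
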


Indeed, when $k=0$ where there are no edges, the trivial term in $p(w)$ is 1. Now for $1\le k\le n-1$, note that the number $\tbinom{n}{k+1}n^{k-1}$ is all the ways to put $k-1$ red numbers $1,\dots ,k-1$ and $k+1$ green circles over the $n$ segments such that the green circles do not overlap each other. We prove Proposition~\ref{prop-forest-a-to-n} by relating this with non-crossing forests with $k$ edges.

\begin{lem}\label{lem-parking-bij}
	 In the context of Definition~\ref{def-forest-discrete} with $w = A^n$, let $\mathcal{B}_1$ be the set of configurations of $k-1$ red numbers and $k+1$ green circles on the $n$ segments such that the green circles do not overlap. Let $\mathcal{B}_2$ be the set of labeled non-crossing forests $F = (T_0,\dots ,T_{n-k-1})$ with total number of $k$ edges where each $T_i$ is labeled by a set $C_i\subset \{0,1,\dots ,k-1\}$ with $\cup_{i=0}^{n-k-1} C_i = \{0,1,\dots ,k-1\}$ and $|C_i|=|T_i|-1$. Then $|\mathcal{B}_1|=|\mathcal{B}_2|=\tbinom{n}{k+1}n^{k-1}$.
\end{lem}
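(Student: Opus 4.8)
The plan is to compute $|\mathcal{B}_1|$ and $|\mathcal{B}_2|$ separately and verify that both equal $\binom{n}{k+1}n^{k-1}$. The first count is immediate: an element of $\mathcal{B}_1$ is a choice of which $k+1$ of the $n$ segments carry green circles — the only constraint, since distinct red numbers may share a segment — together with an unconstrained placement of the $k-1$ red numbers, giving $|\mathcal{B}_1|=\binom{n}{k+1}n^{k-1}$. The content is the count of $|\mathcal{B}_2|$. I would first rewrite an element of $\mathcal{B}_2$ as: a non-crossing forest $F$ on the $n$ circular vertices with exactly $k$ edges, together with an assignment of the labels $\{0,\dots,k-1\}$ to the trees of $F$ with tree $T$ receiving exactly $|T|-1$ of them. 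Because the $C_i$ are forced disjoint and exhaustive, the number of such assignments for fixed $F$ is the multinomial $\binom{k}{(|T|-1)_{T}}$, so $|\mathcal{B}_2|=\sum_{F}\binom{k}{(|T|-1)_{T}}$ summed over non-crossing forests $F$ with $k$ edges. Since crossings are forbidden only between distinct components, such an $F$ is the same data as a non-crossing partition $P$ of $[n]$ together with an arbitrary labeled tree on each block; a block $B$ carries $|B|^{|B|-2}$ trees by Cayley's formula, and $F$ has $k$ edges exactly when $P$ has $n-k$ blocks.

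Next I would group by the type of $P$, say $m_j$ blocks of size $j$ with $\sum_j m_j=n-k$ and $\sum_j jm_j=n$. Kreweras's classical enumeration of non-crossing partitions by type gives $\frac{n!}{(n+1-(n-k))!\,\prod_j m_j!}=\frac{n!}{(k+1)!\,\prod_j m_j!}$ partitions of that type; multiplying by the Cayley factor $\prod_j(j^{j-2})^{m_j}$ and the multinomial $\binom{k}{\dots}=k!/\prod_j((j-1)!)^{m_j}$ collapses everything to
\[
|\mathcal{B}_2| = \frac{n!}{k+1}\sum_{\{m_j\}}\ \prod_{j\ge 1}\frac{1}{m_j!}\left(\frac{j^{\,j-2}}{(j-1)!}\right)^{m_j},
\]
the sum being over $(m_j)$ with $\sum_j m_j=n-k$ and $\sum_j jm_j=n$. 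By the exponential formula this sum is $[x^n y^{n-k}]\exp\!\big(y\,T(x)\big)$, where $T(x)=\sum_{j\ge 1}\frac{j^{\,j-1}}{j!}x^j=\sum_{j\ge 1}\frac{j^{\,j-2}}{(j-1)!}x^j$ is the tree function satisfying $T=xe^{T}$; extracting $[y^{n-k}]$ leaves $[x^n]\frac{T(x)^{n-k}}{(n-k)!}$, and Lagrange inversion gives $[x^n]T(x)^{p}=\frac{p}{n}[u^{n-p}]e^{nu}=\frac{p\,n^{\,n-p-1}}{(n-p)!}$. With $p=n-k$ this yields $|\mathcal{B}_2|=\frac{n!}{k+1}\cdot\frac{(n-k)\,n^{k-1}}{k!\,(n-k)!}=\frac{n!}{(k+1)!\,(n-k-1)!}\,n^{k-1}=\binom{n}{k+1}n^{k-1}$, completing the proof.

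An alternative, perhaps more in keeping with the lemma's name, is to build a direct bijection $\mathcal{B}_1\to\mathcal{B}_2$ in the spirit of the parking-function encoding of labeled forests: the $k+1$ green circles record a canonical ``skeleton'' of the non-crossing forest read off in cyclic order around the root, and the $k-1$ red numbers record a parking-function-type sequence of parent pointers for the remaining vertices. I expect the main obstacle — in either route — to be the bookkeeping around the non-crossing constraint: in the generating-function route this is exactly where Kreweras's count must be matched precisely against the Cayley and multinomial factors, with care for the degenerate singleton trees (empty label-sets), while in the bijective route one must verify that the decoding always produces a genuinely non-crossing forest and is invertible. The remaining ingredients (the exponential formula and the Lagrange inversion step) are routine.
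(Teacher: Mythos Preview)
Your proof is correct but takes a genuinely different route from the paper's. The paper does not compute $|\mathcal{B}_2|$ directly; instead it introduces the intermediate set $\mathcal{B}_3$ of labeled non-crossing \emph{partitions}, notes that the projection $\pi:\mathcal{B}_2\to\mathcal{B}_3$ has fibers of size $\prod_i |G_i|^{|G_i|-2}$ (matrix-tree), then builds an explicit (non-bijective) map $f:\mathcal{B}_1\to\mathcal{B}_2$ and shows that the composite $g=\pi\circ f$ has fibers of \emph{the same} size over each $G\in\mathcal{B}_3$ --- the key step being a parking-function count that gives $|g^{-1}(G)|=\prod_i |G_i|^{|G_i|-2}$ as well. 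Summing over $\mathcal{B}_3$ yields $|\mathcal{B}_1|=|\mathcal{B}_2|$ without ever evaluating either side in closed form. You, by contrast, evaluate $|\mathcal{B}_2|$ globally via Kreweras's type-enumeration of non-crossing partitions, Cayley's formula, and Lagrange inversion on the tree function. Both arguments pass through the same intermediate data (non-crossing partitions with blocks weighted by $|B|^{|B|-2}$), but the paper matches fibers while you sum. Your approach is slicker if one is happy to import Kreweras and Lagrange inversion as black boxes; the paper's approach is more self-contained and, true to the lemma's name, actually explains where the parking functions enter --- which is the structural point the authors want for the surrounding narrative.
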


A similar result of this lemma has been proved in~\cite[Proposition 6.6]{Levy2008}, and here we use a different approach. 

\begin{proof}
Let $\mathcal{B}_3$ be the collection of $G=(G_0, \dots , G_{n-k-1})\in\cG_w$ where $G$ has $n-k$ components with each component $G_i$ labeled by a set $C_i\subset \{0,1,\dots ,k-1\}$ such that $\cup_{i=1}^m C_i = \{0,1,\dots ,k-1\}$ and $|C_i|=|G_i|-1$. Note that the natural projection $\pi$ could be extended to $\mathcal{B}_2\to \mathcal{B}_3$ as the labels are inherited. From the matrix tree theorem, there are $|G_i|^{|G_i|-2}$ possible spanning trees within each component and hence \begin{equation*}|\pi^{-1}(G)| = \prod_{i=0}^{n-k-1} |G_i|^{|G_i|-2}.\end{equation*}

We now define a mapping $f$ from $\mathcal{B}_1$ to $\mathcal{B}_2$ as follows (see Figure~\ref{fig-forest-a-to-n} as an example):
\begin{enumerate}
    \item  Take out all the vertices with no green circles/red numbers on it. 
    \item  Find a red number at position $i\in [N]$ such that there is a green circle located at $i+1$ (all in mod $n$ sense; this $i+1$ is in the sense after step 1). Draw an edge from the red number to the green circle, and then take away the red number and the green circle. If some vertex has no green circles/red numbers on it, then take it away. 
    \item  Iterate the above process on the remaining graph until there are only two green circles left. Join an edge between the two green circles and mark this edge 0.
    \item  The labels of each tree is inherited from the red numbers (or 0) it contain. We call the component with 0 by 0-component and other trees by 1-component. 
\end{enumerate}
\begin{figure}[ht!] \centering
\includegraphics[scale=0.26]{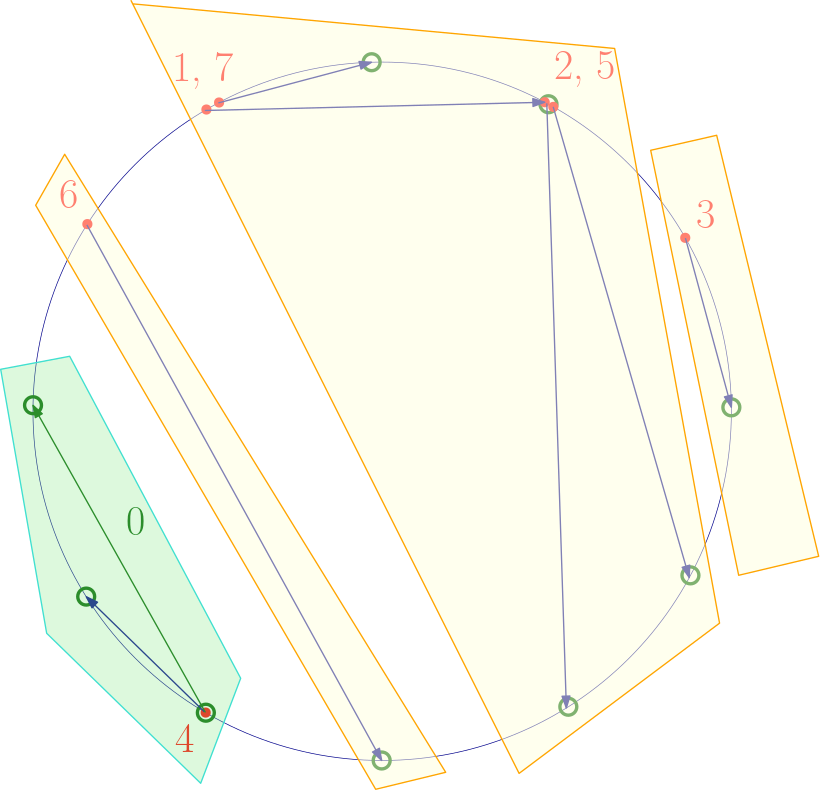}
\caption{An example of the mapping $f$ in the case $n=12$, $k=8$, i.e., 12 segments with 7 red numbers and 9 green circles.
}\label{fig-forest-a-to-n}
\end{figure}

	In general, the mapping $f$ we constructed above is not a bijection. However, if we take $g = \pi\circ f$ where $\pi$ is the canonical projection above, we will show that $|g^{-1}(G)|=|\pi^{-1}(G)|$ for each $G$ in $\mathcal{B}_3$. We begin with several properties of $f$:
	
	\begin{enumerate}
	    \item The forest we get do not cross each other and hence $f$ is well-defined. Indeed this follows from an induction argument since connecting $i$ and $i+1$ in the second step will never create any crossing components as we finish with the remaining graph.
	    \item  Each 1-component with $a_i$ points contains $a_i-1$ green circles and $a_i-1$ red numbers. The 0-component (with $a_0$ points) contains $a_0$ green circles and $a_0-2$ red numbers. The isolated points are those without green circles/red numbers on it.  
	    \item  If we start from the 0-component and go over all the vertices on both sides of this component, then each 1-component starts from some $i\in [N]$ only with red numbers. Moreover, each component can be traced via a stack: we begin with a new 1-component in the stack once we meet a point with no green circles, and we take the component out of the stack once the total number of green circles is the same as the total number of red numbers in this component.
    \end{enumerate}

    With these properties, for a labeled partition $G=(G_0, \ldots , G_{n-k-1})$ where $G_0$ is the component containing 0, we can describe $g^{-1}(G)$ as follows. First the isolated points does not contain any circles/numbers. The 0-component contains $|G_0|-2$ red numbers and $|G_0|$ green circles with no other constraints. For other components $G_i$ with its vertices $\{a_{i,1}, \ldots , a_{i,|G_i|}\}$ in the order as we explore from the 0-component, the point $a_{i,1}$ is covered only by red numbers, and all the other $|G_i|-1$ points contains a green circle. Moreover, denote $z_{i,j}$ the number of red numbers on $a_{i,j}$, then we have $z_{i,1}+\dots +z_{i,j}\ge j$ for $j=1,\dots ,|S_i|-1$. No further constraints imposed on the configuration. There are $|G_0|^{|G_0|-2}$ ways to assign the circles and numbers in the component $G_0$; in other components $G_i$, for each solution to the inequalities $z_{i,j}\ge 0; z_{i,1}+\dots +z_{i,j}\ge j, j=1, \dots , |G_i|-1$ there are $\frac{(|G_i|-1)!}{z_{i,1}!\dots (z_{i,|G_i|-1})!}$ ways to assign the values of the red points such that there are $z_{i,j}$ numbers on $a_{i,j}$. Therefore, the number of the ways to assign numbers and dots in $G_i$ is given by
\begin{equation}\label{eqn-parkingfunc}
	\sum_{\substack{z_{i,j}\ge 0; z_{i,1}+\cdots +z_{i,j}\ge j \\ z_{i,1}+\cdots +z_{i,|G_i|-1}=|G_i|-1}}\frac{(|G_i|-1)!}{z_{i,1}!\cdots (z_{i,|G_i|-1})!}=|G_i|^{|G_i|-2}.
	\end{equation} 
The equation~\eqref{eqn-parkingfunc} follows from the theory of parking functions, as the left side is precisely the number of parking functions with $s=|G_i|-1$ cars and it is known that this number is $(s+1)^{s-1}=|G_i|^{|G_i|-2}$. (For instance see~\cite{foata1974}). Therefore, it follows that
\begin{equation}\label{eqn-parkingfunc-1}
	|g^{-1}(G)| = \prod_{i=0}^{n-k-1} |G_i|^{|G_i|-2} = |\pi^{-1}(G)|
	\end{equation}
	As we sum~\eqref{eqn-parkingfunc-1} over all $G\in \mathcal{B}_3$, it follows that $|\mathcal{B}_1|=|\mathcal{B}_2|$.
\end{proof}

Given a non-crossing forest $F=(T_0,\dots ,T_{n-k-1})$, the number of ways to label it by disjoint set $C_i\subset \{0,1,\dots ,k-1\}$ with $\cup_{i=0}^{n-k-1} C_i = \{0,1,\dots ,k-1\}$ and $|C_i|=|T_i|-1$ is precisely $k!\prod_{i=0}^{n-k-1} \frac{1}{(|T_i|-1)!}$. When we sum $p(F)$ over all $F\in \cF_w$ with $k$ edges, we get the term $(-A)^k|\mathcal{B}_2|/k!$. Now dividing the two quantities in Lemma~\ref{lem-parking-bij} by $k!$, the coefficient of $(-A)^k$ is nothing but $\tbinom{n}{k+1}n^{k-1}\frac{1}{k!}$, which proves Proposition~\ref{prop-forest-a-to-n}.

Observe that, as given in~\cite{Biane1997a}, $\Phi(A^n)=e^{-\frac{nA}{2}}Q_n(A)$ is precisely the $n$-th moment of the free multiplicative Brownian motion. Using the rule of non-crossing, this further implies that the normalized forest polynomial $\Phi(w)$ could be identified with expectation of words of free Brownian motion. Recall that given a non-commutative probability space $(\mathcal{A}, \tau)$, a free Brownian motion is a family $(u_t)_{t\ge 0}$ of unitary elements such that for any $0\le t_1\le \dots \le t_n$ the increments $u_{t_2}u_{t_1}^*, \dots , u_{t_n}u_{t_{n-1}}^*$ are free with distributions $\nu_{t_2-t_1}, \dots , \nu_{t_n-t_{n-1}}$, where $\nu_t$ is a probability measure on the unit circle $\{|z|=1\}$ with 
\begin{equation}\label{free-bm-distribution}
    \int_{|z|=1}z^n\nu_t(dz) = \int_{|z|=1}z^{-n}\nu_t(dz) =  e^{-\frac{nt}{2}}\sum_{k=0}^{n-1}\tbinom{n}{k+1}n^{k-1}\frac{(-t)^k}{k!}
\end{equation}
for any $n\ge 0$. For a general word $w = a_1^{\epsilon_1}\dots a_n^{\epsilon_n}$, let $(u_{t,A})_{t\ge0}$ be mutually free multiplicative Brownian motions as above and $u(w) = u_{t_1, a_1}^{\epsilon_1}\dots  u_{t_n, a_n}^{\epsilon_n}$ with $t_1, \dots , t_n$ correspond to times $a_1, \dots , a_n$, and let $\tau(w) = \tau(u(w))$. For simplicity we let $u_{a_i}$ be the corresponding $u_{t_i, a_i}$ at time $t_i=a_i$.

\begin{prop}\label{prop-forest-free-bm-discrete}
For any words $w$ without inverses, the normalized forest polynomial $\Phi(w)$ is equal to $\tau(w)$. 
\end{prop}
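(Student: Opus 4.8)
The plan is to show that $\Phi(w)$, defined combinatorially by summing over non-crossing forests on the word $w$, coincides with $\tau(w)$, the trace of the corresponding word in free multiplicative Brownian motions, by exploiting the moment-cumulant relations from free probability. The starting observation — already isolated in the excerpt via Proposition~\ref{prop-forest-a-to-n} and equation~\eqref{free-bm-distribution} — is that for the single-letter word $w = A^n$, the normalized forest polynomial $\Phi(A^n) = e^{-nA/2}Q_n(A)$ equals the $n$-th moment of the free multiplicative Brownian motion at time $A$. So the equality $\Phi(w) = \tau(w)$ holds in the base case where $w$ is a power of a single generator (and, since distinct letters correspond to free elements, whenever $w$ is a product of such blocks involving pairwise distinct letters occurring in disjoint arcs). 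The task is to bootstrap from the single-block case to arbitrary words without inverses.

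First I would set up the free-probability side carefully: write $u(w) = u_{a_1}\cdots u_{a_n}$ as a product of elements drawn from mutually free families, and expand $\tau(u(w))$ using the moment-cumulant formula $\tau(x_1\cdots x_n) = \sum_{\sigma \in NC(n)} \kappa_\sigma(x_1,\dots,x_n)$. Because the $u_{a_i}$ belonging to different letters are free, a theorem in free probability (vanishing of mixed free cumulants) forces $\kappa_\sigma$ to vanish unless every block of $\sigma$ consists of positions carrying the same letter — that is, unless $\sigma \in \cG_w$, the set of non-crossing partitions of $w$ refining the letter-partition. This already matches the structure of $\cG_w$ introduced right before the proposition. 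Thus $\tau(w) = \sum_{\sigma \in \cG_w}\kappa_\sigma$, where $\kappa_\sigma = \prod_{B \in \sigma}\kappa_{|B|}(u_{a(B)},\dots,u_{a(B)})$ and the free cumulants involved are those of a single free multiplicative Brownian motion (at the appropriate time, which is the common letter's area).

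Next I would compute the forest side in the same language. Grouping forests $F \in \cF_w$ by their block partition $\sigma = \pi(F) \in \cG_w$, and using that the number of spanning trees on a block $B$ is $|B|^{|B|-2}$ (matrix-tree theorem) weighted so that $p(F) = \prod_{B}\frac{(-x_B)^{|B|-1}}{(|B|-1)!}$ does not depend on which spanning tree is chosen, one gets $p(w) = \sum_{\sigma\in\cG_w}\prod_{B\in\sigma} |B|^{|B|-2}\frac{(-x_B)^{|B|-1}}{(|B|-1)!}$, and after the normalizing exponential, $\Phi(w) = \sum_{\sigma\in\cG_w}\prod_{B\in\sigma}\big(e^{-|B|x_B/2}|B|^{|B|-2}\tfrac{(-x_B)^{|B|-1}}{(|B|-1)!}\big)$. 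It therefore suffices to prove the purely one-block identity: the $n$-th free cumulant $\kappa_n$ of the free multiplicative Brownian motion at time $t$ equals $e^{-nt/2}n^{n-2}\frac{(-t)^{n-1}}{(n-1)!}$. This is the free analogue of the classical fact that the log of the moment generating function of a single-letter loop is essentially $-t$; it can be obtained by applying the moment-cumulant inversion to the known moments in~\eqref{free-bm-distribution} (equivalently, by recognizing $\Phi(A^n)$ as the $n$-th moment of $\nu_t$ and inverting over non-crossing partitions), and comparing with a generating-function identity for $n^{n-2}$ — this is where the parking-function / tree enumeration of Lemma~\ref{lem-parking-bij} re-enters, now read as an identity among exponential generating functions rather than a bijection. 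Once the one-block cumulant formula is in hand, comparing term by term over $\sigma\in\cG_w$ gives $\Phi(w)=\tau(w)$.

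The main obstacle I anticipate is exactly this last reduction: cleanly extracting the free cumulants of the free multiplicative Brownian motion from its moments and matching them with $e^{-nt/2}n^{n-2}(-t)^{n-1}/(n-1)!$. Proposition~\ref{prop-forest-a-to-n} gives the moments in a form ($\binom{n}{k+1}n^{k-1}$ coefficients) that is not obviously a ``clean'' cumulant expansion, so the work is to run the non-crossing-partition Möbius inversion and collapse the resulting sum — the tree/parking-function combinatorics of Lemma~\ref{lem-parking-bij} should be the tool that makes this collapse happen, essentially because $\sum_{\text{spanning trees of }B}1 = |B|^{|B|-2}$ is the same combinatorial constant that appears when one inverts. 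A secondary, more bookkeeping-level obstacle is justifying the vanishing-of-mixed-cumulants step when the same letter appears in several non-adjacent arcs of the circle: here one must be careful that ``same letter'' genuinely means ``same free element'' (the generators $a$ in the word are group generators, but in the trace they are all specialized to the single Brownian motion $u_{t,a}$ at the shared time), so that freeness is correctly invoked only across distinct letters; the self-crossing-allowed convention for forests within one block is precisely what is needed to keep the count consistent with non-crossing partitions of the whole word.
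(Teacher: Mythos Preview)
Your proposal is correct and follows essentially the same architecture as the paper's proof: both sides are decomposed as sums over $\sigma\in\cG_w$ (using vanishing of mixed free cumulants on the $\tau$-side and the matrix-tree theorem to group forests by their block partition on the $\Phi$-side), reducing everything to the single-block identity $\kappa_n(u_A,\dots,u_A)=e^{-nA/2}n^{n-2}\tfrac{(-A)^{n-1}}{(n-1)!}$.

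The one place the paper is slicker than what you sketch is in establishing that single-block identity. You propose running the non-crossing M\"obius inversion on the moments~\eqref{free-bm-distribution} and collapsing the resulting sum with parking-function combinatorics; this works but is laborious. The paper instead observes that the identity follows by a one-line induction: assuming it for $1,\dots,n-1$, the induction hypothesis gives $\kappa_\sigma=e^{-nA/2}P(\sigma)$ for every $\sigma\in NC(n)$ with at least two blocks, and since Proposition~\ref{prop-forest-a-to-n} already says the full moment $\tau(A^n)$ equals the full forest sum $\Phi(A^n)$, subtracting the matched lower terms forces the top block to match as well. No explicit M\"obius computation is needed.
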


To prove the proposition, we use the following result in free probability. For a reference, one may look at~\cite[Chapter 2]{Mingo2017}.

\begin{prop}\label{prop-free-cumulant}
Suppose that $(\mathcal{A}, \tau)$ is a non-commutative probability space, for $i = 1, \dots , n$, $\mathcal{A}_i\subset \mathcal{A}$ are sub-algebras and $x_i\in \mathcal{A}_i$. Then we have
\begin{equation}\label{eqn-free-cumulant}
    \tau(x_1\dots x_n) = \sum_{\sigma\in NC(n)}\kappa_\sigma[x_1, \dots , x_n]
\end{equation}
where $\kappa_\sigma[x_1, \dots , x_n]$ are the cumulants. Moreover, if there exists $i\neq j$ such that $\mathcal{A}_i$ and $\mathcal{A}_j$ are free, then the cumulant $\kappa_n(x_1, \dots , x_n)=0$.
\end{prop}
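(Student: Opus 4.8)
The statement combines two classical facts from free probability, both treated in \cite[Chapter 2]{Mingo2017} (see also the lectures of Nica and Speicher), so the plan is to reconstruct their proofs rather than to introduce anything new. The first assertion, the moment--cumulant formula \eqref{eqn-free-cumulant}, is essentially the \emph{definition} of the free cumulants, while the second assertion is Speicher's characterization of freeness through the vanishing of mixed cumulants. I would treat the two in turn.

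For \eqref{eqn-free-cumulant}, I would first extend both sides to all of $NC(n)$ multiplicatively. For a partition $\pi\in NC(n)$ with blocks $V_1,\dots,V_r$ write $\tau_\pi[x_1,\dots,x_n]=\prod_{j=1}^r \tau\big(\prod_{i\in V_j}x_i\big)$, where the product inside each block respects the induced order. The free cumulant functionals $(\kappa_m)_{m\ge 1}$ are then \emph{defined} implicitly by requiring
$$
\tau_\pi[x_1,\dots,x_n]=\sum_{\sigma\in NC(n),\ \sigma\le \pi}\kappa_\sigma[x_1,\dots,x_n]\qquad\text{for every }\pi\in NC(n),
$$
with $\kappa_\sigma$ the multiplicative extension of the $\kappa_m$. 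Since $(NC(n),\le)$ is a finite lattice, Möbius inversion on this lattice shows that the system has a unique solution $\kappa_\pi=\sum_{\sigma\le\pi}\mu(\sigma,\pi)\,\tau_\sigma$; specializing to the single-block partition $\pi=1_n$ yields exactly \eqref{eqn-free-cumulant}. Thus the first display requires only the definition of the $\kappa_m$ together with the standard fact that $NC(n)$ is a lattice with a well-behaved Möbius function.

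The substantive part is the vanishing of mixed cumulants. I would prove by strong induction on $n$ that $\kappa_n(x_1,\dots,x_n)=0$ whenever the $x_i$ are not all drawn from a single one of the free subalgebras. First one reduces to \emph{centered} arguments: since $\kappa_m$ is multilinear and $\kappa_m(\dots,1,\dots)=0$ for $m\ge 2$, replacing each $x_i$ by $x_i-\tau(x_i)1$ leaves $\kappa_n$ unchanged, so we may assume $\tau(x_i)=0$ for all $i$. Next, invert \eqref{eqn-free-cumulant} to write
$$
\kappa_n(x_1,\dots,x_n)=\tau(x_1\cdots x_n)-\sum_{\substack{\pi\in NC(n)\\ \pi\neq 1_n}}\kappa_\pi[x_1,\dots,x_n].
$$
Centering kills every $\pi$ containing a singleton block (such a block contributes $\kappa_1(x_i)=\tau(x_i)=0$), and the induction hypothesis kills every $\pi\neq 1_n$ possessing a block of size $\ge 2$ whose entries are not all from the same subalgebra. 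Hence only the ``monochromatic'' non-crossing partitions survive, and one checks --- using the products-of-cumulants (associativity) formula to regroup consecutive same-subalgebra factors --- that the surviving sum equals $\tau(x_1\cdots x_n)$; the latter vanishes by the defining property of freeness (alternating products of centered free elements are traceless), which forces $\kappa_n=0$.

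The main obstacle is precisely this last step of the induction: matching the surviving monochromatic partitions against $\tau(x_1\cdots x_n)$. It requires the formula expressing a free cumulant whose arguments are themselves products as a sum of the $\kappa_\pi$ over the non-crossing partitions $\pi$ with $\pi\vee\hat\sigma=1_n$ (where $\hat\sigma$ records the grouping into products), followed by a careful reduction of a general mixed word to the alternating centered case to which the definition of freeness directly applies. The combinatorial bookkeeping of which non-crossing partitions contribute --- rather than any analytic input --- is where the real work lies; it is carried out in full in \cite[Chapter 2]{Mingo2017}, which I would cite for the complete argument.
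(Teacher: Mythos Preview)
Your proposal is correct and aligned with the paper's treatment: the paper does not prove this proposition at all but simply quotes it as a known result from free probability, citing \cite[Chapter~2]{Mingo2017}. You have gone further by sketching the standard argument found in that reference (M\"obius inversion on $NC(n)$ for the moment--cumulant relation, and the induction with centering for the vanishing of mixed cumulants), which is exactly the proof one would find there.
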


\begin{proof}[Proof of Proposition~\ref{prop-forest-free-bm-discrete}]
Given a word $w=a_1\dots a_n$ and $\sigma=(G_1, \dots , G_m)\in\cG_w$, using the matrix tree theorem, let $P(G) = \sum_{F\in \pi^{-1}(G)}p(F) =\prod_{k=1}^m \frac{(-x_k)^{|G_k|-1}|G_k|^{|G_k|-2}}{(|G_k|-1)!}$ where the letter of $G_i$ is $x_i$. Then it is clear that 
\begin{equation}\label{forest-block-sum}
    \Phi(w) = \sum_{G\in \cG_w} P(G)e^{-\frac{\sum_{i=1}^n a_i}{2}}.
\end{equation}
On the other hand, from Proposition~\ref{prop-free-cumulant} we see that 
\begin{equation}\label{freebm-block-sum}
    \tau(w) = \sum_{G\in NC(n)} \kappa_G[u_{a_1}, \dots , u_{a_n}] = \sum_{G\in \cG_w}\kappa_G[u_{a_1}, \dots , u_{a_n}].
\end{equation}
The second equation follows because when $G$ contains a component with two different letters, the freeness assumption with Proposition~\ref{prop-free-cumulant} implies that the corresponding cumulant is 0. Therefore, it suffices to show that for each $G\in \cG_w$, $e^{-\frac{\sum_{i=1}^na_i}{2}}P(G) = \kappa_G[u_{a_1}, \dots , u_{a_n}]$. In particular, we only need to show that for each single block $G_i$ consisting of $n_i$ letter $A$'s,  
\begin{equation}\label{freebm-culmulant}
    e^{-\frac{n_iA}{2}}P(G_i)=e^{-\frac{n_iA}{2}} \frac{(-A)^{n_i-1}n_i^{n_i-2}}{(n_i-1)!} = \kappa_{n_i}(u_A, \dots , u_A).
\end{equation}
This follows by induction. Indeed if $n_i=1, 2$ it is straightforward to check~\eqref{freebm-culmulant} is correct. Suppose that~\eqref{freebm-culmulant} is correct for $1, \dots , n_i-1$. Then using induction hypothesis and applying~\eqref{eqn-free-cumulant}, it follows that $\kappa_\sigma[u_A, \dots , u_A] = P(\sigma) e^{-\frac{n_iA}{2}}$ for some constant $c_\sigma$ and $s_{\sigma}\le n_i-2$ whenever $\sigma\in NC(n_i)$ (with respect to $w_{n_i}=A^{n_i}$ contains at least two blocks. On the other hand, from Proposition~\ref{prop-forest-a-to-n} we know that in this case $\tau(w_{n_i}) = \Phi(w_{n_i})$. Therefore comparing~\eqref{forest-block-sum} and~\eqref{freebm-block-sum} for $w_{n_i}$ we see that~\eqref{freebm-culmulant} holds for $n_i$. Fitting in~\eqref{freebm-culmulant} for~\eqref{forest-block-sum} and~\eqref{freebm-block-sum} once more we see that the statement is true.

\end{proof}

A quick corollary is the following splitting property:

\begin{prop}\label{prop-forest-split}
For a word $w=a_1,\ldots,a_n$ over the letters $A,B,C,\dots $, if we replace every letter $A$ appeared in $w$ by  $A_1A_2$ and get $w'$, then the corresponding forest polynomial is $(p(w'))(A_1,A_2,B,C,\dots )=(p(w))(A_1+A_2, B,C,\dots )$. The equation above is also true if we replace $p$ with its normalized version $\Phi$.
\end{prop}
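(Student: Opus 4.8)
The plan is to prove the splitting property at the level of free multiplicative Brownian motion, and then transfer it to forest polynomials via Proposition~\ref{prop-forest-free-bm-discrete}. The key observation is the defining property of free Brownian motion: if $(u_{t,A})_{t \ge 0}$ is a free multiplicative Brownian motion, then for times $t_1, t_2 > 0$ the product $u_{t_1 + t_2, A}$ has the same law (in fact, can be realized as) $u_{t_2, A}' \, u_{t_1, A}$ where $u_{t_2,A}'$ is the increment $u_{t_1+t_2, A} u_{t_1,A}^*$, which is free from $u_{t_1,A}$ and distributed as $u_{t_2, A}$. In other words, a single free Brownian motion element run for time $A_1 + A_2$ factors as a product of two free elements run for times $A_1$ and $A_2$. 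One must be slightly careful about the \emph{order} in which the two factors appear and whether they are to the left or right; I would check that the convention in the definition of $u(w)$ (reading $u(w) = u_{t_1,a_1}^{\epsilon_1} \cdots u_{t_n,a_n}^{\epsilon_n}$) is consistent with replacing each occurrence of the letter $A$ in the word $w$ by the two-letter block $A_1 A_2$ in that order.

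First I would set up notation: let $w = a_1 \cdots a_n$ be a word over the alphabet $\{A, B, C, \dots\}$ with no inverses, and let $w'$ be the word obtained by replacing every occurrence of $A$ by the block $A_1 A_2$. I want to show $(\Phi(w'))(A_1, A_2, B, C, \dots) = (\Phi(w))(A_1 + A_2, B, C, \dots)$, treating $A_1, A_2$ as the areas assigned to the new letters. By Proposition~\ref{prop-forest-free-bm-discrete}, $\Phi(w) = \tau(u(w))$ and $\Phi(w') = \tau(u(w'))$, so it suffices to show $\tau(u(w')) = \tau(u(w))$ once we identify the time $t = A_1 + A_2$ for each $A$-letter in $u(w)$ with the pair of times $t_1 = A_1$, $t_2 = A_2$ for the corresponding $A_1, A_2$-letters in $u(w')$.

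The heart of the argument is then: take mutually free multiplicative Brownian motions $(u_{t, A})$, $(u_{t,B})$, etc.; for each letter $A$, realize $u_{A_1 + A_2, A}$ concretely as a product of the form $v_2 v_1$ (or $v_1 v_2$, depending on the convention) where $v_1$ is distributed as $u_{A_1, A}$, $v_2$ is distributed as $u_{A_2, A}$, $v_1$ and $v_2$ are free from each other, and the collection of all such $v_1$'s and $v_2$'s (over all occurrences of all letters) together with the $u_{\cdot, B}$'s etc.\ is a free family in the appropriate sense --- this uses the stationarity and freeness of increments of free Brownian motion together with mutual freeness of the distinct letters. Substituting these factorizations into $u(w)$ turns it, term by term, exactly into the product $u(w')$ (the block $A_1 A_2$ replacing the single letter $A$), and hence $\tau(u(w)) = \tau(u(w'))$. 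Since both sides are polynomials in the area variables and the identity holds for all positive real values of $A_1, A_2, B, C, \dots$, it holds as a polynomial identity, giving the claim for $p$; dividing by the exponential prefactor $e^{-\frac{1}{2}\sum_i a_i}$ (which is manifestly the same on both sides once $A_1 + A_2$ is substituted for the $A$-areas) gives the claim for $\Phi$.

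The main obstacle I anticipate is purely bookkeeping: getting the order of the two factors right and verifying the joint freeness claim carefully. The factorization of a free Brownian increment is standard, but when the letter $A$ appears multiple times in $w$ at different ``times'' $t_i$, all of those appearances use the \emph{same} underlying process $(u_{t,A})$, so one cannot split each occurrence independently --- rather, the splitting is a reparametrization of the single process $(u_{t,A})$, and one should phrase it as: the process $(u_{t,A})_{t \ge 0}$ restricted to the two time-intervals $[0, A_1]$ and $[A_1, A_1 + A_2]$ yields two free increments with the stated distributions, and this is compatible with all occurrences simultaneously. An alternative, more self-contained route that sidesteps free probability entirely would be to prove the identity combinatorially: exhibit a bijection between $\mathcal F_{w'}$ and pairs (a forest in $\mathcal F_w$, a choice of how to distribute the $A$-tree structure between the $A_1$- and $A_2$-copies) that matches the forest polynomials after the substitution $A \mapsto A_1 + A_2$ --- essentially the binomial expansion of $(-(A_1+A_2))^{|T|-1}/(|T|-1)!$ against the ways of placing each vertex of an $A$-tree on an $A_1$-segment or an $A_2$-segment while preserving non-crossing. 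I would present the free-probability proof as the primary one since the machinery is already invoked in this section, but mention the combinatorial bijection as the reason the identity is natural.
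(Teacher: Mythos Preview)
Your proposal is correct and follows essentially the same route as the paper: reduce to $\Phi$ via the identification $\Phi(w)=\tau(w)$ from Proposition~\ref{prop-forest-free-bm-discrete}, then use that the product of two mutually free unitary Brownian motions stopped at times $A_1,A_2$ has the same distribution as a single free unitary Brownian motion stopped at time $A_1+A_2$. The paper dispatches this in one sentence; your additional bookkeeping about order and joint freeness is fine but unnecessary here, since all occurrences of the letter $A$ in $w$ correspond to the \emph{same} stopped element $u_A$ (not to different times along the process), so replacing $u_A$ by $u_{A_1}u_{A_2}$ everywhere is a single substitution and there is no ``splitting each occurrence independently'' issue.
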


\begin{proof}
Indeed it suffices to prove the proposition for $\Phi$. Since we know that $\Phi(w) = \tau(w)$, the proposition follows once we notice that for two free unitary Brownian motions $(u_{t, A_1})_{t\ge 0}$ and $(u_{t, A_2})_{t\ge 0}$ that are mutually free, then $u_{t_1, A_1}u_{t_2, A_2}$ has the same law as a free unitary Brownian motion $(u_{t, A})_{t\ge0}$ stopped at time $t = t_1+t_2$. 
\end{proof}

\subsection{The splitting and the continuum level}\label{sec-forest-cont}

In Proposition~\ref{prop-forest-split}, we have justified the splitting property for words without inverses. In this section, we are going to apply this result and split each letter into infinitely many pieces, while from this we could extend the forest polynomial in Definition~\ref{def-forest-discrete} to general words containing inverses. Indeed, we could imagine that each letter $A$ is now a concatenation of a large number of pieces $A_1, A_2, \dots , A_J$ and $A^{-1}$ could be expressed by $A_J\dots A_2A_1$. The `information' in each piece $A_i$ is $\frac{A}{J}$. Since there is cancellation between $A$ and $A^{-1}$, we suppose that connecting two segments in $A$ and $A^{-1}$ produces $+1$, while connecting two segments which are both in $A$ or $A^{-1}$ produces $-1$ as before. Then the definition of forest polynomial could be extended to general words by taking $J\to\infty$.

\begin{defn}\label{def-forest-inverse-poly}
For a word $w = a_1\dots a_n$ and a non-crossing forest $F = (T_1, \dots , T_m)\in \cF_w$, suppose the letter of $T_i$ is $x_i$. Parallel to Definition~\ref{def-forest-discrete}, set $$q(F) = \prod_{k=1}^m(-1)^{s(T_k)}\frac{x_k^{|T_k|-1}}{(|T_k|-1)!}$$ where $s(T_k)$ is the number of edges in $T_k$ connecting two $x_k$'s or two $x_k^{-1}$'s and let $$q(w) = \sum_{F\in \cF_w}q(F).$$ Suppose $w_J$ is the word obtained from $w$ by replacing each $A$, \dots  with $A_1,\dots A_J, \ldots$  and each $A^{-1}$, \dots  with $A_J^{-1},\dots A_1^{-1}, \ldots$. Then the \emph{forest polynomial} is defined by 
\begin{equation}\label{eqn-def-forest-inverse-poly}
    (p(w))(A, B, \dots ) = \lim_{J\to\infty} (q(w_J))(\frac{A}{J}, \dots , \frac{A}{J}; \frac{B}{J}, \dots , \frac{B}{J}; \dots ).
\end{equation}
Finally, the normalized polynomial $\Phi(w)$ is given by $\Phi(w) = e^{-\frac{\sum_{i=1}^na_i}{2}}p(w)$ where in the sum $\sum_{i=1}^na_i$ $A^{-1}$ is identified with $A$. 
\end{defn}

Indeed, it is straightforward from Proposition~\ref{prop-forest-split} that for word $w$ without inverses, the two definitions of forest polynomial agree. Before proving the main result of this section, let us compute some simple examples. 

\begin{enumerate}
    \item For the word $w = AA^{-1}$ with $w_J = A_1\dots A_JA_J^{-1}\dots A_1^{-1}$, we simply choose which edges to glue, and it follows that when we fit in $A_1=\dots =A_J=\frac{1}{J}$, $q(w_J)=\sum_{k=0}^J\tbinom{J}{k}(\frac{A}{J})^k = (1+\frac{A}{J})^J$ and therefore $$p(w)=\lim_{J\to\infty}(1+\frac{A}{J})^J=e^A.$$
    \item For the word $w = AAA^{-1}$, one can calculate that in the sum $q(w_J)$, the contribution from trees in $\cF_{w_J}$ with some component having two edges is $-\frac{A^2}{2J}(1+\frac{A}{J})^{J-1}$, while if all the edges have distinct letters, the sum is $(1+\frac{A}{J})^N$. Therefore $$p(w)=\lim_{J\to\infty}(1+\frac{A}{J})^J-\frac{A^2}{2J}(1+\frac{A}{J})^{J-1}=e^A.$$
    \item Suppose we have the word $w = stu_1u_2^{-1}tu_1u_2^{-1}u_1u_2$ as illustrated in Figure~\ref{fig-forest-continuum-ex-1} and let us compute $q(w_J)$. The letter $s$ appears only once and could be dropped. Conditioning on whether there is an edge between two $t$'s, we see that $q(w_J) = q((u_1u_2^{-1}u_1u_2^{-1}u_1u_2)_J) - tq((u_1u_2^{-1})_J)q((u_1u_2^{-1}u_1u_2)_J)$. Then we condition on how the $u_1$'s are connected. Using the previous two examples, we see that $q((u_1u_2^{-1}u_1u_2^{-1}u_1u_2)_J) = (1+\frac{u_2}{J})^J-\frac{u_2^2}{2N}(1+\frac{u_2}{J})^{J-1} - 2u_1(1+\frac{u_2}{J})^J - u_1(1-u_2) + \frac{3}{2}u_1^2$ and
    $$ q(w_N) =  (1+\frac{u_2}{J})^J-\frac{u_2^2}{2J}(1+\frac{u_2}{J})^{J-1} - 2u_1(1+\frac{u_2}{J})^J - u_1(1-u_2) + \frac{3}{2}u_1^2 - t((1+\frac{u_2}{J})^J - u_1).$$ Therefore,
    $$ p(w) = \lim_{J\to\infty} q(w_J) =  u_1(t+u_2 -1) +   e^{u_2}(1-2u_1-t) + \frac{3}{2}u_1^2.$$
\end{enumerate}

\begin{figure}
\begin{tabular}{ccc} 
\includegraphics[width=0.42\textwidth]{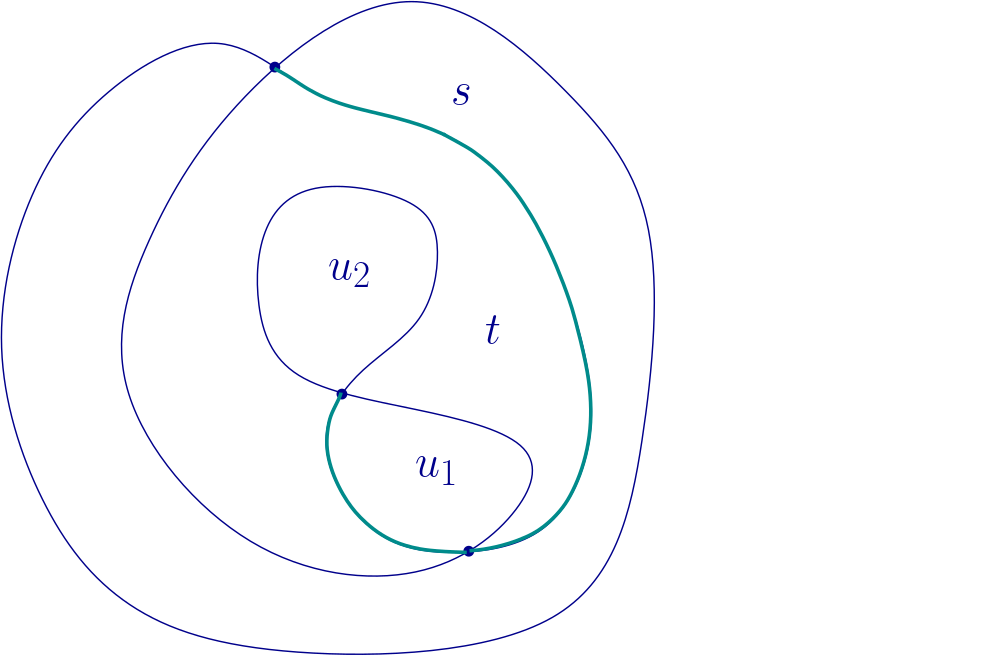}
& \qquad &
\includegraphics[width=0.39\textwidth]{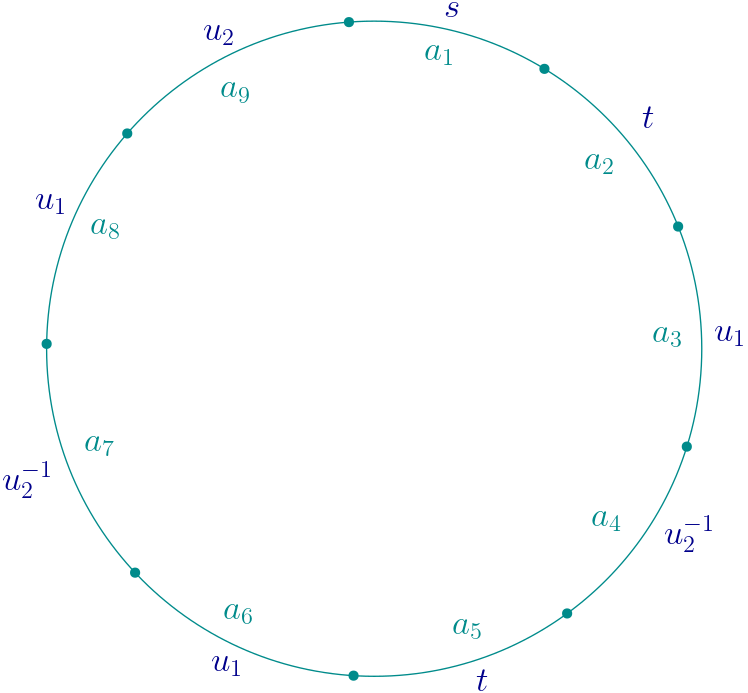}
\end{tabular}
	\caption{Suppose we have the loop illustrate on the left panel with the chosen spanning tree, its associated word is $w = stu_1u_2^{-1}tu_1u_2^{-1}u_1u_2$. If we normalize $p(w)$ by $e^{-\frac{s}{2}-t-3\frac{u_1}{2}-3\frac{u_1}{2}}$, which is $\exp(-\text{length}(w)/2)$, then we get $e^{-\frac{s}{2}-t-\frac{3}{2}u_1-\frac{3}{2}u_2}p_Q = e^{-\frac{s}{2}-t_1-\frac{3}{2}u_1-\frac{u_2}{2}}(e^{-u_2}(u_1(t+u_2-1)+\frac{3}{2}u_1^2)+1+t-2u_1)$, which matches the corresponding figure in the appendix of~\cite{Levy2011a}.}\label{fig-forest-continuum-ex-1}
\end{figure}

Now we state the main result of this section. If $a_i=A^{-1}$ then we let $u_{a_i}$ be $(u_{t, A}^{-1})_{t\ge 0}$ stopped at time $A$.
\begin{prop}\label{prop-forest-freebm}
For any word $w$, the normalized forest polynomial $\Phi(w)$ given in Definition~\ref{def-forest-inverse-poly} is well-defined and is equal to $\tau(w)$. 
\end{prop}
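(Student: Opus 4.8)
The plan is to prove the identity $\Phi(w)=\tau(w)$ directly and to extract well-definedness of the limit in~\eqref{eqn-def-forest-inverse-poly} as a byproduct. The key input from free probability is that a free unitary Brownian motion has free multiplicative increments: fixing $J$ and writing $\ep_A:=a/J$ for the area $a=|A|$ of each base letter $A$, factor the time-$a$ variable $u_{a,A}=v^{(A)}_{1}\cdots v^{(A)}_{J}$ into free elements, each distributed as $\nu_{\ep_A}$, with the whole family $\{v^{(A)}_{k}\}_{A,k}$ mutually free (increments of one motion are free, and motions for distinct base letters are free, so the combined family is free). Substituting this into $w$, and substituting the reversed product of the $\bigl(v^{(A)}_{k}\bigr)^{-1}$ for each occurrence of $A^{-1}$, turns $w$ into exactly the split word $w_J$ of Definition~\ref{def-forest-inverse-poly}, now read as a word in the free family $\{v^{(A)}_{k}{}^{\pm1}\}$; in particular $\tau(w)=\tau(w_J)$ for every $J$, with the segments carrying labels $A_k$ and $A_k^{-1}$ both sitting inside the single increment $v^{(A)}_{k}$. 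Applying the moment--cumulant relation of Proposition~\ref{prop-free-cumulant} and using freeness to kill every block straddling two distinct increments (or two distinct base letters), one gets $\tau(w)=\sum_{G}\prod_{B\in G}\kappa_B$, the sum over non-crossing partitions $G$ of the $nJ$ positions of $w_J$ each of whose blocks lies inside a single increment.

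The heart of the matter is a small-$\ep$ expansion of the block cumulant $\kappa_B$ of a size-$\ell$ block inside an increment $v\sim\nu_\ep$ carrying signs $\epsilon_1,\dots,\epsilon_\ell\in\{\pm1\}$. Since the relations $vv^{-1}=v^{-1}v=1$ collapse any product of the $v^{\pm1}$ to a single power, \eqref{free-bm-distribution} gives $\tau(v^{\epsilon_{i_1}}\cdots v^{\epsilon_{i_r}})=1-\tfrac12(\epsilon_{i_1}+\cdots+\epsilon_{i_r})^2\,\ep+O(\ep^2)$. Feeding this into Möbius inversion on non-crossing partitions, $\kappa_\ell=\sum_{\pi\in NC(\ell)}\mu(\pi,1_\ell)\prod_{V\in\pi}\tau\!\bigl(\textstyle\prod_{i\in V}v^{\epsilon_i}\bigr)$, the constant term cancels for $\ell\ge2$ because $\sum_{\pi}\mu(\pi,1_\ell)=0$, and the coefficient of $\ep$ reduces (the $i=j$ contributions vanishing for the same reason) to $-\tfrac12\sum_{i\ne j}\epsilon_i\epsilon_j\sum_{\pi\ge\pi_{ij}}\mu(\pi,1_\ell)$, where $\pi_{ij}$ is the partition whose unique nonsingleton block is $\{i,j\}$; by the Möbius identity $\sum_{\pi\ge\pi_{ij}}\mu(\pi,1_\ell)=\delta_{\pi_{ij},1_\ell}$, this vanishes for $\ell\ge3$, while for $\ell=2$ the whole coefficient becomes $-\epsilon_1\epsilon_2$. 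Hence $\kappa_B=1-\tfrac{\ep}{2}+O(\ep^2)$ for $\ell=1$, $\kappa_B=-\epsilon_1\epsilon_2\,\ep+O(\ep^2)$ for $\ell=2$, and $\kappa_B=O(\ep^2)$ for $\ell\ge3$; crucially $-\epsilon_1\epsilon_2=(-1)^{s}$ with $s\in\{0,1\}$ the number of edges of the two-vertex tree joining two equal-sign vertices --- exactly the sign $(-1)^{s(T)}$ of Definition~\ref{def-forest-inverse-poly}.

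Finally I would take $J\to\infty$ on both sides. On the cumulant side a partition with a block of size $\ell\ge3$ contributes $O(\ep^2)$ per such block against only $O(J)$ choices of its increment, hence $O(J^{-1})\to0$; a partition into singletons and $r$ two-blocks contributes $\prod_{\text{two-blocks}}(-\epsilon_i\epsilon_j\ep)\,(1+O(J^{-1}))$ times $\prod_{\text{singletons}}(1-\tfrac{\ep}{2}+O(\ep^2))=\exp\!\bigl(-\tfrac12\sum_{i=1}^n a_i\bigr)(1+o(1))$, using that the increment-times over all $nJ$ positions sum to $\sum_{i=1}^n a_i$. The surviving configurations are the non-crossing partial matchings of the $nJ$ positions within single increments, which are exactly the non-crossing forests of $w_J$ all of whose trees have at most two vertices --- and these are also exactly the forests surviving in $q(w_J)(a/J,\dots)$, since a tree of size $\ell$ carries $(a/J)^{\ell-1}$ against $O(J)$ placements and so contributes $O(J^{2-\ell})$, forcing $\ell\le2$ --- with each surviving edge carrying the identical weight $-\epsilon_i\epsilon_j\,(a/J)=(-1)^{s}(a/J)$. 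Therefore $q(w_J)(a/J,\dots)\to e^{\frac12\sum_{i=1}^n a_i}\,\tau(w)$, which shows both that the limit in~\eqref{eqn-def-forest-inverse-poly} exists and that $\Phi(w)=\tau(w)$, and specializes (for positive words) to Propositions~\ref{prop-forest-free-bm-discrete} and~\ref{prop-forest-split}. The main technical obstacle is legitimizing the interchange of $J\to\infty$ with the sum over partitions (resp.\ forests), whose index set itself grows with $J$; I would handle this by a $J$-uniform domination --- the total weight of placing a two-block across a fixed pair of occurrences of $A$ and summing over the $J$ increments is $a+O(J^{-1})$, so the whole sum is bounded by a product over pairs of occurrences of $\exp(O(1))$, after which dominated convergence applies, exactly as in the Poisson-point-process arguments of Section~\ref{sec-forest-pois}.
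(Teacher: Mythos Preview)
Your approach is correct and is genuinely different from the paper's. The paper groups forests in $\cF_{w_J}$ by the projection $\pi_J$ to the coarse non-crossing partition lattice $\cG_w$ of the \emph{original} word $w$, and then proves by induction on $|w|$ that each block contribution $P_J(G)$ converges to the corresponding free cumulant $\kappa_G$; the induction relies on the cancellation Lemma~\ref{lem-forest-cancel} and the explicit single-letter formula $Q_{k,l}(A)$. You instead stay at the fine level of $w_J$: you use the free-increment property of the free multiplicative Brownian motion to write $\tau(w)=\tau(w_J)$ exactly, apply the moment--cumulant relation on all $nJ$ positions, and then read off the small-time asymptotics $\kappa_1=1-\tfrac{\ep}{2}+O(\ep^2)$, $\kappa_2=-\epsilon_1\epsilon_2\,\ep+O(\ep^2)$, $\kappa_{\ell\ge3}=O(\ep^2)$ via M\"obius inversion. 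This bypasses both Lemma~\ref{lem-forest-rep-neg} and Lemma~\ref{lem-forest-cancel}, and avoids the induction entirely; conversely, the paper's decomposition at the level of $\cG_w$ feeds more directly into the Poisson description of Section~\ref{sec-forest-pois}.

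Two small points of care in your writeup. First, when you say ``forcing $\ell\le2$'' on the forest side, note that you must also discard matchings with two or more $2$-trees inside the \emph{same} increment; this follows by the same local $O(\ep^2)$-versus-$O(J)$ counting, but is not literally covered by the tree-size argument. Second, your domination sketch (``product over pairs of occurrences of $\exp(O(1))$'') is the right idea but needs to be made uniform in the number $r$ of edges: the clean way is to bound the total variation by a product over increments $(A,k)$ of the local sum $\sum_{\pi\vdash[s_A]}\prod_B|\kappa_B|\le 1+C_n\ep$, which gives a $J$-uniform bound $\prod_A(1+C_na/J)^J$, and then use a union bound over the increment hosting a bad block to show the bad part is $O(J^{-1})$. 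With these bounds in place, your identification $q(w_J)\to e^{\frac12\sum a_i}\,\tau(w)$ and the ensuing well-definedness of $\Phi(w)$ go through as you describe.
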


Our first lemma shows that in the sum $q(w_J)$, we could actually focus on the term from trees in $\cF_{w_J}$ with all the edges having distinct letters. 

\begin{lem}\label{lem-forest-rep-neg}
In the setting of Definition~\ref{def-forest-inverse-poly}, let $\cF_{w_J}^{\text{rep}}$ be the trees in $\cF_{w_J}$ such that there are at least two edges having the same letter. If we set $A_i = \frac{A}{J}$, $B_i = \frac{B}{J}$, etc., then 
\begin{equation}\label{eqn-forest-rep-neg}
    \lim_{J\to\infty}\sum_{F\in \cF_{w_J}^{\text{rep}}} |q(F)| = 0.
\end{equation}
\end{lem}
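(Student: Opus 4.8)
The plan is to estimate $\sum_{F \in \cF_{w_J}^{\text{rep}}} |q(F)|$ by a crude bound on the number of forests with a given number of edges that have a repeated letter, multiplied by the weight of each, and show that the ``repeated letter'' constraint forces the total to carry an extra factor of $J^{-1}$ relative to the convergent sum. First I would fix the word $w = a_1 \cdots a_n$ and let $\nu$ be the number of distinct letters; recall that in $w_J$ each letter $A$ is replaced by $J$ pieces, so $w_J$ has $nJ$ segments, and a segment's letter is determined by which of the $n$ original positions it came from. For a non-crossing forest $F \in \cF_{w_J}$ with $k$ edges, the weight $|q(F)|$ is a product over the tree components, and since $A_i = A/J$ for every piece, a component $T$ with letter $A$ contributes $\frac{(A/J)^{|T|-1}}{(|T|-1)!} \cdot (|T|/J)^{\,0}$—more precisely $|q(F)| = \prod_{k} \frac{x_k^{|T_k|-1}}{(|T_k|-1)! \, J^{|T_k|-1}}$, so $|q(F)| = J^{-k} \prod_k \frac{x_k^{|T_k|-1}}{(|T_k|-1)!} \le C^k J^{-k}$ where $C = \max_i |\lambda_i|$ (or any upper bound on the areas) and $k = \sum_k (|T_k|-1)$ is the total number of edges.

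Next I would count: the number of non-crossing forests on $nJ$ labeled boundary points with exactly $k$ edges and obeying the ``monochromatic components'' rule is at most the number of ways to choose $k$ ordered pairs of points of the same letter, which is at most $(nJ)^{2k}$—but this overcounts badly. A better route is to run the same estimate used in the proof of Proposition~\ref{prop-limit-legal} / Proposition~\ref{prop-new-wilson}: the number of non-crossing forests with $k$ edges on $w_J$ is bounded by (number of ways to place $k$ edges) which after accounting for the monochromatic rule is at most $\binom{nJ}{k} \cdot (\text{bounded combinatorial factor})^k \le (nJ)^k / k! \cdot D^k$ for a constant $D$ depending only on $n$. Combined with $|q(F)| \le C^k J^{-k}$, the contribution of all forests with $k$ edges is at most $(CDn)^k/k!$, whose sum over $k$ is the finite constant $e^{CDn}$—this reproduces the fact that $q(w_J)$ itself stays bounded. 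The key extra input is that if $F \in \cF_{w_J}^{\text{rep}}$, then two of its edges lie in components with the same letter; fixing which original letter $A$ is repeated (at most $\nu$ choices) and which two of the $\le J^2$ same-letter piece-pairs those two edges use, one finds that the number of such forests with $k$ edges gains a factor $\le \nu J^2$ but the weight already supplied $J^{-k}$; so after the identical counting argument the total is bounded by $\nu J^2 \cdot J^{-2} \cdot (\text{const})^k/k!$ times... no—the cleaner bookkeeping is: a repeated-letter forest has its $k$ edges distributed among components so that at least two edges share a letter, and among the $nJ$ points the monochromatic constraint means those two edges' endpoints are chosen among the $J$ pieces of one original position each, contributing a combinatorial count of order $J^{\,k}$ for the generic placement but order $J^{\,k-1}$ once we demand two edges be ``tied together'' by sharing a block of $J$ pieces. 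I would make this precise by: conditioning on the partition $\pi(F) \in \cG_{w_J}$ into monochromatic blocks, using the matrix–tree count $\prod |G_i|^{|G_i|-2}$ from Lemma~\ref{lem-parking-bij} for the number of forests projecting to a given block structure, and observing that a block structure with a repeated letter has strictly fewer ``free'' choices of piece-indices, costing a factor $J^{-1}$.

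The main obstacle, I expect, is precisely the bookkeeping in the last step: making rigorous the claim that demanding two components (or two edges) carry the same original letter reduces the power of $J$ in the count by exactly one, uniformly over the tree shape and over $k$, so that after summing over $k$ one still gets a convergent bound times $J^{-1} \to 0$. One delicate point is that a single component can have many pieces of the same letter without that being ``wasteful'' in $J$ (the $J$ pieces of letter $A$ are genuinely there and a big $A$-component uses many of them with weight $(A/J)^{|T|-1}$ compensating); the phrase ``at least two edges having the same letter'' must be parsed as: at least two \emph{distinct components} share a letter, OR a single component uses pieces from at least two distinct original positions with the same letter. In either case I would isolate the ``wasted'' $J$: in the first case, gluing two $A$-components into consideration forces a choice of two separate blocks of $J$ pieces, and the non-crossing constraint plus the weight bound yields an extra $J^{-1}$; in the second, similar. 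I would then conclude $\sum_{F \in \cF_{w_J}^{\text{rep}}} |q(F)| \le \frac{\text{const}(w, C)}{J} \to 0$ as $J \to \infty$, which is~\eqref{eqn-forest-rep-neg}. The safest presentation is to mirror the argument of Proposition~\ref{prop-limit-legal} almost verbatim, with ``compatible pairings'' replaced by ``monochromatic non-crossing forests'' and the extra repeated-letter constraint supplying the gain.
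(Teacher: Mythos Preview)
Your overall strategy --- bound each $k$-edge forest by $(C/J)^k$ and show that the repeated-letter constraint costs a factor $J^{-1}$ in the count --- is viable, but you have misread the combinatorial setup, and this derails your case analysis. In $\cF_{w_J}$ the letters are the \emph{pieces} $A_1,\dots,A_J,B_1,\dots$, and every tree component carries a single piece-letter $A_i$; since $A_i$ occurs at most $s(A)\le n$ times in $w_J$, every component has at most $n$ vertices. ``Two edges having the same letter'' means both lie in $A_i$-components for the \emph{same} index $i$. Your parse (``two components share a letter, or a single component uses pieces from two distinct original positions'') treats ``letter'' as the original symbol $A$ rather than the piece $A_i$: under the correct reading your second case is vacuous (any non-trivial component already spans several original positions of $w$), and your worry about ``big $A$-components using many of the $J$ pieces'' cannot arise, because no component ever spans more than one piece.

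Once this is fixed, the argument is actually simpler than you anticipate. There are at most $\nu J\binom{n}{2}$ possible edges in all (choose a piece-letter, then a pair among its $\le n$ occurrences), hence at most $(\nu J n^2)^k/k!$ forests with $k$ edges; together with the weight bound $(C/J)^k$ this already gives a $J$-uniform bound on $\sum_F|q(F)|$. For $\cF_{w_J}^{\text{rep}}$, fix the repeated piece-letter ($\le\nu J$ choices), pick two edges on its $\le n$ occurrences ($\le n^4$ choices), and then pick the remaining $k-2$ edges freely; this yields $\sum_{F\in\cF_{w_J}^{\text{rep}}}|q(F)|\le J^{-1}\cdot\nu n^4 C^2\sum_{k\ge 2}(\nu n^2 C)^{k-2}/(k-2)!\to 0$. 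So your route can be completed, but only after discarding the incorrect case split.

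The paper's proof is quite different and sidesteps all of this counting. It rearranges $w_J$ into a lexicographically sorted word $\tilde w_J$ (all copies of $A_1$ adjacent, then all copies of $A_2$, etc.) via an injection $\sigma\colon\cF_{w_J}\to\cF_{\tilde w_J}$ that preserves $|q(F)|$ and the repeated-letter property. On $\tilde w_J$ the absolute forest polynomial factorizes exactly as $\prod_{A_i}\hat p_{s(A)}(A/J)$, with $\hat p_m$ the explicit polynomial from Proposition~\ref{prop-forest-a-to-n}; the repeated-letter contribution is then this product minus the product of its degree-$\le 1$ truncations $\prod(1+\binom{s(A)}{2}A/J)$, and both products converge to the same exponential as $J\to\infty$. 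This buys exactness and brevity at the cost of invoking the explicit formula for $Q_n$; your (corrected) approach is more elementary in that it needs only crude edge counts, but requires the bookkeeping you flagged.
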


\begin{proof}
    To bound the terms from forests with replicated letters, we construct the word $\tilde{w}_J$ by identifying $A_i$'s with $A_i^{-1}$'s and rearranging all the letters in $w_J$ in lexicographical order. The order of the $a_{ij}'s$ with the same letter are kept. (For instance if $w_J = A_1\dots A_JA_J^{-1}\dots A_1^{-1}$ then $\tilde{w}_J=A_1A_1\dots A_JA_J$). We consider the following natural embedding $\sigma:\cF_{w_J}\to\cF_{\tilde{w}_J}$. Suppose that the rearrangement permutes the letter at place $i$ to $\phi(i)$, then $\phi(i)$ and $\phi(j)$ are connected in $\sigma(F)$ if and only if $i$ and $j$ are connected in $F$. Then it is straightforward to check that $\sigma(F)$ is non-crossing, and $F\in\cF_{w_J}^{\text{rep}}$ if and only if $\sigma(F)\in\cF_{\tilde{w}_J}^{\text{rep}}$, while by definition $|q(F)|=|q(\sigma(F))|$. Hence it suffices to check that  $\sum_{F\in \cF_{\tilde{w}_J}^{\text{rep}}} |q(F)|$ converges to 0.
    
    Recall that in Lemma~\ref{lem-parking-bij} we have given a summation over $F\in \cF_{A^n}$ where $F$ has $k$ edges. Indeed if we sum $|q(F)|$ instead of $p(F)$ over all $F$ with $k$ edges, we will get $\tbinom{n}{k+1}n^{k-1}\frac{|A|^k}{k!}$, and the final result will be $\hat{p}_n(A) = \sum_{k=0}^{n-1}\tbinom{n}{k+1}n^{k-1}\frac{|A|^k}{k!}$. Since $\tilde{w}_N$ has a clear product structure,  it follows that $\sum_{F\in \cF_{\tilde{w}_J}} |q(F)| = \prod_{A_i\in\frk{L}(\tilde{w}_J)}\hat{p}_{s(A_i)}(A_i)$ where $s(A_i)$ is the total number of $A_i$'s appeared in $\tilde{w}_J$ and $\frk{L}(w)$ is the set of letters appeared in $w$. 
	
	Since the degree of $q(F)$ represents how many edges with letter $A_i$ exist in $F$, we could read off that $\sum_{F\in \cF_{\tilde{w}_J}^{\text{rep}}} |q(F)|$ corresponds to the terms in polynomial $ \prod_{A_i\in\frk{L}(\tilde{w}_J)}\hat{p}_{s(A_i)}(A_i)$ where some $A_i$ has degree at least 2. As a result,  
	\begin{equation}\label{eqn-forest-rep-neg-1}
	\sum_{F\in \cF_{\tilde{w}_J}^{\text{rep}}} |q(F)| = \prod_{A_i\in\frk{L}(\tilde{w}_J)}\hat{p}_{s(A_i)}(A_i) - \prod_{A_i\in\frk{L}(\tilde{w}_J)}(1+\tbinom{s(A_i)}{2}|A_i|)
	\end{equation}
	where $\frk{L}(w)$ is the set of different letters appeared in $w$. If we fit in $A_i=\frac{A}{J}$ then 
	\begin{equation}\label{eqn-forest-rep-neg-2}
	    \sum_{F\in \cF_{\tilde{w}_J}^{\text{rep}}} |q(F)| = \prod_{A\in\frk{L}(w)} \hat{p}_{s(A)}(\frac{A}{J})^J-\prod_{A\in\frk{L}(w)}(1+\tbinom{s(A)}{2}\frac{A}{J})^J,
	\end{equation} 
	where $s(A)$ is the total number of $A$'s and $A^{-1}$'s in $w$. One can check that the two terms on the right hand side of~\eqref{eqn-forest-rep-neg-2} both converges to $\exp(\sum_{A\in\frk{L}(w)} \tbinom{s(A)}{2}A)$. Therefore the limit in~\eqref{eqn-forest-rep-neg} holds and the proof is complete.
\end{proof}

With this result that forests with duplicated letters tend to be negligible for large $J$, we now show that if a letter $A$ is next to an $A^{-1}$ then they will cancel in the following sense.

\begin{lem}\label{lem-forest-cancel}
 	For a word $w=a_1\dots a_n$ and $\tilde{w}=(AA^{-1}w)$, 
 	\begin{equation}
 	\label{eqn-forest-cancel}
 	    \lim_{J\to \infty}|q_{\tilde{w}_J}(\frac{A}{J},\dots ,\frac{A}{J};\frac{B}{J},\dots ,\frac{B}{J};\dots )-(1+\frac{A}{J})^Nq_{w_J}(\frac{A}{J},\dots ,\frac{A}{J};\frac{B}{J},\dots ,\frac{B}{J};\dots )|=0,
 	\end{equation}
 	where $q_{w_J}$ and $q_{\tilde{w}_J}$ are the polynomials from Definition~\ref{def-forest-inverse-poly}. As an immediate result, for the word $w=a_1\dots a_{k+l}$ consisting of $k$ $A$'s and $l$ $A^{-1}$'s, 
 	\begin{equation}
 	\label{eqn-forest-a-to-n-1}
 	    p(w) = Q_{k,l}(A) = e^{\min\{k,l\}A}Q_{|k-l|}(A) = e^{\min\{k,l\}A}\sum_{i=0}^{|k-l|}\tbinom{|k-l|}{i+1}|k-l|^{i-1}\frac{(-A)^i}{i!}.
 	\end{equation}
 \end{lem}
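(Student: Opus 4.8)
The plan is to prove the identity~\eqref{eqn-forest-cancel} first, and then deduce~\eqref{eqn-forest-a-to-n-1} from it by induction on $\min\{k,l\}$. For~\eqref{eqn-forest-cancel}, I would set up a bijective/combinatorial comparison between non-crossing forests on $\tilde{w}_J = (A_1\cdots A_J A_J^{-1}\cdots A_1^{-1} w_J)$ and pairs consisting of a forest on $w_J$ together with a choice of how the new $2J$ segments (the inserted $A A^{-1}$) interact. The new $2J$ segments form a palindrome-type block, so a forest restricted to these segments, together with possible edges joining them to the $A$-segments already present in $w_J$, decomposes into: (i) the part of the forest entirely supported on $w_J$, and (ii) the part involving at least one of the new segments. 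Using Lemma~\ref{lem-forest-rep-neg} (repeated letters are negligible as $J\to\infty$), I can restrict attention to forests in which each letter $A_i$ appears on at most one edge; under this restriction, the new $2J$ segments contribute edges only of the form ``glue $A_i$ to $A_i^{-1}$'' among themselves (each such edge joining an $A$ to an $A^{-1}$, hence weight $+1$, contributing the factor $(1+A/J)^J$ exactly as in the first worked example $w=AA^{-1}$ right before the lemma) or of the form ``glue a new $A_i$ or $A_i^{-1}$ to one of the old $A$-segments of $w_J$.'' The second type also creates a repeated use of the letter $A_i$ unless it was previously unused, and one checks these cross-terms are again killed in the limit by the same estimate as in Lemma~\ref{lem-forest-rep-neg} (the total weight of forests using at least one cross-edge is $O(1/J)$ times a convergent quantity). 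This leaves exactly $(1+A/J)^J$ times the sum over forests supported on $w_J$, which is $(1+A/J)^J q_{w_J}$, giving~\eqref{eqn-forest-cancel}.

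Next, to obtain~\eqref{eqn-forest-a-to-n-1}: write the word $w=a_1\cdots a_{k+l}$ with $k$ copies of $A$ and $l$ copies of $A^{-1}$. Since the forest polynomial only depends on the cyclic word up to the combinatorics of which segments carry $A$ versus $A^{-1}$, and since any such word can be cyclically rotated and (using that $q$ is invariant under the reshuffling argument already used in the proof of Lemma~\ref{lem-forest-rep-neg}, via the embedding $\sigma$) reduced to the standard form with all $A$'s first, I can assume $w = A^k (A^{-1})^l$ with, say, $k\ge l$ (the case $k<l$ is symmetric). Then apply~\eqref{eqn-forest-cancel} repeatedly: each application strips one adjacent $A A^{-1}$ pair and multiplies by the limit of $(1+A/J)^J = e^A$. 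After $l$ applications we are left with the word $A^{k-l}$ with no inverses, whose forest polynomial is $Q_{k-l}(A)$ by Proposition~\ref{prop-forest-a-to-n}. Hence $p(w) = e^{lA} Q_{k-l}(A) = e^{\min\{k,l\}A} Q_{|k-l|}(A)$, which is~\eqref{eqn-forest-a-to-n-1} once the binomial formula for $Q_n$ from Proposition~\ref{prop-forest-a-to-n} is inserted. One subtlety: $A$ and $A^{-1}$ need not be adjacent in $w$, but since the forest polynomial is defined through $q(w_J)$ which is symmetric under permuting segments that carry the same letter (with $A$ and $A^{-1}$ counted as the same letter, per Definition~\ref{def-forest-inverse-poly} and the remark that $a$ and $a^{-1}$ are the same letter), one can always bring a chosen $A$ next to a chosen $A^{-1}$ without changing $q(w_J)$; I would record this as a short preliminary remark before the induction.

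The main obstacle I anticipate is the cross-term estimate inside the proof of~\eqref{eqn-forest-cancel}: one must argue that forests in $\cF_{\tilde{w}_J}$ which join a new $A_i^{\pm 1}$-segment to an old $A$-segment of $w_J$ contribute a total weight that vanishes as $J\to\infty$. This is not immediate from Lemma~\ref{lem-forest-rep-neg} as stated, because that lemma is about forests with a repeated \emph{letter} $A_i$ among the \emph{whole} word, whereas here the concern is the interaction across the two blocks. The cleanest route is probably to reprove the needed bound directly along the lines of~\eqref{eqn-forest-rep-neg-1}--\eqref{eqn-forest-rep-neg-2}: bound $\sum |q(F)|$ over forests in $\cF_{\tilde{w}_J}$ using the lexicographic reshuffling $\sigma$ and the product formula $\prod_A \hat p_{s(A)}(A/J)^J$, then subtract off the ``no repeated $A_i$ and no cross-edges'' contribution, which is exactly $(1+A/J)^J \sum_{F\in\cF_{w_J}}|q(F)|$, and check that the difference is $O(1/J)$ times a bounded quantity. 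Everything else — the cyclic/permutation invariance of $q(w_J)$, the passage to the limit, and the final bookkeeping with $Q_n$ — is routine. I would also double-check the normalization in the last sentence: $\Phi(w) = e^{-(k+l)A/2} p(w) = e^{-(k+l)A/2} e^{\min\{k,l\}A} Q_{|k-l|}(A)$, which is the expected trace of the corresponding word of free unitary Brownian motions, consistent with Proposition~\ref{prop-forest-freebm}.
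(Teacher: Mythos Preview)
Your overall architecture is right --- split forests on $\tilde{w}_J$ according to whether they have a ``cross-edge'' from the new $AA^{-1}$ block into $w_J$, identify the no-cross-edge part with $(1+A/J)^J q_{w_J}$, and then argue the cross-edge part vanishes --- but the mechanism you propose for killing the cross-edge part is wrong.

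You suggest bounding the cross-edge contribution in absolute value by the same kind of estimate as Lemma~\ref{lem-forest-rep-neg}, expecting it to be $O(1/J)$. It is not. Take the simplest case: a forest consisting of a single cross-edge and nothing else. For each $i\in[J]$ there are two positions in the new block carrying the letter $A_i$ (namely the $A_i$ in $b_1$ and the $A_i^{-1}$ in $b_2$), and for each copy of $A^{\pm 1}$ in $w$ there is one position in $w_J$ carrying $A_i^{\pm}$. Each such single-edge forest has $|q(F)|=A/J$, and there are order $J$ of them, so the total absolute weight of just the single-cross-edge forests is already of order $1$, not $o(1)$. No purely triangle-inequality argument along the lines of~\eqref{eqn-forest-rep-neg-1}--\eqref{eqn-forest-rep-neg-2} can close this.

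What actually makes the cross-edge contribution vanish is a \emph{sign cancellation}: an edge from the $A_i$ in $b_1$ to a fixed position in $w_J$ carries the opposite sign to the edge from the $A_i^{-1}$ in $b_2$ to that same position. The paper implements this as an explicit involution: let $\alpha_1$ (resp.\ $\alpha_2$) be the largest index $i$ for which $b_1$ (resp.\ $b_2$) has an outgoing edge at $A_i$ to $\{a_1,\dots,a_n\}$; on $\{\alpha_1>\alpha_2\}$ one ``rotates'' the edge leaving $A_{\alpha_1}$ in $b_1$ so that it leaves $A_{\alpha_1}^{-1}$ in $b_2$ instead. This is a bijection onto $\{\alpha_1<\alpha_2\}$ that flips the sign of $q(F)$, so these two pieces cancel exactly. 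What remains ($\alpha_1=\alpha_2>0$, or forests with a repeated letter) genuinely falls under Lemma~\ref{lem-forest-rep-neg}. This is the missing idea in your plan.

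A smaller issue: for part two you want to first rearrange $w$ into the block form $A^k(A^{-1})^l$ by ``permutation invariance of $q(w_J)$''. That invariance is exactly what the lemma is meant to establish (that $p(w)$ depends only on $(k,l)$), so invoking it is circular; the embedding $\sigma$ from Lemma~\ref{lem-forest-rep-neg} only gives $|q(F)|=|q(\sigma(F))|$, not equality with signs. The paper avoids this by using only \emph{cyclic} rotation invariance: if $k,l\ge 1$ the cyclic word must contain an adjacent $AA^{-1}$ (or $A^{-1}A$, handled symmetrically), so one can always put such a pair at the front and strip it with~\eqref{eqn-forest-cancel}, then induct on $\min\{k,l\}$.
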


\begin{proof}
    We write $\tilde{w} = b_1b_2a_1\dots a_n$ where $b_1=A$ and $b_2=A^{-1}$. Let $\cF_J^0\subset{\cF_{\tilde{w}_J}}$ be the collection of non-crossing forests in $\tilde{w}_J$ where there exists an edge from $\{b_1,b_2\}$ to $\{a_1,\dots ,a_n\}$.  Then by definition we have $$\sum_{F\in \cF_{\tilde{w}_J}\backslash{\cF_N^0}}q(F) = (1+\frac{A}{J})^Jq_{w_J}.$$ Therefore by definition it suffices to show that $|\sum_{F\in \cF_J^0}q(F)|$ goes to 0 for large $J$.
    
    To prove this cancellation, set $\alpha_1$, (resp. $\alpha_2$) the largest $i_1$, (resp. $i_2$) such that the sub-segment $A_{i_1}$ in $b_1$ (resp. $A_{i_2}$ in $b_2$) is connected to $\{a_1,\dots ,a_n\}$. If such $i_1$ or $i_2$ does not exist then put the corresponding $\alpha$ to be 0. Define $\cF_J^1:=\{F\in \cF_0^J:\alpha_1>\alpha_2\}\backslash \cF_{\tilde{w}_J}^{\text{rep}} $ and $\cF_J^2:= \{F\in \cF_0^J:\alpha_2>\alpha_1\}\backslash \cF_{\tilde{w}_N}^{\text{rep}} $. Consider the rotation $\sigma$ from $\cF_J^1$ to $\cF_J^2$: the edge from $A_{\alpha_1}$ in $b_1$ going to $\{a_1,\dots ,a_n\}$ is diverted to start from the $A_{\alpha_1}$ in $b_2$, while all the other edges are remained. See Figure~\ref{fig-forest-cancel} for an example.
    
    \begin{figure}
		\centering
		\begin{tabular}{ccc} 
        \includegraphics[width=0.45\textwidth]{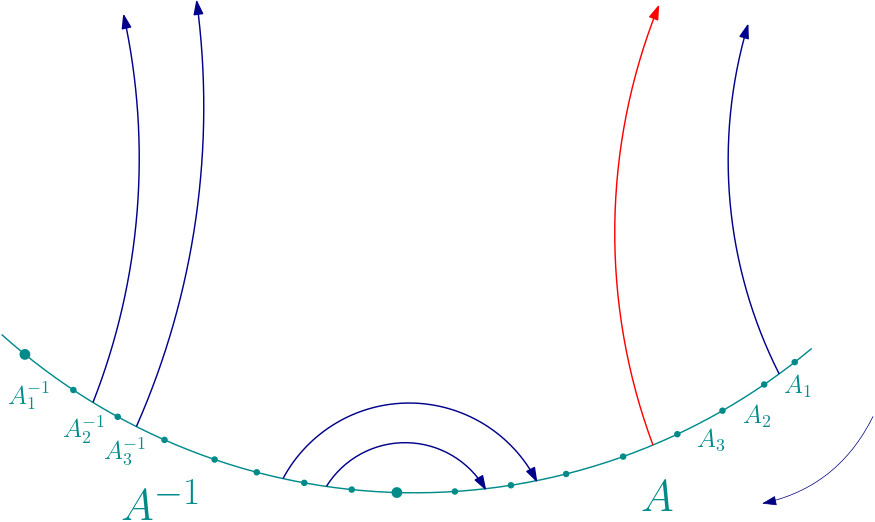}
        & \qquad &
        \includegraphics[width=0.45\textwidth]{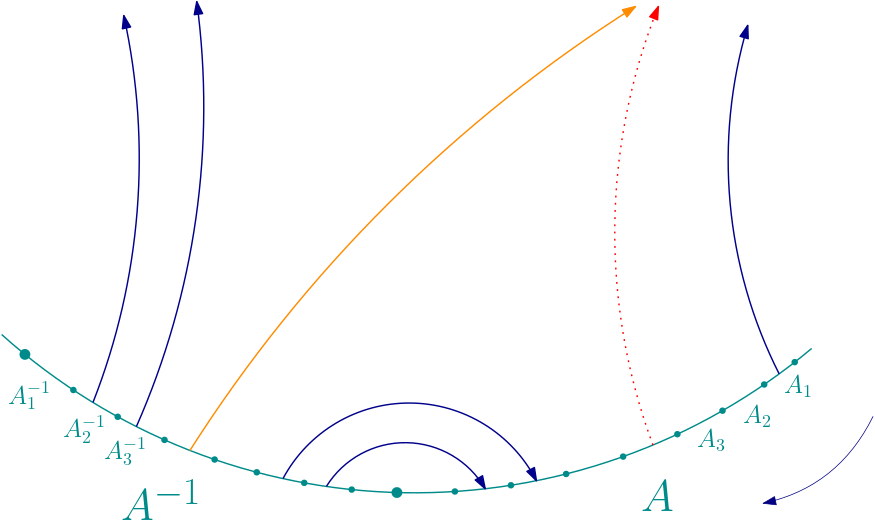}
        \end{tabular}
		\caption{An example of the rotation $\sigma$ in the proof of Lemma~\ref{lem-forest-cancel}. On the left panel $\alpha_1>\alpha_2$ (note that we are going clockwise), and if we divert the red edge from $A_{\alpha_1}$ in $b_1$ to the orange edge from $A_{\alpha_1}^{-1}$ in $b_2$ as on the right panel, we obtain a forest with $\alpha_1<\alpha_2$. Also the symbol of the orange edge is $-1$ times the symbol of the red edge. }
		\label{fig-forest-cancel}
	\end{figure}
	
	It is clear that if we rotate the edge according to $\sigma$, new crossings will not happen. By definition of $\cF_J^1$, the segment $A_{\alpha_1}^{-1}$ is isolated and $\sigma(F)$ does lie in $\cF_J^2$.  Moreover, $\sigma$ is a bijection from $\cF_J^1$ to $\cF_J^2$, and by our rule of symbols $+1/-1$ of the edges, we have $q(\sigma(F))=-q(F)$. Therefore
	\begin{equation}
	\sum_{F\in \cF_J^1}q(F)+\sum_{F\in \cF_J^2}q(F) = \sum_{F\in \cF_J^1}(q(F)+q(\sigma(F)))=0.
	\end{equation}
	The terms in $\sum_{F\in \cF_J^0\backslash (\cF_J^1\cup \cF_J^2)}p(F)$ are those $F$ with $\alpha_1=\alpha_2>0$ or having trees with multiple edges. In this case, $F$ contains two edges with the same letter, and it follows from Lemma~\ref{lem-forest-rep-neg} that $\sum_{F\in \cF_J^0\backslash (\cF_J^1\cup \cF_J^2)}|q(F)|$ goes to 0 as $J\to\infty$. Therefore $|\sum_{F\in \cF_J^0}q(F)|$ goes to 0 for large $J$ and the first part follows.
	
	For the second part, if $k=0$ or $l=0$, then the claim follows directly from Propositions~\ref{prop-forest-a-to-n} and~\ref{prop-forest-split}. Now we assume $k\ge l>0$ and suppose that the claim holds for $k\ge0$ and $l-1$. By rotation invariance, we may assume that $w = AA^{-1}a_3\dots a_{k+l} = AA^{-1}w'$. Our induction hypothesis implies that  $q_{w'_J}(\frac{A}{J},\dots ,\frac{A}{J})$ converges to $Q_{k-1,l-1}(A) =e^{\min\{k-1,l-1\}A}Q_{|k-l|}(A)$ and therefore~\eqref{eqn-forest-a-to-n-1} follows directly from~\eqref{eqn-forest-cancel}.
\end{proof}

\begin{proof}[Proof of Proposition~\ref{prop-forest-freebm}]

Let $\cG_w$ be the collection of non-crossing partitions (where $A^{-1}, B^{-1}, \dots $ are regarded as the same letter as $A, B, \dots $), $\pi:\cF\to\cG$ the natural projection, and $\pi_J:\cF_{w_J}\to\cG_w$ obtained by first mapping trees to its connected components and then gluing every copy of $A_1\dots A_J$, \dots  back to $A$, \dots  and every copy of $A_J^{-1}\dots A_1^{-1}$, \dots  back to $A^{-1}$. Note that $\pi_J$ is well-defined for any word $w$. 

    From the definition of $q(w_J)$, we have 
    \begin{equation}\label{eqn-polygon-partition}
	\begin{split}
	q_{w_J}&=\sum_{G=(G_1,\dots ,G_m)\in \mathcal{G}_w}\sum_{F=(T_1,\dots ,T_r)\in \pi_J^{-1}(G)}q(F)\\
	&=\sum_{G=(G_1,\dots ,G_m)\in \mathcal{G}_w}\sum_{F_1\in \pi_J^{-1}(G_1), \dots , F_m\in\pi_N^{-1}(G_m)}\prod_{j=1}^mq(F_j)\\
	&=\sum_{G=(G_1,\dots ,G_m)\in \mathcal{G}_w}\prod_{j=1}^m\sum_{F_j\in \pi_J^{-1}(G_j)}q(F_j).
	\end{split}
	\end{equation}
Here $G_j$ could be viewed as $|G_j|$-gon but also a word with $|G_j|$ letters, and  $F_j\in \pi_J^{-1}(G_j)$ means that after the gluing all $A_1\dots A_J$'s back to $A$'s, $F_j$ has only one component $G_j$. For $G=(G_1,\dots ,G_m)\in \mathcal{G}_w$, define
$$ P_J(G) =  \prod_{j=1}^m\sum_{F_j\in \pi_J^{-1}(G_j)}q(F_j).$$

On the other hand, from free probability theory (Proposition~\ref{prop-free-cumulant}) we know that 
\begin{equation}\label{freebm-block-sum-1}
    \tau(w) = \sum_{G\in NC(n)} \kappa_G[u_{a_1}, \dots , u_{a_n}] = \sum_{G\in \cG_w}\kappa_G[u_{a_1}, \dots , u_{a_n}].
\end{equation}

To finish the proof it suffices to show that in the case where $w=G_j$ has the form $w = A^{\epsilon_1}\dots A^{\epsilon_{k+l}}$, we have the limit 
\begin{equation}\label{eqn-forest-conditioning}
    P(w):=\lim_{J\to\infty}P_J(w)=\lim_{J\to \infty}\sum_{F'\in \pi_J^{-1}(w)}q(F') = e^{\frac{k+l}{2}A}\kappa_{k+l}(u_{A^{\epsilon_1}}, \dots , u_{A^{\epsilon_{k+l}}}).
\end{equation}
Then it shall follow that 
\begin{equation}\label{eqn-forest-comp-measure}
    P(G):=\lim_{J\to\infty}P_N(G)=\lim_{J\to\infty} \prod_{j=1}^m\sum_{F_j\in \pi_J^{-1}(G_j)}q(F_j) = e^{\frac{\sum_{i=1}^na_i}{2}}\kappa_G[u_{a_1}, \dots , u_{a_n}]
\end{equation}
 for any $G\in \cG_w$ and therefore 
\begin{equation}\label{eqn-polygon-partition-2}
    \Phi(w) = e^{-\frac{\sum_{i=1}^na_i}{2}}\sum_{G\in \cG_w} P(G) = \sum_{G\in \cG_w}\kappa_G[u_{a_1}, \dots , u_{a_n}] = \tau(w).
\end{equation}

Suppose $w$ contains $k$ $A$'s and $l$ $A^{-1}$'s. Again the claim is immediate if $k=0$ or $l=0$, as we know that if $l=0$ (resp. $k=0$) $P(w)$ is nothing but $\frac{(-A)^{k-1}}{(k-1)!}$ (resp. $\frac{(-A)^{l-1}}{(l-1)!}$).  Now we do induction on $k+l$ through  inclusion-exclusion. Using~\eqref{eqn-polygon-partition},
\begin{equation}\label{eqn-polygon-partition-1}
	\begin{split}
	q_{w_J}&=\sum_{G=(G_1,\dots ,G_m)\in \mathcal{G}_w}\prod_{j=1}^m\sum_{F_j\in \pi_J^{-1}(G_j)}q(F_j)\\
	&=\sum_{F\in \pi_J^{-1}(w)}q(F)+\sum_{\substack{G=(G_1,\dots ,G_m)\in \mathcal{G}_w \\ m\ge 2}}\prod_{j=1}^m\sum_{F_j\in \pi_J^{-1}(G_j)}q(F_j).
	\end{split}
	\end{equation}
As we do the splitting and let $A_i = \frac{A}{J}$, Lemma~\ref{lem-forest-cancel} tells us that as $J\to\infty$, $q_{w_J}$ converges to $Q_{k,l}(A)$ given in~\eqref{eqn-forest-a-to-n-1}. On the other hand, using the cancellation along with the moment result in~\cite{Biane1997}, we know that 
\begin{equation}\label{free-bm-block-2}
    \begin{split}
	Q_{k,l}(A) = e^{\frac{k+l}{2}A}\tau(w)&=e^{\frac{k+l}{2}A}\sum_{G=(G_1,\dots ,G_m)\in \mathcal{G}_w}\kappa_{G}[A^{\epsilon_1}, \dots , A^{\epsilon_{k+l}}]\\
	&=e^{\frac{k+l}{2}A}\kappa_{k+l}(u_{A^{\epsilon_1}}, \dots , u_{A^{\epsilon_{k+l}}})+e^{\frac{k+l}{2}A}\sum_{\substack{G=(G_1,\dots ,G_m)\in \mathcal{G}_w \\ m\ge 2}}\kappa_{G}[u_{A^{\epsilon_1}}, \dots , u_{A^{\epsilon_{k+l}}}].
	\end{split}
\end{equation}
Meanwhile, the induction hypothesis shows that when $G$ has $m$ components with $m\ge 2$, the product $\prod_{j=1}^m\sum_{F_j\in \pi_N^{-1}(G_j)}p(F_j)$ converges to $P(G)=e^{\frac{k+l}{2}A}\kappa_G[A^{\epsilon_1}, \dots , A^{\epsilon_{k+l}}]$. Therefore comparing~\eqref{eqn-polygon-partition-1} and~\eqref{free-bm-block-2}, we know that the limit in~\eqref{eqn-forest-conditioning} exists and is equal to the corresponding free Brownian motion cumulant. The proof is now complete.
\end{proof}

As shown in~\cite[Theorem 5.1]{Levy2011a}, the master field of a loop $\ell$ could be computed by first expressing $\ell$ via the Lasso basis and then calculate the corresponding word of free Brownian motion. And now as in Proposition~\ref{prop-forest-freebm} we know that the normalized forest polynomial $\Phi(w(\ell))$ produces the same result.

\subsection{An alternative Poisson description}\label{sec-forest-pois}
In this section, we give an alternative Poisson description, namely by running a Poisson point process over the segments to sample marked points and then consider the previous counting of non-crossing forests. We are going to show that in the case that if we impose the further condition that all the edges have different letters, then the partition function we obtain coincide with the forest polynomial $p(w)$.

For a word $w$, define $\hat{\cF}_w$ the subset of $\cF_w$ where all the edges have different letters. Now for each letter $A, B, \dots $, we sample independent Poisson random variables $n_A, n_B, \dots $ and generate the random word $\varphi(w)$ by replacing every copy $A$ with $A_1\dots A_{n_A}$, $A^{-1}$ by $A_{n_A}^{-1}\dots A_1^{-1}$, etc. For $F \in  \hat{\cF}_{\varphi(w)}$, let $\hat{q}(F) = (-1)^{s(F)}$ where $s(F)$ is the number of edges in $F$ connecting two $A_i$'s or two $A_i^{-1}$'s. 
$$\hat{q}(\varphi(w)) = \sum_{F\in \hat{\cF}_{\varphi(w)}} \hat{q}(F).$$
We define the Poisson forest polynomial
\begin{equation}\label{eqn-def-forest-poisson}
    \hat{p}(w) = \mathbb{E} \hat{q}(\varphi(w))
\end{equation} 

Let us compute two simple examples. First suppose that $w=AA^{-1}$. Given $n_A=k$, $\hat{q}(F)$ always take 1 and there are in total $2^k$ possible $F$'s. Hence $\hat{p}(w) = \sum_{k=0}^\infty 2^k \frac{A^k}{k!}e^{-A}=e^A=p(w)$. Next suppose that $Q=A^4$. Given $n_A=k$, we can check that if we glue one edge then $\hat{q}(F)$ takes $-1$ and there are $6k$ possible ways; if we glue two edges $\hat{q}(F)=1$ and the total number of $F$ is $12\tbinom{k}{2}+2\cdot 2\tbinom{{k}}{2}=16\tbinom{k}{2}$. Finally if $F$ has 3 edges, is non-crossing and all the three edges have different labels, then $\hat{q}(F)=-1$ and the total number of $F$ is $16\tbinom{k}{3}$ (See Figure~\ref{fig-forest-poisson} for more details). Therefore $$\hat{p}(w)=\sum_{k=0}^\infty (1-6k+16\tbinom{k}{2}-16\tbinom{k}{3})\frac{A^k}{k!}e^{-A}=1-6A+8A^2-\frac{8}{3}A^3=p(w).$$
\begin{figure}
\centering
	\begin{tabular}{ccccc} 
        \includegraphics[width=0.29\textwidth]{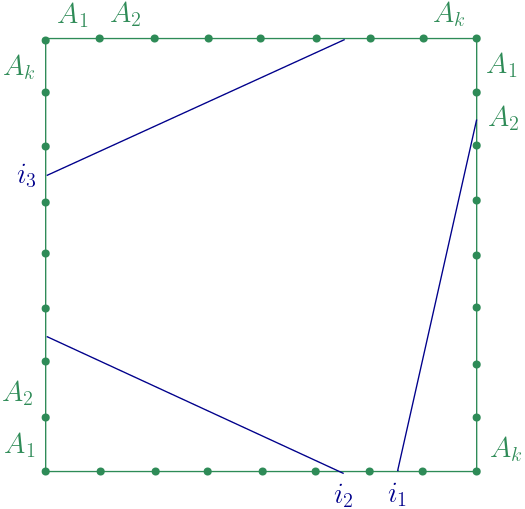}
        & \qquad &
        \includegraphics[width=0.29\textwidth]{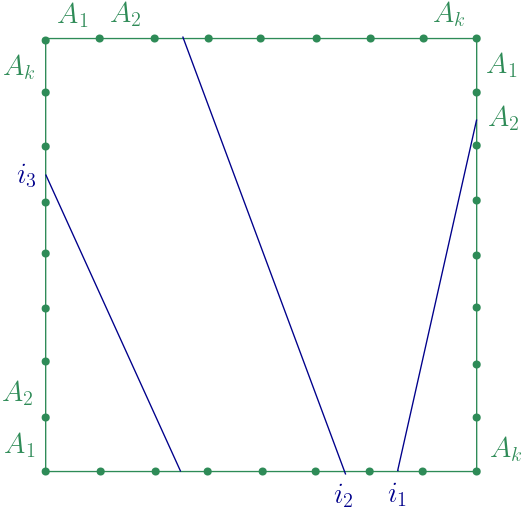} & \qquad &
        \includegraphics[width=0.29\textwidth]{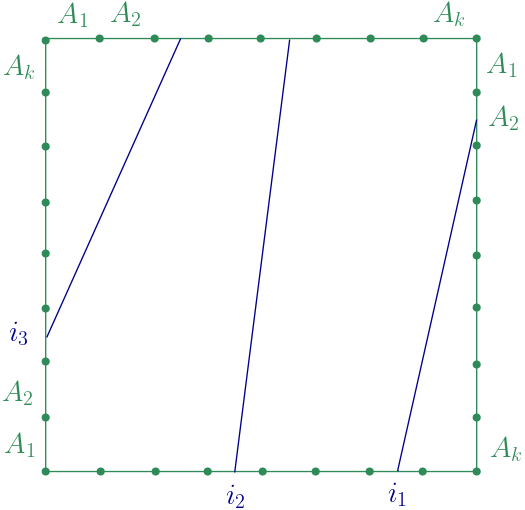}
        \end{tabular}
	\caption{Consider all the non-crossing spanning trees over $A^4$. Then there are 4 trees isomorphic to the tree on the left side, 4 to the middle, and 4 trees to the right. On the left side, suppose the edges have labels $A_{i_1}, A_{i_2}, A_{i_3}$ as depicted, then the non-crossing condition implies $i_1<i_2<i_3$, so the total sum is $\#\{1\le i_1<i_2<i_3\le k\}=\tbinom{k}{3}$; similarly for the tree on the middle side the number is also $\#\{1\le i_1<i_2<i_3\le k\}=\tbinom{k}{3}$. Finally for the tree on the right panel the non-crossing constraint is $i_1<i_2;i_3<i_2$; since we required that all the edges have different labels, $i_1\neq i_3$, the number of $F$ is $\#\{1\le i_1,i_3<i_2\le k;i_1\neq i_3\}=2\tbinom{k}{3}$. Now the total number of $F$ is $16\tbinom{k}{3}$. }
	\label{fig-forest-poisson}
\end{figure}

The main result of this section is to show that the Poisson forest polynomial is equivalent to the previous $p(w)$.

\begin{prop}\label{prop-forest-poisson}
For any words $w = a_1\dots a_n$, the Poisson forest function $\hat{p}(w)$ given by~\eqref{eqn-def-forest-poisson} is the same as the forest polynomial $p(w)$ given by Definition~\ref{def-forest-inverse-poly}. 
\end{prop}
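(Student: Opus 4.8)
The plan is to show that $p(w)$ and $\hat p(w)$ are both equal to one and the same explicit series, obtained by grouping the relevant non-crossing forests by the number of edges carried by each letter; the unifying point is that the equal-mesh splitting of Definition~\ref{def-forest-inverse-poly} and the Poisson splitting of~\eqref{eqn-def-forest-poisson} differ only in \emph{how} one distributes the mass $|A|$ of a letter among its sub-segments, and the combinatorics of the surviving forests does not see this choice. First I would reduce both sides to sums over partial matchings. On the splitting side $p(w)=\lim_{J\to\infty}q(w_J)$, and by Lemma~\ref{lem-forest-rep-neg} the forests $F\in\cF_{w_J}$ having two edges of the same letter contribute $o(1)$; hence $p(w)=\lim_J\sum_{F\in\hat\cF_{w_J}}q(F)$. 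Since every component of a forest in $\cF_{w_J}$ has all its vertices, and therefore all its edges, of a single letter, a component with two edges already repeats a letter, so every non-trivial component of $F\in\hat\cF_{w_J}$ is a single edge; such an edge joins two segments of equal position index $l$ lying in the runs of two distinct occurrences of one letter $A^{\pm1}$ of $w$, the edges form a non-crossing partial matching with pairwise distinct position indices, and $q(F)=(-1)^{s(F)}\prod_A(|A|/J)^{m_A}$, where $m_A$ is the number of $A$-edges of $F$ and $s(F)$ counts the edges joining two like-oriented segments. The set $\hat\cF_{\varphi(w)}$ of~\eqref{eqn-def-forest-poisson} admits exactly the same description, with the position indices of letter $A$ ranging over $[n_A]$ instead of $[J]$, and $\hat q(\varphi(w))=\sum_F(-1)^{s(F)}$.

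The heart of the argument is to decouple the \emph{position data} of such a matching from its \emph{combinatorial type}. Fix a tuple $(m_A)_A$ of non-negative integers. Given a matching $F$ as above on $w_J$ (resp.\ on $\varphi(w)$) with exactly $m_A$ edges of each letter $A$, record for each $A$ the set $S_A$ of position indices used by its edges, and let $F'$ be obtained from $F$ by keeping only the used positions and relabelling them $1,\dots,m_A$ in increasing order within every run of $A^{\pm1}$. Because the non-crossing condition and the quantity $s(F)$ depend only on the cyclic order of the runs and on the \emph{relative} order of position indices inside each run — never on their actual numerical values — the map $F\mapsto\big((S_A)_A,F'\big)$ is a sign-preserving bijection from $\{F\in\hat\cF_{w_J}:m_\bullet(F)=(m_A)\}$ onto (the $m_A$-element subsets of $[J]$, for each $A$) $\times\ \mathcal N(w;(m_A))$, where $\mathcal N(w;(m_A))$ is a finite set of matchings on the ``reduced'' word in which each occurrence of $A$ is replaced by exactly $m_A$ segments; crucially this reduced word, and hence $\mathcal N(w;(m_A))$, depends only on $w$ and $(m_A)$. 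Writing $N(w;(m_A)):=\sum_{F'\in\mathcal N(w;(m_A))}(-1)^{s(F')}$ we obtain
\begin{equation*}
\sum_{\substack{F\in\hat\cF_{w_J}\\ m_\bullet(F)=(m_A)}}(-1)^{s(F)}=\Big(\prod_A\binom{J}{m_A}\Big)N(w;(m_A)),
\end{equation*}
together with the identical formula in which $J$ is replaced by $n_A$ in the $A$-th factor on the Poisson side.

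Finally I would pass to the limit and take expectations. Combining the above,
\begin{equation*}
p(w)=\lim_{J\to\infty}\sum_{(m_A)}\Big(\prod_A\binom{J}{m_A}\big(\tfrac{|A|}{J}\big)^{m_A}\Big)N(w;(m_A))=\sum_{(m_A)}\Big(\prod_A\frac{|A|^{m_A}}{m_A!}\Big)N(w;(m_A)),
\end{equation*}
using $\binom{J}{m}J^{-m}\to 1/m!$ and dominated convergence, whose hypothesis — absolute summability of $\prod_A\frac{|A|^{m_A}}{m_A!}\,|N(w;(m_A))|$ — follows from the convergent bounds in the proof of Lemma~\ref{lem-forest-rep-neg}. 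On the Poisson side the $n_A$ are independent and $\mathbb{E}\big[\binom{n_A}{m}\big]=|A|^m/m!$ is the $m$-th factorial moment of a $\mathrm{Poisson}(|A|)$ variable, so
\begin{equation*}
\hat p(w)=\mathbb{E}\Big[\sum_{(m_A)}\Big(\prod_A\binom{n_A}{m_A}\Big)N(w;(m_A))\Big]=\sum_{(m_A)}\Big(\prod_A\frac{|A|^{m_A}}{m_A!}\Big)N(w;(m_A)),
\end{equation*}
the interchange of $\mathbb{E}$ and $\sum$ being justified by the same absolute summability. Comparing the two displays gives $p(w)=\hat p(w)$.

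The main obstacle is the decoupling step: one must verify carefully that both the non-crossing condition and the sign $(-1)^{s(F)}$ genuinely see only the \emph{order type} of the position indices, so that the reduction to $\mathcal N(w;(m_A))$ is clean and sign-preserving even when $w$ contains inverses — where a run of $A^{-1}$ is read in reverse, and the ``two like-oriented segments'' rule must be tracked through the relabelling. The remaining issues, namely the vanishing of repeated-letter forests and the domination needed for the two limit interchanges, are of exactly the kind already dealt with in the proof of Lemma~\ref{lem-forest-rep-neg}; alternatively, one could bypass splitting entirely by checking that $\hat p$ inherits the $AA^{-1}$-cancellation of Lemma~\ref{lem-forest-cancel} and agrees with $p$ on the base words $A^n$ via Propositions~\ref{prop-forest-a-to-n} and~\ref{prop-forest-split}, and then conclude by the same inductive scheme used in Section~\ref{sec-forest-cont}.
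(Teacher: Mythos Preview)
Your approach is correct and takes a genuinely different, more direct route than the paper's. The paper proves the proposition in three steps: first checking $\hat p=p$ for inverse-free words by reading off coefficients from the splitting identity (Proposition~\ref{prop-forest-split}), then establishing an $AA^{-1}$-cancellation for $\hat p$ analogous to Lemma~\ref{lem-forest-cancel} via the same rotation trick, and finally running the component-wise induction through $\cG_w$ and the quantities $P(G)$, $\hat P(G)$ that already drives the proof of Proposition~\ref{prop-forest-freebm}. Your argument bypasses all of this by exhibiting a single explicit series $\sum_{(m_A)}\prod_A\frac{|A|^{m_A}}{m_A!}\,N(w;(m_A))$ that both $p(w)$ and $\hat p(w)$ equal, via a bijection that separates the choice of which position indices are used from the order-type data of the resulting matching.

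The decoupling you flag as the ``main obstacle'' does go through: once one observes that every forest in $\hat\cF_{w_J}$ (or $\hat\cF_{\varphi(w)}$) is a partial matching, the crossing relation between any two edges depends only on which occurrences they connect and, when they share an occurrence, on the relative order of their indices within that run (reversed in $A^{-1}$-runs); the sign $s$ depends only on the orientations of the two occurrences. Hence relabelling a size-$m_A$ index set $S_A$ order-isomorphically to $[m_A]$ preserves both non-crossing and sign, and the inverse relabelling recovers $F$. The domination for both interchanges is elementary (the crude bound $|N(w;(m_A))|\le\prod_A\binom{s(A)}{2}^{m_A}$ already gives an exponential envelope), so appealing to Lemma~\ref{lem-forest-rep-neg} is sufficient but not even necessary. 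Your approach yields a shorter, induction-free proof that makes the Poisson identity transparent as a factorial-moment computation; the paper's approach has the advantage of reusing the $P(G)$ machinery already built for Proposition~\ref{prop-forest-freebm}, so it is modular within the section but longer.

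One small remark on your closing ``alternative'': agreeing on the base words $A^n$ together with $AA^{-1}$-cancellation is not quite enough on its own. The inductive scheme of Section~\ref{sec-forest-cont} also relies on the component-wise product structure over $\cG_w$ to pass from single-letter words to arbitrary words; you would have to reproduce that for $\hat p$ as the paper does in its Step~3.
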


\begin{proof}
The main idea of the proof is to trace back the whole procedure of establishing the forest polynomial for general words.

    \emph{Step 1.} We first show that $\hat{p}(w)=p(w)$ when $w$ contains no inverses. Then in this case we know that $p(w)$ is a polynomial over $\frk{L}(w)$. Suppose $$p(w) = \sum_{m_1, \dots , m_r}\gamma_{m_1, \dots , m_r}A_1^{m_1}\dots A_r^{m_r}.$$
    Then given $n_{A_i} = k_i$, as we split each $A_i$ into $A_{i,1}\dots A_{i,k_i}$ the corresponding forest polynomial for $\varphi(w)$ is nothing but $$(p(\varphi(w)))(A_{1,1}, \dots , A_{1,k_1}; \dots ) = (p(w))(A_{1,1}+\dots +A_{1,k_1};\dots ) = \sum_{m_1, \dots , m_r}\gamma_{m_1, \dots , m_r}\prod_{i=1}^r(\sum_{j=1}^{k_i}A_{i,j})^{m_i}.$$ From this we could read off the number of non-crossing forest $F$'s with all the edges having different letters. Namely, given sets $J_i\subset [k_i]$, the total number of $F\in\cF_{\varphi(w)}$ with edges having letters $\{A_{i,j}:j\in J_i\}$ is $|J_1|!\dots |J_r|!(-1)^{|J_1|+\dots +|J_r|}\gamma_{|J_1|, \dots , |J_r|}$, and when we fit in $A_{i,j}=1$, the corresponding $q(F) = (-1)^{|J_1|+\dots +|J_r|}$. Therefore it follows that
    \begin{equation}
        (\hat{q}(\varphi(w))) = \sum_{s_1, \dots , s_r}\prod_{i=1}^r(-1)^{s_i}\tbinom{k_i}{s_i}s_i!\gamma_{s_1, \dots , s_r}
    \end{equation}
    and 
    \begin{equation}
        \begin{split}
            \hat{p}(w) &= \sum_{k_1, \dots , k_r}\sum_{0\le s_i\le k_i}\prod_{i=1}^r(-1)^{s_i}\tbinom{k_i}{s_i}s_i!\gamma_{s_1, \dots , s_r}(-1)^{s_i}\frac{A_i^{k_i}}{k_i!}e^{-A_i}\\
            &= \sum_{s_1, \dots , s_r\ge 0}\sum_{k_i\ge s_i}\prod_{i=1}^r\gamma_{s_1, \dots , s_r}\frac{A_i^{k_i}}{(k_i-s_i)!}e^{-A_i}\\
            &= \sum_{s_1, \dots , s_r\ge 0}\gamma_{s_1, \dots , s_r}\prod_{i=1}^r A_i^{s_i} = p(w).
        \end{split}
    \end{equation}
    
    \emph{Step 2.} In this step we establish the rule of cancellation, namely for $\tilde{w} = AA^{-1}w$,  $\hat{p}(\tilde{w}) = \hat{p}(w)$.  Suppose $n_B, n_C, \dots $ are fixed and we are given $n_A=J$. Again we write $\tilde{w} = b_1b_2a_1\dots a_n$. Let $\hat{\cF}_{\varphi(\tilde{w})}^0\subset \hat{\cF}_{\varphi(\tilde{w})}$ be the forests such that there is an edge from segments $\{b_1,b_2\}$ to $\{a_1,\dots ,a_n\}$. Define $\alpha_1$ and $\alpha_2$ as in Lemma~\ref{lem-forest-cancel}, namely the largest subscript $i_1$ and $i_2$ such that $b_1$ or $b_2$ has an outgoing edge to $\{a_1,\dots ,a_n\}$ at place $A_{i_1}$ or $A_{i_2}$. Now let $\hat{\cF}_{\varphi(\tilde{w})}^1 = \{F\in \hat{\cF}_{\varphi(\tilde{w})}^0:\alpha_1>\alpha_2\}$ and $\hat{\cF}_{\varphi(\tilde{w})}^2 = \{F\in \hat{\cF}_{\varphi(\tilde{w})}^0:\alpha_1<\alpha_2\}$. Note that we are requiring that every edge has different letter, $\alpha_1\neq \alpha_2$. Then the rotation $\sigma$ in Lemma~\ref{lem-forest-cancel} is still well-defined (recall Figure~\ref{fig-forest-cancel}) and
    \begin{equation}\label{eqn-forest-pois-cancel-1}
        \sum_{F\in \hat{\cF}_{\varphi(\tilde{w})}^0}\hat{q}(F) = \sum_{F\in \hat{\cF}_{\varphi(\tilde{w})}^1}\hat{q}(F)+\sum_{F\in \hat{\cF}_{\varphi(\tilde{w})}^2}\hat{q}(F) = \sum_{F\in \hat{\cF}_{\varphi(\tilde{w})}^1}(\hat{q}(F)+\hat{q}(\sigma(F))) = 0.
    \end{equation}
    
    The term where $F\in \hat{\cF}_{\varphi(\tilde{w})}\backslash \hat{\cF}_{\varphi(\tilde{w})}^0$, i.e., $\{b_1, b_2\}$ is disconnected from $\{a_1, \dots , a_n\}$ is slightly more tricky. We condition on the number of edges between $b_1$ and $b_2$ (the $A$ and $A^{-1}$). If we use $k$ labels (there are $\tbinom{J}{k}$ choices) while connecting $b_1$ and $b_2$ then for the remaining part $w$ we could only use $J-k$ $A_i$'s. Then it follows that (note connecting $A_i$ and $A_i^{-1}$ gives 1 in $\hat{q}(F)$)
    \begin{equation}\label{eqn-forest-pois-cancel-2}
        \sum_{F\in \hat{\cF}_{\varphi(\tilde{w})}\backslash \hat{\cF}_{\varphi(\tilde{w})}^0}q(F) = \sum_{k=0}^J\tbinom{J}{k}\mathbb{E}  [\hat{q}(\varphi(w))|n_A = J-k, n_B, \dots ].
    \end{equation}
    Combining~\eqref{eqn-forest-pois-cancel-1} and~\eqref{eqn-forest-pois-cancel-2} we see
    \begin{equation}
        \begin{split}
            \mathbb{E}  [\hat{q}(\varphi(\tilde{w}))|n_B, \dots ]& = \sum_{J=0}^\infty \mathbb{E}  [\hat{q}(\varphi(\tilde{w}))|n_A = J, n_B, \dots ]\frac{A^J}{J!}e^{-A}\\
            & = \sum_{J=0}^\infty\sum_{k=0}^J\tbinom{J}{k}\frac{A^J}{J!}e^{-A}\mathbb{E}  [\hat{q}(\varphi(w))|n_A = J-k, n_B, \dots ]\\
            & = \sum_{k = 0}^\infty \frac{A^k}{k!}\mathbb{E}  [\hat{q}(\varphi(w))|n_B, \dots ] = e^A \mathbb{E}  [\hat{q}(\varphi(w))|n_B, \dots ].
        \end{split}
    \end{equation}
    Therefore if we take further expectation over $n_B, n_C, \dots $ the conclusion follows.
    
    From this we know that when $w$ is consisted of $A$'s and $A^{-1}$'s, then $\hat{p}(w) = p(w)$.
    
    \emph{Step 3.} The remaining words. Again we operate at the level of connected components. Given $\cJ:=(n_A:A\in \frk{L}(w))$, let $\hat{\pi}_\cJ:\hat{\cF}_{\varphi(w)}\to\cG_{w}$ be the projecting and gluing operator similar to $\pi_J$ in the previous section. For $G\in \cG_w$, write $G = (G_A:A\in \frk{L}(w))$ where $G_A$ is the collection of all the $A$-components in $G$. Suppose $|\frk{L}(w)|=r$. Then by definition
    \begin{equation}\label{eqn-forest-pois-partition}
        \begin{split}
        \hat{p}(w) &= \mathbb{E}\hat{q}(\varphi(w)) = \sum_{\cJ=(n_A:A\in \frk{L}(w))\in \mathbb{N}^{r}}\sum_{G\in \cG_w}\sum_{F \in \hat{\pi}_\cJ^{-1}(G)}\hat{q}(F)\prod_{A\in \frk{L}(w)}\frac{A^{n_A}}{n_A!}e^{-A}\\
        &= \sum_{G = (G_A:A\in \frk{L}(w))\in \cG_w}\prod_{A\in \frk{L}(w)}\sum_{n_A=0}^\infty \sum_{F_A\in \hat{\pi}_{n_A}^{-1}(G_A)}\hat{q}(F_A)\frac{A^{n_A}}{n_A!}e^{-A}.
        \end{split}
    \end{equation}
    On the other hand, we know from~\eqref{eqn-polygon-partition-2} that
     \begin{equation}\label{eqn-forest-comp-measure-2}
     p(w) = \sum_{G = (G_A:A\in \frk{L}(w))\in \cG_w}\prod_{A\in \frk{L}(w)} P(G_A)
     \end{equation}
     where $P(G_A)$ is given in~\eqref{eqn-forest-comp-measure}. Therefore it suffices to show that for every $G_A$, 
     \begin{equation}
     \label{eqn-forest-pois-equi}
         \hat{P}(G_A): =  \sum_{n_A=0}^\infty \sum_{F_A\in \hat{\pi}_{n_A}^{-1}(G_A)}\hat{q}(F_A)\frac{A^{n_A}}{n_A!}e^{-A} = P(G_A).
     \end{equation}
     
     At this level we can now assume that $w$ is consisted of $k$ $A$'s and $l$ $A^{-1}$'s, and $G = G_A = (G_1, \dots , G_m)\in \cG_w$. We do induction on $k+l$. If $k+l=1$ then both sides of~\eqref{eqn-forest-pois-equi} are 1 and there is nothing to prove. Suppose now~\eqref{eqn-forest-pois-equi} holds for $1,\dots ,k+l-1$ and corresponding $G\in \mathcal{G}_w$. Then for $(k,l)$, $G = (G_1, \dots , G_m)$ with $m\ge 2$, we write $G'=(G_2, \dots , G_m)$ and condition on the number of the edges in $G_1$. Given $n_A=J$, if there are $s$ edges in the component $G_1$, then by our rule of having distinct edge labels, the situation for the rest of $F\in\hat{\pi}_{n_A}^{-1}(G')$ could be linked to $G'$ with $n_A=J-s$. Therefore we have
     \begin{equation}\label{eqn-forest-pois-partition-1}
	\begin{split}
	&\sum_{J=0}^\infty\sum_{F\in \hat{\pi}_J^{-1}((G_1, G'))}\hat{q}(F)\frac{A^{J}}{J!}e^{-A}\\
	&=\sum_{J=0}^\infty\sum_{s=0}^J\tbinom{J}{s}\sum_{\substack{F_1\in \hat{\pi}_s^{-1}(G_1)\\ |E(F_1)|=s}}\sum_{F'\in \hat{\pi}_{J-s}^{-1}(G')}\hat{q}(F_1)\hat{q}(F')\frac{A^J}{J!}e^{-A}\\
	&=\sum_{s=0}^\infty\sum_{\substack{F_1\in \hat{\pi}_s^{-1}(G_1) \\ |E(F_1)|=s}}\sum_{J=s}^\infty\sum_{F'\in \hat{\pi}_{J-s}^{-1}(G')}\hat{q}(F_1)\hat{q}(F')\frac{A^sA^{J-s}}{s!(J-s)!}e^{-A}\\
	&=\Big(\sum_{s=0}^\infty\sum_{\substack{F_1\in \hat{\pi}_s^{-1}(G_1)\\ |E(F_1)|=s}}\hat{q}(F_1)\frac{A^s}{s!}  \Big) \Big(\sum_{J=0}^\infty\sum_{F'\in \hat{\pi}_J^{-1}(G')}\hat{q}(F')\frac{A^J}{J!}e^{-A}  \Big).
	\end{split}
	\end{equation}
	Clearly the second term on the last line of~\eqref{eqn-forest-pois-partition-1} is nothing but $\hat{P}(G')$. To deal with the first term, we note that when calculating  $\hat{P}(G_1)$, we could also condition on the total number of edges appeared in the forest. That is, we may first choose $r$ labels from the $N$ choices then view $F_1$ as an element in $\hat{\pi}_r^{-1}(G_1)$ with $r$ edges.
	\begin{equation}\label{eqn-forest-pois-partition-2}
	\begin{split}
	\hat{P}(G_1)&=\sum_{J=0}^\infty\sum_{F_1\in \hat{\pi}_N^{-1}(G_1)}\hat{q}(F_1)\frac{A^J}{J!}e^{-A}=\sum_{J=0}^\infty\sum_{r=0}^J\tbinom{J}{r}\sum_{\substack{F_1\in \hat{\pi}_r^{-1}(G_1)\\ |E(F_1)|=r}}\hat{q}(F_1)\frac{A^J}{J!}e^{-A}\\
	&=\sum_{r=0}^\infty\sum_{\substack{F_1\in \hat{\pi}_r^{-1}(G_1) \\ |E(F_1)|=r}}\hat{q}(F_1)\frac{A^r}{r!}.
	\end{split}
	\end{equation}
		Combining~\eqref{eqn-forest-pois-partition-1} and~\eqref{eqn-forest-pois-partition-2} we see that $\hat{P}(G) = \hat{P}(G_1)\hat{P}(G')$. On the other hand, by definition $P(G) = P(G_1)P(G')$. Therefore it follows from the induction hypothesis that $\hat{P}(G) = P(G)$.
		
		It remains to deal with the case where $G$ has one single component $w$. We apply once more the inclusion-exclusion argument in Proposition~\ref{prop-forest-freebm}. On one hand, from~\eqref{eqn-polygon-partition-2} and~\eqref{eqn-forest-pois-partition}, we know that
		\begin{equation}
		    P(w) = p(w) - \sum_{G = (G_1, \dots , G_m)\in \cG_w, \ m\ge 2}P(G); \ \hat{P}(w) = \hat{p}(w) - \sum_{G = (G_1, \dots , G_m)\in \cG_w, \ m\ge 2}\hat{P}(G).
		\end{equation}
		On the other hand, we have shown that $\hat{P}(G)=P(G)$ for $G = (G_1, \dots , G_m)$ with $m\ge 2$ and also from Step 2 $p(w) = \hat{p}(w)$ as $w$ has only one letter $A$. Therefore it follows that $P(w) = \hat{p}(w)$, which finishes the induction step. Now we have shown~\eqref{eqn-forest-pois-equi} and the proof of the whole proposition is complete.
\end{proof}

\section{Open questions}\label{sec-ques}
In addition to the famous problems about Yang-Mills theory in dimensions $d>2$, there are many aspects of the two-dimensional theory that we would like to understand better.

\begin{ques}
In two dimensions, there are actually two approaches for representing a Wilson loop expectation as a sum over surfaces. The first (established in this paper) only involves surfaces that are locally ``flat'' (perhaps with ramification points). The second (established in \cite{cao2023random}) involves all discretized surfaces, flat or otherwise. Our intuition is that the ``non-flat'' surfaces in the second approach cancel each other out. For example, in 2D lattice, if you remove all the plaquettes not surrounded by a loop $\ell$, it does not change the Wilson loop expectation for $\ell$ at all. This means that the surfaces involving plaquettes \emph{not} surrounded by $\ell$ together contribute zero to the sum. 
Can we explain where and why cancellation occurs at least in 2D?
\end{ques}

\begin{ques}
	The surface sum described in Theorem~\ref{thm-main-simple} (and more formally in Lemma~\ref{lem-main-poisson-gen} and~\ref{lem-other-gen}) depends \emph{a priori} on a choice of a spanning tree, which determines a specific lasso basis. Because it describes a Wilson look expectation, we know \emph{a posteriori} that the overall sum does not depend on such a choice. Is there a more direct way to see that different expressions for the Wilson loop expectation (using different lasso bases) are equal?
\end{ques}

\begin{ques}
    The results of this paper involved Wilson loop expectations on the plane. Which of these results can be extended to the sphere or to higher genus surfaces? Note that some work in this direction has been done: see~\cite{dahlqvist2020,lemoine2022large, dahlqvist2022large} for results about the master fields on the sphere and higher genus surfaces.
\end{ques}

\begin{ques}
    Can one give a detailed mathematical explanation of the Gross-Taylor expansion~\cite{Gross1993a} for higher genus surfaces, and prove the convergence of the summation?
\end{ques}


\begin{ques}
    While Theorem~\ref{thm::walk-on-permutations} was formulated only for $G=\UN$, similar random walks on permutations can be constructed for general $G$.
    Can we compute Wilson loop expectations for general Lie groups from the same approach?
\end{ques}

\begin{ques}
    Is there a more computationally efficient way to compute formulas like the ones in the table in Section~\ref{sec-chart}? For example, can one compute the $\UN$ Wilson loop expectation for a complicated loop with 10 or 20 self-intersections? (We expect that the number of terms in the formula can grow at least exponentially with the number of self-intersections of the loop, so the expressions might require some space to write down.)
\end{ques}

\begin{appendix}
\section{Strand interpretation}\label{sec-appendix}
In this appendix, we provide an alternative interpretation of our results in terms of Poisson sums on \emph{strand diagrams} which will be useful in the companion paper~\cite{cao2023random} studying the lattice gauge theory for any dimensions $d\ge 2$. We first recall basic definitions and notations.

Let $\bm \Gamma = (\Gamma_1, \Gamma_2, \dots, \Gamma_k)$ be an (ordered) collection of words $\Gamma_i$ on letters $\{\lambda_1,\cdots,\lambda_L\}$ where
$$\Gamma_i = \lambda_{c_i(1)}^{\ep_i(1)}\cdots \lambda_{c_i(M_i)}^{\ep_i(M_i)}$$
for some $c_i: [M_i]\to [L]$ and $\ep_i: [M_i]\to \{-1,1\}$. By letting $M=M_1+\cdots+M_k$ and concatenating $c_i$'s and $\ep_i$'s, we may define $c:[M]\to [L]$ and $\ep:[M]\to \{-1,1\}$. Our goal is to compute
$\E[\Tr \big(B({\bm \Gamma}) \big)] ,$
where
$$\mathbb \Tr \big(B({\bm \Gamma})) := \Tr(B(\Gamma_1))\cdots \Tr(B(\Gamma_k)), \text{ and } B(\Gamma_i) = B_{T}^{\ep_i(1)} (\lambda_{c_i(1)}) \cdots B_{T}^{\ep_i(M_i)} (\lambda_{c_i(M_i)}),$$
and where $\{B_{T}(\lambda_\ell)\}_{\ell\in [L]}$ is a collection of independent Brownian motions on a classical Lie group $G$ that is one of $\UN, \SON, \SUN, \SphN$ started at the identity and run for time $T>0$. \footnote{Compared to our main setting in this paper, we chose the time parameters to be the same as $T$ for all indices for simplicity.} We also define
$$\mathcal C = \bigcup_{\ell\in [L]}\binom{c^{-1}(\ell)}{2} = \{ (m,m^*) : m < m^* \text{ and } c(m) = c(m^*)\},$$
and
$$\mathcal D_T = \bigsqcup_{(m,m^*) \in \mathcal C} [0,T],$$
equipped with some parametrizing bijection $\eta: \mathcal C \times [0,1] \to \mathcal D_T$.\footnote{We also removed some geometric interpretations compared to Definition~\ref{defn-space-match} for simplicity.} We now consider the Poisson point process $\Sigma$ on $\mathcal D_T$ with intensity given by the Lebesgue measure.

We will interpret the results in Section~\ref{sec-poisson-sum} first for $G=\UN$ to conclude that expectations of unitary Brownian motion may be represented by a certain diagram which is obtained from a Poisson process on $\mc{D}_T$. To begin to make this statement precise, in the following definition, we describe how to associate a diagram to a given collection of points of $\mc{D}_T$.

\begin{defn}[Strand diagram]
\label{defn-strand-diagram}
    Let $\Gamma=\lambda_{c(1)}^{\ep(1)}\cdots \lambda_{c(M)}^{\ep(M)}$ be a word on $\{\lambda_1,\cdots, \lambda_L\}$ and $\Sigma$ be a collection of points in $\mathcal D_T$. Then $\eta^{-1}(\Sigma)$ be a collection of points $((m,m^*), t)\in \mathcal C\times [0,1]$ for $(m,m^*)\in \mathcal C$ and $t\in [0,1]$. Let $n_\ell = |c_i^{-1}(\ell)|$ for each $\ell\in [L]$.
    The \textbf{strand diagram of $(\Gamma, \Sigma)$} is an array of right- or left-directed arrows, each of which is identified as the unit interval $[0,1]$, placed as follows.
    \begin{itemize}
        \item There are $L$ columns and each column is labelled by $\lambda_\ell$ for $\ell \in [L]$;
        \item The column labeled by $\lambda_\ell$ consists of a stack of $n_\ell$ unit-length arrows, each of which corresponds to an element of $c_i^{-1}(\ell)$;
        \item If an arrow corresponds to $m=c_i^{-1}(\ell)\in [M]$, it is right-directed (resp.\ left-directed) if $\ep(m)=1$ (resp.\ $\ep(m)=-1$);
        \item The end of arrow corresponding to $m$ is connected to the origin of the arrow corresponding to $m+1$, modulo $M$;
        \item For each point $((m,m^*), t)\in \eta^{-1}(\Sigma)$, if $\ep(m)\ep(m^*)=1$, we insert a green crossing (called the ``same-direction swap'') on two arrows corresponding to $m$ and $m^*$ at location $t\in [0,1]$. Otherwise, we put a blue double bar (called the ``opposite-direction swap'') on two arrows corresponding to $m$ and $m^*$ at location $t\in [0,1]$.
    \end{itemize}

    In general, if $\bm \Gamma=(\Gamma_1, \dots \Gamma_k)$ is a collection of words $\Gamma_i$ on $\{\lambda_1,\cdots, \lambda_L\}$, we define the strand diagram of $(\bm \Gamma, \Sigma)$ as a collection of strand diagrams of $(\Gamma_1, \Sigma_1), \cdots, (\Gamma_k,\Sigma_k)$ where $\Sigma = \bigsqcup_{i=1}^k \Sigma_i$ with the same labelled columns. See Figure~\ref{fig-strand-diagram} for an example.
\end{defn}
\begin{figure}[ht!]
    \centering
    \includegraphics[width=7cm]{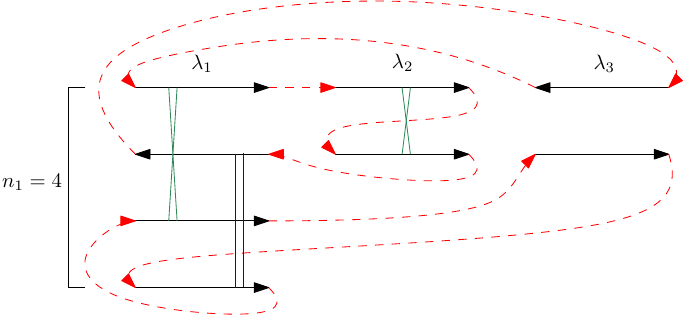}\qquad
    \includegraphics[width=6cm]{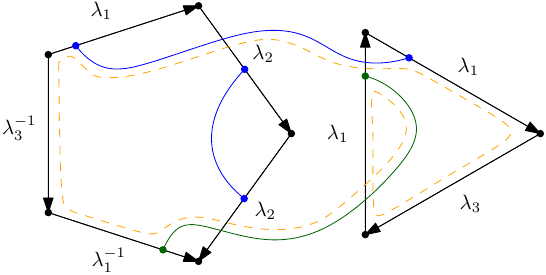}
    \caption{\textbf{Left:} The strand diagram for $((\Gamma_1,\Gamma_2), \Sigma)$ with $\Gamma_1=\lambda_1\lambda_2\lambda_2\lambda_1^{-1}\lambda_3^{-1}$, $\Gamma_2=\lambda_1\lambda_3\lambda_1$, and $\eta^{-1}(\Sigma)=\{((1,6),1/4), ((2,3),1/2), ((4,8),3/4)\}$. For example, the first and sixth alphabet in the word $\Gamma$ are $\lambda_1$'s, so we put a green crossing at the 1/4 location of two unit intervals representing those. Similarly, we put a blue double bar at the 3/4 location of two unit intervals representing $\lambda_1^{-1}$ and $\lambda_1$. \textbf{Right:} The CW-complex constructed from the left strand diagram. By following each 1-cells and closing each cycle by adding a new 2-cell, we obtain a closed surface whose Euler characteristic defines the Euler characteristic of the strand diagram. The orange dashed line is an example of a 2-cell we add. Including this face, we need 3 faces in total to obtain a closed surface with the minimum genus. Therefore, the Euler characteristic of this surface is equal to $V-E+F=14-17+5=2$, and the resulting surface is a sphere.}
    \label{fig-strand-diagram}
\end{figure}

We define a CW-complex from a strand diagram as in Figure~\ref{fig-strand-diagram}. Each word $\Gamma_i$ can be represented by a regular polygon with unit-length arrows (preserving the orientation), and each same-direction swap or opposite-direction swap corresponds to a path connecting two arrows at the specified location in the strand diagram. As a result, we have $k$ 2-cells for each polygon, $M+2|\Sigma|$ 1-cells, and $M+2|\Sigma|$ 0-cells, where $M=n_1+\cdots+n_L$. Then there exists a \emph{closed} surface with the minimum genus constructed by adding extra $F$ 2-cells, that is by following every 1-cell and adding a 2-cell whenever they form a cycle. (Equivalently, it can be viewed as a \emph{ribbon graph}.) By Euler's formula, the number $F$ of extra 2-cells determines the minimum genus, that is $\chi=(M+2|\Sigma|)-(M+3|\Sigma|)+(k+F) = k+F-|\Sigma|$. We define the Euler characteristic $\chi$ of the strand diagram as the Euler characteristic of this surface with the minimum genus.

\begin{defn}\label{defn-chi-S} Let $S$ be the strand diagram of $(\bm \Gamma, \Sigma)$. Define $\numcomp(S)$ by the number of cycles obtained by swaps and the Euler characteristic of $S$ by
$$\chi(S)=\abs{\bm \Gamma}+\numcomp(S)-\abs{\Sigma}.$$
\end{defn}

Instead of constructing a CW-complex by adding extra faces as above, we may construct another CW-complex by ``gluing'' edges according to each swap. More precisely, we divide each unit-length arrow by $J$ equal pieces for sufficiently large $J>0$ so that each piece only contains at most one swap. 
Note that this picture is exactly what Figure~\ref{fig-expansion-diagram} describes, where the arrows there correspond to opposite-direction swaps in the corresponding stand diagram. (In Figure~\ref{fig-expansion-diagram}, we do not have any arrows corresponding to same-direction swaps, but both types of swaps occur for generic words.)
Hence, if we construct a CW-complex from $k$ 2-cells (with total $2\abs{\Sigma}$ edges) as described in Section~\ref{sec-main-results} and gluing operations described in Table~\ref{table-gluing}, it is straightforward to see that the resulting CW-complex has exactly the same Euler characteristic compared to the previous construction from a stand diagram.

These two interpretations are \emph{dual} to each other in the sense that in the former interpretation the number of faces $F$ determines the Euler characteristic, while in the latter the number of vertices after gluings does so.
As a consequence, we obtain the following result from Lemma~\ref{lem-main-poisson} and~\ref{lem-un}.
\begin{lem}[Expected trace as Poisson sums]
\label{lem-poisson-sum}
    Let $G=\UN$, $\bm \Gamma$ be a collection of words on $\{\lambda_1, \ldots, \lambda_L\}$, and $T>0$.
    Let $\Sigma$ be the Poisson point process on $\mathcal D_T$. Consider the strand diagram $S$ for $(\bm \Gamma, \Sigma)$. Then
    $$\mathbb E\big[\Tr(B(\bm \Gamma))\big] = \exp\left(-\frac{1}{2}\sum_{m=1}^M T+\sum_{(m,m^*)\in \mathcal C} T\right) \mathbb E\Bigl[\ep(\Sigma)(-1)^{|\Sigma|} N^{-k+\chi(S)}\Bigr].$$
\end{lem}

Recall from Definition~\ref{defn-chi-S} that $(-1)^{\abs{\Sigma}}N^{-k+\chi(S)} = (-1/N)^{\abs{\Sigma}}N^{\numcomp(S)}$. Since $\numcomp(S)$ depends only on how the ends of strands are hooked regardless of what happens in between, we may rewrite the sum in terms of matchings on the ends $[2n_\ell]$ for $\ell\in [L]$. We denote by $\pi(S)$ this matching induced by a strand diagram $S$. See Figure~\ref{fig-strand-diagram-matchings} for an illustration.
Next, we define the weight
\begin{equs}\label{eq:w-T-def}
w_T(\pi) := \exp\bigg(\sum_{\ell \in [L]} \bigg(\binom{n_\ell}{2} - \frac{n_\ell}{2}\bigg)T\bigg) \E\big[ \varep(\Sigma) (-1/N)^{|\Sigma|} \ind(\pi(S) = \pi) \big],
\end{equs}
associated to $\pi=(\pi_\ell)_{\ell\in [L]}$ where $\pi_\ell$ is a matching on $[2n_\ell]$. We note that not all matchings $\pi$ can be realized as $\pi(S)$ for some strand diagram $S$ because there are two types of strands (left- and right-directed) and two types of swaps we allow for each pair of strands. We write $\mc M_{\bm \Gamma} := \{\pi(S): S\text{ is a strand diagram of $(\bm\Gamma,\Sigma)$ for some $\Sigma$}\}$ for given $\bm \Gamma$.\footnote{In~\cite{cao2023random}, this set will be more carefully defined in the context of a variant of the Brauer algebra.}

\begin{figure}[ht!]
    \centering
    \includegraphics[width=7cm]{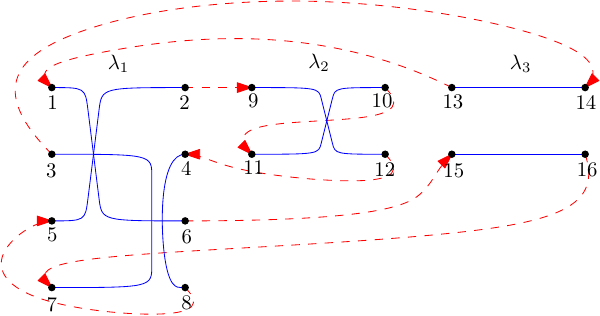}\qquad
    \includegraphics[width=6cm]{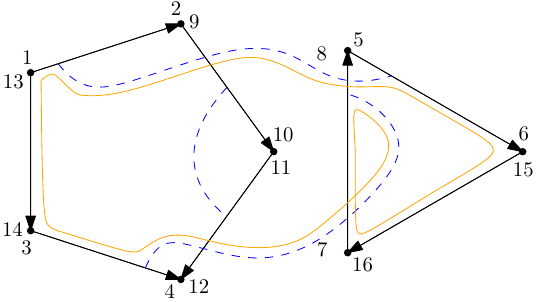}
    \caption{\textbf{Left:} The same strand diagram as Figure~\ref{fig-strand-diagram} but with ends of arrows labelled. By following all swaps, each row of the strand diagram defines a matching on $[2n_\ell]$ for $\ell=1,2,3$. \textbf{Right:} The corresponding CW complex picture with labels. It is straightforward that the number of components in the left picture is exactly the number of faces in this picture.}
    \label{fig-strand-diagram-matchings}
\end{figure}

In the companion paper~\cite{cao2023random}, we are particularly interested in the case where $\bm \Gamma$ contains exactly the same number of $\lambda_i$'s and $\lambda_i^{-1}$'s, that we will call ``balanced'' to consider non-trivial $T\to \infty$ limit.
For bookkeeping, we state a version of our Poisson sum results that is applicable to this specific case.

In particular, by considering a local picture of each $\lambda_i$, we consider only one column of strands with $n$ right-directed and $n$ left-directed arrows.
Let $\mc M_n$ be the set of matchings $\pi(S)$ induced by all possible stand diagrams on these strands. Then we have

\begin{lem}\label{prop:unitary-brownian-motion-expectation}
Let $G=\UN$ and $i_1, \ldots, i_n$, $i_1', \ldots, i_n'$, $j_1, \ldots, j_n$, $j_1', \ldots, j_n' \in [N]$. Then
\begin{equ}\label{eq-un-tensor}
\E \big[(B_T)_{i_1j_1} \cdots (B_T)_{i_n j_n} (\ovl{B}_T)_{i_1' j_1'} \cdots (\ovl{B}_T)_{i_n' j_n'} \big] = \sum_{\pi\in \mc M_n} w_T(\pi) \ind(\text{indices match with $\pi$})
\end{equ}
where $w_T(\pi)$ is defined as~\eqref{eq:w-T-def}.
\end{lem}
\begin{proof}
This is slightly more general than the trace expansion in our results in that the indices can be now arbitrary, not cyclic as in the trace expansion.
We may regard that the cyclic condition was imposed (with strands from other lassos) by following red dotted arrows in Figure~\ref{fig-strand-diagram}, so taking a local picture of one lasso is equivalent to removing those extra conditions imposed by red dotted arrows.

The proof is almost immediate from Section~\ref{sec-poisson-sum} and~\ref{sec-surface-story} by changing the setting accordingly. For example, in order to obtain the Poisson sum for the left hand side of~\eqref{eq-un-tensor}, all we need to change from Lemma~\ref{lem-main-poisson} is the indices of the product $W_{a_1a_2}\cdots W_{a_{2|\Sigma|}a_{2|\Sigma|+1}}$ to $W_{i_1j_1}\cdots W_{i_n'j_n'}$. Hence, the exponential factor in~\eqref{eq:w-T-def} coincides with Lemma~\ref{lem-main-poisson} and the remaining term follows from Proposition~\ref{prop-compatible-one}.\end{proof}

We now consider the case of $G=\SON$ and $\SphN$. The strand interpretations for these groups are exactly the same as that for $G=\UN$, except that we now allow same- or opposite-direction swaps for any pair of strands, whether or not they have the same directions. Note that their gluing operations are identical as described in Table~\ref{table-gluing}. In the example of Figure~\ref{fig-expansion-diagram}, if $G=\SON$ or $\SphN$, we allow gluing a pair of edges either in an orientation-preserving way or not.

To state a similar result like Lemma~\ref{prop:unitary-brownian-motion-expectation} for $G=\SON, \SphN$, we also take a local picture of strands from one lasso; consider one column of strands with $n$ right-directed arrows. We again allow an arbitrary index at each end of strands, which gives a more general formula than the trace expansion. In addition, compared to the Poisson point process $\Sigma$ for $G=\UN$, we consider $\Sigma_{\mrm{OS}}$ which is the union of \emph{two independent copies} of $\Sigma$ so that one represents same-direction swaps and the other represents opposite-direction swaps.

Note that the exponential factor in Lemma~\ref{lem-main-poisson} consists of two terms. The first comes from the normalizing constant $Z$ defined in~\eqref{eqn-z} which is \emph{independent} of the Poisson point process and only dependent of the number of strands and the constant $\fc_\fg$ in Table~\ref{table-lie}. The second comes from the partition function of Poisson point processes.
Therefore, the factor in Lemma~\ref{lem-main-poisson} now becomes $\exp\left(- \frac{\fc_\fg}{2}nT + 2\binom{n}{2}T\right)$ where $\fc_\fg = -1+\frac{\ep}{N}$ as defined in Table~\ref{table-lie}. Here, we set $\ep=1$ for $G=\SON$ and $\ep=-1$ for $G=\SphN$ following the convention of~\cite{Dahlqvist2017}. Because we used two copies of $\Sigma$, there is the factor 2 in the second term.

Consider $G=\SON$ where we may define the weight, for $\ep=1$,
\begin{equs}\label{eq:w-T-SON-def}
w_T^{\ep}(\pi) := \exp\bigg(2\binom{n}{2}T - \frac{n}{2}\left(1-\frac{\ep}{N}\right)T\bigg) \E\big[ F_{\ep}(\Sigma_{\mrm{OS}}) \ind(\pi(S) = \pi) \big]
\end{equs}
where $F_+(\Sigma_{\mrm{OS}})$ is the weight defined from Proposition~\ref{prop-compatible-one}, and $\pi(S)$ is defined as before for $G=\UN$. 
Let $\mc M_n^{\mrm{OS}}$ be the set of possible matchings induced from $\Sigma_{\mrm{OS}}$.
Again, by following the proofs in Section~\ref{sec-surface-story} in the modified setting, we obtain the Poisson sum results for $G=\SON$.

\begin{lem}\label{prop:orthogonal-symplectic-brownian-motion-expectation}
Let $G = \SON$ and $B_T$ be a $G$-valued Brownian motion at time $T$. For $\mbf{i} = (i_1, \ldots, i_n), \mbf{j} = (j_1, \ldots, j_n) \in [N]^n$, we have that
\begin{equs}
\E[(B_T)_{i_1 j_1} \cdots (B_T)_{i_n j_n}] = \sum_{\pi\in \mc M^{\mrm{OS}}_n}w_T^{+}(\pi) \ind(\text{indices match with $\pi$}).
\end{equs}
\end{lem}

For $G=\SphN$, we have almost the same formula, except that now the labels at ends of strands do not need to match exactly, as seen in Proposition~\ref{prop-compatible-one}. On the other hand, $\pi(S)$ determines whether two indices equals or off by $N/2$ depending on whether a match joins the labels on the same side or not. Thus, we say the labels are compatible with $\pi$ if they satisfy the correct relations in this sense. Defining $F_{-}(\Sigma_{\mrm{OS}})$ according to Proposition~\ref{prop-compatible-one} and reusing~\eqref{eq:w-T-SON-def} for $\ep=-1$, we obtain the Poisson sum for $G=\SphN$.
\begin{lem}\label{prop:SU-brownian-motion-expectation}
    Let $G = \SphN$ and $B_T$ be a $G$-valued Brownian motion at time $T$. For $\mbf{i} = (i_1, \ldots, i_n), \mbf{j} = (j_1, \ldots, j_n) \in [N]^n$, we have that
\begin{equs}
    \E[(B_T)_{i_1 j_1} \cdots (B_T)_{i_n j_n}] = \sum_{\pi\in \mc M^{\mrm{OS}}_n}  w_T^{-}(\pi) \ind(\text{indices are compatible with $\pi$})
\end{equs}
where $w_T(\pi)$ is defined as~\eqref{eq:w-T-def}.
\end{lem}

For $G=\SUN$, we have a similar interpretation as $G=\UN$, but now consider another copy of $\Sigma$ that represents each pair related to contracting-edge operation in Table~\ref{table-gluing}. The corresponding object in strand diagrams can be also visualized as a line segment between a pair of strands, but it does not swap anything and only contributes an extra factor $\pm 1/N^2$ in the weight. Thus, $\pi(S)$ is only induced from the other usual swaps as in the case of $G=\UN$.

Let $\Sigma_{\mrm{SU}}$ be the union of these two copies of $\Sigma$. For this case, as a local picture of on lasso, we consider $n$ right-directed arrows and $m$ left-directed arrows. As per usual, let $F_{\mrm{SU}}$ be the weight defined from Proposition~\ref{prop-compatible-one} and $\mc M_{n,m}$ be the set of possible matchings induced by $\Sigma_{\mrm{SU}}$. Then for the weight
\begin{equs}\label{eq:w-T-SON-def}
w_T^{\mrm{SU}}(\pi) := \exp\bigg(2\binom{n+m}{2}T - \frac{n+m}{2}\left(1-\frac{1}{N^2}\right)T\bigg) \E\big[ F_{\mrm{SU}}(\Sigma_{\mrm{SU}}) \ind(\pi(S) = \pi) \big],
\end{equs}
we have a desired result for $G=\SUN$.
\begin{remark}
    In our story, it is natural to allow contracting edges only on a pair of two \emph{distinct} strands. 
    Alternatively, as in~\cite{cao2023random}, we may also allow this on the \emph{same} strand by introducing additional Poisson point process. In this case, the exponential factor in the weight~\eqref{eq:w-T-SON-def} changes by $\left(1-\frac{1}{N^2}\right)(n+m)T$ which is the contribution from this extra point process on single strands, resulted in
\begin{equs}
\wt{w}_T^{\mrm{SU}}(\pi) := \exp\bigg((n+m)^2T - \frac{n+m}{2}\left(1+\frac{1}{N^2}\right)T\bigg) \E\big[ \wt{F}_{\mrm{SU}}(\wt{\Sigma}_{\mrm{SU}}) \ind(\pi(S) = \pi) \big],
\end{equs}
if $\wt{\Sigma}_{\mrm{SU}}$ and $\wt{F}_{\mrm{SU}}$ are the union of related Poisson point processes and the corresponding weight.
\end{remark}

\begin{lem}\label{prop:su-brownian-motion-expectation}
Let $G=\SUN$ and $i_1, \ldots, i_n$, $i_1', \ldots, i_n'$, $j_1, \ldots, j_n$, $j_1', \ldots, j_n' \in [N]$. Then
\begin{equ}
\E \big[(B_T)_{i_1j_1} \cdots (B_T)_{i_n j_n} (\ovl{B}_T)_{i_1' j_1'} \cdots (\ovl{B}_T)_{i_n' j_n'} \big] = \sum_{\pi\in \mc M_{n,m}} w_T^{\mrm{SU}}(\pi) \ind(\text{indices match with $\pi$}).
\end{equ}
\end{lem}

\begin{remark}
    All the above lemmas easily follow from previous results, e.g.\ from~\cite{Dahlqvist2017}. Still, this section (as well as the whole paper) provides more probabilistic viewpoints on various formulas related to this topic. These turn out to be useful in the companion paper~\cite{cao2023random} where we consider some explorations on the strand diagrams to derive some generalized results on lattice Yang-Mills theory. We also note that in the existing literature, most formulas are written in tensor notations for brevity. In contrast, in this paper we keep expressions with the actual matrix entries.
\end{remark}

\end{appendix}

\bibliographystyle{hmralphaabbrv}
\bibliography{references}

\end{document}